\theoremstyle{plain}
\newtheorem{thm}{Theorem}[section]
\newtheorem{pro}[thm]{Proposition}
\newtheorem{lem}[thm]{Lemma}
\newtheorem{cor}[thm]{Corollary}
\newtheorem{claim}[thm]{Claim}
\theoremstyle{definition}
\newtheorem{dfn}[thm]{Definition}
\newtheorem{nota}[thm]{Notation}
\newtheorem{setup}[thm]{Setup}
\newtheorem{rem}[thm]{Remark}
\newtheorem{exa}[thm]{Example}
\numberwithin{equation}{section}
\newcommand{\inv}{^{-1}}
\newcommand{\into}{\hookrightarrow} 
\newcommand{\onto}{\twoheadrightarrow} 
\renewcommand{\setminus}{\smallsetminus} 
\newcommand{\epsi}{\varepsilon}
\newcommand{\Gm}{\mathbb{G}_\mathrm{m}} 
\renewcommand{\emptyset}{\varnothing}
\newcommand{\phiv}{\varphi}
\DeclareMathOperator{\codim}{codim}
\DeclareMathOperator{\coker}{coker}
\DeclareMathOperator{\Newt}{Newt}
\DeclareMathOperator{\Aut}{Aut}
\DeclareMathOperator{\Spec}{Spec}
\DeclareMathOperator{\Proj}{Proj}
\DeclareMathOperator{\uProj}{\underline{Proj}}
\DeclareMathOperator{\Sing}{Sing}
\DeclareMathOperator{\Trop}{\mathcal{T}}
\DeclareMathOperator{\T}{\mathbf{T}}
\DeclareMathOperator{\C}{\mathbf{C}}
\DeclareMathOperator{\depth}{depth}
\DeclareMathOperator{\ob}{ob}
\DeclareMathOperator{\divisor}{div}
\DeclareMathOperator{\interior}{int}
\DeclareMathOperator{\Der}{Der}
\newcommand{\Def}[1]{\mathrm{Def}(#1)}
\newcommand{\catDef}[1]{\mathfrak{Def}(#1)}
\newcommand{\Deff}[2]{\mathrm{Def}(#1, #2)} 
\newcommand{\catDeff}[2]{\mathfrak{Def}(#1, #2)}
\newcommand{\Hom}{\mathrm{Hom}} 
\newcommand{\aut}{\mathfrak{aut}}
\newcommand{\Ext}{\mathrm{Ext}} 
\newcommand{\cHom}{\mathcal{H}om} 
\newcommand{\cExt}{\mathcal{E}xt} 
\newcommand{\conv}[1]{\left[ #1 \right]} 
\newcommand{\cayley}[1]{\mathrm{Cayley}(#1)} 
\newcommand\cA{\mathcal{A}}
\newcommand\cC{\mathcal{C}}
\newcommand\cD{\mathcal{D}}
\newcommand\cE{\mathcal{E}}
\newcommand\cH{\mathcal{H}}
\newcommand\cM{\mathcal{M}}
\newcommand\cO{\mathcal{O}}
\newcommand\cS{\mathcal{S}}
\newcommand\cT{\mathcal{T}}
\newcommand\cU{\mathcal{U}}
\newcommand\cX{\mathcal{X}}
\newcommand\cY{\mathcal{Y}}
\renewcommand\AA{\mathbb{A}}
\newcommand\CC{\mathbb{C}}
\newcommand\DD{\mathbb{D}}
\newcommand\GG{\mathbb{G}}
\newcommand\HH{\mathbb{H}}
\newcommand\LL{\mathbb{L}}
\newcommand\NN{\mathbb{N}}
\newcommand\PP{\mathbb{P}}
\newcommand\QQ{\mathbb{Q}}
\newcommand\RR{\mathbb{R}}
\newcommand\TT{\mathbb{T}}
\newcommand\ZZ{\mathbb{Z}}
\newcommand\rB{\mathrm{B}}
\newcommand\rH{\mathrm{H}}
\newcommand\rL{\mathrm{L}}
\newcommand\rR{\mathrm{R}}
\newcommand\rT{\mathrm{T}}
\newcommand\rmd{\mathrm{d}}
\newcommand{\frakB}{\mathfrak{B}}
\newcommand{\frakD}{\mathfrak{D}}
\newcommand{\frakg}{\mathfrak{g}}
\DeclareMathOperator{\Cone}{Cone}
\title{Smoothing Gorenstein toric Fano \mbox{$3$-folds}}
\author{Alessio Corti}
\address{Department of Mathematics, Imperial College London, 180 Queen's Gate, London, SW7 2AZ, UK}
\email{a.corti@imperial.ac.uk}
\author{Paul Hacking}
\address{Department of Mathematics and Statistics, Lederle Graduate, Research Tower, University of Massachusetts, Amherst, MA 01003-9305, US}
\email{hacking@math.umass.edu}
\author{Andrea Petracci}
\address{Dipartimento di Matematica, 
	Universit\`a di Bologna,
	Piazza di Porta San Donato 5,
	Bologna,
	40126,
	Italy}
\email{a.petracci@unibo.it}
\begin{document}

\renewcommand{\thefootnote}{\fnsymbol{footnote}}

\begin{abstract}
  We introduce \emph{admissible Minkowski decomposition
    data (amd)} for a $3$-dimensional reflexive polytope
  $P$. This notion is defined purely in terms of the combinatorics of
  $P$. Denoting by $X_P$ the Gorenstein toric Fano~\mbox{$3$-fold}
  whose fan is the spanning fan (a.k.a. face fan) of $P$, our first
 result states that amd for $P$ determine a smoothing of
  $X_P$. Our second result amounts to an effective recipe for computing
  the Betti numbers of the smoothing.

In the companion paper~\cite{CdS} we study by computer millions of
amd for the $4,319$ $3$-dimensional
reflexive polytopes, and in particular recover the $98$ families of Fano
$\mbox{3-folds}$ whose anticanonical line bundle is very ample. 
\end{abstract}

\maketitle

\setcounter{tocdepth}{2}
\tableofcontents

\section{Introduction}
\label{sec:introduction}

\subsection{Summary of the paper and short discussion of its context}
\label{sec:summary-paper-short}

Our first result is Theorem~\ref{thm:1}: a sufficient
condition for smoothing a Gorenstein toric Fano \mbox{$3$-fold}
starting from \emph{admissible Minkowski
  decomposition data (amd)}, see
Definition~\ref{dfn:decomposition_data}. Note that, because ampleness is
  an open condition, the smoothing is a Fano
  \mbox{$3$-fold}. Our second result is
Theorem~\ref{thm:topology} --- and the more precise
Theorem~\ref{thm:topologyII} --- implying an effective recipe for computing the Betti
numbers of the smoothing from the amd.


Gorenstein toric Fano \mbox{$3$-folds} are in correspondence with
$3$-dimensional reflexive polytopes, where the polytope $P$
corresponds to the toric variety $X_P$ whose fan is the \emph{spanning
  fan}, also known as the \emph{face fan}, of $P$. Our main
result states that a smoothing of $X_P$ can be constructed from amd
for $P$.


These results are not optimal --- see Remark~\ref{rem:limitations} ---
but they are applicable to a very large, though of course finite, set
of examples.
Our Theorems~\ref{thm:topology} and~\ref{thm:topologyII} allow to compute
the Betti numbers of the smoothing from the amd. In~\cite{CdS}
the technology is applied to construct smoothings of Gorenstein toric
Fano \mbox{$3$-folds} and compute their Betti numbers from millions of
amd.

\smallskip

Our results go some way towards explaining the facts on mirror symmetry for Fano \mbox{$3$-folds}
first observed ``experimentally'' in \cite{quantum_periods_fano_3folds, MR3007265}.
Indeed, starting from amd for a reflexive
polytope $P$, in this paper we construct:
\begin{enumerate}[(A)]
\item A smoothing $X$ of the toric Fano \mbox{$3$-fold} $X_P$. The
  companion paper \cite{CdS}, using Theorems~\ref{thm:topology}
  and~\ref{thm:topologyII}, locates $X$ in the Mori--Mukai
  classification;
\end{enumerate}
On the other hand, as explained in~\cite{MR3007265}, amd
for $P$ give:
\begin{enumerate}[(A)]
\item[(B)] A Laurent polynomial $w$ with Newton polytope $P$, called a
  \emph{Minkowski polynomial}. Paper~\cite{MR3007265} computes the
  classical period of $w$.
\end{enumerate}
Paper~\cite{quantum_periods_fano_3folds} computes the regularized
quantum periods of all the Fano \mbox{$3$-folds} in the Mori--Mukai
list, and it can be seen from the outcome of the computation that the
classical period of $w$ equals the regularized quantum period of the
smoothing.

\smallskip

In other words, we now have constructions for both the A and the B
sides of mirror symmetry starting from the same combinatorial
data.\footnote{We are oversimplifying. The Minkowski polynomial only
  depends on the choice, for all facets $F\leq P$, of (a) an
  admissible Minkowski decomposition of $F$. However, according to
  Definition~\ref{dfn:decomposition_data}, amd for $P$ involve a
  further choice, for all $F$, of (b) a dual tropical arrangement
  satisfying the matching condition (c) on dull edges. Now fix (a) and
  assume that we can also further choose (b) such that (c) holds: we
  don't know how to prove from first principles that these additional
  choices lead to deformation equivalent smoothings. (But it is not
  very difficult to show that the smoothings have the same Betti
  numbers.)}  However, we are still missing a conceptual explanation
of why the two periods are equal. This would follow if we had the
technology for computing the quantum cohomology of the smoothing from
the toric degeneration and the amd. The technology does not yet exist,
but the paper~\cite{MR4498821} proves some encouraging results in this
direction.

\smallskip

This paper is a first step in a program to construct deformations of
toric Fano varieties systematically, where the ultimate goal is
to prove a general form of the Fano/Landau--Ginzburg correspondence,
with application to the classification of $\QQ$-Fano \mbox{$3$-folds} ---
see~\cite{MR4340449} --- and Gorenstein terminal Fano
\mbox{$4$-folds}.

\begin{rem}
  \label{rem:Prince}
  The work of Thomas Prince \cite{prince_lagrangian_fano_3folds,
    prince_cracked_ci, prince_cracked_Fano_3folds} presents a
  different perspective on the questions studied here.
\end{rem}

\subsection{Outline of the Introduction}

In \S~\ref{sec:main-theorem} and \S~\ref{sec:comp-hodge-numb} we state
our results precisely; we also define the notions and explain the 
constructions that enter the statements.

In \S~\ref{sec:some-remarks-proof} we explain some key ideas of the
proof, in \S~\ref{sec:summary-paper} we give an outline of the paper;
\S~\ref{sec:notation} collects notation in use throughout.

\subsection{Smoothing Gorenstein toric Fano \mbox{$3$-folds}}
\label{sec:main-theorem}


\begin{dfn} \label{dfn:A_triangles}
	An \emph{$\text{A}_{-1}$-triangle} is a
	lattice segment of lattice length $1$.
	If $n$ is a non-negative integer, an
	\emph{$\text{A}_n$-triangle} is a lattice polygon
	$\ZZ^2 \rtimes \mathrm{GL}_2(\ZZ)$-equivalent to the triangle
	$\conv{(0,0), (0,1), (n+1,1)}$.  An \emph{$\text{A}$-triangle} is an
	$\text{A}_n$-triangle for some integer $n \geq -1$, see
	Figure~\ref{fig:A_triangles}.

	\begin{figure}[h]
          \begin{tikzpicture}[point/.style={circle, inner sep=0pt, minimum
    size=3pt, fill=black}]
  \draw[thick] (0,0) -- (0,-1) (1,0) -- (2,0) -- (1,-1) -- cycle
  (3,0) -- (5,0) -- (3,-1) -- cycle (6,0) -- (9,0) -- (6,-1) -- cycle;
  \node at (0,0) [point] {} ;  \node at (0,-1) [point] {} ;  \node at (1,0)
  [point] {} ;  \node at (2,0) [point] {} ;  \node at (1,-1) [point] {} ;
  \node at (3,0) [point] {} ; \node at (4,0) [point] {} ; \node at (5,0) [point] {} ;  \node at (3,-1)
  [point] {} ;  \node at (6,0) [point] {} ; \node at (7,0) [point] {} ; \node at (8,0) [point] {} ; \node at (9,0) [point] {} ;  \node at (6,-1) [point] {} ;
\end{tikzpicture}
		\caption{An \texorpdfstring{$\text{A}_{-1}$}{A1}-triangle, an \texorpdfstring{$\text{A}_0$}{A0}-triangle, an \texorpdfstring{$\text{A}_1$}{A1}-triangle
			and an \texorpdfstring{$\text{A}_2$}{A2}-triangle.}
		\label{fig:A_triangles}
	\end{figure}
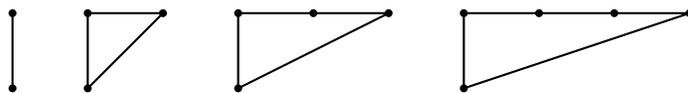
\end{dfn}

\begin{dfn}
  \label{dfn:Minkowski_Dec}
  The \emph{Minkowski sum} of lattice polytopes $F_1, \dots, F_r$ in a
  lattice $L$ is the lattice polytope\footnote{We adopt the convention of \textbf{never}
      putting punctuation at the end of displayed formulas. It is
      impossible to give rules, there are just too many cases. Our
      convention has the double advantage that it is simple and easy
      to use consistently.}    
\[
F_1 + \cdots + F_r = \{ v_1 + \cdots + v_r \mid v_1 \in F_1, \dots, v_r \in F_r \}
\]
A \emph{Minkowski decomposition} of a lattice polytope $F$
is a tuple of lattice polytopes whose Minkowski sum is $F$.
These lattice polytopes are called \emph{Minkowski summands}
of $F$.
\end{dfn}

\begin{dfn}
  \label{dfn:polygon_amd}
	Let $L$ be a lattice of rank 2 and let $F$ be a lattice polygon in
	$L$. 
	An \emph{admissible Minkowski decomposition} of $F$ is a
	Minkowski decomposition where each Minkowski summand is an $\text{A}$-triangle.  
	Minkowski decompositions that differ by reordering and translating
	the summands are considered to be the same.
\end{dfn}

\begin{exa} \label{exa:Minkowski_dec_hexagon} The hexagon
	$ F = \conv{ (1,0),(1,1),(0,1),(-1,0),(-1,-1),(0,-1) } $ in the lattice $\ZZ^2$
	has two admissible Minkowski decompositions, see Figure~\ref{fig:Minkow}: one
	into three $\text{A}_{-1}$-triangles
	\begin{equation} \label{eq:MD_hexagon_3_segments}
	F = \conv{(0,0), (1,0)} 
	+ \conv{(0,0), (0,1)}
	+ \conv{(0,0), (-1,-1)} 
	\end{equation}
	and one into two $\text{A}_0$-triangles
	\begin{equation} \label{eq:MD_hexagon_2_triangles}
	F = \conv{(0,0), (-1,0), (-1,-1)  }
	+ \conv{ (0,0), (1,0), (1,1)} 
	\end{equation} 
\end{exa}

	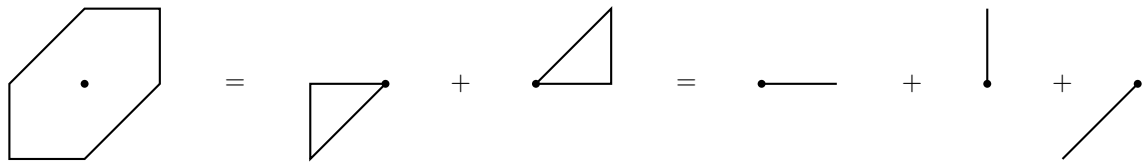
\begin{figure}[h]
          \centering
          \begin{tikzpicture}[point/.style={circle, inner sep=0pt, minimum size=3pt, fill=black}]
  \draw[thick]   (1,0) -- (1,1) -- (0,1) -- (-1,0) -- (-1,-1) --
  (0,-1) -- cycle ;
  \node at (0,0) [point] {} ; \node at  (2,0) {$=$} ;
  \draw[thick]  (4,0) -- (3,0) -- (3,-1) -- cycle ;
  \node at (4,0) [point] {} ; \node at (6,0) [point] {} ; \node at  (5,0) {$+$} ;
  \draw[thick]  (6,0) -- (7,0) -- (7,1) -- cycle ;
  \node at (8,0) {$=$} ; \node at (9,0) [point] {} ; \node at (12,0)
  [point] {} ; \node at (14,0) [point] {} ;
  \draw[thick] (9,0) -- (10,0) ;
  \node at (11,0) {$+$} ;
  \draw[thick] (12,0) -- (12,1) ;
  \node at (13,0) {$+$} ;
  \draw[thick] (13,-1) -- (14,0);
\end{tikzpicture}
	\caption{The two admissible Minkowski decompositions of the hexagon in Example~\ref{exa:Minkowski_dec_hexagon}.}
	\label{fig:Minkow}
\end{figure}

\begin{exa}
  \label{exa:no_amd}
	Some polygons have no admissible Minkowski decompositions,
        e.g.\ the triangle \newline
	$\conv{(-1,-1),(2,-1),(-1,1)}$ and the triangle $\conv{(-1,-1),(1,0),(0,1)}$ in
	$\ZZ^2$, see Figure~\ref{fig:no_amd}.
\end{exa}

\begin{figure}[h]
          \centering
          \begin{tikzpicture}[point/.style={circle, inner sep=0pt, minimum size=3pt, fill=black}]
            \draw[thick]   (-1,-1) -- (2,-1) -- (-1,1) -- cycle ;
            \node at (0,0) [point] {} ;\node at (-1,-1) [point] {} ; \node
            at (2,-1) [point] {} ;\node at (-1,1) [point] {} ; \node at
            (0,-1) [point] {} ; \node at (1,-1) [point] {} ; \node at
            (-1,0) [point] {} ;
            \draw[thick]   (5,-1) -- (7,0) -- (6,1) -- cycle ;
                \node at (5,-1) [point] {} ;\node at (7,0) [point] {} ; \node
            at (6,0) [point] {} ; \node at (6,0) [point] {} ; \node
            at (6,1) [point] {} ;
          \end{tikzpicture}
	\caption{These polygons have no admissible Minkowski decomposition.}
	\label{fig:no_amd}
\end{figure}
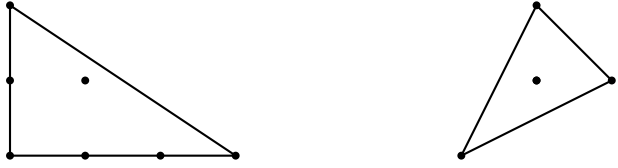

\begin{dfn}
        \label{dfn:dual_tropical_arrangement}
        Fix a plane polygon $F$ and admissible Minkowski decomposition $m=(F=\sum F_j)$. To each
        Minkowski summand 
        $F_j$ is attached a \emph{dual tropical curve} $\Gamma_j$, see Theorem~\ref{thm:cayley_trick} and its proof.
        A \emph{dual tropical arrangement subordinated to $m$} is a
        generic plane arrangement of the $\Gamma_j$. 

        A dual tropical arrangement induces, for every edge
        $e\subset F$, a partition of the set $L_e$ of unit segments of $e$.
        The parts of the partition are indexed by the $\Gamma_j$ so
        that the part corresponding to
        $\Gamma_j$ is the set of unit segments that have a leg of
        $\Gamma_j$ going through them. We call this partition the
        \emph{induced partition}.

        A dual tropical arrangement also induces a lattice polyhedral
        subdivision of $F$, which we call the \emph{induced
          subdivision}.

        These concepts are illustrated in
        Figures~\ref{fig:TA_1},~\ref{fig:TA_2} and~\ref{fig:TA_3}.
\end{dfn}

\begin{rem}
  The tropical curve $\Gamma=\cup \Gamma_j$ has trivalent and
  $4$-valent vertices only.

  The induced subdivision is a \emph{coherent fine mixed subdivision},
  see Theorem~\ref{thm:cayley_trick}, but we don't need this language
  nor the fact now.

  Figures~\ref{fig:TA_1},~\ref{fig:TA_2} and~\ref{fig:TA_3} show examples
  of dual tropical arrangements and induced subdivisions.
  
  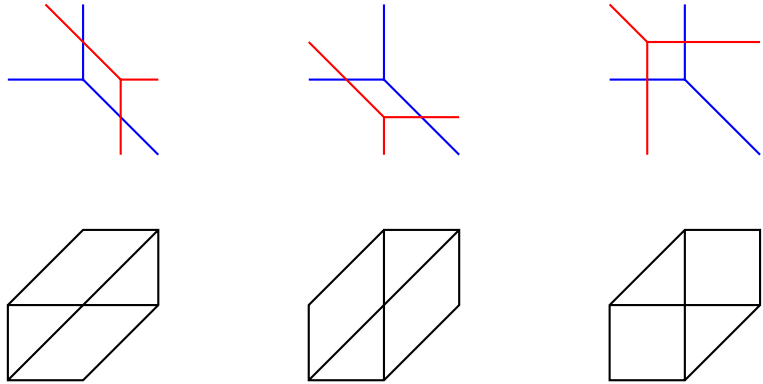
\begin{figure}[ht]
  \centering
\begin{tikzpicture}
  \draw[thick, blue] (0,0) -- (1,-1) (0,0) -- (-1,0) (0,0) -- (0,1)
  (4,0) -- (5,-1) (4,0) -- (3,0) (4,0) -- (4,1)
  (8,0) -- (9,-1) (8,0) -- (7,0) (8,0) -- (8,1);
   \draw[thick, red] (0.5,0) -- (-0.5,1) (0.5,0) -- (1,0) (0.5,0) --
   (0.5,-1)
   (4,-0.5) -- (3,0.5) (4,-0.5) -- (5,-0.5) (4,-0.5) --
   (4,-1)
   (7.5,0.5) -- (7,1) (7.5,0.5) -- (9,0.5) (7.5,0.5) -- (7.5,-1);
   \draw[thick]   (1,-3) -- (1,-2) -- (0,-2) -- (-1,-3) -- (-1,-4) --
  (0,-4) -- cycle (0,-3) -- (1,-3) (0,-3) -- (1,-2) (0,-3) -- (-1,-3) (0,-3)
  -- (-1,-4)
  (5,-3) -- (5,-2) -- (4,-2) -- (3,-3) -- (3,-4) --
  (4,-4) -- cycle (4,-3) -- (4,-2) (4,-3) -- (5,-2) (4,-3) -- (4,-4) (4,-3)
  -- (3,-4)
  (9,-3) -- (9,-2) -- (8,-2) -- (7,-3) -- (7,-4) --
  (8,-4) -- cycle (8,-3) -- (8,-2) (8,-3) -- (9,-3) (8,-3) -- (8,-4) (8,-3)
  -- (7,-3);
\end{tikzpicture}
\caption{Dual tropical arrangements and induced subdivisions for the
    admissible Minkowski decomposition of the hexagon into two $\text{A}_0$-triangles.}
      \label{fig:TA_1}
\end{figure}

\begin{figure}[ht]
  \centering
\begin{tikzpicture}
  \draw[thick, blue] (-0.5,0) -- (-0.5,2) ;
  \draw[thick, red] (-1,0.5) -- (1,0.5) ;
  \draw[thick, green] (-1,2) -- (1,0) ;
  \draw[thick, blue] (3.5,0) -- (3.5,2) ;
  \draw[thick, red] (2,1.5) -- (4,1.5) ;
  \draw[thick, green] (2,2) -- (4,0) ;
   \draw[thick]   (1,-2) -- (1,-1) -- (0,-1) -- (-1,-2) -- (-1,-3) --
   (0,-3) -- cycle ;
   \draw[thick] (0,-2) -- (-1,-2) (0,-2) -- (0,-3) (0,-2) -- (1,-1) ;
     \draw[thick]   (4,-2) -- (4,-1) -- (3,-1) -- (2,-2) -- (2,-3) --
   (3,-3) -- cycle ;
   \draw[thick] (3,-2) -- (4,-2) (3,-2) -- (3,-1) (3,-2) -- (2,-3) ;
\end{tikzpicture}
  \caption{Dual tropical arrangements and induced subdivisions for the
    admissible Minkowski decomposition of the hexagon into three $\text{A}_{-1}$-triangles.}
    \label{fig:TA_2}
\end{figure}

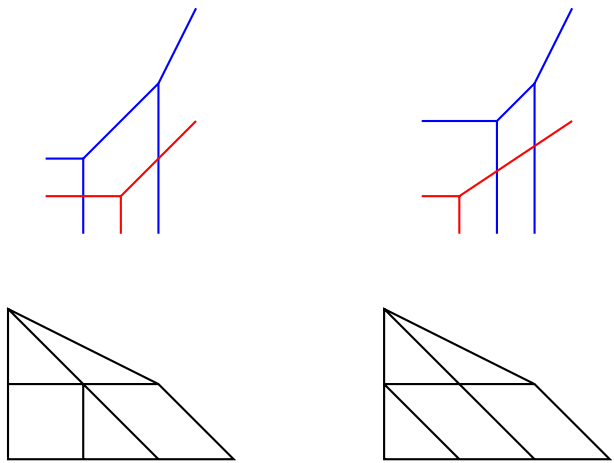
\begin{figure}[ht]
  \centering
\begin{tikzpicture}
  \draw[thick, blue] (0,0) -- (0,-1) (0,0) -- (-0.5,0) (0,0) -- (1,1)
  (1,1) -- (1,-1) (1,1)-- (1.5,2);
  \draw[thick, red] (0.5,-0.5) -- (1.5,0.5) (0.5,-0.5) -- (-0.5,-0.5) (0.5,-0.5) -- (0.5,-1);
  \draw[thick] (-1,-4) -- (2,-4) -- (1,-3) -- (-1,-2) -- cycle
  (0,-3) -- (0,-4) (-1,-2) -- (1,-4) (-1,-3) -- (1,-3);
  \draw[thick, blue] (5.5,0.5) -- (5.5,-1) (5.5,0.5) -- (4.5,0.5) (5.5,0.5) -- (6,1)
  (6,1) -- (6,-1) (6,1)-- (6.5,2);
  \draw[thick, red] (5,-0.5) -- (6.5,0.5) (5,-0.5) -- (5,-1) (5,-0.5)
  -- (4.5,-0.5);
    \draw[thick] (4,-4) -- (7,-4) -- (6,-3) -- (4,-2) -- cycle
  (4,-3) -- (5,-4) (4,-2) -- (6,-4) (4,-3) -- (6,-3);
\end{tikzpicture}
\caption{More dual tropical arrangements and induced subdivisions.}
  \label{fig:TA_3}
\end{figure}
\end{rem}
  
\begin{dfn}
  \label{dfn:sharp_edges}
  Let $P$ be a $3$-dimensional reflexive polytope, and $e\leq P$ an
  edge of $P$.

  The \emph{length} of $e$, denoted by $\ell_e$, is the
  integral length of $e$. The \emph{colength} of $e$, denoted by
  $k_e$, is the length of the dual edge $e^\star\leq P^\star$ of the polar
  polytope $P^\star$.

  The edge $e$ is \emph{dull} if it has colength $k_e=1$. 
\end{dfn}

\begin{dfn}
  \label{dfn:decomposition_data}
 Let $P$ be a $3$-dimensional reflexive polytope. An \emph{admissible
 Minkowski decomposition data (amd) for $P$} consists of the
following:
\begin{enumerate}[(a)]
\item A choice, for all facets $F\leq P$, of an admissible Minkowski decomposition $F=\sum F_j$;
\item A choice, for all facets $F\leq P$, of a dual tropical
  arrangement subordinated to the admissible Minkowski decomposition
  in~(a).
\end{enumerate}
For all dull edges $e\subset P$, the choices above are required to
satisfy the matching condition at $e$ that we describe next.
Let $F,G\leq P$ be the facets incident along $e$. The set $L_e$ of unit segments of $e$
has two induced partitions. The first of these is induced by the
Minkowski decomposition $F=\sum F_i$, and we denote by $L^F_i$ the
part corresponding to $F_i$. The second  is induced by the
Minkowski decomposition $G=\sum G_j$, and we denote by $L^G_j$ the
part corresponding to $G_j$.

The \emph{matching condition at $e$} is the following statement:
\begin{enumerate}[(a)]
  \setcounter{enumi}{2}
\item For all $i,j$, $|L^F_i\cap L^G_j|\leq 1$. 
\end{enumerate}
\end{dfn}

\begin{nota}
  \label{dfn:partial_res}
  Let $P$ be a $3$-dimensional reflexive polytope endowed with
  choices, for all facets, of (a) admissible Minkowski decompositions
  and (b) dual tropical arrangements.
  
  Let $X$ be the Gorenstein toric Fano \mbox{$3$-fold} whose fan is the
  spanning fan of $P$.

  For all facets $F$ of $P$, the dual tropical arrangement induces a
  subdivision of the cone $\sigma_F=\langle F \rangle_+$ and hence a
  toric partial resolution of the corresponding Zariski open subset
  $X_{\sigma_F}\subset X$.

  These subdivisions combine to give a refinement of the fan of $X$
  and hence the corresponding local toric partial resolutions glue to a
  global toric partial resolution of $X$ that we call the
  \emph{induced partial resolution} and denote by
  $\pi \colon Y \to X$.

  Note that, in general, $\pi\colon Y \to X$ is not a projective morphism.
\end{nota}

\begin{dfn}
    \label{dfn:qODP} A \emph{quasi-ordinary double point},
    abbreviated \emph{qODP}, is a Gorenstein toric \mbox{$3$-fold} singularity
    locally analytically --- or, depending on the context, \'etale
    locally --- isomorphic to the germ at the origin of
    \begin{equation*}
    \left( x_1 x_2 - x_3 x_4 = 0 \right) \subseteq \frac{1}{a}(1,-1,b,-b)_{x_1, x_2, x_3, x_4}
    \end{equation*}
    for some positive integers $a,b$ such that
    $\gcd(a,b)=1$.
\end{dfn}

\begin{lem}
  \label{lem:1}
    A Gorenstein toric \mbox{$3$-fold} singularity is a qODP
    if and only if it corresponds to the cone over a parallelogram with edges of unit length.

    The qODP is an ordinary double point (ODP) --- i.e., $a=1$ ---
    if and only if the parallelogram does not have interior points. \qed 
\end{lem}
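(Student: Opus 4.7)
The plan is to match two descriptions of the same family of toric singularities via the standard dictionary between Gorenstein affine toric \mbox{$3$-fold} singularities and lattice polygons. Recall that such a singularity corresponds to a strictly convex $3$-dimensional cone $\sigma\subset N_\RR$ whose primitive ray generators all lie on an affine hyperplane $H=\{m^\star=1\}$ for some $m^\star\in M$; the associated polygon is $Q=\sigma\cap H$, sitting in the affine lattice $H\cap N$.

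I would begin with the base case $a=1$. A direct computation of $\sigma^\vee\cap M$ for the cone over the unit square with rays $(0,0,1),(1,0,1),(0,1,1),(1,1,1)$ yields four monomial generators $\chi_x,\chi_y,\chi_z,\chi_w$ with the unique relation $\chi_x+\chi_y=\chi_z+\chi_w$, identifying $X_\sigma$ with the conifold $xy-zw=0$. For general $a$, a quotient of an affine toric variety by a toric $\mu_a$-action corresponds to a lattice enlargement $N\subset N'$ of index $a$. Reading the weights $(1,-1,b,-b)$ against the characters $(\chi_x,\chi_y,\chi_z,\chi_w)$, one computes that the adjoined vector is $\tfrac{1}{a}(-1,b,0)\in N_\QQ$. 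Since the third coordinates of the ray generators are all $1$, they remain primitive in $N'$, and the cross-section is still the parallelogram with edges $v_1=(1,0)$, $v_2=(0,1)$, now sitting in the finer affine lattice $L'=\ZZ^2+\ZZ\cdot\tfrac{1}{a}(-1,b)$. A short check shows $v_1,v_2$ are primitive in $L'$ --- equivalently, the edges of $Q$ have unit length --- precisely when $\gcd(a,b)=1$. This proves $(\Leftarrow)$.

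For the converse $(\Rightarrow)$, I would start from an arbitrary parallelogram $Q$ with primitive edge vectors $v_1,v_2$ in a rank-$2$ lattice $L$. Setting $L_0=\ZZ v_1\oplus\ZZ v_2\subseteq L$ of index $a=|\det(v_1,v_2)|$, the cone over $Q$ in the sublattice $L_0\oplus\ZZ$ is the conifold cone, and $L\supset L_0$ of index $a$ exhibits $X_\sigma$ as a $\mu_a$-quotient of the conifold. Because the defining equation $\chi_x+\chi_y-\chi_z-\chi_w=0$ must remain invariant, the weights necessarily have the form $(c,-c,d,-d)$; primitivity of $v_1,v_2$ in $L$ forces $\gcd(a,c)=\gcd(a,d)=1$, so after rescaling we recover the qODP presentation $\tfrac{1}{a}(1,-1,b,-b)$ for some $b$ coprime to $a$. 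The ODP statement then follows from Pick's theorem: a parallelogram with $4$ boundary lattice points and $L'$-normalized area $a$ has $I=a-1$ interior lattice points, so $I=0\iff a=1$. The main obstacle is the bookkeeping in the lattice-enlargement step: one must verify that the signed weight pattern $(1,-1,b,-b)$, rather than any other pattern compatible with $xy=zw$, corresponds precisely to those lattice refinements that preserve the parallelogram cross-section with unit edges.
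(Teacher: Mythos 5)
The paper states this lemma with a terminal \verb|\qed| and gives no proof, treating the result as a routine application of the toric dictionary. Your proposal supplies a genuine argument and the overall structure is correct: the unit-square base case, the horizontal lattice enlargement for general $a$, primitivity $\Leftrightarrow \gcd(a,b)=1$, the sublattice construction for the converse, and the Pick's theorem count $I=a-1$ are all right.

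Two points deserve tightening. First, your reason for the weight pattern $(c,-c,d,-d)$ in the converse direction is not quite the right one: invariance of the binomial $\chi_x\chi_y-\chi_z\chi_w$ gives only the single congruence $\mathrm{wt}(x)+\mathrm{wt}(y)\equiv\mathrm{wt}(z)+\mathrm{wt}(w)\pmod a$, which by itself permits patterns like $(1,1,1,1)$. What actually forces each pair to sum to zero is that the character $e_3^\star=\chi_x+\chi_y=\chi_z+\chi_w$ (the height function) restricts to $0$ on $L/L_0$, because you only refine the rank-$2$ lattice $L_0\subset L$ and leave the $\ZZ$-summand untouched. This gives the two separate constraints $\mathrm{wt}(x)+\mathrm{wt}(y)\equiv 0$ and $\mathrm{wt}(z)+\mathrm{wt}(w)\equiv 0$. (In the forward direction the same fact is what shows the adjoined vector has zero third coordinate; there your computation is fine.) Second, the step ``$L\supset L_0$ of index $a$ exhibits $X_\sigma$ as a $\mu_a$-quotient'' quietly uses that $L/L_0$ is \emph{cyclic}; this does follow from primitivity of $v_1,v_2$ — the gcd of the entries of the $2\times 2$ matrix $(v_1\,|\,v_2)$ is $1$, so the Smith normal form has invariant factors $(1,a)$ — but it should be said, since for a non-primitive pair of edge vectors the quotient could be $(\ZZ/d)^2$ and the quotient singularity would not be a qODP. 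With those two fixes the argument is complete.
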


\begin{lem}
  \label{lem:qODP}
  Let $P$ be a $3$-dimensional reflexive polytope endowed with amd,
  $X$ the corresponding Gorenstein toric Fano \mbox{$3$-fold}, and
  $\pi \colon Y \to X$ the induced partial resolution. Then
  \begin{enumerate}[(1)]
  \item $Y$ is Gorenstein and the morphism $\pi\colon Y \to X$ is
    crepant, that is $K_Y=\pi^\star K_X$;
  \item $Y$ has qODPs;
  \item The pair $(Y,E)$ is isomorphic, Zariski
    locally at all closed points $y\in Y$, either to a normal crossing
    pair, or a qODP with $E= \{ x_1 x_2 = x_3 x_4 = 0 \}$. 
  \end{enumerate}
\end{lem}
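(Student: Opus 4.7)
The plan is to analyze the local geometry of $Y$ cone by cone, using the combinatorial structure of the induced subdivision of each facet $F \leq P$.

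\textbf{Cell structure of the induced subdivision.} I would first pin down the shape of the cells. For each $A$-triangle summand $F_j$, the dual tropical curve $\Gamma_j$ corresponds via tropical duality to the unique unimodular triangulation of $F_j$; in particular every edge of $\Gamma_j$ has weight one. By the genericity hypothesis on the arrangement $\Gamma = \bigcup_j \Gamma_j$, every vertex of $\Gamma$ is either trivalent (coming from a single $\Gamma_j$) or four-valent (a transverse intersection of two distinct $\Gamma_j$ and $\Gamma_k$). Duality between $\Gamma$ and the induced subdivision then gives two classes of 2-cells: unit triangles, dual to the trivalent vertices (the unit triangles of the unimodular triangulation of $F_j$, suitably translated), and unit parallelograms, dual to the transverse crossings (their sides being the primitive vectors dual to the crossing tropical edges). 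The 1-cells of the subdivision are therefore all unit lattice segments.

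\textbf{Part (1).} Reflexivity of $P$ puts every facet $F$ on an affine hyperplane at lattice distance $1$ from the origin. All rays of the refined fan of $Y$ are primitive lattice points of facets, hence at height $1$ with respect to the corresponding primitive facet normal $h_F$. On each refined maximal cone over $F$, $h_F$ lies in the dual cone and evaluates to $1$ on every ray generator, which shows $-K_Y$ is Cartier; thus $Y$ is Gorenstein. Because no ray of $Y$ lies strictly inside the cone over $F$, I get $\pi^\star K_X = K_Y$, i.e.\ $\pi$ is crepant.

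\textbf{Part (2).} Let $\sigma$ be a maximal cone in the refined fan, spanned by the vertices of a 2-cell $C$ of the induced subdivision. If $C$ is a unit triangle with vertices $v_1, v_2, v_3$, then by column operations $\det(v_1, v_2, v_3) = \det(v_1, v_2 - v_1, v_3 - v_1) = \pm 1$, since $v_1$ is primitive at height $1$ and the unit edge vectors $v_2 - v_1, v_3 - v_1$ form a $\ZZ$-basis of the height-zero sublattice of the ambient $\ZZ^3$; thus $\sigma$ is smooth. If $C$ is a unit parallelogram, Lemma~\ref{lem:1} identifies the affine toric variety $U_\sigma$ as a qODP.

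\textbf{Part (3) and the main obstacle.} The local form of $(Y, E)$ at closed points of maximal-orbit dimension follows from (2) and Lemma~\ref{lem:1}: at a smooth fixed point the pair is Zariski locally $(\AA^3, \{x_1 x_2 x_3 = 0\})$, a normal crossings pair; at a qODP fixed point it is Zariski locally $(\{x_1 x_2 = x_3 x_4\}, \{x_1 x_2 = x_3 x_4 = 0\})$ in the specified cyclic quotient. The delicate step I expect to spend most time on is checking that every 2-cone $\Cone(v_1, v_2)$ of the refined fan is smooth: the 1-cells of the induced subdivision being unit segments gives $v_1, v_2$ primitive at height $1$ with $v_2 - v_1$ primitive; then for any $(a, b) \in \QQ^2$ with $a v_1 + b (v_2 - v_1) \in \ZZ^3$, evaluating $h_F$ forces $a \in \ZZ$, and primitivity of $v_2 - v_1$ then forces $b \in \ZZ$, so $\ZZ v_1 + \ZZ v_2$ is saturated. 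Combined with the obvious smoothness of 1-cones, this gives a normal crossings pair at every closed point not in the closed orbit of a qODP cone, completing part (3).
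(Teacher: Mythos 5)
Your proof is correct and takes essentially the same route as the paper's very terse proof, which records only that (1) holds because the rays of the subdivision lie on the facets of $P$, and that the maximal cones of the refined fan are basic simplices or cones over unit-edged parallelograms. You have filled in the details the paper leaves to the reader: the tropical-duality classification of the $2$-cells of the induced subdivision, the determinant computation for basic simplices, and the unimodularity check on $2$-cones that settles part (3) on the lower-dimensional strata.
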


\begin{proof}
    The first statement is obvious from the toric geometry: the rays
    of the subdivision are by construction generated by vectors that
    lie on $F$.
    
    It is clear from the tropical curve picture that the cones of the
    fan of $Y$ are either basic simplices, or cones over plane
    parallelograms with edges of lattice length $=1$. 
  \end{proof}

  \begin{rem}
    \label{rem:why_stacks?}
    A toric Gorenstein affine \mbox{$3$-fold} $X_\sigma$ has a qODP
    singularity at the origin if and only if $\sigma$ is the cone over
    a plane integral parallelogram with edges of unit integral length,
    placed at height $1$.
    
  The computations in~\cite[(7.3)~Proposition]{altmann_obstruction}
  show that the natural obstruction space of qODP that are not ODP is
  nontrivial. 

  The deformation theory of the stack $[\text{ODP}/\mu_a]$ is better behaved.
\end{rem}
  
\begin{dfn}
  \label{dfn:qODPpair}
  A \emph{qODP stack} is a Deligne--Mumford (DM) stack $Y$ which is
  locally analytically --- or, depending on the context, \'etale
  locally --- isomorphic to either a smooth scheme or the
  DM stack:
  \begin{equation*}
    [\left( x_1 x_2 - x_3 x_4 = 0 \right)/\mu_a]
  \end{equation*}
  where $x_1,\dots, x_4$ are coordinates on $\CC^4$ and $\mu_a$ acts
  with weights $(1,-1,b,-b)$ for some positive integers $a,b$ such
  that $\gcd(a,b)=1$.

  A \emph{qODP stack pair} $(Y,E)$ is a pair of a \mbox{$3$-fold} DM
  stack $Y$ together with an effective Cartier divisor $E \subset Y$
  which is locally analytically --- or, depending on the context,
  \'etale locally --- isomorphic to: either a
  normal crossing pair, or a qODP stack with
  $E= \{ x_1 x_2 = x_3 x_4 = 0 \}$.
\end{dfn}

\begin{thm}
  \label{thm:1}
  Let $P$ be a $3$-dimensional reflexive polytope endowed with amd,
  $X$ the corresponding toric Fano \mbox{$3$-fold}, and
  $\pi \colon Y \to X$ the induced partial resolution. Then:
  \begin{enumerate}
  \item The pair $(Y,E)$, regarded as a qODP stack in the obvious way,
    is unobstructed and smoothable;
  \item If $\cY_t$ is a general smoothing of $Y$, then $\cY_t$ is a weak Fano
\mbox{$3$-fold} and, denoting by
    \[
R(\cY_t,-K_{\cY_t}) = \bigoplus_{n=0}^\infty \rH^0(\cY_t,-nK_{\cY_t})
    \]
the anticanonical ring of $\cY_t$, the anticanonical morphism
\[
\pi_t \colon \cY_t \to \cX_t = \Proj R (\cY_t, -K_{\cY_t})
\]
  contracts a finite number of disjoint nonsingular rational curves with normal
  bundle $\cO(-1)\oplus \cO(-1)$;
\item $\cX_t$ is a Fano \mbox{$3$-fold} with ODPs as singularities
  and it is a deformation of $X$.
  \end{enumerate}
\end{thm}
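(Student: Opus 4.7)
For part~(1), the plan is to use the logarithmic deformation theory of the qODP-stack pair $(Y, E)$ introduced in Definition~\ref{dfn:qODPpair}. Let $\cT^i_{(Y,E)}$ denote the sheaves $\cExt^i(\Omega^1_Y(\log E), \cO_Y)$, interpreted on the stack. At smooth interior points and at SNC points of $E$ these vanish for $i\geq 1$ by log smoothness. At a qODP stack point étale locally modelled on $[\{x_1x_2 = x_3x_4\}/\mu_a]$ with $E = \{x_1x_2 = 0\}\cup\{x_3x_4 = 0\}$, a direct $\mu_a$-equivariant computation, using the canonical smoothing $x_1x_2 - x_3x_4 = t$ (which is $\mu_a$-invariant), gives $\cT^1_{(Y,E)}$ one-dimensional and $\cT^2_{(Y,E)} = 0$. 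This is the payoff of working with stacks advertised in Remark~\ref{rem:why_stacks?}: for the scheme qODP the analogous $\cT^2$ would be nontrivial by~\cite{altmann_obstruction}.

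Unobstructedness then reduces, via the local-to-global $\Ext$ spectral sequence, to the vanishing of $H^1(Y, \cT^1_{(Y,E)})$ and $H^2(Y, \cT^0_{(Y,E)})$. The first is immediate because $\cT^1_{(Y,E)}$ is supported on the zero-dimensional qODP locus. For the second I would exploit crepancy (Lemma~\ref{lem:qODP}(1)) to identify $K_Y + E \sim 0$ with $E$ the pullback of the toric boundary on $X$, and then apply Serre duality followed by an Akizuki--Nakano type argument for log Calabi--Yau pairs. Smoothability, as opposed to merely formal unobstructedness, further requires a global section of $\cT^1_{(Y,E)}$ that is nonzero at every qODP and at every NC double point; such a section is produced by gluing the canonical local smoothing parameters $t$, where the matching condition~(c) of Definition~\ref{dfn:decomposition_data} is precisely what is needed for the gluing to be consistent at the qODP loci lying over dull edges.

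For parts~(2) and~(3): crepancy together with ampleness of $-K_X$ gives that $-K_Y$ is nef and big on $Y$, and by semicontinuity $-K_{\cY_t}$ is nef and big on a general smoothing, so $\cY_t$ is weak Fano. The anticanonical morphism $\pi_t\colon\cY_t \to \cX_t$ then contracts exactly the curves $C$ with $K_{\cY_t}\cdot C = 0$; these are the deformations of the exceptional $\PP^1$'s of $\pi\colon Y\to X$ that persist in $\cY_t$ (one per qODP stratum of $Y$ admitting a small resolution direction), and a local toric calculation of the normal bundle of such a curve in the smooth total space of the smoothing yields $\cO(-1)\oplus\cO(-1)$. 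Setting $\cX_t = \Proj R(\cY_t, -K_{\cY_t})$ realizes these contractions, producing a Fano \mbox{$3$-fold} with only ODP singularities, and taking the relative Proj of the anticanonical ring over the smoothing base gives a flat family $\cX\to\Delta$ with $\cX_0\cong X$ and generic fibre $\cX_t$. The principal obstacle is part~(1), specifically the global $H^2$ vanishing and the gluing of the canonical local smoothing parameters into a global section of $\cT^1_{(Y,E)}$, both of which rely crucially on the combinatorial hypotheses built into the amd.
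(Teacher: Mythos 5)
Your proposal for part~(1) contains a substantive error that invalidates the argument. You write that $\cT^1_{(Y,E)}$ is ``supported on the zero-dimensional qODP locus,'' and hence that $\rH^1(Y,\cT^1_{(Y,E)})=0$ is ``immediate.'' This is incorrect: by Lemma~\ref{lem:qODPdeformations} one has $\cT^1_{Y,E}=\nu_\star\nu^\star\bigl(\cO_\Delta(E)\bigr)$ where $\Delta=\Sing E$ is the one-dimensional singular locus of the boundary (a union of curves, the toric $1$-skeleton of $Y$), \emph{not} the finite qODP locus. You are conflating $\cT^1_Y$ for the scheme (which indeed lives only at qODPs) with $\cT^1_{Y,E}$ for the pair. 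The $\rH^1$-vanishing therefore requires an actual argument along the curves $\Delta$: the paper proves it by prying $\Delta'$ apart into disjoint $\PP^1$'s via a further partial normalisation $\mu$ and checking surjectivity of $\rH^0(\mu_\star\mu^\star\cO_{\Delta'}(-K_Y))\to\delta$ together with vanishing on the rational pieces. Related to this, your claim that $\cT^1$ has a ``one-dimensional'' stalk at a qODP does not match Example~\ref{exa:ODP_deformations}, where the fibre of $\cExt^1(\Omega_X(\log D),\cO_X)$ at the ODP is computed to be two-dimensional (it equals $\cO_{\Gamma_1}\oplus\cO_{\Gamma_2}$).

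Two further points. First, the vanishing of $\rH^2(Y,T_Y(-\log E))$ is much more elementary than the Serre-duality--plus--Akizuki--Nakano argument you sketch: as recorded in Remark~\ref{rem:log_tangents}, for a toric pair $T_Y(-\log E)=N\otimes_\ZZ\cO_Y\cong\cO_Y^3$, and $\cO_Y$ has no higher cohomology, so $\rH^i(Y,T_Y(-\log E))=0$ for $i>0$ immediately. Second, your explanation of the role of the matching condition is not what happens in the paper. Smoothability follows simply from unobstructedness plus global generation of $\cT^1_{Y,E}$, and global generation is proved (Theorem~\ref{thm:3}(1)) without invoking the matching condition. The matching condition is used later (Corollary~\ref{cor:TT1nice_sections} and Theorem~\ref{thm:topologyII}) to guarantee that a general first-order class separates the components $\Delta^e_j$ over each edge, which is what ultimately yields ODPs rather than worse cDV points on $\cX_t$.

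For parts~(2) and~(3), your sketch substitutes plausibility for argument exactly where the paper does its hardest work. That the exceptional set of $\pi_t\colon\cY_t\to\cX_t$ is a \emph{finite} set of curves is Lemma~\ref{lem:fibre_dimensions}, proved by degenerating to the local and global Altmann families (Propositions~\ref{pro:singularities_general_fibre_Altmann} and~\ref{pro:TdefoII}) and using the Cartan--Remmert theorem to control fibre dimensions in a miniversal family; this is not a soft consequence of ``$K_{\cY_t}\cdot C=0$'' plus persistence of exceptional $\PP^1$'s. Indeed the paper does not claim the original exceptional $\PP^1$'s of $\pi$ persist; the flopping curves in $\cY_t$ lie in homology classes $\sum_{i<k\leq j}C_k^e$ determined by the zeros of differences $s_i-s_j$ of the Kodaira--Spencer data (Theorem~\ref{thm:topologyII}), and establishing that $\cX_t$ has genuine ODPs, rather than merely isolated cDV points, occupies Section~\ref{sec:proof-theorem-2} and requires the flop machinery of Section~\ref{sec:flops}. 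The proof given in Section~\ref{sec:proof-theorem} only reaches isolated cDV singularities via Reid~\cite{MR605348}; the ODP refinement is deferred to Theorem~\ref{thm:topologyII}.
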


\begin{rem}
 \label{rem:1}
 By Namikawa~\cite[Theorem~11]{namikawa_smoothing_fano_3folds} $\cX_t$
 is unobstructed and smoothable; hence $X$ itself is smoothable. In the
 display diagram $\cX_\eta$ is a generic smoothing of $\cX_t$:
\begin{equation*}
\xymatrix{
Y \ar[d]^\pi & \cY_t \ar[d]^{\pi_t} \ar@{~>}[l]& \\
X & \cX_t \ar@{~>}[l] & \cX_\eta \ar@{~>}[l]}
\end{equation*}
\end{rem}

\begin{rem}
  \label{rem:limitations}
  \begin{enumerate}[(1)]
  \item   See~\cite{petracci_almost_flat}
  for some examples and counterexamples when the matching condition is
  not satisfied.
  \item Theorem~\ref{thm:1} is not optimal. For example consider the
  polytope $P$ with vertices the columns of the matrix
  \[
    \begin{pmatrix}
    -1 & 2 & -1 & 0 \\
    -1 & -1& 1 & 0 \\
    -1 & -1 & -1 & 1  
    \end{pmatrix}   
  \]
  $P$ is the convex hull of the polygon $F$ of Fig.~\ref{fig:no_amd}, left,
  placed at height $-1$, and the vector $(0,0,1)$. Because $F$ has no
  admissible Minkowski decomposition, there is no amd for $P$. However, one
  can see that the Fano $X_P$ is smoothable. In fact, the local moduli
  space of $X_P$ has two components, one smoothing to
  $X_{1,1}\subset \PP^2\times \PP^2$ and the other to
  $\PP^1\times \PP^1 \times \PP^1$. See~\cite{MR4381899} for a
  conjectural characterization of smoothing components of Gorenstein
  toric affine \mbox{$3$-folds}.
\item A conjectural characterization of all deformations of
  $\QQ$-Gorenstein toric Fano \mbox{$3$-folds} to $\QQ$-factorial \mbox{$3$-folds}
  with terminal singularities can be found in~\cite{MR4340449}.
  \end{enumerate}
\end{rem}

\subsection{Understanding the topology of the smoothing}
\label{sec:comp-hodge-numb}

Consider a general smoothing $\cY_t$ of $Y$ as provided
by Theorem~\ref{thm:1}. In order to determine the topology of
$\cX_\eta$, it is necessary to study the morphism $\pi_t \colon \cY_t \to \cX_t$.  

\begin{thm}
  \label{thm:topology}
  Let $P$ be a $3$-dimensional reflexive polytope endowed with amd,
  $X$ the corresponding toric Fano \mbox{$3$-fold}, and
  $\pi \colon Y \to X$ the induced toric partial resolution.

  For all edges $e \leq P$, define $n_e\in \NN$ as follows. Let
  $F,G\leq P$ be the facets incident at $e$, $F=\sum F_i$, $G=\sum
  G_j$ their admissible Minkowski decompositions, and $L^F_i$, $L^G_j$
  the corresponding parts of the induced partitions of $L^e$. Define
\[
n_e=k_e \binom{\ell_e}{2} -\sum_i \binom{|L^F_i|}{2}
-\sum_j \binom{|L^G_j|}{2}
 \] 

 Then $\cX_t$ has precisely 
  \[n=\sum_{\ell_e\geq 2} n_e\]
  ODPs. Correspondingly, $\cY_t$ has precisely $n$
  disjoint nonsingular rational curves with normal bundle
  $\cO(-1)\oplus \cO(-1)$.
\end{thm}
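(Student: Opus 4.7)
The overall strategy is to localize the count of ODPs of $\cX_t$ along edges of $P$ by following the degeneration $\cY_t \rightsquigarrow Y$. By Theorem~\ref{thm:1} each ODP of $\cX_t$ is the image under $\pi_t$ of a unique $(-1,-1)$-curve of $\cY_t$, so it suffices to enumerate those curves. In the limit $t \to 0$ each such curve specializes to a rational cycle in $Y$ contracted by $\pi$. The first step is to show that every such cycle is supported in a formal (or \'etale) neighborhood of some $V(\sigma_e)$ with $\ell_e \geq 2$: over the open $3$-dimensional stratum of $X$ the map $\pi$ is an isomorphism; over open facet strata $\pi$ is trivially fibered over smooth transverse data and contracts nothing generically; and over the $0$-dimensional strata of $X$ the smoothings of the cones $\sigma_F$ absorb the local contributions into the global Namikawa/Altmann deformation without producing localized ODPs. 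When $\ell_e = 1$ the cone $\sigma_e$ is basic and $\pi$ is an isomorphism there, so only $\ell_e \geq 2$ contributes and $n = \sum_{\ell_e \geq 2} n_e^{\mathrm{loc}}$, where $n_e^{\mathrm{loc}}$ is the number of $(-1,-1)$-curves of $\cY_t$ localized near $V(\sigma_e)$.

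Near $V(\sigma_e)$ the geometry is controlled by four pieces of data: the length $\ell_e$, the colength $k_e$, and the restricted partitions $\{L_i^F\}, \{L_j^G\}$ of $L_e$. The cone $\sigma_e$ carries the transverse $A_{\ell_e-1}$-singularity, refined in $Y$ via the $\ell_e - 1$ interior lattice points of $e$ into $\ell_e$ basic $2$-cones; at the endpoints sit the refined $3$-cones $\sigma_F, \sigma_G$ whose subdivisions come from the amd. I would combine Altmann's description of the deformations of $\sigma_F$ and $\sigma_G$ (in terms of their respective Minkowski decompositions of $F$ and $G$) with a transverse smoothing analysis along $V(\sigma_e)$, using the matching condition (c) of Definition~\ref{dfn:decomposition_data} to ensure that the two local models glue consistently across $\sigma_e$.

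The main step---and the technical heart of the proof---is the following local count: for each unordered pair $\{u,u'\}\subset L_e$ of distinct unit segments of $e$, the contribution to the $(-1,-1)$-curve count near $V(\sigma_e)$ is $k_e$ if $u$ and $u'$ lie in different $L_i^F$-parts \emph{and} different $L_j^G$-parts, and $k_e - 1$ if they lie in the same part on exactly one side (by the matching condition no pair is in the same part on both sides). The factor $k_e$ reflects the $k_e$ independent deformation directions transverse to $V(\sigma_e)$ prescribed by the dual edge $e^\star$, and the drop by one in the second case corresponds to the coincidence on $e$ of the relevant dual tropical legs. Granting this, summing over unordered pairs with $\binom{\ell_e}{2}$ total pairs, $\sum_i \binom{|L_i^F|}{2}$ same-part-on-$F$ pairs, and $\sum_j \binom{|L_j^G|}{2}$ same-part-on-$G$ pairs yields
\[
n_e^{\mathrm{loc}} = k_e \binom{\ell_e}{2} - \sum_i \binom{|L_i^F|}{2} - \sum_j \binom{|L_j^G|}{2} = n_e
\]
and hence $n = \sum_{\ell_e \geq 2} n_e$. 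The hardest piece will be verifying these $k_e$ versus $k_e - 1$ multiplicities, where the amd combinatorics interact most delicately with Altmann's toric deformation theory and where the matching condition is essential.
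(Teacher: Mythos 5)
Your approach matches the paper's in outline: localize the singular points of $\cX_t$ to the interiors of the one-strata $\Gamma^e$, then for each edge $e$ count coincidences of certain degree-$k_e$ data attached to the $\ell_e$ unit segments of $e$, dropping one for each shared Minkowski part and invoking the matching condition to forbid a double drop. Your final combinatorial identity
\[
n_e = k_e\binom{\ell_e}{2}-\sum_i\binom{|L_i^F|}{2}-\sum_j\binom{|L_j^G|}{2}
\]
is exactly the paper's, derived in the same way.

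However, there are two genuine gaps where you assert precisely the things that carry the technical weight. First, and most seriously, you dispose of the $0$-strata with the sentence that the smoothings of the cones $\sigma_F$ ``absorb the local contributions into the global Namikawa/Altmann deformation without producing localized ODPs.'' This is not a routine observation: it is the statement of Lemma~\ref{lem:topology}, whose proof is the bulk of Sections~\ref{sec:flops}--\ref{sec:proof-theorem-2} and Appendix~\ref{sec:exampl-miniv-deform}. Near a $0$-stratum $x^v$ the general fibre $\cY_t$ could a priori contain a contracted curve; ruling this out requires choosing, for each candidate curve, a flop (a move of type I--IV in the sense of Lemma~\ref{lem:moves}) so that the curve in question is isolated in an exceptional divisor whose first-order deformation visibly destroys it, and then using Theorem~\ref{thm:local_invariance} to pull the conclusion back across the flop. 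Nothing in your proposal addresses this, and it cannot be waved away: absent this argument you have not shown that the count is exhausted by the one-strata. Second, your ``main step'' (the $k_e$ versus $k_e-1$ multiplicities) is the right statement, but you offer only a heuristic (``drops by one corresponding to the coincidence on $e$ of the relevant dual tropical legs''). The paper makes this precise via the explicit basis of $\T^1_{Y,E}$ from Lemma~\ref{lem:T1Y}, which identifies the edge contributions with sections $s_i$ of $\cO_{\PP^1}(k_e)$ on $\Gamma^e$, together with the local Jacobian computation of Lemma~\ref{lem:smoothness}; the constraints ``$s_i(x^v)=s_j(x^v)$ iff $i,j$ in the same $F$-part'' and the genericity of the Kodaira--Spencer class are what convert your heuristic into the actual simple-zero count. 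You have correctly identified what has to be proved, but a complete argument needs both of these ingredients made rigorous.
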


Theorem~\ref{thm:topologyII} is a more precise version of the
statement just made.

\subsection{Some ideas of the proof}
\label{sec:some-remarks-proof}

We want to study deformations of $X$. We prefer to work with
deformations of the pair $(X,D)$. These are controlled by
$\Omega_X (\log D)$, see
Definition~\ref{dfn:logarithmic_differential}, and its dual sheaf
$T_X (-\log D)$. A nice thing about pairs is that
$\Aut^0 (X,D)=\TT$ is the torus, and that the sheaf of infinitesimal
automorphisms
$T_X (-\log D)=\cO_X\otimes N$ is the trivial sheaf and hence has
vanishing higher cohomology. The deformation theory of the pair $(X,D)$
is complicated and poorly understood. On the other hand, it is easy to
see that the pair $(Y,E)$, regarded as a qODP stack in the obvious
way, is unobstructed and smoothable. It is also easy to see by
well-understood means that all deformations $(\cY_t,\cE_t), \; t\in T$ of
$(Y,E)$ can be blown down to deformations $(\cX_t,\cD_t),\; t\in T$ of
$(X,D)$. The plan is to work only with the deformations of $(X,D)$
that arise in this way from deformations of $(Y,E)$. To
understand these deformations we need to study the contraction morphism
$\pi_t \colon \cY_t \to \cX_t$. Theorem~\ref{thm:1} follows easily from
Lemma~\ref{lem:fibre_dimensions}, which establishes some simple facts
about the geometry of the contraction morphism. In that lemma, we
prove things about $\pi_t\colon \cY_t \to \cX_t$ for a general $t\in
\cM_{Y,E}$, the miniversal deformation of $(Y,E)$. The key
difficulty is that we do not have an explicit description of
$(\cY_t,\cE_t)$. The result is derived from studying many particular
deformations that one can write down explicitly. These are the
$\TT$-equivariant deformations, also known as homogeneous
deformations, defined over eigenspaces of the torus action on
$\T^1_{Y,E}$.

The proof of Theorem~\ref{thm:topology} is harder; an informal
discussion can be found at the beginning of \S~\ref{sec:flops}. A
recurrent theme of the proof is the correspondence between: (i)
coherent mixed subdivisions of a polytope $F$ subordinated to a
Minkowski decomposition, (ii) regular subdivisions of the Cayley
polytope, and (iii) arrangements of dual tropical hypersurfaces. The
correspondence allows us to study the geometry of the very
high-dimensional and complicated deformation spaces associated with
the Cayley polytope by drawing simple diagrams of tropical curves in
the plane.

\subsection{Summary of the paper}
\label{sec:summary-paper}

In Section~\ref{sec:homog-deform-local} we collect some facts about
toric (also known as homogeneous) deformations. Most of these are
known, see for example the paper of Petracci~\cite{petracci_mavlyutov}
and the references therein, but the statements in the literature are
not always written in a way that they can be applied off-the-shelf to
our situation.

Section~\ref{sec:deform-theory-pair} proves some explicit results
concerning the deformation theory of the pair $(Y,E)$: this is
straightforward combinatorics.

Section~\ref{sec:homog-deform-ii} uses the deformation theory of the
pair $(Y,E)$ to construct a particular homogeneous deformation and
establish some of its properties, thus completing the discussion of
homogeneous deformations started in Section~\ref{sec:homog-deform-local}.

In Section~\ref{sec:proof-theorem} we prove Theorem~\ref{thm:1}, and in
Section~\ref{sec:proof-theorem-2} we prove Theorem~\ref{thm:topology},
after some preliminaries discussed in
Section~\ref{sec:flops}.

In the Appendices we synthesise mostly known facts for which there may
not be a convenient reference to a statement in the literature
that can be applied off-the-shelf to our situation.

In Appendix~\ref{sec:Cayley} we summarize some facts on the
combinatorics of lattice polyhedra, specifically the correspondence
between: (i) coherent mixed subdivisions of a polytope $F$
subordinated to a Minkowski decomposition, (ii) regular subdivisions
of the Cayley polytope, and (iii) arrangements of dual tropical
hypersurfaces. These results are well-known lore in the lattice
polyhedra community but we could not find in the literature a complete
statement that would apply off-the-shelf to our situation.

In Appendix~\ref{sec:appendix} we develop the general theory of
deformations of pairs $(X,D)$ of a scheme and effective Cartier
divisor. This is basically a long exercise in deformation theory. The
results must be well-known to many algebraic geometers but,
surprisingly given the vast amounts of existing literature on
deformation theory, we could not find them written up anywhere.

The purpose of Appendix~\ref{sec:exampl-miniv-deform} is to write down
explicitly some miniversal deformation families of pairs $(Y,E)$ that are
needed in the proof of Theorem~\ref{thm:local_invariance}, which in
turn is the main ingredient in the proof of
Theorem~\ref{thm:topology}. This would be an exercise, were it
not for the fact that several of these families are
infinite-dimensional. We address the infinite-dimensionality carefully
by working in the language of ind-schemes (fortunately, we only need
the definition and none of the theory of ind-schemes).

In Appendix~\ref{sec:equiv-g-struct} we summarize Rim's theory of
(formal) equivariant $G$-structures on versal deformations.

We refer the reader to the beginning of each section for a more
detailed discussion of its contents.

\subsection{Notation and conventions}
\label{sec:notation}

\begin{dfn}
  \label{dfn:toric_pair}
  A \emph{toric pair} is a pair $(X,D)$ of a toric variety $X$ and
  toric boundary divisor $D$.
\end{dfn}

The table and diagrams summarize notation in use throughout the paper. More
notation is spelled out in \S~\ref{sec:notation-1}.

\begin{longtable}{lp{0.85\textwidth}}
$X$ & Gorenstein toric Fano \mbox{$3$-fold} (except in
\S~\ref{sec:altm-deform-gorenst} and~\ref{sec:admiss-decomp-simult})\\
$X$ & Gorenstein toric affine \mbox{$3$-fold} (in \S~\ref{sec:altm-deform-gorenst} and~\ref{sec:admiss-decomp-simult})\\
$\pi \colon Y\to X$ & induced partial resolution of $X$ with qODPs \\
$D$ & toric boundary of $X$ \\
$E$ & toric boundary of $Y$ \\
$\Gamma$ & toric 1-skeleton of $X$ \\
$\Delta$ & toric 1-skeleton of $Y$ \\
$p\colon \Delta \to \Gamma$ & natural morphism \\
$\nu \colon \Delta^\prime\to \Delta$ & the partial normalisation of
                                       $\Delta$ defined in \S~\ref{sec:notation-1}\\
$p^\prime=p\circ \nu \colon \Delta^\prime \to \Gamma$ & natural
                                                        morphism \\
$f\colon (\cX,\cD) \to S$ & deformation of the pair $(X,D)$ over a base scheme $S$\\
$g\colon (\cY,\cE)\to S$ & deformation of the pair $(Y,E)$ over a base
scheme $S$
\end{longtable}

\begin{equation*}
\xymatrix{
\Delta^\prime \ar[dr]_{p^\prime}\ar[r]^\nu & \Delta \ar[d]^p \ar@{^(->}[r] & E \ar[d] \ar@{^(->}[r] & Y \ar[d]^\pi \\
& \Gamma \ar@{^(->}[r] & D \ar@{^(->}[r] & X
}
\end{equation*}

\[
    \xymatrix{ (Y,E)\ar[dd]\ar[dr]^\pi\ar@{^{(}->}[rrr] &   & & (\cY,\cE)\ar[dd]_(.3){g}\ar[dr]^\Pi & \\
                        &(X,D)\ar[dl] \ar@{^{(}-}[rr]&  & \ar@{->}[r]& (\cX,\cD)\ar[dl]^f\\
                        \{0\}  \ar@{^{(}->}[rrr]& & & S &}
\]

\subsection{Leitfaden}

\[
  \xymatrix{A \ar@{=>}[r]& 2\ar@{=>}[dr] & & 3\ar@{=>}[dl] \ar@{=>}[ddd]& B\ar@{=>}[l] \\
    & D \ar@{=>}[r]& 4\ar@{=>}[d] & & \\
    & &    5 \ar@{=>}[d] &    & \\
    & &    6 \ar@{=>}[d] & C \ar@{=>}[l]& \\
    & &    7 &    & }
\]

\subsection*{Acknowledgements} During most of the work AC was
supported by EPSRC Programme Grant EP/N03189X/1. PH was supported by
NSF grants DMS-1901970 and DMS-2200875.  PH and AC thank IHES for
support in Summer 2023 and 2024, when parts of this project were
completed.

\section{Homogeneous deformations I}
\label{sec:homog-deform-local}

This section is a synthesis of mostly known facts due to
Altmann~\cite{altmann_minkowski_sums,MR1798979},
Mavlyutov~\cite{mavlyutov}, Ilten~\cite{MR2958983},
Matsushita~\cite{matsushita_simultaneous}, and
Petracci~\cite{petracci_mavlyutov} (this list of references is not
attempting to be complete). 

\smallskip

The reader who wishes to understand quickly the idea of the proof of
Theorem~\ref{thm:1} is advised to read the statements of purpose
\S~\ref{sec:synopsis}, and then jump to
Section~\ref{sec:proof-theorem}.

\subsection{Statement of purpose}
\label{sec:synopsis}

Fix a $3$-dimensional reflexive polytope $P$. 

\begin{enumerate}[(1)]
\item Let $F\leq P$ be a facet. Fix a Minkowski decomposition
  $F=\sum_{j=1}^r F_j$. Consider the cone $\sigma =\langle F\rangle_+$
  and the corresponding affine toric pair $(X_F,D_F)$. In
  \S~\ref{sec:altm-deform-gorenst} we recall Altmann's construction of
  a deformation
\[
    \xymatrix{
      (X_F,D_F) \ar@{^(->}[r]\ar[d]& (\cX_F,\cD_F) \ar[d]^f \\
\{0\} \ar@{^(->}[r]& \AA^r}
  \]
  that we call the \emph{local Altmann deformation.}
\item In \S~\ref{sec:admiss-decomp-simult} we show that the
  local Altmann deformation can be simultaneously partially
  resolved. In other words, choose a dual tropical arrangement
  subordinated to the Minkowski decomposition of $F$, and
  let $\pi_F\colon (Y_F,E_F)\to (X_F,D_F)$
  be the induced partial resolution. There exists a
  diagram of deformations:
  \[
    \xymatrix{ (Y_F,E_F)\ar[dd]\ar[dr]^\pi\ar@{^{(}->}[rrr] &   & & (\cY_F,\cE_F)\ar[dd]_(.3){g}\ar[dr]^\Pi & \\
      &(X_F,D_F)\ar[dl] \ar@{^{(}-}[rr]&  & \ar@{->}[r]& (\cX_F,\cD_F)\ar[dl]^f\\
      \{0\} \ar@{^{(}->}[rrr]& & & \AA^r&}
  \]
\item In \S~\ref{sec:glob-altm-deform} we show that the local Altmann
  deformation can be globalized. In other words, let $(X_P,D_P)$ be
  the $3$-dimensional Fano toric pair corresponding to the polytope $P$.
  There is a deformation
  \[
    \xymatrix{
      (X_P,D_P) \ar@{^(->}[r]\ar[d]& (\cX,\cD) \ar[d]^f \\
\{0\} \ar@{^(->}[r]& \AA^r}
  \]
  that induces the local Altmann deformation of $(X_F,D_F)$. We call
  this the \emph{global Altmann deformation}.
\item  In \S~\ref{sec:simult-part-resol} we state
  that the analytification of the global Altmann deformation can be
  simultaneously partially resolved. (The statement is proved
  in Section~\ref{sec:homog-deform-ii}.) In other words, fix amd for
  the whole polytope $P$ that restrict to the given one on the facet
  $F$, and let $\pi\colon (Y,E)\to (X_P, D_P)$ be the induced partial resolution.
  
  Denote by $S^{\text{an}}$ the analytic germ of $0\in \AA^r$ and by
  $f^{\text{an}}\colon (\cX^{\text{an}},\cD^{\text{an}}) \to
  S^{\text{an}}$ the analytification of the global Altmann
  deformation. There exists a diagram of complex analytic
  deformations:
 \[
    \xymatrix{ (Y,E)\ar[dd]\ar[dr]^\pi\ar@{^{(}->}[rrr] &   & &
      \bigl(\cY^{\text{an}}, \cE^{\text{an}}\bigr)
      \ar[dd]_(.3){g^{\text{an}}}\ar[dr]^{\Pi^{\text{an}}} & \\
      &(X,D)\ar[dl] \ar@{^(-}[rr]&  & \ar@{->}[r]&
      \bigl(\cX^{\text{an}},\cD^{\text{an}}\bigr)
      \ar[dl]^{f^{\text{an}}} \\
      \{0\} \ar@{^{(}->}[rrr]& & &  S^{\text{an}} & }
  \]
 
  This last deformation will be a crucial ingredient of the proof of
  our main results.  (The existence of this deformation
  $g^{\text{an}} \colon (\cY^{\text{an}},\cE^{\text{an}})\to
  S^{\text{an}}$ is a natural and unsurprising fact.)
\end{enumerate}

\subsection{The local Altmann deformation and its properties}
\label{sec:altm-deform-gorenst}

\begin{nota}
  \label{nota:characters} We work with lattices $N\cong \ZZ^n$,
  $M=\Hom (N, \ZZ)$, and the torus $\TT=\Spec \CC[M]\cong \Gm^n$. When $m\in M$,
  we denote by $\chi^m\in \CC[M]$ the corresponding character of $\TT$.
\end{nota}

\begin{pro}
  \label{pro:Altmann_deformation}
  Let $\overline{N}$ be a lattice of rank $n-1$ and let $F\subset
  \overline{N}$ be a $(n-1)$-dimensional lattice polytope endowed with a
  Minkowski decomposition
\[
F = F_1 + \cdots + F_r
\]

Consider the Gorenstein cone
$\sigma = \langle F \times \{e_0\} \rangle_+ \subset N = \overline{N}
\oplus \ZZ e_0$ and the corresponding $n$-dimensional affine toric
pair $(X_F, D_F)$.

  Consider the commutative diagram:
\[
    \xymatrix{
      (X_F,D_F) \ar@{^(->}[r]\ar[d]& (\cX,\cD) \ar[d]^f \\
\{0\} \ar@{^(->}[r]& \AA^r}
  \]
where 
\begin{enumerate}[(i)]
\item Write
  $\widetilde{M}=\Hom(\widetilde{N},\ZZ)=\overline{M}\oplus \ZZ
  e_0^\star \oplus \ZZ e_1^\star \oplus \cdots \oplus \ZZ e_r^\star$.
$\cX = \Spec \CC[\widetilde{\sigma}^\vee \cap \widetilde{M}]$ is the $(n+r)$-dimensional affine toric variety corresponding to the cone
\begin{equation*}
  \widetilde{\sigma} = \langle e_0, F_1 + e_1, \dots, F_r + e_r\rangle_+\subset
  \widetilde{N} = \overline{N} \oplus \ZZ e_0 \oplus \ZZ e_1 \oplus \cdots \oplus \ZZ e_r
\end{equation*}
and $\cD= \left(\chi^{e_0^\star}=0\right)$.
\item 
  The morphism $f\colon \cX \to \AA^r_\CC$ is given by the $r$
  functions
  $\chi^{e_1^\star} - \chi^{e_0^\star}, \dots, \chi^{e_r^\star} -
  \chi^{e_0^\star}$.
\item The closed embedding $X \into \cX$ is the toric morphism induced by
the lattice homomorphism
 \[
 N = \overline{N} \oplus \ZZ \into \widetilde{N} = \overline{N} \oplus \ZZ e_0 \oplus \ZZ e_1 \oplus \cdots \oplus \ZZ e_r
 \]
defined by $(v,m) \mapsto v + m(e_0 + e_1 + \cdots + e_r)$.
\end{enumerate}
Then:
\begin{enumerate}[(1)]
\item $\cX$ is Gorenstein; indeed its toric boundary divisor is
  \[\bigl(\chi^{e_0^\star
      + e_1^\star + \cdots + e_r^\star}=0\bigr)\subset \cX
  \]
\item The morphisms $f\colon \cX \to \AA^r_\CC$ and its restriction
$\cD \to \AA^r_\CC$ are flat of relative dimension $n$ and $n-1$, and
the fibres over the origin are $X$ and $D$.
\end{enumerate}
\end{pro}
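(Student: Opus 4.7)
My plan is to extract both statements from a direct analysis of the cone $\widetilde\sigma$ and of the semigroup algebra $A = \CC[\widetilde\sigma^\vee \cap \widetilde M]$. For (1), I would read off the primitive ray generators of $\widetilde\sigma$ --- namely $e_0$ together with the vectors $v + e_i$ as $v$ ranges over the vertices of $F_i$, for $i = 1, \dots, r$ --- and observe that each pairs to $1$ against the linear form $e_0^\star + e_1^\star + \cdots + e_r^\star \in \widetilde M$. This primitive interior lattice element of $\widetilde\sigma^\vee$ simultaneously shows $\widetilde\sigma$ is Gorenstein, identifies the toric boundary of $\cX$ as $\bigl(\chi^{e_0^\star + \cdots + e_r^\star} = 0\bigr)$, and (since Gorenstein implies Cohen--Macaulay, or directly by Hochster's theorem for normal affine toric varieties) ensures $\cX$ is Cohen--Macaulay; this settles~(1).

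For (2), the first task is to identify the scheme-theoretic fibre of $f$ over $0$ with $X_F$. I would write $\widetilde\sigma^\vee$ explicitly: $\bar\phi + \sum a_i e_i^\star \in \widetilde\sigma^\vee$ iff $a_0 \geq 0$ and $a_i \geq \eta_i(\bar\phi) := -\min_{v \in F_i}\bar\phi(v)$ for $i \geq 1$. Invoking the Minkowski identity $\min_{v\in F} \bar\phi(v) = \sum_i \min_{v\in F_i}\bar\phi(v)$, the dual lattice surjection $\widetilde M \twoheadrightarrow M$ restricts to a monoid surjection $\widetilde\sigma^\vee \cap \widetilde M \twoheadrightarrow \sigma^\vee \cap M$, inducing a surjection $A \twoheadrightarrow \CC[\sigma^\vee \cap M]$ whose kernel manifestly contains the ideal $I = (\chi^{e_i^\star} - \chi^{e_0^\star})_{i=1}^r$. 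To show the kernel equals $I$, I reduce each monomial of $A$ modulo $I$ to a normal form $\chi^{\bar\phi + \sum_{i \geq 1} \eta_i(\bar\phi) e_i^\star + k e_0^\star}$, moving all ``excess'' among the $a_i$ onto $a_0$; the additivity $\eta = \sum_i \eta_i$ guarantees each such reduction step stays inside the semigroup. These normal forms biject with $\sigma^\vee \cap M$ under the surjection, so the induced map $A/I \to \CC[\sigma^\vee \cap M]$ is an isomorphism.

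Flatness of $f$ then follows from ``miracle flatness'': since $\cX$ is Cohen--Macaulay of dimension $n+r$ and $\AA^r$ is regular, it suffices to check that every non-empty fibre has dimension $n$. The central fibre has dimension $n$ by the above; the Krull bound gives $\dim f^{-1}(s) \geq n$ everywhere; and the natural big-torus action makes $f$ equivariant with respect to a 1-parameter subgroup that scales $\AA^r$ with positive weights, so every fibre specialises to the central one and upper semicontinuity of fibre dimension forces equality. For $f|_\cD$, I observe that $\cD = V(\chi^{e_0^\star})$ is the toric prime divisor of $\cX$ associated to the ray $\langle e_0 \rangle$: its defining ideal inside $A$ is indeed principal, because any monomial $\chi^m$ with $\langle m, e_0\rangle > 0$ can be factored as $\chi^{e_0^\star} \cdot \chi^{m - e_0^\star}$ with $m - e_0^\star \in \widetilde\sigma^\vee$. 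Hence $\cD$ is itself an affine toric variety, Gorenstein of dimension $n+r-1$ (with primitive interior form $e_1^\star + \cdots + e_r^\star$), and its fibre over $0$ is $V(\chi^{\bar e_0^\star}) \subset X_F$, which equals $D_F$ since $\bar e_0^\star$ is the primitive interior element of $\sigma^\vee$; the same miracle-flatness argument then applies.

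The step I expect to be the main obstacle is the kernel computation in~(2): verifying that no relations beyond the obvious $\chi^{e_i^\star} \equiv \chi^{e_0^\star}$ are forced by the semigroup constraints $a_i \geq \eta_i(\bar\phi)$. Everything rests on the Minkowski additivity of support functions, which ensures the proposed normal form is well-defined and exhausts a set of coset representatives for the kernel of $\widetilde M \twoheadrightarrow M$ within $\widetilde\sigma^\vee \cap \widetilde M$. Once this combinatorial verification is set up, all remaining assertions --- flatness, fibre dimensions, and the identification of the central fibre of $\cD$ with $D_F$ --- fall out from the standard Cohen--Macaulay and equivariance arguments above.
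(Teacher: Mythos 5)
Your plan takes a genuinely different route from the paper. The paper disposes of flatness in one line by appealing to the statement that $\chi^{e_1^\star}-\chi^{e_0^\star},\dots,\chi^{e_r^\star}-\chi^{e_0^\star}$ is a regular sequence in the Cohen--Macaulay ring $\CC[\widetilde\sigma^\vee\cap\widetilde M]$, citing Eisenbud's Exercise~18.18 on flatness over polynomial subrings; it neither checks that the central fibre is $X_F$ nor addresses Part~(1) explicitly. Your approach buys self-containedness: the semigroup normal-form computation (via Minkowski additivity of support functions) that identifies $\CC[\widetilde\sigma^\vee\cap\widetilde M]/I\cong\CC[\sigma^\vee\cap M]$ is precisely the step the paper elides, your Gorenstein computation via the primitive interior linear form $e_0^\star+\cdots+e_r^\star$ is sound, and your treatment of $\cD$ as the toric prime divisor of the ray $\langle e_0\rangle$ is correct.

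There is, however, a gap in the fibre-dimension bound. You invoke ``a $1$-parameter subgroup that scales $\AA^r$ with positive weights''; any such $\lambda\in\widetilde N$ must satisfy $\langle e_j^\star,\lambda\rangle=w>0$ for all $j=0,\dots,r$, and the obvious choice is $\lambda=e_0+e_1+\cdots+e_r$. But this $\lambda$ need not lie in $\widetilde\sigma$: already for $r=1$, $\overline N=\ZZ$, $F=F_1=[1,2]$ one checks that $(0,1,1)\notin\langle(0,1,0),(1,0,1),(2,0,1)\rangle_+$. If $\lambda\notin\widetilde\sigma$, the limit $\lim_{t\to0}\lambda(t)\cdot x$ does not exist in $\cX$ for general $x$, so orbit closures need not meet $f^{-1}(0)$ and the ``specialisation'' step collapses. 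The remedy is to take $\lambda=e_0+\sum_{i=1}^r(v_i+e_i)$ for any choice of lattice points $v_i\in F_i$: this is a sum of generators of $\widetilde\sigma$, hence lies in $\widetilde\sigma$, and still satisfies $\langle e_j^\star,\lambda\rangle=1$ for every $j$. With that $\lambda$, the closed $\Gm$-invariant locus $\{x\in\cX\mid\dim_x f^{-1}(f(x))\geq d\}$ has all its orbit closures meeting $f^{-1}(0)$, which (together with the Krull lower bound and the already-computed $\dim f^{-1}(0)=n$) forces constancy of fibre dimension and lets miracle flatness conclude. Be careful also to use upper semicontinuity of $x\mapsto\dim_x f^{-1}(f(x))$ on the \emph{source}, since $f$ is not proper and upper semicontinuity of $s\mapsto\dim f^{-1}(s)$ on $\AA^r$ is not automatic.
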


\begin{dfn}
  \label{dfn:local_Altmann_deformation}
  The diagram of Proposition~\ref{pro:Altmann_deformation} is the
  \emph{local Altmann deformation} of the pair $(X_F,D_F)$ associated with
  the given Minkowski decomposition of $F$.
\end{dfn}

\begin{proof}[Proof of Proposition~\ref{pro:Altmann_deformation}]
  The key point is to show that the morphisms
  $f\colon \cX \to \AA^r_\CC$ and its restriction $\cD \to \AA^r_\CC$
  are flat of relative dimension $n$ and $n-1$.  Because all toric
  varieties are Cohen--Macaulay, this follows from the fact that the
  $r$ functions
  $\chi^{e_1^\star} - \chi^{e_0^\star}, \dots, \chi^{e_r^\star} -
  \chi^{e_0^\star}$ form a regular sequence, see~\cite[Exercise
  18.18]{MR1322960}; alternatively,
  see~\cite[\href{https://stacks.math.columbia.edu/tag/00HT}{Tag
    00HT}]{stacks-project} and~\cite[Theorem~23.1]{MR1011461}.
\end{proof}

\begin{dfn}
  \label{dfn:transverse_singularity}
  Let $(V,B)$ be an affine toric pair. A variety $X$ has
  {$V$-singularities} along a Zariski closed subset $Z\subset X$ if there
  exist Zariski open subsets $Z\subset U$ in $X$ and $B\subset W$ in
  $V$ such that
  \[
U\cong W
  \]
\end{dfn}

\begin{pro} \label{pro:singularities_general_fibre_Altmann}

  Let $\overline{N}$ be a lattice of rank $n-1$,
  $F=F_1 + \cdots + F_r \subset \overline{N}$ a lattice polytope
  endowed with a Minkowski decomposition, 
 $\sigma = \langle F \times \{1\} \rangle_+ \subset N = \overline{N}
\oplus \ZZ$, $(X_F,D_F)$ the corresponding affine toric pair, and
  \[
    \xymatrix{
      (X_F,D_F) \ar@{^(->}[r]\ar[d]& (\cX,\cD) \ar[d]^f \\
\{0\} \ar@{^(->}[r]& \AA^r}
  \]
the local Altmann deformation.
  
  Let $\CC \subset K$ be a field extension and 
\[
a\colon \CC[t_1, \dots, t_r] \to K
\]
a $\CC$-algebra homomorphism such that the $a_j=a(t_j)\in K$ are
pairwise distinct and non-zero. Consider the base change
\[
\xymatrix{
(X_a, D_a )\ \ar[d] \ar@{^{(}->}[r] & (\cX,\cD)\ar[d]^f \\
\Spec K \ar@{^{(}->}[r]_a& \AA^r
}
\]
  
 For all $1\leq j \leq r$ let $\sigma_j=\langle F_j\times
  \{1\}\rangle_+\subset N$ and
  \[
    V_j=\Spec \CC[\sigma_j^\vee \cap M]
  \]
  \begin{enumerate}[(1)]
  \item The singular locus of $X_a$ consists of a disjoint union of
    connected components, where $X_a$ intersects transversely the
    singular locus of $\cX$. These components are in 
    bijective correspondence with the $V_j$ that are singular, and $X_a$
  has $V_j$-singularities along them.
\item The divisor $D_a = \cD \cap X_a \subset X_a$ is nonsingular and
  disjoint from the singular locus of $X_a$.
  \end{enumerate}
\end{pro}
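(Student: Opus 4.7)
The plan is to exploit the explicit Cayley-type description of $\tilde{\sigma}$ to identify the toric orbits of $\cX$ and then intersect them with the generic fibre $X_a$. I would structure the argument in three steps.

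First, I would analyze the torus-orbit stratification of $\cX$. For each face $\tau \preceq \tilde{\sigma}$, the transverse singularity of $\cX$ along the orbit $O_\tau$ is governed by the quotient cone $\tilde{\sigma}/\tau$ with respect to the quotient lattice. The key faces are $\tau_j = \langle F_j + e_j \rangle_+$, for which a direct lattice computation shows $\tilde{\sigma}/\tau_j \cong \sigma_j \subset N$, so $\cX$ has a $V_j$-singularity along $O_{\tau_j}$. Other faces of $\tilde{\sigma}$ either lie inside some $\tau_j$ (hence in its closure) or mix distinct summands $F_i + e_i$ and $F_j + e_j$; the latter, being faces of the Cayley polytope built from distinct vertices in the different slices $\overline{N} + e_i$, are simplicial and yield smooth transverse cones. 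Thus the singular locus of $\cX$ in a neighborhood of the fibre $X_F$ is precisely the union of the closures $\overline{O_{\tau_j}}$ for the singular $V_j$.

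Second, I would carry out the base change $a \colon \Spec K \to \AA^r$. Using that $f$ is given by the regular sequence $\chi^{e_j^\star} - \chi^{e_0^\star}$ (Proposition~\ref{pro:Altmann_deformation}), the generic fibre $X_a$ is cut out by the relations $\chi^{e_j^\star} = \chi^{e_0^\star} + a_j$. A point of $X_a$ lies in $O_{\tau_j}$ iff $\chi^{e_j^\star} = 0$, which forces $\chi^{e_0^\star} = -a_j$ and $\chi^{e_k^\star} = a_k - a_j$ for $k \neq j$; since the $a_k$ are pairwise distinct these values are nonzero, hence $X_a \cap O_{\tau_j}$ is disjoint from $\overline{O_{\tau_k}}$ for $k \neq j$. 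This establishes that the singular components of $X_a$ are disjoint and in bijection with the singular $V_j$.

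Third, the transversality statement and the identification with $V_j$-singularities proceed as follows. Near a point of $X_a \cap O_{\tau_j}$, the characters $\chi^{e_k^\star}$ for $k \neq j$ are invertible (they take nonzero values $a_k - a_j$), so $r-1$ of the defining equations of $X_a \subset \cX$ reduce to eliminating invertible functions; only the remaining equation cuts non-trivially, in a direction transverse to $\tau_j$. The toric chart of $\cX$ around $O_{\tau_j}$ factors (up to a free torus factor) as $V_j \times \Gm^{\dim \tau_j - 1}$ by the lattice quotient identification of Step~1, and the single transversal relation extracts exactly $V_j$, proving transverse intersection and the $V_j$-singularity claim. For the divisor, $D_a = (\chi^{e_0^\star} = 0) \cap X_a$ forces $\chi^{e_j^\star} = a_j \neq 0$ for every $j$, hence $D_a$ avoids every orbit $O_{\tau_j}$; smoothness of $D_a$ follows since on $D_a$ the remaining toric description exhibits $(\chi^{e_0^\star}=0)$ as the vanishing of a local coordinate.

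The main obstacle is the lattice bookkeeping in Step~1: verifying that $\tilde{\sigma}/\tau_j$ together with the quotient lattice is genuinely isomorphic to $\sigma_j \subset N$, and that the transversal equation inherited from the base change is a regular parameter for the local ring along $O_{\tau_j}$. Once this identification is in place, Steps~2 and~3 are essentially toric bookkeeping and the disjointness, transversality, and divisor claims all follow cleanly.
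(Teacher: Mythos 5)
Your overall architecture mirrors the paper's: cover $X_a$ by toric affine opens indexed by the Minkowski factors, exploit a product decomposition of each open, and show that the base-change equations cut out an open neighborhood of the boundary of $V_j$. The paper's proof organizes exactly this into three claims: (a) $U_j = \{\chi^{u_j}\neq 0\}$ with $U_j \cong V_j\times\TT_j$ for an $r$-dimensional torus $\TT_j$; (b) $X_a\subset U_0\cup\cdots\cup U_r$; (c) $X_a\cap U_j\cong W_j\subset V_j$ a Zariski open containing the boundary $B_j$. Your Steps 2 and 3 are the right ideas for (b) and (c).

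Your Step 1, however, contains genuine errors even if they turn out not to be load-bearing. The transverse toric geometry of $\cX$ along $O_{\tau_j}$ is \emph{not} governed by the quotient cone $\widetilde{\sigma}/\tau_j$; that cone lives in a lattice of rank $(n+r)-(\dim F_j+1)$ and controls the orbit closure $\overline{O_{\tau_j}}$, not the transverse slice. The transverse slice is governed by $\tau_j$ viewed in its own saturated sublattice, and the correct product is $U_j\cong V_j\times\TT_j$ with $\TT_j\cong\Gm^r$, not $V_j\times\Gm^{\dim\tau_j-1}$ as you write in Step 3 --- the dimensions of your claimed factorization do not add up to $\dim\cX=n+r$. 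Separately, the claim that ``mixed'' faces of the Cayley cone are necessarily simplicial is false (take $F_1,F_2$ two segments; the Cayley polytope is a quadrilateral and $\cX$ is a conifold at the origin). Fortunately none of this is needed: your Step 2 already shows that $X_a$ misses the mixed strata because the $a_j$ are pairwise distinct and nonzero, which is exactly how the paper avoids ever analyzing the full singular locus of $\cX$. Also, in Step 2 the condition $\chi^{e_j^\star}=0$ does not characterize $O_{\tau_j}$ alone; it cuts out a larger union of orbits. What you actually want, and what makes the covering argument go through, is the identity $\{\chi^{u_j}\neq 0\}=\bigcap_{k\neq j}\{\chi^{e_k^\star}\neq 0\}$ together with the equations $\chi^{e_k^\star}-\chi^{e_0^\star}=a_k$. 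So: keep Steps 2 and 3, but replace Step 1 by the explicit lattice splitting $\widetilde{M}=M_j\oplus\bigl(\bigoplus_{k\neq j}\ZZ e_k^\star\bigr)$ and the resulting isomorphism $U_j\cong V_j\times\TT_j$, which is the technical heart of the argument.
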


\begin{rem}
  The key cases are $K=\CC$ (general fibre) and
  $K=\CC(t_1,\dots, t_r)$ (generic fibre).
\end{rem}

\begin{proof}[Proof of Proposition~\ref{pro:singularities_general_fibre_Altmann}]
We work over $K$, setting
\[
  \cX_K=\cX \times_{\Spec \CC} \Spec K,\quad \cD_K=\cD\times_{\Spec \CC} \Spec K
\]
etcetera, and we view $a\in \AA^r(K)$ as a $K$-valued point.

\begin{claim}
  \label{cla:subcones}
   For all $j = 1, \dots, r$ consider the subcone $\widetilde{\sigma}_j = \langle F_j
+ e_j\rangle_+ \subset \widetilde{\sigma}$ and the Zariski open affine
toric subscheme
\[
  U_j = \Spec K[\widetilde{\sigma}_j^\vee \cap \widetilde{M}] \subset \cX_K
\]
with its toric boundary $D_j\subset U_j$. Also consider the subcone
$\widetilde{\sigma}_0 = \langle e_0\rangle_+ \subset
\widetilde{\sigma}$ and the Zariski open
$U_0 = \Spec K[\widetilde{\sigma}_0^\vee \cap \widetilde{M}] \subset
\cX_K$ with toric boundary $D_0\subset U_0$.
  \begin{enumerate}[(1)]
  \item For all $j=0,\dots, r$ write $u_j=e_0^\star + \dots +
    \widehat{e_j^\star} + \dots + e_r^\star \in
    \widetilde{M}$. We have
    \[
      U_j=\{\chi^{u_j}\neq 0\} \subset \cX_K \quad \text{and} \quad
      D_j=\bigl(\chi^{e_j^\star}=0\bigr)\subset U_j
    \]
  \item For all $j=0,\dots, r$ write $N_j=\overline{N}\oplus \ZZ e_j$
    and $M_j=\Hom (N_j, \ZZ)=\overline{M}\oplus \ZZ e_j^\star$ so
    that, for example
    \[
\widetilde{M} = M_j  \oplus \ZZ e_0^\star \oplus \cdots \oplus
\widehat{\ZZ e_j^\star} \oplus \cdots \oplus \ZZ e_r^\star
\]
We will need the $r$-dimensional tori:
\[
\TT_j=\Spec K[\ZZ e_0^\star \oplus \cdots \oplus \widehat{\ZZ e_j^\star} \oplus \cdots \oplus \ZZ e_r^\star] 
\]
For $j\geq 1$, let
$\sigma_j=\langle F_j\times \{e_j\}\rangle_+\subset N_j$, and let
$\sigma_0=\langle e_0\rangle_+\subset N_0$. Then
\begin{equation*}
  (U_j,D_j)=(V_j\times \TT_j,B_j\times \TT_j) \quad \text{where} \quad
  V_j=\Spec K[\sigma_j^\vee \cap M_j]\; \text{and}\;B_j\subset V_j \; \text{is the
    toric boundary}
\end{equation*}
  \end{enumerate}
\end{claim}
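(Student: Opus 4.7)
The plan is to treat both parts of the claim as elementary toric-geometric computations inside the affine toric variety $\cX_K$, which is just the extension of scalars of $\cX$ to $K$; in particular the relations imposed by the base change along $a$ play no role here (they would only enter if one restricted further to the fibre $X_a$).

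First I would establish part~(1) by checking that the linear form $u_j\in\widetilde{M}$ realises $\widetilde{\sigma}_j$ as a face of $\widetilde{\sigma}$. This reduces to pairing $u_j$ against each generator of $\widetilde{\sigma}$: for $j\geq 1$ one finds $\langle u_j,e_0\rangle = 1$, $\langle u_j,v+e_k\rangle = 1$ for $v\in F_k$ and $k\neq j$, and $\langle u_j,v+e_j\rangle = 0$, so $\widetilde{\sigma}_j = \widetilde{\sigma}\cap u_j^\perp$; the case $j=0$, with $u_0 = e_1^\star+\cdots+e_r^\star$, is analogous. Standard toric geometry then identifies $U_j$ with the principal open locus $\{\chi^{u_j}\neq 0\}\subset \cX_K$. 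For the identification $D_j = (\chi^{e_j^\star}=0)$ I would observe that $e_j^\star$ evaluates to $1$ on every primitive generator of every ray of $\widetilde{\sigma}_j$, so it lies in the relative interior of $\sigma_j^\vee$ inside $(M_j)_\RR$; this is precisely the condition ensuring that its vanishing locus coincides with the toric boundary of the $V_j$-factor appearing in part~(2).

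For part~(2), the key is the lattice splitting $\widetilde{N} = N_j \oplus \bigoplus_{k\neq j}\ZZ e_k$ together with the fact that $\widetilde{\sigma}_j$ is contained in $(N_j)_\RR$. Dualising and intersecting with $\widetilde{M}$ yields
\[
\widetilde{\sigma}_j^\vee \cap \widetilde{M} = \bigl(\sigma_j^\vee \cap M_j\bigr) \oplus \bigoplus_{k\neq j}\ZZ e_k^\star
\]
which immediately gives the tensor decomposition of coordinate rings, hence the geometric product $U_j = V_j \times \TT_j$; combining with part~(1) then gives $D_j = B_j \times \TT_j$. The only subtlety to handle is that $\sigma_j$ may fail to span $(N_j)_\RR$ (for instance when $F_j$ is a lattice segment, which occurs already in the polygon setting of the main theorem), so $V_j$ factors further as a lower-dimensional affine toric variety times a residual torus and \emph{toric boundary} must be interpreted via relative interiors; but the pairing computations above are insensitive to this and the argument goes through verbatim. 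I do not expect any substantive obstacle beyond keeping the indexing straight.
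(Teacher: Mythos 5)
Your proof is correct and carries out exactly the elementary face/pairing computation that the paper's phrase ``the proof of the claim is straightforward'' leaves implicit: you verify $u_j\in\widetilde{\sigma}^\vee$ with $\widetilde{\sigma}\cap u_j^\perp=\widetilde{\sigma}_j$ to get $U_j=\{\chi^{u_j}\neq 0\}$, use the lattice splitting $\widetilde{N}=N_j\oplus\bigoplus_{k\neq j}\ZZ e_k$ to obtain $\widetilde{\sigma}_j^\vee\cap\widetilde{M}=(\sigma_j^\vee\cap M_j)\oplus\bigoplus_{k\neq j}\ZZ e_k^\star$, and check $\langle e_j^\star,\rho\rangle=1$ on primitive ray generators to identify $D_j$. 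The caveat about $\sigma_j$ not spanning $(N_j)_\RR$ is correctly noted as harmless.
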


Note that the notation in the claim is compatible, by a small abuse,
with the statement of the proposition. The proof of the claim is straightforward.

\begin{claim}
  \[
    X_a\subset U_0 \cup U_1 \cup \cdots \cup U_r
  \]
\end{claim}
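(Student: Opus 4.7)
The plan is to use part~(1) of Claim~\ref{cla:subcones} to describe $\cX_K \setminus (U_0 \cup \cdots \cup U_r)$ as a concrete closed subscheme cut out by $r+1$ monomials, and then show that this closed subscheme is disjoint from $X_a$ by a direct elementary computation that uses only the defining equations of $f$ together with the assumption that the $a_j$ are pairwise distinct and nonzero.

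First, by Claim~\ref{cla:subcones}(1) each $U_j$ equals $\{\chi^{u_j} \neq 0\}$, so
\[
\cX_K \setminus \bigcup_{j=0}^r U_j \;=\; \bigcap_{j=0}^r \{\chi^{u_j} = 0\}
\]
Writing $y_j = \chi^{e_j^\star}$ for $j = 0, 1, \dots, r$ and unpacking $u_j = \sum_{i \neq j} e_i^\star$, I read off the monomial expression $\chi^{u_j} = \prod_{i \neq j} y_i$. Restricting to the fibre $X_a$, the defining equations $\chi^{e_j^\star} - \chi^{e_0^\star} = a_j$ of $f$ become $y_j = y_0 + a_j$ for $j \geq 1$.

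Second, I would argue that no $K$-point $p \in X_a$ can satisfy $\chi^{u_j}(p) = 0$ for every $j$. Set $c = y_0(p)$ and consider the set
\[
S_p \;=\; \{\, i \in \{0,1,\dots,r\} \mid y_i(p) = 0 \,\}
\]
Because $a_1, \dots, a_r$ are nonzero and pairwise distinct, $S_p$ has at most one element: if $c = 0$ then $S_p = \{0\}$; if $c = -a_{i_\star}$ for some (unique) $i_\star \geq 1$ then $S_p = \{i_\star\}$; otherwise $S_p = \emptyset$. The equation $\chi^{u_j}(p) = \prod_{i\neq j} y_i(p) = 0$ is equivalent to the condition $S_p \not\subseteq \{j\}$, so the system of all $\chi^{u_j}(p) = 0$ has no solution: if $S_p = \emptyset$ then none of the $\chi^{u_j}(p)$ vanish, while if $S_p = \{i_\star\}$ then $\chi^{u_{i_\star}}(p) \neq 0$. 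Either way $p \in U_j$ for some $j$.

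The only real step, such as it is, is bookkeeping: identifying the face $\widetilde{\sigma}_j \leq \widetilde{\sigma}$ with the open locus where the single monomial $\chi^{u_j}$ is invertible (Claim~\ref{cla:subcones}(1)) and translating this into the product expression $\chi^{u_j} = \prod_{i\neq j} y_i$. Once that identification is in place, the rest is an elementary combinatorial observation resting entirely on the pairwise distinctness and nonvanishing of the $a_j$.
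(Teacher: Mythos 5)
Your argument is correct and follows essentially the same route as the paper: the pairwise distinctness and nonvanishing of the $a_j$ force the functions $y_i = \chi^{e_i^\star}$ to have no common zero on $X_a$ (since $y_i - y_j$ is a nonzero constant), and then a point avoiding all $U_j$ would require at least two of the $y_i$ to vanish simultaneously. One small precision point worth fixing in the write-up: you phrase the second step in terms of $K$-points, but the inclusion $X_a \subset \bigcup U_j$ is a statement about all scheme-theoretic points of $X_a$; the argument survives unchanged because $y_i - y_j \in K^\times$ is a unit in $\cO_{X_a}$, so $y_i$ and $y_j$ cannot both lie in any prime ideal, which is exactly the paper's formulation.
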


We prove the claim. Since $X_a$ is the closed subscheme of $\cX_K$
defined by the equations $\chi^{e_i^\star} - \chi^{e_0^\star} = a_i$
($i=1, \dots, r$), it follows from the assumptions on the $a_i$ that
on $X_a$ no pair of the $\chi^{e_0^\star},
\chi^{e_1^\star}, \dots, \chi^{e_r^\star}$ have a common zero.  
This implies that, as was to be shown:
\begin{align*}
X_a &\subset \bigcap_{0 \leq i<j \leq r} \left( \{ \chi^{e_i^\star} \neq 0 \} \cup \{ \chi^{e_j^\star} \neq 0 \} \right) = \bigcup_{i=0}^r \bigcap_{j \neq i} \{ \chi^{e_j^\star}\neq 0 \} \\
&= \bigcup_{i=0}^r \{\chi^{u_i} \neq 0 \} = U_0 \cup U_1 \cup \cdots \cup U_r
\end{align*}

\begin{claim}
  For all $j=0,\dots, r$, $X_a\cap U_j\cong W_j$ where $W_j\subset
  V_j$ is the Zariski open neighbourhood of the toric boundary given
  by the conditions
  \begin{align*}
 \chi^{e_j^\star} - a_j &\neq 0, \\
\chi^{e_j^\star} + a_i - a_j &\neq 0 \quad  \forall \,i \geq 1, i \neq j
\end{align*}
\end{claim}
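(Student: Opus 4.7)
The plan is to work chart by chart using the product decomposition $U_j=V_j\times \TT_j$ from the previous claim, and to exhibit $X_a\cap U_j$ as the graph of an explicit morphism $W_j\to \TT_j$. Recall that $\TT_j$ has invertible coordinates $\chi^{e_i^\star}$ for $i\in\{0,1,\dots,r\}\setminus\{j\}$, while $V_j$ contains the $\overline{M}$-characters together with $\chi^{e_j^\star}\in \sigma_j^\vee\cap M_j$ (this plays the role of the distinguished non-invertible coordinate for $j\geq 1$; for $j=0$ that role is played by $\chi^{e_0^\star}$ itself, since $\sigma_0^\vee=\overline{M}_\RR\oplus \RR_{\geq 0}e_0^\star$). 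The defining equations of $X_a\subset\cX_K$ are the $r$ relations $\chi^{e_i^\star}-\chi^{e_0^\star}=a_i$ for $i=1,\dots,r$, and the key observation is that on $U_j$ they can be solved so as to express every $\TT_j$-coordinate as an affine-linear function of a single $V_j$-coordinate.

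Carrying out the elimination: for $j\geq 1$, the equation with $i=j$ reads $\chi^{e_0^\star}=\chi^{e_j^\star}-a_j$, and the remaining $r-1$ equations then yield $\chi^{e_i^\star}=\chi^{e_j^\star}+a_i-a_j$ for $i\in\{1,\dots,r\}\setminus\{j\}$. For $j=0$ there is nothing to substitute and one simply reads $\chi^{e_i^\star}=\chi^{e_0^\star}+a_i$ for $i=1,\dots,r$. The condition that each right-hand side be invertible, which is forced because the left-hand side lies in $\TT_j$, translates (with the convention $a_0=0$) into the open conditions $\chi^{e_j^\star}+a_i-a_j\neq 0$ for $i\in\{0,1,\dots,r\}\setminus\{j\}$; this is precisely the defining locus of $W_j$. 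I would then assemble these affine-linear expressions into a morphism $W_j\to \TT_j$ and identify $X_a\cap U_j$ with its graph inside $W_j\times \TT_j\subset U_j$. Equivalently, at the level of coordinate rings the projection $X_a\cap U_j\to V_j$ factors through an isomorphism onto $W_j$.

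Finally, to check that $W_j$ genuinely is a neighbourhood of the toric boundary $B_j\subset V_j$, I would observe that for $j\geq 1$ the primitive generators of the rays of $\sigma_j$ are of the form $(w,1)\in \overline{N}\oplus \ZZ e_j$ with $w$ a vertex of $F_j$, so $\langle e_j^\star,(w,1)\rangle=1$; hence $\chi^{e_j^\star}$ vanishes identically on every toric divisor of $V_j$. Likewise for $j=0$, $\chi^{e_0^\star}$ cuts out $B_0$ in $V_0$. Along $B_j$ each defining inequality of $W_j$ thus reduces to a constant $a_i-a_j$ (or $-a_j$ when $i=0$), which is nonzero by the hypothesis that the $a_i$ are pairwise distinct and nonzero, so $B_j\subset W_j$. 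The whole argument is a direct coordinate calculation, and I do not anticipate any substantive obstacle; the only mild subtlety is the bookkeeping distinction between the $j=0$ and $j\geq 1$ cases, which is handled uniformly by the convention $a_0=0$.
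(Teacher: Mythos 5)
Your proof is correct and follows essentially the same route as the paper's: both solve the $r$ defining equations of $X_a$ for the $\TT_j$-coordinates $\chi^{e_i^\star}$ ($i\neq j$) in terms of $\chi^{e_j^\star}$, identify $X_a\cap U_j$ with the graph of the resulting affine-linear map over $W_j\subset V_j$, and invoke the pairwise-distinctness of the $a_i$ to confirm $B_j\subset W_j$. You have merely spelled out the "graph of a morphism" phrasing and the vanishing of $\chi^{e_j^\star}$ on $B_j$, both of which the paper leaves implicit, and you correctly supply the convention $a_0=0$ that the paper's uniform indexing tacitly assumes.
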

We prove the claim. Our assumption on the $a_i$ imply that $W_j$ is
contains the boundary $B_j\subset V_j$. The variety $X_a \cap U_j$ is given in $U_j$ by
the equations
\begin{align*}
\chi^{e_0^\star} &= \chi^{e_j^\star} - a_j, \\
\chi^{e_i^\star} &= \chi^{e_j^\star} + a_i - a_j \qquad \forall \; i \geq 1, i \neq j.
\end{align*}
The statement follows by solving for $\chi^{e_0^\star}$ and the
$\chi^{e_i^\star}$ ($i \geq 1, i \neq j$) in terms of
$\chi^{e_j^\star}$.

\smallskip

We finish with the proof of the proposition.
From the equations of $X_a \cap U_0$ in $U_0$ we see that $X_a \cap
U_0$ is isomorphic to an open subscheme of $\AA^1_K \times_K \Spec
K[\overline{M}]$ and hence it is smooth.
If $0 \leq i< j \leq r$, then $X_a \cap U_i \cap U_j$ is smooth, as
$U_i \cap U_j$ is the big torus of $\widetilde{X}_K$: this shows the first
part of the proposition. Finally, it is obvious 
that $D_a \cong \Spec K[ \overline{M}]$, therefore it is smooth.
\end{proof}

\begin{rem}
  It might be possible to give a proof of
  Proposition~\ref{pro:singularities_general_fibre_Altmann} by using
  the theory of T-varieties \cite{ilten_vollmert, geometryTvarieties,
    altmann_hausen_affineTvarieties}.
\end{rem}

\subsection{Simultaneous resolutions of the local Altmann deformation}
\label{sec:admiss-decomp-simult}

\begin{pro} \label{pro:admissible_decomposition_Matsushita} Let
  $\overline{N}$ be a $2$-dimensional lattice,
  $F=F_1+\cdots + F_r\subset \overline{N}$ a polygon endowed with an
  admissible Minkowski decomposition,
  $\sigma = \langle F \times \{ 1\} \rangle_+ \subset N = \overline{N}
  \oplus \ZZ$, $(X_F,D_F)$ the corresponding $3$-dimensional affine toric pair,
  and $f\colon (\cX,\cD) \to \AA^r$ the local Altmann deformation.
  Fix a dual tropical arrangement and let $\pi \colon (Y,E) \to (X,D)$
  be the induced partial resolution.

There exists a commutative diagram
\[
    \xymatrix{ (Y,E)\ar[dd]\ar[dr]^\pi\ar@{^{(}->}[rrr] &   & & (\cY,\cE)\ar[dd]_(.3){g}\ar[dr]^\Pi & \\
      &(X,D)\ar[dl] \ar@{^{(}-}[rr]&  & \ar@{->}[r]& (\cX,\cD)\ar[dl]^f\\
      \{0\} \ar@{^{(}->}[rrr]& & & \AA^r &}
  \]
where all squares are Cartesian and:
\begin{enumerate}[(i)]
\item The morphism $\Pi \colon \cY \to \cX$ is toric, birational,
  projective, and crepant;
\item The toric variety $\cY$ is Gorenstein, $\QQ$-factorial, and terminal;
\item The morphisms $g=f \circ \Pi \colon \cY \to \AA^r$ and its
  restriction $\cE=\Pi^{-1}(\cD)\to \AA^r$ are flat;
\item Let $\CC \subset K$ be a field extension,
  $a\colon \CC[t_1, \dots, t_r] \to K$ a $\CC$-algebra homomorphism
  such that the $a_j=a(t_j)\in K$ are pairwise distinct and non-zero,
  and consider the base change:
  \[
    \xymatrix{ (Y_a,E_a)\ar[dd]\ar[dr]^{\pi_a}\ar@{^{(}->}[rrr] &   & & (\cY,\cE)\ar[dd]_(.3){g}\ar[dr]^\Pi & \\
      &(X_a,D_a)\ar[dl] \ar@{^{(}-}[rr]&  & \ar@{->}[r]& (\cX,\cD)\ar[dl]^f\\
      \Spec K \ar@{^{(}->}[rrr]& & & \AA^r &}
  \]
  The singular locus of $X_a$ is a disjoint union of curves with
  transverse type $\text{A}$ singularities and
  $\pi_a\colon Y_a \to X_a$ is the minimal resolution.
\end{enumerate}
\end{pro}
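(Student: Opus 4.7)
The starting point is the Cayley-trick dictionary of Appendix~\ref{sec:Cayley}: the dual tropical arrangement subordinate to the admissible Minkowski decomposition $F=F_1+\cdots+F_r$ is the same data as a coherent fine mixed subdivision $\mathcal{M}$ of $F$, equivalently a regular subdivision of the Cayley polytope $\cayley{F_1,\dots,F_r}$. For each maximal mixed cell $G=G_1+\cdots+G_r\in\mathcal{M}$ form the $(r+3)$-dimensional cone $\langle e_0,\,G_1+e_1,\,\dots,\,G_r+e_r\rangle_+\subset\widetilde{N}$; these cones and their faces assemble into a fan $\widetilde{\Sigma}$ refining the fan of $\widetilde{\sigma}$. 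Let $(\cY,\cE)$ be the toric pair of $\widetilde{\Sigma}$ and $\Pi\colon\cY\to\cX$ the induced toric birational morphism. Regularity of the subdivision is equivalent to projectivity of $\Pi$, and specializing via the identification $e_1=\cdots=e_r=e_0$ shows that the zero fibre of $g=f\circ\Pi$ recovers $\pi\colon(Y,E)\to(X,D)$, so all squares in the proposed diagram are Cartesian.

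Every ray of $\widetilde{\Sigma}$ is of the form $\langle e_0\rangle_+$ or $\langle(v,e_j)\rangle_+$ for a lattice point $v\in F_j$ and pairs to $1$ with $e_0^\star+\cdots+e_r^\star$, so $\cY$ is Gorenstein and $\Pi$ is crepant. Because $\mathcal{M}$ is a fine mixed subdivision arising from an admissible decomposition, each maximal cell is either a unit triangle (type T: one $G_j$ a unit triangle, the others vertices) or a unit parallelogram (type P: two $G_j$ unit segments in independent directions, the others vertices); a direct count gives exactly $r+3$ rays in each case, so $\widetilde{\Sigma}$ is simplicial and $\cY$ is $\QQ$-factorial. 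A short analysis of lattice points in the height-$1$ cross-section of a maximal cone shows that in both (T) and (P) any lattice-point barycentric decomposition with non-negative real coordinates summing to $1$ forces the point to be a vertex --- using only that a unit triangle (resp.\ unit segment) has exactly three (resp.\ two) lattice points --- so $\cY$ is terminal, completing (i) and (ii). For (iii), $\cY$ is Cohen--Macaulay of dimension $r+3$ and the zero fibre has the expected dimension $3$, whence $\Pi^\star(\chi^{e_j^\star}-\chi^{e_0^\star})$ is a regular sequence on $\cY$; the same argument on the Cohen--Macaulay toric divisor $\cE$ yields flatness of $g\vert_\cE$.

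For (iv), Proposition~\ref{pro:singularities_general_fibre_Altmann} describes $\Sing(X_a)$ as a disjoint union of curves, one for each singular $V_j$, along which $X_a$ has transverse $V_j$-singularities; since $F_j$ is an $\rA_{n_j}$-triangle, the transverse type is $\rA_{n_j}$. The key compatibility is that, under the Cayley-trick correspondence, the restriction of $\mathcal{M}$ to $F_j$ is precisely the subdivision by the single dual tropical leg $\Gamma_j$, namely the unique minimal triangulation of the $\rA_{n_j}$-triangle into unit triangles; the induced toric resolution of a curve of transverse $\rA_{n_j}$-singularities on $X_a$ is therefore the minimal resolution.

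The main obstacle of this plan is the Cayley-trick bookkeeping (Appendix~\ref{sec:Cayley}) that converts the tropical data into a coherent, simplicial, crepant refinement of $\widetilde{\sigma}$ whose maximal cells are of type (T) or (P); once this dictionary is set up, (i)--(iii) reduce to elementary toric combinatorics and (iv) reduces to the classical theory of $\rA_n$ resolutions.
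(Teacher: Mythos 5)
Your overall strategy is the same as the paper's: via the Cayley trick (Theorem~\ref{thm:cayley_trick}) the dual tropical arrangement yields a regular fine triangulation of the Cayley polytope, inducing a refinement $\widetilde{\Sigma}$ of $\widetilde{\sigma}$; one sets $\cY = \cY_{\widetilde{\Sigma}}$ with $\Pi$ the induced toric morphism, and reads off (i), (ii) from the combinatorics of the fan (the paper gets terminality directly from fineness of the triangulation --- every lattice point of $\widetilde{F}$ is used --- whereas your T/P cell analysis spells out the same fact). For (iv) both proofs invoke Proposition~\ref{pro:singularities_general_fibre_Altmann}; the paper deduces minimality from the fact that $\cY$, being terminal, is regular in codimension two.

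The substantive gap is in (iii). You write ``$\cY$ is Cohen--Macaulay of dimension $r+3$ and the zero fibre has the expected dimension $3$, whence \dots\ is a regular sequence,'' but the dimension claim is precisely the thing that needs proving. A priori $g^{-1}(0) = \Pi^{-1}(X)$ contains the toric closed subvariety $Y$ (of dimension~$3$), and it could acquire extra components from exceptional divisors of $\Pi$ sitting over strata of $X \subset \cX$. ``Specializing via $e_1 = \cdots = e_r = e_0$'' only produces the closed immersion $Y \hookrightarrow \cY$; the assertion that this immersion \emph{is} the scheme-theoretic fibre is essentially equivalent to the flatness you want (together with Proposition~\ref{pro:Altmann_deformation}), so the argument as written is circular. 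The paper breaks the circle by establishing the regular-sequence property directly: on each maximal chart $\cY_\lambda$, the monomials $\chi^{e_0^\star}, \dots, \chi^{e_r^\star}$ are expressions in pairwise disjoint sets of Cox coordinates, and then the trinomials $\chi^{e_j^\star} - \chi^{e_0^\star}$, $j=1,\dots,r$, together with $\chi^{e_0^\star}$ form a regular sequence by~\cite[Lemma~4.3]{petracci_mavlyutov}. With that in hand, the Cartesianity of the diagram (and hence $Y = g^{-1}(0)$) follows, rather than being assumed. To repair your argument you would need an independent proof that $\dim \Pi^{-1}(X) = 3$, for instance by bounding the fibres of $\Pi$ over each torus-invariant stratum met by $X$; the Cox-coordinate route is cleaner.
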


\begin{proof}
  By construction, $\cX$ is the toric variety corresponding to the cone
  \[
    \widetilde{\sigma} = \langle e_0, F_1+e_1, \dots, F_r+e_r
    \rangle_+ \subset \widetilde{N}=\overline{N}\oplus \ZZ e_0\oplus
    \cdots \oplus \ZZ e_r
  \]
  This is the cone over the $(r+2)$-dimensional polytope
\begin{equation*}
\widetilde{F} = \conv{e_0, F_1 + e_1, \dots, F_r + e_r}
\end{equation*}
which is none other than the Cayley polytope --- see
Definition~\ref{dfn:cayley_polytope} --- of the Minkowski decomposition.
According to Theorem~\ref{thm:cayley_trick}, the tropical
arrangement determines a coherent fine mixed subdivision of $F$
induced by a regular triangulation of $\widetilde{F}$. 

This regular triangulation induces a subdivision of
$\widetilde{\sigma}$. The cones of this subdivision are the elements of a fan
$\Sigma$ in $\widetilde{N}$ and we denote by $\cY=\cY_\Sigma$ the
corresponding toric variety and by $\Pi \colon \cY \to \cX$ the
induced partial resolution. Because the subdivision
arises from a triangulation of $\widetilde{F}$, this morphism is
crepant.

Because $\Pi$ is crepant and comes from a triangulation, the variety
$\cY$ is Gorenstein and $\QQ$-factorial. By construction, every lattice
point of $\widetilde{F}$ is the primitive generator of some ray of the
fan defining $\cY$, hence $\cY$ has terminal singularities. 

\smallskip

The key point is to show that the morphisms
$g=f \circ \Pi \colon \cY \to \AA^r$ and its restriction
$\cE=\Pi^{-1}(\cD)\to \AA^r$ are flat, and this is what we do
next.

Exactly as in the proof of Proposition~\ref{pro:Altmann_deformation},
it is enough to show that
\begin{equation}
  \label{eq:regular_seq}
\chi^{e_1^\star} - \chi^{e_0^\star}, \cdots, \chi^{e_r^\star} - \chi^{e_0^\star}, \chi^{e_0^\star}  
\end{equation}
is a regular sequence in $\cO_{\cY}$. We prove this by showing
that~(\ref{eq:regular_seq}) is a regular sequence in every Zariski open
toric subset of $\cY$. For this purpose, pick a maximal --- that is,
$(r+3)$-dimensional --- cone $\lambda$ of the subdivision and denote by
\[
\cY_\lambda \subset \cY
\]
the corresponding Zariski open toric subset. We need to show
that~(\ref{eq:regular_seq}) is a regular sequence in
$\cO_{\cY_\lambda}$. 

The cone $\lambda$ is generated by vectors $v_1,\dots, v_{r+3}\in \widetilde{N}$
such that for all $k\in \{1,\dots, r+3\}$, there is a unique $i\in
\{1,\dots,r\}$ such that $v_k\in e_i+F_i$. For all $k\in \{1, \dots, r+3\}$, denote by $x_k$ the
Cox coordinate corresponding to the generator $v_k$. Then in Cox
coordinates we have, for all $i=0,\dots,r$
\[
\chi^{e_i^\star} = \prod_{k\colon v_k\in e_i+F_i} x_k
\]
We see that for all
$0\leq i,j\leq r$, $i\neq j$, the two functions
$\chi^{e_i^\star}, \chi^{e_j^\star}$ are expressions in two disjoint sets of Cox
coordinates, namely the set of $x_k$ such that $v_k\in e_i+F_i$ and
the set of $x_k$ such that $v_k\in e_j+F_j$. 
It follows from~\cite[Lemma~4.3]{petracci_mavlyutov}
that~(\ref{eq:regular_seq})
is a regular sequence in the Cox ring of $\cY_\lambda$. As the
elements in \eqref{eq:regular_seq} belong to
$\cO_{\cY_\lambda}$, which is the degree zero part of the
$\mathrm{Cl}(\cY_\lambda)$-grading on the Cox ring of $\cY_\lambda$,
it follows that ~(\ref{eq:regular_seq}) is a regular sequence in
$\cO_{\cY_\lambda}$.

\smallskip

The statement concerning the singular set of $X_a$ in Part~(iv)
follows immediately from
Proposition~\ref{pro:singularities_general_fibre_Altmann},
Part~(1). Because $\cY$ has terminal singularities, and terminal
singularities are nonsingular in codimension two, the morphism
$\Pi\colon \cY \to \cX$ is a crepant resolution of the singular strata of $\cX$ that
cause, according to Part~(1) of Proposition~\ref{pro:singularities_general_fibre_Altmann},
the singularities of $X_a$, and hence $\pi_a\colon Y_a\to X_a$ is the
minimal resolution.
\end{proof}

\subsection{The global Altmann deformation}
\label{sec:glob-altm-deform}

\begin{pro} \label{prop:altmann_global_Fano}
  Let $N$ be a $3$-dimensional lattice, $P\subset N$ a reflexive
  polytope, and $F\leq P$ a facet of $P$ endowed with a Minkowski decomposition
	\begin{equation} \label{eq:mink_deco_altmann_global}
		F = F_1 + \cdots + F_r
        \end{equation}

Let $(X_P,D_P)$ be the corresponding Fano toric pair, and denote by
$(X_F, D_F) \subseteq (X_P,D_P)$ the Zariski open affine toric pair
corresponding to $F$.

There exists a deformation
  \[
    \xymatrix{
      (X_P,D_P) \ar@{^(->}[r]\ar[d]& (\cX,\cD) \ar[d]^f \\
\{0\} \ar@{^(->}[r]& \AA^r}
  \]
  of the pair $(X_P,D_P)$ that induces the local Altmann deformation
  of the pair $(X_F,D_F)$.
\end{pro}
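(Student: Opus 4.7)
The plan is to construct $(\cX,\cD)\to \AA^r$ by Zariski gluing: on the affine open $X_F\subset X_P$ use the local Altmann deformation of Proposition~\ref{pro:Altmann_deformation}; on every other maximal affine open $X_{\sigma_{F'}}$, with $F'\neq F$ a facet of $P$, use the trivial deformation $X_{\sigma_{F'}}\times \AA^r$; glue these along their common overlaps via canonical isomorphisms. This approach exploits the well-known fact that Altmann's homogeneous deformation is supported at the torus-fixed point of $X_F$ and hence becomes canonically trivial away from that point.

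The first and main step is to establish that the local Altmann deformation $f_F\colon \cX_F\to \AA^r$, when restricted to the Zariski open $X_\tau\subset X_F$ corresponding to any proper face $\tau\subsetneq \sigma_F$, is canonically isomorphic over $\AA^r$ to the trivial deformation $X_\tau\times \AA^r$. Concretely, the face $\widetilde\tau\leq \widetilde\sigma$ of the Cayley cone of Proposition~\ref{pro:Altmann_deformation} containing the image $\iota(\tau)$ is spanned by $e_0$ together with a chosen vertex $v_i+e_i$ of each summand $F_i+e_i$ (corresponding to the decomposition of $\tau$ induced from $F=F_1+\cdots+F_r$); on the associated affine chart the change of variables $u=\chi^{e_0^\star}$, $v_i=\chi^{e_i^\star}-\chi^{e_0^\star}$ exhibits a product structure $X_{\widetilde\tau}\cong X_\tau\times \AA^r$ as a scheme over $\AA^r=\Spec \CC[v_1,\ldots,v_r]$. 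The computations of Proposition~\ref{pro:singularities_general_fibre_Altmann} give a template for this reduction. This canonical product decomposition is the gluing datum needed in the next step.

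I then assemble $(\cX,\cD)\to \AA^r$ by Zariski descent: on each pair of maximal charts $(X_F,X_{\sigma_{F'}})$ the gluing on the intersection $X_{\sigma_F\cap \sigma_{F'}}$ is the Step~1 trivialization on the Altmann side, matched with the tautological product structure on the trivial side. Cocycle conditions on triple intersections are automatic because each side of each gluing is a canonical product with $\AA^r$, so all required compatibilities reduce to compatibilities on $X_P$ itself. The toric boundary $\cD$ glues together from the Altmann boundary $\cD_F$ and the divisors $D_{\sigma_{F'}}\times \AA^r$.

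The properties claimed in the proposition are then local on $\cX$ and follow chart-by-chart: flatness of $\cX\to \AA^r$ and of $\cD\to \AA^r$ is Proposition~\ref{pro:Altmann_deformation} on the Altmann chart and obvious on the trivial charts; the central fibre is $X_P$ by construction; and the restriction to a neighborhood of $X_F$ is, by construction, the local Altmann deformation. The main obstacle I expect is Step~1, the canonical trivialization of the local Altmann deformation on the boundary toric strata of $X_F$: this amounts to a direct combinatorial analysis of the faces of the Cayley cone $\widetilde\sigma$ corresponding to proper faces of $\sigma_F=\langle F\rangle_+$, and—while unambiguous—requires some bookkeeping in translating the decompositions of the sub-faces of $F$ induced from the global Minkowski decomposition into explicit sub-cones of $\widetilde\sigma$ that split off the $\AA^r$-factor.
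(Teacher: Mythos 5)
Your Step~1 --- that the local Altmann deformation $f_F\colon\cX_F\to\AA^r$, restricted to the Zariski open $X_\tau\subset X_F$ for any proper face $\tau\subsetneq\sigma_F$, is \emph{canonically trivial} --- is false, and this sinks the gluing argument.

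Already your description of the relevant face $\widetilde\tau\leq\widetilde\sigma$ is incorrect for edges. If $\tau=\langle G\times\{1\}\rangle_+$ with $G\leq F$ an edge, the smallest face of $\widetilde\sigma$ containing $\iota(\tau)$ is
$\widetilde\tau=\langle e_0,\,G_1+e_1,\dots,G_r+e_r\rangle_+$,
where $G=G_1+\cdots+G_r$ is the induced face decomposition and each $G_j\leq F_j$ is a vertex \emph{or an edge}; whenever two or more $G_j$ are edges this cone has more than $r+1$ rays and is not simplicial, so it is not ``spanned by $e_0$ and a chosen vertex of each summand.'' That description is valid only when $\tau$ is the cone over a vertex of $F$. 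But the charts you actually need to trivialise, to glue $X_F$ to an adjacent facet chart $X_{\sigma_{F'}}$, are precisely those over the edges $e=F\cap F'$.

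The substantive failure is worse: the family $X_{\widetilde\tau}\to\AA^r$ is simply \emph{not} a trivial deformation when $\ell_e\geq 2$ and the induced decomposition of $e$ has two or more nontrivial summands. One checks (this is essentially the content of Proposition~\ref{pro:singularities_general_fibre_Altmann}, the very result you cite as a ``template'') that the singular locus of the general fibre near $\Gamma^e$ splits into several disjoint components, or becomes empty, while the central fibre $X_\tau$ has a connected transverse $\mathrm{A}_{\ell_e-1}$-singularity along $\Gamma^e$. Concretely, take $F=2T=T+T$ with $T=\conv{(0,0),(1,0),(0,1)}$ (an admissible decomposition into two $\mathrm{A}_0$-triangles) and any edge $e$ of $F$: the induced decomposition of $e$ is a sum of two unit segments. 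The chart $X_{\widetilde\tau}$ is, up to a $\CC^*$ factor, $\{xy=u_1u_2\}\subset\AA^4_{x,y,u_1,u_2}$ with $t_j=u_j-u_0$; substituting $u_j=u_0+t_j$ gives $\{xy=(u_0+t_1)(u_0+t_2)\}\subset\AA^3_{x,y,u_0}\times\AA^2_{t_1,t_2}$, whose central fibre has a transverse $\mathrm{A}_1$-singularity and whose general fibre ($t_1\neq t_2$) is smooth. No isomorphism over $\AA^r$ with $X_\tau\times\AA^r$ can exist.

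The conceptual point is that the global Altmann deformation necessarily perturbs the $1$-strata $\Gamma^e$ for edges $e\leq F$, so it is \emph{not} trivial on the neighbouring facet charts $X_{\sigma_{F'}}$. Gluing the local Altmann deformation on $X_F$ to a product $X_{\sigma_{F'}}\times\AA^r$ on every other chart therefore cannot work, regardless of the choice of gluing data. The paper avoids this entirely: it does not glue local pieces, but applies the global Cayley-cone construction of~\cite[Theorem~6.1]{petracci_mavlyutov} to the $4$-dimensional cone $\sigma_\dagger=\langle P\times\{e_\dagger\}\rangle_+$ encoding the polarized toric pair $(X_P,D_P)$, and then verifies in Cox coordinates on the distinguished maximal chart that the restriction recovers the local Altmann deformation of $(X_F,D_F)$.
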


\begin{dfn}
  \label{dfn:global_Altmann_deformation}
  The diagram of Proposition~\ref{prop:altmann_global_Fano} is the
  \emph{global Altmann deformation} of the pair $(X_P,D_P)$ associated with
  the given Minkowski decomposition of the facet $F\leq P$.
\end{dfn}

\begin{proof}[Proof of Proposition~\ref{prop:altmann_global_Fano}]
  The proof is a straightforward application of the general
  construction given in~\cite[Theorem~6.1]{petracci_mavlyutov}.

  Denote by $v\in Q=P^\star\subset M$ the vertex of the polar polytope
  corresponding to $F$; then tautologically $F\subset \{v=-1\}$. We
  choose the following data to interpret
  Equation~\ref{eq:mink_deco_altmann_global}:
  \begin{enumerate}
  \item Lattice polygons $F_i\subset \{v=0\}$;
  \item A lattice vector $f_0\in \{v=-1\}$ --- for
    example we can choose $f_0$ to be a vertex of $F$ --- such that
    \[
F=\{f_0\} +  F_1+\cdots + F_r
\]
    holds in $N$. We write $F_0=\{f_0\}\subset N$, a
    $0$-dimensional polytope.
  \end{enumerate}
  
  We have that $(F, F_0, F_1, \dots, F_r, v)$ is a
  $\partial$-deformation datum, in the language of
  \cite[Definition~3.1]{petracci_mavlyutov}, for $(N, \sigma_F)$, where
  $\sigma_F$ is the cone over $F$. It is a simple exercise to show
  that the construction of~\cite[Theorem~4.1]{petracci_mavlyutov}
  applied to $(N, \sigma_F)$ starting from this $\partial$-deformation
  datum produces the local Altmann deformation of $(X_F, D_F)$.

  \smallskip
  
  Now consider the $4$-dimensional lattice
  $N_\dagger = N \oplus \ZZ e_\dagger$ and the cone
  $\sigma_\dagger=\langle P\times \{e_\dagger\}\rangle_+\subset
  N_\dagger$.  This is the cone corresponding to the polarised projective
  toric pair $(X_P, D_P)$ in the language
  of~\cite[Lemma~2.3]{petracci_mavlyutov}, and
  $(F + e_\dagger, F_0+e_\dagger, F_1, \dots, F_r, v)$ is a
  $\partial$-deformation datum for $(N_\dagger,
  \sigma_\dagger)$.\footnote{The direct sum decomposition $N_\dagger =
    N \oplus \ZZ e_\dagger$ induces a dual direct sum decomposition
    $M_\dagger = M \oplus \ZZ e^\star_\dagger$ (the notation is
    self-explanatory) by which we think of 
    $v\in M$ as of an element of $M_\dagger$ such that $\langle v,
    e_\dagger \rangle =0$.} The
  construction of~\cite[Theorem~6.1]{petracci_mavlyutov} applied to
  $(N_\dagger, \sigma_\dagger)$ starting from this $\partial$-deformation
  datum produces a global deformation of the Fano
  toric pair $(X_P, D_P)$ over $\AA^r$.

  \smallskip

  To finish the proof we verify that the restriction of this global
  deformation to $(X_F,D_F)$ is the local Altmann
  deformation. This, in fact, is a straightforward exercise in
  unpacking the construction of the deformation associated to a
  $\partial$-deformation datum described in the statement
  of~\cite[Theorem~3.5]{petracci_mavlyutov}. We sketch the
  verification leaving some of the details to the reader.

  The global deformation is constructed as follows. Consider the
  lattices:
  \[
    \widetilde{N}=N\oplus \ZZ e_1 \oplus \cdots \oplus \ZZ e_r, \quad
    \text{and} \quad
   \widetilde{N}_\dagger=N_\dagger \oplus \ZZ e_1 \oplus \cdots \oplus \ZZ e_r
  \]
  and the cone:
  \[
 C = \big\langle \sigma_\dagger , \{f_0+e_\dagger\}-e_1-\cdots -e_r,
F_1+e_1, \dots, F_r + e_r \big\rangle_+ \subset \widetilde{N}_\dagger
\]
The element $e_\dagger\in \interior C$ defines a $\ZZ$-grading on the ring $\CC[C^\vee
\cap \widetilde{M}_\dagger]$ and hence a projective toric variety:
\[
\widetilde{X} = \Proj \CC[C^\vee \cap \widetilde{M}_\dagger]
 \]
We now describe the total space of the global 
deformation as a subvariety of $\widetilde{X} \times \AA^r$ defined by
explicit equations in Cox coordinates. First note that
$\widetilde{X}$ is the toric variety of the fan
$\widetilde{\Sigma}\subset \widetilde{N}$ whose cones are the
images of the proper faces of $C$ under the obvious projection
\[
\widetilde{N}_\dagger \to \widetilde{N}=\widetilde{N}_\dagger/\ZZ e_\dagger
\]
Let us denote by $\widetilde{\Sigma}(1)$ the set of primitive generators of
the rays of $\widetilde{\Sigma}$. The Cox coordinates of $\widetilde{X}$ are
indexed by the elements $\rho \in \widetilde{\Sigma}(1)$ and we denote
by $x_\rho$ the corresponding coordinate, and by $t_j$, $j=1,\dots,r$
the standard coordinate functions on $\AA^r$. The total space of the
global deformation is the subvariety $\cX\subset
\widetilde{X}\times \AA^r$ defined by the homogeneous ideal generated
by the following trinomials in Cox coordinates:
\begin{equation}
  \label{eq:global_Altmann_equations}
\prod_{\langle e_j^\star , \rho \rangle>0} x_\rho^{\langle e_j^\star ,
  \rho \rangle}
- \prod_{\langle e_j^\star , \rho \rangle<0}x_\rho^{-\langle e_j^\star ,
  \rho \rangle}
-t_j\prod_{\rho} x_\rho^{\langle v,\rho\rangle}
\prod_{\langle e_j^\star , \rho \rangle<0} x_\rho^{-\langle
  e_j^\star,\rho\rangle}  
\end{equation}
for $j=1, \dots, r$. We need to compare these equations with the
equations that define the local Altmann deformation. Now, on the face of it, we don't understand
all the cones of $\widetilde{\Sigma}$. But we do know one maximal
cone, namely the cone:
\[
\widetilde{\sigma}=\big\langle \{f_0\}-e_1-\cdots -e_r, F_1+e_1,\dots,
F_r+e_r \big\rangle_+ \subset \widetilde{N}
\]
that is used in the construction of the local Altmann
deformation. Indeed denote by
\[
  \widetilde{X}_F=\Spec \CC[\widetilde{\sigma}^\vee \cap
  \widetilde{M}] \subset \widetilde{X}
\]
the corresponding affine open subset. It is immediate to check that
the restriction of Equations~\eqref{eq:mink_deco_altmann_global} to
$\widetilde{X}_F$ (for all $\rho \in \widetilde{\Sigma}$, if $\rho \not
\in \widetilde{\sigma}$ set $x_\rho =1$) give the equations of the
local Altmann deformation in $\widetilde{X}_F\times \AA^r$.   
\end{proof}

\subsection{Simultaneous resolution of the global Altmann deformation}
\label{sec:simult-part-resol}

The proof of the following Proposition will be given in \S~\ref{sec:proof_defoII}.

\begin{pro}
  \label{pro:TdefoII}
Let $N$ be a $3$-dimensional lattice, $P\subset N$ a reflexive
polytope, and $(X_P,D_P)$ be the corresponding Fano toric pair.

Let $F\leq P$ be a facet endowed with a Minkowski decomposition
\[
F=F_1+\cdots + F_r
\]
Denote by $S^{\text{an}}$ the analytic germ of $0\in \AA^r$ and by
\[
  \xymatrix{
    (X_P,D_P) \ar@{^(->}[r] \ar[d] & (\cX^{\text{an}}, \cD^{\text{an}}) \ar[d]^{f^{\text{an}}}\\ 
    \{0\} \ar@{^(->}[r] & S^{\text{an}}}
\]
the analytification of the global Altmann deformation constructed in
Proposition~\ref{prop:altmann_global_Fano}. 

Assume that $P$ is endowed with amd and let $\pi \colon (Y,E)\to
(X_P, D_P)$ be the induced partial resolution. There exists a
diagram of complex analytic deformations:
  \[
    \xymatrix{ (Y,E)\ar[dd]\ar[dr]^\pi\ar@{^{(}->}[rrr] &   & &
      \bigl(\cY^{\text{an}}, \cE^{\text{an}}\bigr)
      \ar[dd]_(.3){g^{\text{an}}}\ar[dr]^{\Pi^{\text{an}}} & \\
      &(X_P,D_P)\ar[dl] \ar@{^(-}[rr]&  & \ar@{->}[r]&
      \bigl(\cX^{\text{an}},\cD^{\text{an}}\bigr)
      \ar[dl]^{f^{\text{an}}} \\
      \{0\} \ar@{^{(}->}[rrr]& & &  S^{\text{an}} & }
  \]
\end{pro}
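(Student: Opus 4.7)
The plan is to construct $(\cY^{\text{an}}, \cE^{\text{an}})$ by gluing together local simultaneous partial resolutions over an open cover of $\cX^{\text{an}}$ indexed by the facets of $P$.

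First, on the open set $X_F \subset X_P$ corresponding to the distinguished facet $F$ whose Minkowski decomposition drives the global Altmann deformation, the restriction of $f$ is, by the concluding argument of the proof of Proposition~\ref{prop:altmann_global_Fano}, isomorphic to the local Altmann deformation of $(X_F, D_F)$. Since the amd for $P$ restricts to a dual tropical arrangement subordinate to this Minkowski decomposition, Proposition~\ref{pro:admissible_decomposition_Matsushita} produces an algebraic simultaneous partial resolution $g_F \colon (\cY_F, \cE_F) \to \AA^r$ compatible with the induced partial resolution $Y_F \to X_F$ of the central fibre. Analytifying and restricting to the germ at the origin gives an analytic partial resolution over the analytic germ of $X_F \subset \cX^{\text{an}}$.

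Next, for any other facet $G \leq P$, I would show that the restriction of $f^{\text{an}}$ to an analytic neighborhood of $X_G$ in $\cX^{\text{an}}$ is locally analytically trivial over $S^{\text{an}}$. The heuristic is that the trinomial equations~\eqref{eq:global_Altmann_equations} only involve Cox monomials supported on rays contributed by the Minkowski decomposition of $F$; these monomials become units on the locus away from $X_F$, and after inverting them the holomorphic implicit function theorem (applied on a small neighborhood of $0 \in S^{\text{an}}$) trivializes the deformation. Assuming this, crossing the induced partial resolution $Y_G \to X_G$ with $S^{\text{an}}$ yields a simultaneous partial resolution over this open set, compatible with the central fibre $Y \to X_P$ on the stratum of $G$.

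The final step is to glue these local resolutions along the overlaps $X_e$ for edges $e \leq P$ shared between two facets. The relevant combinatorial input is the matching condition of Definition~\ref{dfn:decomposition_data}: for a dull edge $e$ shared between $F$ and $G$, the two induced partitions $\{L^F_i\}$ and $\{L^G_j\}$ of the unit segments of $e$ satisfy $|L^F_i \cap L^G_j| \leq 1$, which is precisely the condition ensuring that the local partial resolutions coming from the two sides refine to a global partial resolution with qODP singularities (cf. Lemma~\ref{lem:qODP}) that can be simultaneously smoothed by the Altmann family. For non-dull edges the transverse singularity along $X_e$ is not of type A, and the local partial resolutions from either side agree after the analytic trivialization of the second paragraph, so no further matching is required. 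The main obstacle will be the triviality claim in the second paragraph: the algebraic global Altmann deformation does not become trivial on $X_G$ with $G \neq F$ as an algebraic family, which is exactly why the proposition is formulated for the analytification $f^{\text{an}}$, where the implicit function theorem on the analytic germ $S^{\text{an}}$ is available.
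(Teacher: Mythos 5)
Your proposal takes a genuinely different and more constructive route than the paper, but it has several genuine gaps and at least one outright error.

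First, the paper's proof does not construct the simultaneous resolution by gluing local resolutions. Instead, it is an abstract deformation-theoretic argument: the global Altmann deformation corresponds to a map $j\colon S_v^{\text{an}} \to \cM_{X,D}^{\text{an}}$ into the miniversal deformation space of $(X,D)$, and the content of the proposition is that this map lifts along the blow-down morphism $\Pi_\star\colon \cM_{Y,E}^{\text{an}}\to \cM_{X,D}^{\text{an}}$. The lift is first shown to exist formally using Lemma~\ref{lem:injectivities} and Rim's $\TT$-equivariant formal versal families (Lemma~\ref{lem:Ginvariant_formal}, building on Theorem~\ref{thm:Tequivariant_versal}), and then converted to an analytic lift via Artin approximation \cite[Theorem~1.5(ii)]{MR232018}. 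The paper's Remark~\ref{rem:global_altmann_resolution}(1) explicitly acknowledges a constructive algebraic approach like yours might work, so the general plan is not unreasonable — but the paper pursues the abstract route precisely because it avoids the technical issues your proposal elides.

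Now the gaps. Your second paragraph — that $f^{\text{an}}$ is locally analytically trivial over a neighborhood of $X_G$ for $G\neq F$ — is the heart of the argument, and it is not proved. The ``heuristic'' that the relevant Cox monomials become units away from $X_F$ is not substantiated by the explicit form of Equations~\eqref{eq:global_Altmann_equations}, which involve monomials over all rays $\rho$ with $\langle v,\rho\rangle>0$ or $\langle e_j^\star,\rho\rangle\neq 0$; whether these are units on $X_G$ requires an actual computation in the fan $\widetilde\Sigma$ that you do not give. What is true is that the Kodaira--Spencer class lives in the $v$-isotypic part $\T^1_{X,D}(v)$, which by Lemma~\ref{lem:T1(X,D)}(2) is entirely localized at the $0$-stratum $x^v$; this is a first-order statement, and upgrading it to local triviality of the full analytic family requires an argument (essentially the content of what the paper proves abstractly).

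Second, your final paragraph invokes the matching condition (Definition~\ref{dfn:decomposition_data}(c)) as the ``relevant combinatorial input'' for gluing. This contradicts Remark~\ref{rem:global_altmann_resolution}(2), which states explicitly that the matching condition is \emph{not} needed for Proposition~\ref{pro:TdefoII}: one only needs a choice of admissible Minkowski decomposition and dual tropical arrangement on each facet. If your gluing argument genuinely required the matching condition, that would be a sign something was wrong with it.

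Third, the sentence ``for non-dull edges the transverse singularity along $X_e$ is not of type~A'' is incorrect. By Definition~\ref{dfn:sharp_edges}, dullness is about the \emph{colength} $k_e$ (the length of the dual edge of $P^\star$), whereas the transverse singularity type along $\Gamma^e$ is $\text{A}_{\ell_e-1}$ and depends only on the \emph{length} $\ell_e$; see the notational discussion in \S~\ref{sec:notation-1}. These two integers are independent. Confusing them changes which edges need care in the gluing step and undermines the claimed dichotomy in your last paragraph.
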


\begin{rem}
  \label{rem:global_altmann_resolution}
  \begin{enumerate}[(1)]
  \item We prove the result in \S~\ref{sec:proof_defoII} using
    deformation theory. An algebraic version of the statement probably holds, and it
    may be be possible to prove it by the methods of the proof of
    Proposition~\ref{prop:altmann_global_Fano}.
  \item In the statement it is enough to choose, for all facets of
    $P$, an admissible Minkowski decomposition and a dual tropical
    arrangement. It is not necessary to assume the compatibility
    condition along dull edges. We will not need this more general
    statement.
  \end{enumerate}
\end{rem}

\subsection{Some results on infinitesimal deformations}
\label{sec:some-remarks-infin}

We collect some facts about infinitesimal deformations of
the pair $(X,D)$ that we will need in \S~\ref{sec:proof_defoII} when
we prove Proposition~\ref{pro:TdefoII}. The reader can skip this
material and return to it when it is needed. 

\begin{nota}
  \label{sec:notation_reps}
  If $\T$ is a representation of $\TT =\Spec \CC[M]$ and $v\in M$, we
  denote by $\T(v)\subset \T$ the invariant summand on which $\TT$
  acts with character $v$.

  We also denote by $\CC(v)$ the one-dimensional representation of
  $\TT$ on which $\TT$ acts with character $v$.

  If $X$ is a scheme over a field $k$, we denote by $\T^1_X$ the
  tangent space of the deformation functor $\Def{X}$; if $D\subset X$
  is an effective Cartier divisor, we denote by $\T^1_{X,D}$ the
  tangent space of the deformation functor $\Def{X,D}$ of the pair
  $(X,D)$. See Appendix~\ref{sec:appendix} for a discussion of these
  matters.
\end{nota}

\begin{lem}
  \label{lem:T1(XD)and(X)}
    Let $(X,D)$ be a $3$-dimensional Gorenstein Fano toric pair. Denote by $Q\subset M$ be
    the moment polyhedron of $D$.

  There is an exact sequence:
  \[
    \rH^0(X,T_X) \to \rH^0\left(D, N_D X\right) \to \T^1_{X,D} \to
    \T^1_X \to \rH^1\left(D,N_D X\right) 
  \]
  where:
  \begin{enumerate}
  \item $\rH^1\left(D,N_D X\right) = (0)$;
  \item Denote by $Q^1$ the union of the edges of $Q$, including the
    vertices.
    \[
     \text{Write}\quad \C =   \coker \Bigl[  \rH^0(X,T_X) \to \rH^0\left(D, N_D
        X\right)\Bigr] ,\quad \text{then:} \quad  
      \C= \bigoplus_{v\in Q^1\cap M} \CC(v)
    \]
    as a representation of $\TT$;
  \item For all $v\in  Q^1\cap M$, $\T^1_X(v)=(0)$ and hence $\T^1_{X,D}(v)=\C(v)$.
  \end{enumerate}
\end{lem}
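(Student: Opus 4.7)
The plan is to derive the exact sequence from the general theory of deformations of pairs developed in Appendix~\ref{sec:appendix}, then treat the three assertions in order. The sequence is the standard tangent-obstruction comparison attached to the forgetful morphism of deformation functors $\Def{X,D}\to \Def{X}$, where the map $\rH^0(X, T_X) \to \rH^0(D, N_D X)$ is restriction of vector fields to their normal components along $D$; its kernel is $\rH^0(X, T_X(-\log D)) = N\otimes\CC$, namely the infinitesimal torus automorphisms, which act trivially on the pair.

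For part (1), I would use that $D \in |{-}K_X|$, so by adjunction $K_D = 0$ and $N_D X \cong \cO_D(-K_X)$. Then the short exact sequence
\[
0 \to \cO_X \to \cO_X(-K_X) \to N_D X \to 0
\]
together with the vanishings $\rH^i(X, \cO_X) = 0$ for $i \geq 1$ (any projective toric variety) and $\rH^i(X, -K_X) = 0$ for $i \geq 1$ (Batyrev--Borisov/Demazure vanishing for an ample line bundle on a Gorenstein toric variety) immediately forces $\rH^1(D, N_D X) = 0$.

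For part (2), I would compute both sides as $\TT$-representations. From the cohomology of the previous sequence and $\rH^1(X, \cO_X) = 0$,
\[
\rH^0(D, N_D X) = \rH^0(X, -K_X)/\CC = \bigoplus_{v \in \partial Q \cap M} \CC(v),
\]
using that the reflexive polytope $Q$ has unique interior lattice point $0$. The classical Demazure description of toric vector fields gives
\[
\rH^0(X, T_X) = (N\otimes\CC) \oplus \bigoplus_{v \in R(\Sigma)} \CC(v),
\]
where the Demazure roots $R(\Sigma)$ are precisely the lattice points in the relative interiors of the facets of $Q$ (in the reflexive case the condition $\langle v, n_\rho\rangle \geq 0$ for $\rho \neq \rho_F$ is automatic for an integer $v$ in the relative interior of the facet dual to $\rho_F$). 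The $N\otimes\CC$ summand maps to zero (torus fields preserve $D$), while each root summand $\CC(v)$ maps isomorphically onto the corresponding weight space of $\rH^0(D, N_D X)$, since the 1-parameter subgroup associated to $v$ moves precisely the facet of $D$ dual to $v$. Hence the cokernel is
\[
\C = \bigoplus_{v \in \partial Q \cap M \setminus R(\Sigma)} \CC(v) = \bigoplus_{v \in Q^1 \cap M} \CC(v),
\]
since a boundary lattice point of $Q$ either lies in the relative interior of a facet (Demazure root) or on the 1-skeleton $Q^1$.

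For part (3), the vanishing $\T^1_X(v) = 0$ for $v \in Q^1 \cap M$ would follow from the $\TT$-equivariant computation of $\T^1$ of Gorenstein toric Fano 3-folds after Altmann, Mavlyutov, and Petracci: the nonzero weights of $\T^1_X$ are concentrated in degrees that lie outside $Q$ (each nontrivial Minkowski datum on a facet $F\leq P$ contributes to $\T^1_X$ in a degree opposite to the vertex of $Q$ dual to $F$), so no weight in $Q^1 \cap M \subset Q\cap M$ can support a deformation. The main technical point here is carefully reconciling the sign conventions between Altmann's local formulas and the global Fano setting; after that, combining (2) and (3) with the exact sequence yields $\T^1_{X,D}(v) = \C(v) = \CC(v)$ for every $v \in Q^1 \cap M$, as claimed.
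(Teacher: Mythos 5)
Your proposal is correct and follows essentially the same route as the paper: the five-term exact sequence comes from the forgetful morphism $\Deff{X}{D}\to\Def{X}$ (equivalently, the $\Ext$ long exact sequence for $\Omega_X\to\Omega_X(\log D)\to\cO_D$ from Appendix~\ref{sec:appendix}), Part~(1) from the short exact sequence $0\to\cO_X\to\cO_X(-K_X)\to N_DX\to 0$ with standard toric vanishing, and Part~(3) by appealing to Altmann's description of $\T^1_{X_\sigma}$ for Gorenstein toric $3$-fold cones. The one place you diverge from the paper is Part~(2): you compute $\rH^0(X,T_X)$ globally via Demazure roots (lattice points in the relative interiors of the facets of $Q$), whereas the paper uses the local description of $\Der_k(R_\sigma,R_\sigma)$ as in \cite{2018arXiv180909070S} on each toric affine chart and then glues --- these are equivalent, but yours is arguably more direct in the compact setting, provided you are comfortable that the Demazure decomposition holds for singular complete toric varieties (it does, and your verification that the conditions $\langle v, n_\rho\rangle\geq 0$ are automatic for an integral point in the relative interior of a facet of a reflexive polytope is the right observation). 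Your sketch of Part~(3) explicitly flags the unresolved sign conventions in Altmann's formula; the paper's own proof of this part is equally terse, simply citing \cite[\S4.3]{MR1798979} and deferring to the toric affine cover, so you have matched the level of detail in the source.
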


\begin{rem}
  \label{rem:T1(X,D)}
  \begin{enumerate}[(1)]
  \item   The statement applies both to the case $X$ compact and $X$
  affine. If $X$ is compact, then $X$ corresponds to a reflexive
  polytope $P\subset N$ and $Q=P^\star$ is the polar polytope. If $X$
  is affine then $X=X_\sigma$ for some Gorenstein cone $\sigma \subset
  N$; there is a vector $u\in \sigma^\vee$ such that $\divisor
  (\chi^u)=D$ and $Q=-u+\sigma^\vee$.
\item The proof follows easily from the difficult explicit description of
  $\T^1_{X_\sigma}$, for $\sigma \subset N$ a $3$-dimensional
  Gorenstein cone, given in Altmann's paper~\cite[\S~4.3]{MR1798979}.
  \end{enumerate}
\end{rem}

\begin{proof}
By Lemma~\ref{lem:two_short_exact_sequence_with_log_differentials} we
have an exact sequence:
\[
(0) \to \Omega_X \to \Omega_X(\log D) \to \cO_D \to (0)
\]
and by Proposition~\ref{pro:deformations_pairs_X_D}
$\T^1_{X,D}=\Ext^1 \bigl(\Omega_X(\log D),\cO_X\bigr)$. Taking
the long exact sequence of $\Ext$ we get
\[
(0) \to \rH^0\bigl( X,T_X(-\log D) \bigr)  \to \rH^0(X,T_X) \to
\Ext^1 \left(\cO_D ,\cO_X\right) \to \T^1_{X,D} \to
    \T^1_X \to \Ext^2\left(\cO_D,\cO_X\right) 
  \]
but then $\Ext^1 \left(\cO_D ,\cO_X\right)=\rH^0\left(X, \cExt^1(\cO_D
  ,\cO_X)\right)$ and $\cExt^1\left(\cO_D,\cO_X\right)=\cO_D(D)=N_D
X$. Similarly, $\Ext^2 \left(\cO_D ,\cO_X\right)=\rH^1\left(\cExt^1(\cO_D
  ,\cO_X)\right)=\rH^1\left(D,N_D X\right)$. Note $\cExt^2(\cO_D,\cO_X)=0$ since $D$ is Cartier. 
This establishes the exact sequence.

\smallskip

From the exact sequence $(0)\to \cO_X \to \cO_X(D)\to N_D X \to (0)$ we
get an exact sequence
\[
\rH^1\left(X, \cO_X(D)\right) \to \rH^1 (D, N_D X) \to \rH^2 (X,\cO_X) 
\]
where the groups on the left and right are $(0)$ by Kawamata Viehweg vanishing (using $D=-K_X$ ample) and hence the group
in the middle also is $(0)$.

\smallskip

To prove Part~(2), consider first a toric cone $\sigma=\langle
\rho_1,\dots \rho_r \rangle_+\subset N$, and let
$R_\sigma=k[\sigma^\vee\cap M]$. We work with the affine case $X=\Spec
R_\sigma$. It is stated for example
in~\cite[Theorem~3.2]{2018arXiv180909070S} that
\begin{equation}
  \label{eq:derivations}
  \Der_k(R_\sigma, R_\sigma)=\bigl(N\otimes R_\sigma \bigr)\oplus k[P_1]D_1 \oplus
\cdots \oplus k[P_r]D_r
\end{equation}
where:
\begin{enumerate}[(i)]
\item For all $v\in N$ $D_v\colon R_\sigma\to R_\sigma$ is the
  derivation defined by $D_v(x^m)=\langle v,m\rangle x^m$;
\item  The morphism $N\otimes R_\sigma\to \Der_k(R_\sigma, R_\sigma)$
  takes $v\otimes f$ to $fD_v$;
\item For all $j=1,\dots,r$,
  \[
    P_j=\bigl\{m\in M \mid \langle\rho_j, m \rangle=-1 \; \text{and} \; \forall
    i\neq j\; \langle\rho_i, m \rangle\geq 0\bigr\}
  \]
$k[P_j]$ is a $R_\sigma$-module in the obvious way, and $D_j=D_{\rho_j}$.
\end{enumerate}
In Equation~\eqref{eq:derivations}, the summand $N\otimes R_\sigma$
corresponds to the log derivations $T_X (-\log D)=N\otimes \cO_X$.
Part~(2) follows immediately in the affine case. Part~(2) in the
general case is proved by considering the toric affine cover.

Part~(3) in the affine case is an immediate consequence of the explicit description of $\T^1_{X_\sigma}$,
for $\sigma \subset N$ a $3$-dimensional Gorenstein cone, given in the
Theorem in~\cite[\S~4.3]{MR1798979}.  Part~(3) in the
general case is proved by considering the toric affine cover.
\end{proof}

\begin{lem}
  \label{lem:T1(X,D)}
  Let $P\subset N$ be a reflexive polytope, and $(X,D)$
  the corresponding toric pair. Denote by $Q\subset M$ be the polar
  polytope.
  
  Denote by $\Sigma$ the spanning fan of $P$, and by $\Sigma^\prime$ the
  set of its maximal cones. For all $\sigma \in \Sigma^\prime$,
denote by $(X_{\sigma},D_{\sigma})$ the corresponding affine toric
pair.
  
  \begin{enumerate}
  \item The natural homomorphism \begin{equation}
      \label{eq:T1injectivity}
  \T^1_{X,D} = \rH^0(X, \cT^1_{X,D})
  \to \bigoplus_{\sigma \in \Sigma^\prime} \T^1_{X_\sigma, D_\sigma}  
\end{equation}
 is injective.
\item  Let $v\in Q$ be a vertex. Denote by
$\sigma_v\subset \Sigma^\prime$ the cone corresponding to $v$. Equation~\ref{eq:T1injectivity} induces an isomorphism:
  \[
    \T^1_{X,D}(v) \cong \T^1_{X_{\sigma_v}, D_{\sigma_v}} (v) 
  \]
  \end{enumerate}  
\end{lem}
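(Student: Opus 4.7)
For \textbf{Part (1)}, the strategy is to identify $\T^1_{X,D}$ with the global sections of the sheaf $\cT^1_{X,D}=\cExt^1(\Omega_X(\log D),\cO_X)$ and then invoke the sheaf axiom on the affine cover $\{X_\sigma\}_{\sigma\in\Sigma^\prime}$. By Proposition~\ref{pro:deformations_pairs_X_D} we have $\T^1_{X,D}=\Ext^1(\Omega_X(\log D),\cO_X)$, so I would run the local-to-global spectral sequence for $\Ext$. Its $E_2^{p,0}$ terms involve $\cExt^0(\Omega_X(\log D),\cO_X)=\cT_X(-\log D)$, which equals $\cO_X\otimes N$ by the discussion in \S~\ref{sec:some-remarks-proof}. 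Since $X$ is projective Gorenstein toric we have $\rH^i(X,\cO_X)=0$ for $i\geq 1$, hence $\rH^i(X,\cT_X(-\log D))=0$ for $i\geq 1$, and the five-term sequence degenerates to $\T^1_{X,D}\cong\rH^0(X,\cT^1_{X,D})$. The same argument on each affine piece (where higher coherent cohomology automatically vanishes) gives $\T^1_{X_\sigma,D_\sigma}=\rH^0(X_\sigma,\cT^1_{X_\sigma,D_\sigma})$. Since $\cT^1_{X,D}|_{X_\sigma}=\cT^1_{X_\sigma,D_\sigma}$, Part~(1) then follows from the separatedness of sections on the cover.

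For \textbf{Part (2)}, the plan is to exploit $\TT$-equivariance, pass to the weight $v$ piece, and apply Lemma~\ref{lem:T1(XD)and(X)} both globally on $(X,D)$ and locally on $(X_{\sigma_v},D_{\sigma_v})$. Equation~\eqref{eq:T1injectivity} is $\TT$-equivariant so it respects the $M$-grading. Since $v\in Q^1\cap M$ is a vertex, Lemma~\ref{lem:T1(XD)and(X)}(3) gives $\T^1_X(v)=0$ and $\T^1_{X,D}(v)=\C(v)\cong\CC$. To apply the same lemma to the affine pair, I need to check that $v$ lies in the $1$-skeleton of the local polyhedron $Q_{\sigma_v}$. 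Because $P$ is reflexive and $\sigma_v$ is the cone over the facet $F_{\sigma_v}\leq P$ dual to $v$, the functional $-v$ takes value $1$ on every vertex of $F_{\sigma_v}$; hence in the notation of Remark~\ref{rem:T1(X,D)}(1), $u_{\sigma_v}=-v$ and $Q_{\sigma_v}=v+\sigma_v^\vee$, so $v$ is the vertex at the tip. Applying Lemma~\ref{lem:T1(XD)and(X)} to $(X_{\sigma_v},D_{\sigma_v})$ yields $\T^1_{X_{\sigma_v},D_{\sigma_v}}(v)=\C_{\sigma_v}(v)\cong\CC$.

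Finally, restriction to $X_{\sigma_v}$ induces a $\TT$-equivariant morphism between the exact sequences of Lemma~\ref{lem:T1(XD)and(X)} for $(X,D)$ and $(X_{\sigma_v},D_{\sigma_v})$, and so a map $\C(v)\to\C_{\sigma_v}(v)$ between one-dimensional spaces. It suffices to check this map is non-zero. Concretely, both groups are quotients of the weight $v$ part of sections of $N_DX$ and $N_{D_{\sigma_v}}X_{\sigma_v}$ respectively, and the generator of $\C(v)$ comes from the character $\chi^v$ (or equivalently, via the explicit description of derivations used in the proof of Lemma~\ref{lem:T1(XD)and(X)}(2), from a $D_j$-type derivation scaled by $\chi^v$); this restricts to a generator of the corresponding quotient on $X_{\sigma_v}$, which is precisely non-zero because Lemma~\ref{lem:T1(XD)and(X)} guarantees $\C_{\sigma_v}(v)\neq 0$. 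Composed with the projection onto the $\sigma_v$-summand, the map $\T^1_{X,D}(v)\to\bigoplus_\sigma\T^1_{X_\sigma,D_\sigma}(v)$ is therefore an isomorphism of $\CC$-lines.

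The main obstacle is bookkeeping rather than conceptual: one must carefully track the identification $u_{\sigma_v}=-v$ to see that the vertex $v$ of $Q$ maps to the vertex at the tip of the local polyhedron $Q_{\sigma_v}$, so that Lemma~\ref{lem:T1(XD)and(X)} applies locally, and then verify at the level of the explicit derivation description that the generator of $\C(v)$ does not lie in the image of $\rH^0(X_{\sigma_v},T_{X_{\sigma_v}})(v)$ upon restriction.
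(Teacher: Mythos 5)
Your argument is correct and follows essentially the same route as the paper: for Part~(1) you reproduce the paper's use of the local-to-global $\Ext$ spectral sequence together with $T_X(-\log D)\cong\cO_X^3$ and the vanishing of higher cohomology of $\cO_X$, followed by the sheaf axiom on the affine cover; for Part~(2) the paper simply writes ``follows from Lemma~\ref{lem:T1(XD)and(X)},'' and your more detailed account — identifying $u_{\sigma_v}=-v$ so that $v$ is the apex of $Q_{\sigma_v}$, applying Lemma~\ref{lem:T1(XD)and(X)} on both sides to get one-dimensional weight spaces, and checking the restriction map is non-zero via the generator $\chi^v\in\rH^0(D,N_DX)(v)$ — is exactly the intended unpacking.
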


\begin{proof}
  We use freely the content of Appendix~\ref{sec:appendix}, especially
  Proposition~\ref{pro:deformations_pairs_X_D}. Computing
  $\T^1_{X,D}=\Ext^1_{\cO_X}\left(\Omega_X (\log D),\cO_X\right)$ with
  the local-to-global spectral sequence for $\Ext$ we get an exact
  sequence:
\[
 \rH^1 \left( X, T_X(-\log D)\right) \to \T^1_{X,D} \to
 \rH^0(X,\cT^1_{X,D}) \to \rH^2\left(X,T_X(-\log D)\right)
\]
But now by Remark~\ref{rem:log_tangents} $T_X(-\log D)=N\otimes
\cO_X \cong \cO_X^3$
and hence we get~(1).

Part~(2) follows from Lemma~\ref{lem:T1(XD)and(X)}.
\end{proof}

\section{Infinitesimal deformations of \texorpdfstring{$(Y, E)$}{(Y,E)}}
\label{sec:deform-theory-pair}

\subsection{Notation}
\label{sec:notation-1}

We summarize the notation that will be in force throughout
Sections~\ref{sec:deform-theory-pair}--\ref{sec:proof-theorem-2}. From
this point on in the paper, the reader should come back to this
section when in doubt about notation.  Most of this notation is
illustrated and summarised in Fig.~\ref{fig:excep_notation}. The
figure depicts a pair $(Y,E)$ above an affine neighbourhood $(X,D)$ of
a closed point $x\in X$ in the $0$-stratum. The blue surfaces are the
strict transforms of the components of $D\subset X$; the green
surfaces dominate a $1$-stratum $x\in \Gamma \subset X$; the pink
surfaces are contracted to the closed point $x\in X$. We invite the
reader to contemplate the picture before reading the following text.

\begin{figure}[ht]
  \centering
\begin{tikzpicture}
  \draw[thick, blue] (0,0) -- (2,-2) -- (2,-6) -- (0,-8)
  (2,-2) -- (4,-2) (2,-6) -- (4,-6);
  \draw[thick, red] (1,0) -- (1,-4) -- (-3,-7) -- (-5,-8)
  (1,-4) -- (4,-4) (-3,-7) -- (4,-7);
  \draw[thick, black] (3.5,-2) -- (3.5,-4) ;
  \fill[green!5!white] (2.1,-6.1) -- (4,-6.1) -- (4,-6.9) -- (1.1, -6.9)
  -- cycle ;
  \fill[green!5!white] (2.1,-4.1) -- (4,-4.1) -- (4,-5.9) -- (2.1, -5.9)
  -- cycle ;
  \fill[green!5!white]  (2.1,-2.1) -- (3.4,-2.1) -- (3.4,-3.9) -- (2.1, -3.9)
  -- cycle ;
  \fill[green!5!white]  (3.6,-2.1) -- (4,-2.1) -- (4,-3.9) -- (3.6, -3.9)
  -- cycle ;
  \fill[red!5!white]  (1,-4.1) -- (1.9,-4.1) -- (1.9,-6) -- (1,-6.9) -- (-2.7, -6.9)
  -- cycle ;
   \fill[red!5!white]  (1.1,-1.2) -- (1.9,-2) -- (1.9,-3.9) -- (1.1,-3.9) 
   -- cycle ;
   \fill[blue!5!white]  (1.1,0) -- (1.1,-1) -- (2,-1.9) -- (4,-1.9)  -- cycle ;
   \fill[blue!5!white]  (0.9,0) -- (0.9,-0.8) -- (0.1,0) -- cycle ;
    \fill[blue!5!white]  (-0.1,0) -- (0.9,-1) -- (0.9,-4) -- (-3,-6.9)
  -- (-5,-7.9) -- cycle ;
   \fill[blue!5!white]  (-3,-7.1) -- (0.8,-7.1) -- (-0.1,-8) --
   (-4.9,-8) -- cycle ;
   \fill[blue!5!white]  (1.1,-7.1) -- (4,-7.1) -- (0.1,-8) -- cycle ;
  \node at (2.75,-3) {$E_3^e$};
  \node at (2.75,-5) {$E_2^e$};
  \node at (2.75,-6.5) {$E_1^e$};
  \node at (4.5,-7) {$\Delta_1^e$};
  \node at (4.5,-6) {$\Delta_2^e$};
  \node at (4.5,-4) {$\Delta_3^e$};
  \node at (4.5,-2) {$\Delta_4^e$};
  \node at (3.75,-2.5) {$C_3^e$};
\end{tikzpicture}
\caption{Exceptional surfaces over transverse $\rm{A}_3$.} 
  \label{fig:excep_notation}
\end{figure}
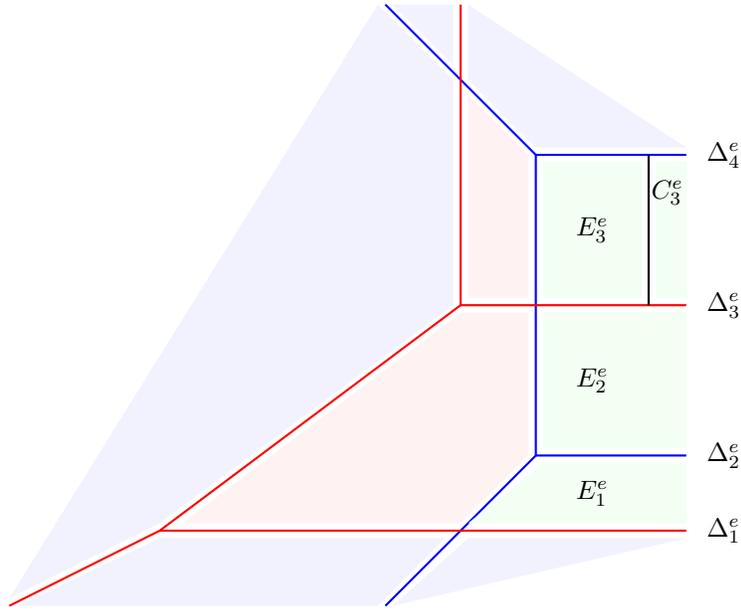

\smallskip

Fix a $3$-dimensional reflexive polytope $P$ endowed with
amd, and denote by $(X,D)=(X_P,D_P)$ the corresponding Fano toric
pair and by  $\pi \colon (Y,E) \to (X,D)$ the induced partial resolution.

\smallskip

By Lemma~\ref{lem:qODP}, the pair $(Y,E)$ has qODP singularities, that
is, it is simple normal crossing outside a finite set of
quasi-ordinary double points $x\in E \subset X$, \'etale-locally
isomorphic to
 \begin{equation*}
0 \in (x_1x_2=0) \subset  \left( x_1 x_2 - x_3 x_4 = 0 \right)/\mu_a
  \end{equation*}

\smallskip

We denote by $\Delta \subset E$ the singular locus of
$E$. At a qODP point, $\Delta$ is the union of the $4$
coordinate axes, as pictured on the right in Fig.~\ref{fig:axes_normalization}.
We denote by $\nu \colon \Delta^\prime \to \Delta$ the partial
normalization of $\Delta$ that is an isomorphism outside the qODPs, and
that at the qODPs is as pictured in Fig.~\ref{fig:axes_normalization}.
\begin{figure}[ht]
  \centering
\begin{tikzpicture}[point/.style={circle, inner sep=0pt, minimum
    size=3pt, fill=black}] 
  \draw[thick, blue, ->] (-1,0) -- (-1,-2);
  \draw[thick, blue, ->] (-1,0) -- (-1,2);
  \node at (-0.75,2) {$x_4$};
  \node at (-1.25,-2) {$x_3$};
  \node at (-1,0) [point] {} ;
  \draw[thick, red, ->] (3,0) -- (5,0);
  \draw[thick, red, ->] (3,0) -- (1,0);
  \node at (1,0.25) {$x_1$};
  \node at (5,-0.25) {$x_2$};
  \node at (3,0) [point] {};
  \node at (0,0) {$\coprod$};
  \node at (6.5,0) {$\overset{\nu}{\xrightarrow{\hspace{1truecm}}}$};
  \draw[thick, blue, ->] (10,0) -- (10,-2);
  \draw[thick, blue, ->] (10,0) -- (10,2);
  \node at (10,0) [point] {} ;
  \draw[thick, red, ->] (10,0) -- (12,0);
  \draw[thick, red, ->] (10,0) -- (8,0);
  \node at (8,0.25) {$x_1$};
  \node at (12,-0.25) {$x_2$};
  \node at (10.25,2) {$x_4$};
  \node at (9.75,-2) {$x_3$};
\end{tikzpicture}
\caption{The partial normalization $\nu \colon \Delta^\prime \to \Delta$.} 
  \label{fig:axes_normalization}
\end{figure}

\smallskip

We denote by $Q=P^\star \subset M$ the polar polytope of $P$: $Q$ is
the moment polytope of $X$.

\smallskip

For all vertices $v\leq Q$, we denote by 
$F^v\leq P$ the corresponding facet of $P$, and by $r(v)$ the number of
Minkowski factors of $F^v$. We choose a labeling of those factors and write: 
\[
F^v=\sum_{i=1}^{r(v)} F^v_i
\]
We denote by $x^v\in X$ the $0$-stratum corresponding to $v$.

\smallskip

For all edges $e \leq P$, we denote by $\ell_e$ the (lattice) length of $e$, by
$e^\star=[v,w] \leq Q$ the dual edge, and by $k_e$ the colength of
$e$, that is, the length of $e^\star$.

We denote by $\Gamma^e\subset X$ the closure of the $1$-dimensional torus orbit
corresponding to $e$. Note that $X$ has transverse $\rm{A}_{\ell_e-1}$
singularities generically along
$\Gamma^e$, and that $-K_X\cdot \Gamma^e=k_e$.

\smallskip

The set $L^e$ of unit lattice segments of $e$ is in bijective
correspondence with the $1$-dimensional torus orbits of $Y$ that
dominate $\Gamma^e$. We choose an
orientation of $e$ identifying $L^e$ with the set
$[\ell_e]=\{1,2,\dots,\ell_e\}$ and hence denote by
\[
\Delta^e_1, \dots, \Delta^e_{\ell_e}
\]
the closures of the  $1$-dimensional torus orbits of $Y$ that dominate
$\Gamma^e$. We label the exceptional divisors of $\pi
\colon Y \to X$ that dominate $\Gamma^e$ in such a
way that
\[
\Delta^e_1 \subset E^e_1; \quad \forall \, 2\leq j\leq
\ell_e-1,\;\Delta^e_j=E^e_{j-1}\cap E^e_j; \quad \Delta^e_{\ell_e}
\subset E^e_{\ell_e-1}
  \]
  In this setting, we denote by $C^e_j\subset E^e_j$ a general fibre
  of $\pi\colon E^e_j\to \Gamma^e$. These labelling and numbering
  conventions are illustrated in Fig.~\ref{fig:excep_notation}.

\smallskip

  For all edges $e\leq P$ and dual edge $e^\star=[v,w]\leq Q$ and all
  $1\leq j \leq \ell_e$, the polarized curve
  $\bigl(\Delta^e_j, -K_Y\vert_{\Delta^e_j}\bigr)$ is canonically isomorphic
  to $\bigl(\PP^1,\cO (k_e)\bigr)$ and we denote by
  $[z_v:z_w]$ the homogeneous coordinates such that $x^v=[1:0]$,
  $x^w=[0:1]$.

\smallskip

Let $e\subset P$ be an edge and $e^\star=[v,w]$ the dual edge.
The set $L^e$
is endowed with a partition whose parts correspond to
Minkowski factors $F_i^v$. We denote by
\[
L^{v}_i\subset L^e\quad \text{the part corresponding to} \quad F^v_i
\] 
Note that the $F^v_i$ are in bijective correspondence with the
connected components of $(\pi \circ \nu)^{-1} (x^v)$.

\smallskip

The notation is summarized in Fig.~\ref{fig:excep_notation}, which
shows a typical configuration above a vertex-edge flag $v\in
e^\star\subset Q$ where
$F_v$ has two Minkowski factors, with
tropical curves pictures in blue and red, and $\ell_e=4$. In the
figure, $L^e=\{1,2,3,4\}$ and the partition is
$\bigl\{\{1,3\},\{2,4\}\bigr\}$. The exceptional divisors of $\pi
\colon Y \to X$ that map to $x^v$ are shaded in pink; those that
dominate $\Gamma^e$ are shaded in green; and the strict transform of
boundary divisors in $X$ are shaded in blue.

\subsection{Statement of purpose}
\label{sec:statement-purpose}

By Lemma~\ref{lem:qODP}, the pair
$(Y,E)$ has qODP singularities.

In Appendix~\ref{sec:appendix} we develop a general theory of
deformations and obstructions of pairs of a variety or stack and
effective Cartier divisor, and specialize the general theory to the
case of qODP pairs and qODP stack pairs. In particular, we construct
an obstruction theory for deformations of qODP pairs and qODP stack
pairs.

In this section we prove some facts about deformations
of the qODP stack pair $(Y,E)$. In particular we compute the tangent space
$\T^1_{Y,E}$ to the deformation functor as a
representation of the torus, and we show that $\T^2_{Y,E}=(0)$.

\subsection{Deformation theory of qODP stack pairs}
\label{sec:deform-theory-qodp}

\begin{lem} \label{lem:deformations_3fold_with_qODPs}
	Let $Y$ be a qODP stack and let $\Sing Y$ be the singular locus of $Y$ equipped with the reduced structure. Then
	$\cExt^1(\Omega_Y, \cO_Y) = \cO_{\Sing Y}$ and
	$\cExt^2(\Omega_Y, \cO_Y)=(0)$.  \qed
\end{lem}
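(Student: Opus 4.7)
The statements are étale-local on $Y$, so by Definition~\ref{dfn:qODPpair} we reduce to two cases. At a smooth point $\Omega_Y$ is locally free, the sheaves $\cExt^i(\Omega_Y,\cO_Y)$ vanish for $i \geq 1$, and $\Sing Y$ is empty on this chart, so both equalities are trivial. It remains to treat the case of a qODP, and the stack case reduces to the scheme case as follows: let $V = (f = 0) \subset W = \CC^4$ with $f = x_1x_2 - x_3x_4$. The $\mu_a$-action lifts the computation, and since the étale cover $V \to [V/\mu_a]$ pulls back $\Omega_{[V/\mu_a]}$ to $\Omega_V$ (equivariantly), and similarly for $\cO$, it suffices to compute $\cExt^i_V(\Omega_V,\cO_V)$ and observe that the answer is naturally $\mu_a$-equivariant.

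The plan for $V$ is to use the conormal exact sequence. Since $V$ is a hypersurface, $\cI/\cI^2 \cong \cO_V$ via the generator $f$, and we have a short exact sequence
\[
0 \to \cO_V \xrightarrow{\cdot df} \Omega_W|_V \to \Omega_V \to 0
\]
Apply $\cHom(-,\cO_V)$. Because $\Omega_W|_V$ and $\cO_V$ are locally free on $V$, their $\cExt^{\geq 1}$ into $\cO_V$ vanish, and the resulting long exact sequence collapses to
\[
0 \to \cHom(\Omega_V,\cO_V) \to T_W|_V \xrightarrow{\alpha} \cO_V \to \cExt^1(\Omega_V,\cO_V) \to 0
\]
together with $\cExt^i(\Omega_V,\cO_V) = 0$ for all $i \geq 2$. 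This immediately gives the vanishing of $\cExt^2$.

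For the $\cExt^1$ statement, the map $\alpha$ sends a vector field $\xi = \sum g_i \partial_{x_i}$ to $\xi(f) \bmod f$, so its image equals the image of the Jacobian ideal of $f$ in $\cO_V$. For $f = x_1x_2 - x_3x_4$ this Jacobian ideal is $(x_2, x_1, -x_4, -x_3) = (x_1,x_2,x_3,x_4)$, which is exactly the ideal of the singular point of $V$ (the unique point where all partials of $f$ vanish). Hence
\[
\cExt^1(\Omega_V,\cO_V) = \cO_V/(x_1,x_2,x_3,x_4) = \cO_{\Sing V}
\]
which, after descending via the $\mu_a$-equivariance, gives $\cExt^1(\Omega_Y,\cO_Y) = \cO_{\Sing Y}$ on the qODP chart. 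Nothing here is an obstacle: the proof is a direct application of the conormal sequence together with the observation that the Jacobian ideal of a qODP hypersurface defines its singular locus scheme-theoretically.
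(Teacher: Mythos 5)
Your proof is correct. The paper records this lemma with only a \qed{} and no argument, so there is nothing to compare against, but your treatment is the natural one: reduce to the \'etale atlas $V=(x_1x_2-x_3x_4=0)$ of the quotient stack (using that $V\to[V/\mu_a]$ is \'etale since $\mu_a$ is a finite group in characteristic zero), then use the two-term locally free resolution of $\Omega_V$ coming from the conormal sequence of the hypersurface. This immediately gives $\cExt^{\geq 2}(\Omega_V,\cO_V)=0$, identifies $\cExt^1(\Omega_V,\cO_V)$ with $\cO_V$ modulo the Jacobian ideal $(x_1,x_2,x_3,x_4)$ — the reduced ideal of the origin — and the whole computation is visibly $\mu_a$-equivariant, so it descends to $[V/\mu_a]$.

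One small point worth making explicit in a final write-up: the conormal sequence $0\to\cO_V\xrightarrow{df}\Omega_W|_V\to\Omega_V\to 0$ is exact on the left because $V$ is integral (hence $\cO_V$ has no torsion) and the map is nonzero; this is what entitles you to treat it as a length-one locally free resolution of $\Omega_V$ and to conclude $\cExt^i=0$ for $i\geq 2$.
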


\begin{lem}
  \label{lem:qODPdeformations}
  Let $(Y,E)$ be a qODP stack pair and let $\Omega_Y(\log E)$ be the
  coherent sheaf of Definition~\ref{dfn:logarithmic_differential}. Let
  $\Delta = \Sing E$ and let $\nu \colon \Delta^\prime \to \Delta$ be
  the partial normalisation of Notation~\ref{sec:notation-1}. Then:
\[
\cT^i_{Y,E} = \cExt^i _{\cO_Y}\left( \Omega_Y(\log E),\cO_Y \right) =
\begin{cases}
  T_{Y}(-\log E) \quad & \text{if } i=0,\\
\nu_\star \nu^\star (\cO_\Delta(E)) \quad & \text{if } i=1,\\
(0) \quad & \text{if } i=2.
\end{cases}
\]
\end{lem}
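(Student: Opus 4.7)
The case $i=0$ is immediate from the defining equality $T_Y(-\log E) = \cHom_{\cO_Y}(\Omega_Y(\log E), \cO_Y)$. For $i \geq 1$ the plan is to apply $\cHom_{\cO_Y}(-, \cO_Y)$ to the residue sequence of Lemma~\ref{lem:two_short_exact_sequence_with_log_differentials},
\[
0 \to \Omega_Y \to \Omega_Y(\log E) \to \cO_E \to 0
\]
and to splice in two auxiliary computations. The first is Lemma~\ref{lem:deformations_3fold_with_qODPs}: $\cExt^1(\Omega_Y, \cO_Y) = \cO_{\Sing Y}$ and $\cExt^2(\Omega_Y, \cO_Y) = 0$. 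The second is the two-term locally free resolution $0 \to \cO_Y(-E) \to \cO_Y \to \cO_E \to 0$ of the Cartier divisor $E$, which yields $\cExt^1(\cO_E, \cO_Y) = \cO_E(E)$ and $\cExt^i(\cO_E, \cO_Y) = 0$ for $i \neq 1$.

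The resulting long exact sequence
\[
T_Y \xrightarrow{\phi} \cO_E(E) \to \cT^1_{Y,E} \to \cO_{\Sing Y} \to 0 \to \cT^2_{Y,E} \to 0
\]
delivers $\cT^2_{Y,E} = 0$ at once; the connecting map $\phi$ sends a local vector field $v$ to $v(g) \bmod g$, where $g$ is a local equation for $E$. It remains to produce a canonical isomorphism $\cT^1_{Y,E} \cong \nu_\star \nu^\star \cO_\Delta(E)$. The projection formula together with the tautological sequence $0 \to \cO_\Delta \to \nu_\star \cO_{\Delta'} \to \cQ \to 0$ (with $\cQ$ supported at the qODP locus) places $\nu_\star \nu^\star \cO_\Delta(E)$ in a parallel short exact sequence
\[
0 \to \cO_\Delta(E) \to \nu_\star \nu^\star \cO_\Delta(E) \to \cQ \otimes \cO_\Delta(E) \to 0
\]
so the matter reduces to verifying the local identifications $\coker \phi \cong \cO_\Delta(E)$ everywhere on $Y$ and $\cO_{\Sing Y} \cong \cQ \otimes \cO_\Delta(E)$ at the qODPs, plus the compatibility of the two extension classes.

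These checks are \'etale local and can be carried out in the two model charts of Definition~\ref{dfn:qODPpair}. In the normal crossing chart $\cO_{\Sing Y} = 0$, and writing $g = x_1 \cdots x_k$ in local coordinates the partials $\partial g/\partial x_i$ visibly generate the ideal of $\Delta = \Sing E$ inside $\cO_E$, giving $\coker \phi = \cO_\Delta(E)$. At a qODP the local equation may be taken as $g = x_1 x_2 = x_3 x_4$ on $Y = [\{x_1 x_2 = x_3 x_4\}/\mu_a]$; here the plan is to use $\TT$-equivariance to decompose everything into character eigenspaces and then inspect directly, using the explicit description of $T_Y$ as the module of derivations annihilating $x_1 x_2 - x_3 x_4$, that the image of $\phi$ cuts out $\Delta$ scheme-theoretically inside $\cO_E(E)$ and that the resulting length-one skyscraper $\cO_{\Sing Y}$ at the qODP is exactly the one measuring the regrouping of the four branches of $\Delta$ into the two pairs determined by the two rulings. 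The main obstacle is the compatibility of extension classes: the plan here is to exhibit both $\cT^1_{Y,E}$ and $\nu_\star \nu^\star \cO_\Delta(E)$ as canonical subquotients of $\cO_Y(E)|_E$ controlled by the Jacobian ideal of $g$, which reduces the matching to a length-one computation at each qODP.
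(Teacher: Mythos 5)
Your treatment of $i=0$ and $i=2$ reproduces the paper's argument: both follow by dualising the residue sequence \eqref{eq:residue_sequence_Omega_log}, using that $\cExt^i(\cO_E,\cO_Y)=0$ for $i\neq 1$ since $E$ is Cartier, together with $\cExt^2(\Omega_Y,\cO_Y)=0$ from Lemma~\ref{lem:deformations_3fold_with_qODPs}. This is exactly the content of Remark~\ref{rem:ext_sheaves_Omegalog}, which the paper cites.

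For $i=1$, however, you take a genuinely different (and riskier) route, and it has a real gap. The paper establishes the isomorphism $\cT^1_{Y,E}\cong\nu_\star\nu^\star\cO_\Delta(E)$ by a \emph{direct} local computation of $\cT^1$ in each of the two model charts: Proposition~\ref{prop:Omegalog_for_X_smooth}(2) together with Remark~\ref{rem:Omegalog_Fitting_singular_locus_D} identifies $\cT^1_{Y,E}$ with $\cO_\Delta(E)$ on the normal crossing charts (where $\nu$ is an isomorphism), and Example~\ref{exa:ODP_deformations} computes $\cT^1_{Y,E}=\cO_{\Gamma_1}\oplus\cO_{\Gamma_2}=\nu_\star\cO_{\Gamma'}$ at the qODP charts by an explicit matrix manipulation — the total sheaf is thus described locally without ever splitting into a sub and quotient. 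Your approach instead places $\cT^1_{Y,E}$ and $\nu_\star\nu^\star\cO_\Delta(E)$ in two parallel short exact sequences with isomorphic ends ($\coker\phi\cong\cO_\Delta(E)$ and $\cO_{\Sing Y}\cong\cQ$) and then must match the two extension classes. You correctly flag this as the main obstacle, but you do not resolve it: the "plan" to exhibit both middle terms as "canonical subquotients of $\cO_Y(E)|_E$" is not substantiated, and indeed $\nu_\star\nu^\star\cO_\Delta(E)$ is not in any evident way a subquotient of $\cO_E(E)$. Two extensions of a length-one skyscraper by the same sheaf need not be isomorphic, so until the class matching (or a replacement intrinsic identification) is actually carried out, the argument is incomplete. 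The most economical repair is to abandon the sub/quotient decomposition and do what the paper does: compute $\cT^1_{Y,E}$ directly at a qODP chart as in Example~\ref{exa:ODP_deformations}, where the identification with $\nu_\star\nu^\star\cO_\Delta(E)$ is read off immediately and the chart-to-chart compatibility is a matter of naturality of the Fitting-ideal description, as the paper's parenthetical "working intrinsically" indicates.
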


\begin{rem}
  \label{rem:log_str} The sheaf $\cT^1_{Y,E}$ is the sheaf
  $\mathcal{LS}_E$ of log structures on $E$ as defined
  in~\cite[Definition~3.19]{MR2213573}; see
  also~\cite{2023arXiv231213867C}. 
\end{rem}

\begin{proof}[Proof of Lemma~\ref{lem:qODPdeformations}]
	The case $i=0$ is obtained in Remark~\ref{rem:ext_sheaves_Omegalog}.
	By Remark~\ref{rem:ext_sheaves_Omegalog} we have $\cT^2_{Y,E} \simeq \cT^2_Y$. Therefore the case $i=2$ follows from Lemma~\ref{lem:deformations_3fold_with_qODPs}.
	The case $i=1$ follows from Proposition \ref{prop:Omegalog_for_X_smooth}(2) and Example \ref{exa:ODP_deformations}  (working intrinsically, cf. \cite{Friedman83}).
	
\end{proof}

\begin{lem}
\label{lem:toricqODPdeformations}
If $(Y, E)$ is a proper toric qODP stack pair, then 
\[
\T^i_{Y, E} = \Ext^i_{\cO_Y}(\Omega_Y(\log E), \cO_Y) = 
\begin{cases}
  \CC \otimes N \quad & \text{if }i=0,\\
  \rH^0\left(Y, \cT^1_{Y,E}\right)=\rH^0\left(\Delta^\prime, \nu^\star \left( -K_Y\right) \right)\quad & \text{if } i=1, \\
  \rH^1\left(Y, \cT^1_{Y,E}\right)=\rH^1\left(\Delta^\prime, \nu^\star \left(-K_Y\right) \right) \quad &\text{if } i=2.
\end{cases}
\]
\end{lem}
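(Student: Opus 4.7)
The plan is to apply the local-to-global spectral sequence for $\Ext$
\[
E_2^{p,q} = \rH^p(Y, \cT^q_{Y,E}) \Rightarrow \T^{p+q}_{Y,E}
\]
with $E_2$-page supplied by Lemma~\ref{lem:qODPdeformations}, and observe it degenerates for dimension reasons.

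As input I use the identification $T_Y(-\log E) \cong \cO_Y \otimes_\ZZ N$, the standard toric formula already invoked in the proofs of Lemmas~\ref{lem:T1(XD)and(X)} and~\ref{lem:T1(X,D)}, together with $E = -K_Y$ coming from $E$ being the toric boundary of a Gorenstein toric stack. Combined with Lemma~\ref{lem:qODPdeformations}, the $E_2$-page reads
\[
E_2^{p,0} = \rH^p(Y,\cO_Y) \otimes_\CC N, \quad E_2^{p,1} = \rH^p\bigl(\Delta^\prime, \nu^\star(-K_Y)\bigr), \quad E_2^{p,2} = 0,
\]
where in the middle equality I used that $\nu$ is finite and $\Delta \hookrightarrow Y$ is a closed immersion, so $\rH^p(Y, \nu_\star \nu^\star \cO_\Delta(E)) = \rH^p(\Delta^\prime, \nu^\star \cO_\Delta(E))$.

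Next I evaluate the three rows. The $q=0$ row is killed in positive $p$ by the vanishing $\rH^{>0}(Y,\cO_Y) = 0$, which is classical on the coarse moduli space, a normal proper toric variety, and descends to the DM stack through the coarse moduli morphism; thus only $E_2^{0,0} = \CC \otimes N$ survives. Since $\Delta^\prime$ is a curve, the $q=1$ row is concentrated in $p \in \{0,1\}$. The $q=2$ row is zero by construction.

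Hence the only surviving terms sit at $(0,0)$, $(0,1)$, and $(1,1)$. The only differential that could connect two of them is $d_2 \colon E_2^{0,1} \to E_2^{2,0}$, whose target is zero, so $E_2 = E_\infty$; since at most one $E_\infty^{p,q}$ is nonzero on each antidiagonal $p+q = i$, the abutment filtration collapses and yields the three claimed formulas for $\T^0$, $\T^1$, and $\T^2$. The only step that requires a moment of care is the vanishing $\rH^{>0}(Y,\cO_Y) = 0$ for the stack, handled by the coarse moduli argument above; everything else is formal once Lemma~\ref{lem:qODPdeformations} and the toric triviality of $T_Y(-\log E)$ are in hand.
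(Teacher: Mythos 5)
Your argument is correct and is exactly the paper's proof: the local-to-global spectral sequence $E_2^{p,q} = \rH^p(Y, \cT^q_{Y,E}) \Rightarrow \T^{p+q}_{Y,E}$, with the $E_2$-page read off from Lemma~\ref{lem:qODPdeformations}, the triviality $T_Y(-\log E) \cong \cO_Y \otimes_\ZZ N$ (which the paper attributes to Remark~\ref{rem:log_tangents}), and the vanishing of $\rH^{>0}(Y,\cO_Y)$. You have merely spelled out the degeneration bookkeeping that the paper leaves to the reader.
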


\begin{proof}
We consider the spectral sequence
\[
E_2^{p,q} = \rH^p(Y, \cT^q_{Y,E}) \Rightarrow \Ext^{p+q} (\Omega_Y(\log E), \cO_Y) = \T^{p+q}_{Y,E}.
\]
Recalling that $\cO_Y$ does not have higher cohomology, we easily conclude by Lemma~\ref{lem:qODPdeformations} and Remark~\ref{rem:log_tangents}.
\end{proof}

\subsection{Explicit computation of
  \texorpdfstring{$\T^1_{Y,E}$}{T1(Y,E)}} 
\label{sec:structureTT1}

We build an explicit basis of $\T^1_{Y,E}=\rH^0\left(Y,
  \cT^1_{Y,E}\right)$ as a representation of the torus $\TT=\Spec
\CC[M]$ and we show that $\T^2_{Y,E}=(0)$. As a corollary we get the
first part of Theorem~\ref{thm:1}, stating that $Y$ is smoothable. 

\begin{lem} 
  \label{lem:T1Y}
  Let $P\subset N$ be a reflexive polytope endowed with choices, for
  all facets, of admissible Minkowski decompositions and dual tropical arrangements.

  Let $(X,D)$ be the corresponding Fano toric pair and let $\pi\colon
  (Y,E)\to (X,D)$ the induced partial resolution.

  Assume the notation set out in \S~\ref{sec:notation-1}.

The following sections $s^v_i$ and $s^e_{j,m}$ give a
basis of $\T^1_{Y,E}=\rH^0\left(Y, \cT^1_{Y,E}\right)$:
\begin{itemize}
\item For all vertices $v$ of $Q$, and for every Minkowski factor
  $F^v_i$, $s=s^v_i$ is the unique section such that:
  \begin{enumerate}[(a)]
  \item $s$ takes constant value $1$ on the
  connected component of $(\nu \pi)\inv \left(x_v\right)$
  corresponding to $F^v_i$; and $s$ takes value $0$ everywhere else
  on $(\nu \pi)\inv \left(x_v\right)$;
\item If $e\subset F^v$ is an edge: if $j\in L^{v}_i$ then
  $s\vert_{\Delta^e_j}=z_v^{k_e}$; if $j\not \in L^{v}_i$ then $s\vert_{\Delta^e_j}=0$;
 \item If $e$ is an edge of $P$ that is not an edge of $F^v$, then for all
   $j\in L^e$ $s\vert_{\Delta^e_j}=0$.
  \end{enumerate}
\item For all edges $e \leq P$ such that $k_e\geq 2$,
  $j=1, \dots, \ell_e$, $1\leq m \leq k_e-1$, and dual edge
  $e^\star=[v,w]$, $s=s^e_{j,m}$ is the
  unique section such that: $s\vert_{\Delta^e_j}=z_v^mz_w^{k_e-m}$, and $s=0$
  everywhere else.
\end{itemize}

If $\T$ is a representation of $\TT$, and $m\in M$, denote by $\T(m)$
the invariant summand on which $\TT$ acts with character $m$. Denote
by $Q^1$ the union of the edges of $Q$, including the vertices. The
set of $m \in M$ such that $\T^1_{Y,E}(m)\neq (0)$ is $Q^1\cap
M$. More precisely,
\[
  s^v_i\in \T^1_{Y,E}(v) \quad \text{and} \quad s^e_{j,m}\in
  \T^1_{Y,E}(v+m(w-v))
\]
thus $\dim \T_{Y,E}^1(v)=r(v)$ (the number of Minkowski summands of
$F_v$) and $\dim \T_{Y,E}^1(m)=\ell_e$.
\end{lem}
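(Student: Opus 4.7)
By Lemma~\ref{lem:toricqODPdeformations} we have $\T^1_{Y,E} \cong \rH^0(\Delta', \nu^\star(-K_Y))$ as $\TT$-representations, so the plan is to compute global sections on $\Delta'$ combinatorially. The curve $\Delta'$ is nodal with two types of irreducible components: the $\Delta^e_j \cong \PP^1$ (one for each edge $e$ of $P$ and each $j \in L^e$), on which $\nu^\star(-K_Y)$ has degree $k_e$; and bounded components, each a $\PP^1$ contracted by $\pi$ to some $x^v$, on which $\nu^\star(-K_Y)$ has degree $0$ by crepancy of $\pi$ (Lemma~\ref{lem:qODP}(1)).

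First I would describe $(\nu\pi)^{-1}(x^v) \subset \Delta'$ combinatorially for each vertex $v$ of $Q$. Using the induced resolution from the tropical arrangement on $F^v = \sum_i F^v_i$, the vertices of $\bigcup_i \Gamma^v_i$ are either trivalent (smooth $0$-strata of $Y$, producing triple points of $\Delta'$) or $4$-valent crossings $\Gamma^v_i \cap \Gamma^v_{i'}$ (qODPs, each split by $\nu$ into two nodes of $\Delta'$). It follows that $(\nu\pi)^{-1}(x^v)$ decomposes as $\bigsqcup_i \Delta'_{v,i}$, where each $\Delta'_{v,i}$ is a connected subcurve realizing the graph $\Gamma^v_i$: its bounded components are the bounded edges of $\Gamma^v_i$, and its external legs extend globally to form the $\Delta^e_j$ for $j \in L^v_i$.

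Next I would parameterize global sections. On a bounded component the section is a constant in $\CC$; on a leg $\Delta^e_j$ (with $e^\star = [v, w]$) it has the form $\sum_{m=0}^{k_e} c_m z_v^m z_w^{k_e - m}$. The triple-point compatibility over $x^v$ forces all adjacent constants and all vertex coefficients $c_{k_e}^{(e,j)}$ for $j \in L^v_i$ to coincide; since $\Gamma^v_i$ is connected, this propagates to a single scalar $a^v_i$ shared throughout $\Delta'_{v,i}$, with the analogous constraint $c_0^{(e,j)} = a^w_{i'}$ at the far end of each leg. The qODP nodes impose no further constraint because both branches at such a node already lie in the same $\Delta'_{v,i}$. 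The middle coefficients $c_m$ with $1 \leq m \leq k_e - 1$ remain free. The sections $s^v_i$ and $s^e_{j,m}$ then correspond to setting exactly one of these parameters equal to $1$ and all others to $0$; this simultaneously verifies existence, uniqueness, and the basis property.

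Finally the $\TT$-weights follow from equivariance. The canonical equivariant structure on $-K_Y$ has fiber character $v$ at every $0$-stratum of $Y$ over $x^v$, so $s^v_i$ has weight $v$. On $\Delta^e_j$ the line bundle $\cO(k_e)$ is $\TT$-equivariant with endpoint characters $v$ and $w$, so its monomial weights form the arithmetic progression $v, v + \alpha, \ldots, v + k_e\alpha = w$ along the primitive lattice direction $\alpha = (w-v)/k_e \in M$ of $e^\star$, yielding the stated weight of $s^e_{j,m}$ as the $m$-th interior lattice point of $e^\star$. The main obstacle is the local-to-global combinatorial analysis of $\Delta'$ over each $x^v$ carried out in the second paragraph; once the identification of $\Delta'_{v,i}$ with $\Gamma^v_i$ is established, the parameter count and the weights are essentially mechanical.
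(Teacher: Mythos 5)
The paper offers no detailed proof of this lemma (it says only ``Straightforward combinatorics''), so there is nothing in the paper to compare against line-by-line; what matters is whether your argument is correct and complete.

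Your proof follows the expected route and is essentially sound: you identify $\T^1_{Y,E}$ with $\rH^0(\Delta', \nu^\star(-K_Y))$ via Lemma~\ref{lem:toricqODPdeformations}, split $\Delta'$ into bounded components (on which $\nu^\star(-K_Y)$ is degree $0$ by crepancy) and the $\Delta^e_j$ (degree $k_e$), identify the connected components of $(\nu\pi)^{-1}(x^v)$ with the connected dual tropical curves $\Gamma^v_i$ using the classification of $0$-strata as smooth (trivalent vertex) or qODP ($4$-valent crossing, split by $\nu$), and then observe that the gluing conditions reduce a global section to one free scalar per pair $(v,i)$ plus the $k_e-1$ middle monomial coefficients on each leg $\Delta^e_j$. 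That parameter count exactly matches the listed basis, and existence/uniqueness of the $s^v_i$, $s^e_{j,m}$ drops out. Two small remarks: you open by calling $\Delta'$ ``nodal,'' but it has triple points at the smooth $0$-strata (which you do correctly account for later), so the terminology is slightly off even though the argument is right. More substantively, your weight computation is stated a little loosely: you assert without checking that $z_v^m z_w^{k_e-m}$ lands at the $m$-th interior lattice point $v+m\alpha$, but since $s^v_i\vert_{\Delta^e_j}=z_v^{k_e}$ has weight $v$, linearity forces $z_v^m z_w^{k_e-m}$ to have weight $v+(k_e-m)\alpha = w-m\alpha$, i.e.\ the indexing is reflected relative to what the lemma's display suggests. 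This does not affect the content — the set of characters appearing and the dimensions $\dim\T^1_{Y,E}(v)=r(v)$, $\dim\T^1_{Y,E}(m)=\ell_e$ are exactly as claimed — but you should pin down which monomial carries which character rather than just appealing to ``the stated weight,'' and in doing so you would notice the apparent index flip in the statement (which the paper must be resolving via an implicit sign/indexing convention, since $w-v$ in the display can only sensibly mean the primitive vector $\alpha$).
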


\begin{proof}
  Straightforward combinatorics.
\end{proof}

\begin{cor}
  \label{cor:TT1nice_sections}
 Assume that the matching condition holds. Consider an edge $e^\star
 \leq Q$ with $\ell_e>1$. 
 There is a section $s\in \T^1_{Y,E}$ such that $s\vert_{\Delta^e_{j_1}}\neq s\vert_{\Delta^e_{j_2}}$ for
 $j_1\neq j_2$ .
\end{cor}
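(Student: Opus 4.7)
The plan is to exhibit $s$ as an explicit linear combination of the basis sections in Lemma~\ref{lem:T1Y}, distinguishing the cases $k_e\geq 2$ and $k_e=1$. Let $e^\star=[v,w]\leq Q$ be the dual edge and write $F=F^v$, $G=F^w$ for the facets incident at $e$, with induced partitions $\{L^v_i\}$ and $\{L^w_j\}$ of $L^e=\{1,\dots,\ell_e\}$.

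\textbf{Case $k_e\geq 2$.} Here the sections $s^e_{k,m}$ are available. Fix $m$ with $1\leq m\leq k_e-1$, pick pairwise distinct constants $c_1,\dots,c_{\ell_e}\in \CC$, and set
\[
s = \sum_{k=1}^{\ell_e} c_k\, s^e_{k,m}
\]
By the explicit description in Lemma~\ref{lem:T1Y}, $s\vert_{\Delta^e_k}=c_k\,z_v^m z_w^{k_e-m}$, and these are pairwise distinct since the $c_k$ are and $z_v^m z_w^{k_e-m}\not\equiv 0$ on $\Delta^e_k\cong \PP^1$.

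\textbf{Case $k_e=1$ (so $e$ is a dull edge).} Here only the vertex-type sections $s^v_i$ and $s^w_j$ contribute nontrivially to the restrictions to the $\Delta^e_k$. Pick constants $a_1,\dots,a_{r(v)}$ that are pairwise distinct and $b_1,\dots,b_{r(w)}$ that are pairwise distinct, and set
\[
s = \sum_{i=1}^{r(v)} a_i\, s^v_i + \sum_{j=1}^{r(w)} b_j\, s^w_j
\]
For each $k\in L^e$, let $i(k)$ be the unique index with $k\in L^v_{i(k)}$ and $j(k)$ the unique index with $k\in L^w_{j(k)}$. Then by Lemma~\ref{lem:T1Y},
\[
s\vert_{\Delta^e_k} = a_{i(k)}\,z_v + b_{j(k)}\,z_w
\]
Since $z_v$ and $z_w$ are linearly independent on $\Delta^e_k\cong \PP^1$, two such restrictions agree precisely when $a_{i(k_1)}=a_{i(k_2)}$ and $b_{j(k_1)}=b_{j(k_2)}$, i.e.\ when $(i(k_1),j(k_1))=(i(k_2),j(k_2))$. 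By the matching condition (c) of Definition~\ref{dfn:decomposition_data}, the intersections $L^v_i\cap L^w_j$ have at most one element, so the map $k\mapsto (i(k),j(k))$ is injective. Hence $s\vert_{\Delta^e_{k_1}}\neq s\vert_{\Delta^e_{k_2}}$ whenever $k_1\neq k_2$.

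The main (only) obstacle is the dull case $k_e=1$, which is precisely where the matching condition is needed: without it, two distinct unit segments $k_1,k_2$ could have $(i(k_1),j(k_1))=(i(k_2),j(k_2))$, and no linear combination of vertex sections would separate $\Delta^e_{k_1}$ from $\Delta^e_{k_2}$. The matching condition is exactly the combinatorial hypothesis that makes the separation possible with the sections available.
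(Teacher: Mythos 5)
Your proposal is correct and matches the paper's own argument for the crucial case $k_e=1$: a generic (or, in your version, pairwise-distinct-coefficient) linear combination of the vertex sections $s^v_i$ and $s^w_j$, with the matching condition supplying the injectivity of $k\mapsto(i(k),j(k))$. The only cosmetic difference is that the paper declares the $k_e\geq 2$ case "easy" while you spell out the linear combination of the $s^e_{k,m}$; both are fine.
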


\begin{proof}
  If $k_e>1$ then the statement is easy. Assume that $k_e=1$, and
  let $v$ and $w$ be the vertices of $e^\star$. We
  argue that for general $\lambda_h, \mu_i\in \CC$
\[
s = \sum \lambda_h s^v_h + \sum \mu_i s^w_i
\] 
has the wished-for property. Indeed, assume for a contradiction that
there exist $j_1\neq j_2$ such that
$s\vert_{\Delta^e_{j_1}}=s\vert_{\Delta^e_{j_2}}$. There are $h_1, i_1$ such that
$j_1\in L^{v}_{h_1}\cap L^{w}_{i_1}$ and,
similarly, there are $h_2, i_2$ such that
$j_2\in L^{v}_{h_2}\cap L^{w}_{i_2}$. By assumption
$\lambda_{h_1}=\lambda_{h_2}$ and $\mu_{i_1}=\mu_{i_2}$. Since these
constant are generic, this implies further that $L^{v}_{h_1}=L^{v}_{h_2}$
and $L^{w}_{i_1}=L^{w}_{i_2}$. The matching condition states that for
all $h,i$ $|L^w_h \cap L^v_i|\leq 1$ and it implies that $j_1=j_2$.
\end{proof}

\begin{thm} \label{thm:3}
  Let $P\subset N$ be a reflexive polytope endowed with amd, $(X,D)$
  the corresponding Fano toric pair, and $\pi\colon (Y,E)\to (X,D)$
  the induced partial resolution.

  Regard the pair $(Y,E)$ as a qODP stack pair in the obvious 
  way. Then
  
  \begin{enumerate}[(1)]
\item The sheaf $\cT^1_{Y,E}$ is generated by global sections and
$\rH^1\left(Y, \cT^1_{Y,E}\right)=(0)$;
\item For $i>0$ $\rH^i\left(Y, T_Y(-\log E)\right)=(0)$;
\item $\T^2_{Y,E}=(0)$; that is, the pair $(Y,E)$ is unobstructed and
  hence smoothable.
  \end{enumerate}
\end{thm}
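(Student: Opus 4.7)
The plan is to handle the three parts in the order (2), (1), (3), invoking Lemma \ref{lem:toricqODPdeformations} to reduce each claim to a concrete cohomological calculation. Part~(3) will then follow from Parts~(1) and~(2) in a routine way; the main work is Part~(1), which hinges on the concrete description of $\Delta^\prime$ in \S\ref{sec:notation-1}.

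For Part~(2), because $(Y,E)$ is a toric pair, the logarithmic tangent sheaf $T_Y(-\log E) \cong N \otimes_\ZZ \cO_Y$ is trivial of rank $3$ (as in Remark \ref{rem:log_tangents}). Hence $\rH^i(Y, T_Y(-\log E)) = N \otimes \rH^i(Y,\cO_Y) = 0$ for $i > 0$, since $Y$ is a proper toric (stack) variety and so has no higher structure sheaf cohomology.

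For Part~(1), by Lemma \ref{lem:qODPdeformations} we have $\cT^1_{Y,E} = \nu_\star \nu^\star \cO_\Delta(E) = \nu_\star \nu^\star(-K_Y|_\Delta)$, using that $E \sim -K_Y$ since $Y$ is Gorenstein toric with toric boundary $E$. The scheme $\Delta^\prime$ is a reduced curve whose irreducible components are $\PP^1$'s coming from the $1$-dimensional torus orbit closures of $Y$, meeting at nodes lying above the non-qODP $0$-strata. On a component $C$ dominating some $\Gamma^e \subset X$ the restriction $\nu^\star(-K_Y)|_C$ is $\cO_{\PP^1}(k_e)$ with $k_e \geq 1$, by the canonical identification of \S\ref{sec:notation-1}; on a component $C$ contracted by $\pi$ the restriction is $\cO_{\PP^1}$ of degree $0$, by the projection formula. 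In either case the restriction is globally generated with vanishing $\rH^1$. A short Mayer--Vietoris on the normalisation of $\Delta^\prime$, combined with the observation that the branches at every node belong to distinct irreducible components (so sections on each branch can be prescribed independently and hence surject onto the skyscraper cokernel at the nodes), then gives global generation of $\cT^1_{Y,E}$ and $\rH^1(Y, \cT^1_{Y,E}) = \rH^1(\Delta^\prime, \nu^\star(-K_Y)) = 0$.

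For Part~(3), the local-to-global spectral sequence $\rH^p(Y, \cT^q_{Y,E}) \Rightarrow \T^{p+q}_{Y,E}$ has vanishing $E_2^{p,q}$ for $p+q=2$: the term $E_2^{0,2}=0$ by Lemma \ref{lem:qODPdeformations}, $E_2^{1,1}=0$ by Part~(1), and $E_2^{2,0}=0$ by Part~(2). Hence $\T^2_{Y,E}=0$, and $(Y,E)$ is unobstructed. Smoothability then follows because global generation of $\cT^1_{Y,E}$ allows us to pick $\xi \in \T^1_{Y,E}$ whose stalk at every qODP is a smoothing first-order deformation; unobstructedness lifts $\xi$ to an actual smoothing of the pair. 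The main delicate point throughout is the identification of the components of $\Delta^\prime$ and the verification that $\nu^\star(-K_Y)$ has non-negative degree on each, both of which reduce to unpacking the combinatorial setup of \S\ref{sec:notation-1}.
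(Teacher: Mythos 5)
Your Parts (2) and (3) match the paper's argument: Part (2) follows from $T_Y(-\log E)\cong N\otimes_\ZZ\cO_Y$, and Part (3) from the local-to-global $\Ext$ spectral sequence together with Parts (1) and (2). The additional remark on lifting a first-order smoothing of each qODP via global generation and unobstructedness is a fine elaboration of ``unobstructed and hence smoothable''.

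The issue is in Part (1), where your justification of surjectivity onto the skyscraper is not adequate. You normalise $\Delta^\prime$ completely and argue that, since the branches at every node belong to distinct irreducible components, ``sections on each branch can be prescribed independently and hence surject onto the skyscraper cokernel.'' But distinctness of the components through a node is \emph{not} enough: a cycle of degree-$0$ components (e.g.\ two $\PP^1$'s with trivial line bundle meeting in two points) has all branches distinct at each node, yet the difference map fails to be surjective and $\rH^1\neq 0$. The section on a $\pi$-contracted component is a constant, so its values at the several nodes it touches cannot be varied independently. What actually saves the argument is a global structural fact you never invoke: after $\nu$, the $\pi$-contracted components over each vertex $v$ of $Q$ form a disjoint union of trees (one per Minkowski factor, the compact part of the tropical tree $\Gamma_j$). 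Hence every cycle in the dual graph of $\Delta^\prime$ must pass through some $\Delta^e_j$, on which $\nu^\star(-K_Y)$ has degree $k_e\geq 1$ and so has two independent degrees of freedom at its endpoints; that positivity is what breaks the parity obstruction around each cycle. This is precisely why the paper does not use the full normalisation: it introduces instead the partial normalisation $\mu\colon\widetilde{\Delta}\to\Delta^\prime$ prying out \emph{only} the $\Delta^e_j$. Then $\widetilde{\Delta}$ is a disjoint union of trees of $\PP^1$'s with trivial line bundle (where $\rH^1=0$ is automatic) and isolated $\PP^1$'s of degree $k_e\geq1$, and surjectivity onto the skyscraper $\delta$ is manifest because it is governed entirely by the freely-prescribable endpoint values of the positive-degree pieces. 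Either make the tree structure of the contracted locus explicit in your Mayer--Vietoris argument, or switch to the paper's partial normalisation, which encodes it automatically.
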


\begin{proof}
  For Part~(1) consider the partial normalization $\mu \colon
  \widetilde{\Delta} \to \Delta^\prime$ that pries out all of the
  $\Delta^e_j$ from the body of $\Delta^\prime$; then we have an exact
  sequence 
\[
(0)\to \cO_{\Delta^\prime} (-K_Y)\to \mu_\star \mu^\star
\cO_{\Delta^\prime} (-K_Y) \to \delta \to (0)
\]
and one sees right away that the homomorphism
\[
\rH^0\bigl(\Delta^\prime, \mu_\star \mu^\star
\cO_{\Delta^\prime} (-K_Y) \bigr) \to \delta
\]
is surjective. It is also clear that $\rH^1\bigl(\widetilde{\Delta}, \mu^\star
\cO_{\Delta^\prime} (-K_Y) \bigr) = (0)$.

\smallskip

For the rest of the proof we use freely the content of
Appendix~\ref{sec:appendix}.

Part~(2) is an immediate consequence of Remark~\ref{rem:log_tangents}
stating that $T_Y(-\log
E)=N\otimes_\ZZ\cO_Y\cong \cO_Y^3$.

We now prove Part~(3). By Lemma~\ref{lem:qODPdeformations}
\[
\cT^1_{Y,E} =\cExt^1_{\cO_Y}\left(\Omega_Y(\log E),\cO_Y\right) 
 \]
and $\T^2_{Y,E}=\Ext^2_{\cO_Y}\left(\Omega_Y (\log
  E),\cO_Y\right)$. Computing this group with the
local-to-global spectral sequence for $\Ext$ we get an exact sequence:
\[
\rH^2 \left( Y, T_Y(-\log E)\right) \to \T^2_{Y,E} \to
\rH^1\left(Y,\cT^1_{Y,E}\right) 
\]
Now conclude with the vanishing proved in Parts~(1) and~(2).
\end{proof}

\subsection{Blowing down deformations}
\label{sec:blow-down-deform}

\begin{lem}
  \label{lem:blow_down_defo} Let $P\subset N$ be a $3$-dimensional reflexive polyhedron
  endowed with choices, for all facets, of admissible Minkowski decompositions
  and dual tropical arrangements. Let $(X,D)$ the corresponding Fano
  toric pair and $\pi\colon (Y,E)\to (X,D)$ the induced partial
  resolution.
  
  Let $g\colon (\cY,\cE) \to S$ be a deformation of the pair $(Y,E)$
  (either algebraic or complex analytic); then
  \begin{enumerate}[(1)]
  \item For all $n\geq 0$ and all $i\geq 1$
    $R^i g_\star \cO_{\cY}(-nK_{\cY})=(0)$. For all $n\geq 0$, $g_\star \cO_{\cY}(-nK_{\cY})$
    is a vector bundle.
  \item There is a diagram of deformations:
  \[
    \xymatrix{ (Y,E)\ar[dd]\ar[dr]^\pi\ar@{^{(}->}[rrr] &   & &
    \bigl(\cY, \cE \bigr)
      \ar[dd]_(.3){g}\ar[dr]^{\Pi} & \\
      &(X,D)\ar[dl] \ar@{^(-}[rr]&  & \ar@{->}[r]&
      \bigl(\cX,\cD\bigr)
      \ar[dl]^{f} \\
      \{0\} \ar@{^{(}->}[rrr]& & &   S& }
  \]
where
  \[
    \cX = \uProj  \oplus_{n\geq 0} g_\star \cO_{\cY}
    (-nK_{\cY})
  \]
\item Every fibre of $f$ is a Fano \mbox{$3$-fold} with
  Gorenstein canonical singularities.
  \end{enumerate}
\end{lem}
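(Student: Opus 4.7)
The plan is to address the three parts in order, reducing the relative statements to fibrewise ones via flatness and base change.

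For Part~(1), the key input is Kawamata--Viehweg vanishing on the central fibre. By Lemma~\ref{lem:qODP}, $\pi\colon Y \to X$ is crepant, so $-K_Y = \pi^\star(-K_X)$ is nef and big since $-K_X$ is ample on the Gorenstein toric Fano $X$. As $Y$ has at worst qODP singularities, which are Gorenstein canonical, Kawamata--Viehweg yields $\rH^i(Y, \cO_Y(-nK_Y)) = 0$ for $i \geq 1$ and $n \geq 0$. Upper semicontinuity of fibre cohomology in a flat family propagates the vanishing to all nearby fibres $\cY_s$, and then Grauert's theorem delivers both the vanishing $R^i g_\star \cO_{\cY}(-nK_{\cY}) = 0$ for $i \geq 1$ and local freeness of $g_\star \cO_{\cY}(-nK_{\cY})$, the formation of the latter commuting with base change.

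For Part~(2), I would set $\mathcal{R} := \bigoplus_{n \geq 0} g_\star \cO_{\cY}(-nK_{\cY})$ and verify that it is a finitely generated graded $\cO_S$-algebra. By Part~(1) and Nakayama, this reduces to finite generation on the central fibre, where the projection formula combined with $R^i \pi_\star \cO_Y = 0$ for $i \geq 1$ (since $Y$ has canonical and hence rational singularities) gives $\rH^0(Y, -nK_Y) = \rH^0(X, -nK_X)$. Thus the central fibre algebra equals the anticanonical ring $R(X, -K_X)$, which is a finitely generated $\CC$-algebra since $X = \Proj R(X, -K_X)$ is projective with ample anticanonical polarisation. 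Defining $\cX := \uProj \mathcal{R}$, the natural evaluation maps give an $S$-morphism $\Pi \colon \cY \to \cX$ restricting to $\pi$ on the central fibre, as required by the diagram.

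For Part~(3), base change from Part~(1) identifies each fibre as $\cX_s = \Proj R(\cY_s, -K_{\cY_s})$, with tautologically ample $\QQ$-Cartier $-K_{\cX_s}$. The central fibre $X$ is Gorenstein canonical by hypothesis. For $s$ near $0$, qODP singularities deform only to smooth points, ODPs, or further qODPs, all Gorenstein canonical, so each $\cY_s$ is a weak Fano with Gorenstein canonical singularities. Then $\Pi_s$ is the anticanonical model of $\cY_s$, and standard birational geometry implies $\cX_s$ is Fano with Gorenstein canonical singularities. The main obstacle I expect is verifying these last two assertions — that qODP singularities deform within the Gorenstein canonical class and that $\Pi$ is relatively crepant in a neighbourhood of the central fibre — both of which should be accessible via the local Altmann deformations of \S~\ref{sec:altm-deform-gorenst} and the qODP deformation theory of \S~\ref{sec:deform-theory-qodp}.
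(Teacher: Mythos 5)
Your overall decomposition matches the paper's and Parts~(1)--(2) are essentially correct, but the route through Part~(1) differs and Part~(3) has a genuine gap.

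For Part~(1), you apply Kawamata--Viehweg vanishing directly on $Y$ using that $-K_Y=\pi^\star(-K_X)$ is nef and big and that $Y$ has Gorenstein canonical singularities, then propagate by semicontinuity and Grauert. That works. The paper instead factors through $\pi$: it uses $R^i\pi_\star\cO_Y(-nK_Y)=R^i\pi_\star\cO_Y=0$ (rational singularities of toric varieties) plus the Leray spectral sequence and Kawamata--Viehweg on $X$. Both routes reach the same conclusion; yours is marginally shorter, the paper's makes the role of $\pi$ explicit and is more transparently compatible with the passage to the stacky total space $\cY$.

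The real issue is Part~(3). You frame the argument as: first establish that each fibre $\cY_s$ is weak Fano with Gorenstein canonical singularities, then invoke the general fact that the anticanonical model of such a variety is Fano Gorenstein canonical. But the first step is exactly what you concede you cannot yet prove: ``qODP singularities deform only to smooth points, ODPs, or further qODPs'' is not a scheme-theoretic fact — scheme deformations of a qODP with $a>1$ are obstructed (Remark~\ref{rem:why_stacks?}), and canonical singularities are not automatically open in flat families. The statement you want is true precisely in the qODP \emph{stack} formalism of Definition~\ref{dfn:qODPpair}: étale-locally a qODP stack is $[\mathrm{ODP}/\mu_a]$, its miniversal deformation is the one-parameter family $x_1x_2-x_3x_4=t$, and every fibre is again a qODP stack, in fact terminal. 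You never actually invoke this, so as written the key claim dangles. The paper sidesteps the need to classify deformations of the singularities at all: it reads off from the construction $\cX=\uProj\,\bigoplus_n g_\star\cO_\cY(-nK_\cY)$ that $\Pi^\star\cO_\cX(1)\cong\cO_\cY(-K_\cY)$ and $\Pi_\star\cO_\cY=\cO_\cX$, hence $\cO_{\cX}(1)=-K_{\cX}$ is Cartier, so each $\cX_t$ is Gorenstein and $\Pi_t$ is crepant ``for free'' from Part~(2); combining this with terminality of the qODP stacks $\cY_t$ immediately gives that $\cX_t$ is canonical. If you want to keep your fibrewise-first approach, you must replace your deformation claim by the observation that $g$ is a deformation of the qODP \emph{stack} pair and that this class is closed under infinitesimal deformation; otherwise the argument does not close.
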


\begin{proof}

The birational morphism $\pi$ is crepant and toric so $R^i\pi_*\cO_Y(-nK_Y) = R^i\pi_*\cO_Y = 0$ for $i>0$ since toric varieties have rational singularities, see e.g. \cite{Fulton93}, p.~76.
Then  also $\rH^i(Y,-nK_Y)=(0)$ for $i>0$ by Leray and Kawamata--Viehweg vanishing for $X$. Part~(1) then follows from cohomology and base change.

Part~(2) now follows from the fact that $X= \Proj  \oplus_{n\geq 0}
\rH^0(Y, -nK_Y)$, see also~\cite[Theorem~1.4]{Wahl}.

Next we show Part~(3). First, it follows from Part~(2) that, for
all $t\in S$, the fibre $\cX_t$ is Gorenstein and
the contraction morphism $\pi_t \colon \cY_t
\to \cX_t$ is crepant. For all $t$, the fibre
$\cY_t$ has qODP, hence terminal singularities, and hence
$\cX_t$  has at worst canonical singularities.
\end{proof}

\begin{dfn}
  \label{dfn:blow_down}
  The deformation $f\colon (\cX,\cD) \to S$ constructed in
  Lemma~\ref{lem:blow_down_defo} is the \emph{blow-down} of
  the deformation $g\colon (\cY, \cE)\to S$.
\end{dfn}
\section{Homogeneous deformations II}
\label{sec:homog-deform-ii}

\subsection{Statement of purpose}
\label{sec:statement-purpose-1}

The purpose of this section is to prove Proposition~\ref{pro:TdefoII}.
The statement is natural and unsurprising: if you are
willing to take it on trust, we suggest that you jump to
Section~\ref{sec:proof-theorem} and see how it is used to complete the
proof of Theorem~\ref{thm:1}.

Our proof uses abstract deformation
theory. Specifically, we need: (a)  the theorem of
Kuranishi--Douady--Grauert~\cite{MR0141139, MR0382729, MR0346194} on
the existence of the complex analytic versal deformation of a compact
complex analytic space, and (b) the work of Rim~\cite{MR549162} on
$G$-equivariant structures on formal versal families, which for
convenience we recall in Appendix~\ref{sec:equiv-g-struct}.


\subsection{Proof of
  \texorpdfstring{Proposition~\ref{pro:TdefoII}}{Proposition 4.1}}
\label{sec:proof_defoII}

\begin{setup}
  \label{sec:setup_sec4}
  This setup will be in force throughout this section.
  Let $\widetilde{f}^{\text{an}} \colon
 (\widetilde{\cX}^{\text{an}},\widetilde{\cD}^{\text{an}})
 \to \cM_{X,D}^{\text{an}}$ be a miniversal
  complex analytic deformation of the pair $(X,D)$. 

  Let $f^{\text{an}}\colon 
  (\cX^{\text{an}},\cD^{\text{an}}) \to \cM_{Y,E}^{\text{an}}$ be the
  blow-down deformation.

  Choose a complex analytic morphism $\Pi_\star \colon \cM_{Y,E}^{\text{an}}\to
  \cM_{X,D}^{\text{an}}$ and a fibre square:
  \begin{equation}
    \label{eq:blow_dn_diagram}
        \xymatrix{(\cX^{\text{an}},\cD^{\text{an}}) \ar[r]
      \ar[d]_{f^{\text{an}}}&
      (\widetilde{\cX}^{\text{an}},\widetilde{\cD}^{\text{an}}) 
      \ar[d]^{\widetilde{f}^{\text{an}}} \\
      \cM_{Y,E}^{\text{an}}  \ar[r]_{\Pi_\star}&  \cM_{X,D}^{\text{an}}}
  \end{equation}

  (The fibre square exists  by the versal property of
  $\widetilde{f}^{\text{an}}$.)

\smallskip  

Let
$f^{\text{an}}_v \colon (\cX_v^{\text{an}} ,\cD_v^{\text{an}})\to S_v
^{\text{an}}$ be the analytification of the global Altmann family.
Choose a complex analytic morphism
$j\colon S_v ^{\text{an}}\to \cM_{X,D}^{\text{an}}$ and a fibre square
\begin{equation}
  \label{eq:Altmann_diagram}
    \xymatrix{
       \bigl(\cX_v^{\text{an}}
       ,\cD_v^{\text{an}}\bigr)\ar[d]_{f^{\text{an}}_v}\ar[r]&
       (\widetilde{\cX}^{\text{an}} , \widetilde{\cD}^{\text{an}}) \ar[d]^{\widetilde{f}^{\text{an}}}\\
       S_v ^{\text{an}} \ar[r]_j & \cM_{X,D}^{\text{an}}}
\end{equation}

    (The existence of such a fibre square follows from the versal property of
   $\widetilde{f}^{\text{an}}\colon
   (\widetilde{X}^{\text{an}},\widetilde{D}^{\text{an}} )\to
   \cM_{X,D}^{\text{an}}$.)

   \smallskip

   In the course of the proof, we identify the vertex $v\in M$ with the
  corresponding character of the torus and denote by $\T^1_{Y,E}(v)$
  the invariant summand on which $\TT$ acts with character $v$.
  \end{setup}
   
   \begin{lem}
     \label{lem:injectivities}
     Consider the diagram of morphisms and their derivatives, also
     known as Kodaira--Spencer maps 
     \[
       \xymatrix{ & \cM_{Y,E}^{\text{an}} \ar[d]^{\Pi_\star}\\
         S_v^{\text{an}} \ar[r]_j & \cM_{X,D}^{\text{an}}
       }
       \quad
          \xymatrix{ & \T^1_{Y,E} \ar[d]^{D\Pi_\star=\kappa}\\
         \CC^r=T_0 S_v \ar[r]_{Dj=\kappa^\dagger} & \T^1_{X,D}
       }
       \]
       Then $\kappa^\dagger$ is injective, $\kappa$ is injective on
       $\T^1_{Y,E}(v)$, and $Dj(\CC^r)=D\Pi_\star (\T^1_{Y,E}(v))$.
     \end{lem}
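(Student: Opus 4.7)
The plan is to reduce all three claims to a local computation at the $0$-stratum $x_v$, exploiting $\TT$-equivariance throughout. First, by Rim's theorem recalled in Appendix~\ref{sec:equiv-g-struct}, the miniversal deformations of $(X,D)$ and of $(Y,E)$ carry compatible $\TT$-actions, and the classifying morphisms $\Pi_\star$ and $j$ may be chosen $\TT$-equivariant. Reflexivity of $P$ forces the facet $F^v$ to lie in the affine hyperplane $\{v=-1\}$ and its Minkowski summands to lie in $\{v=0\}$ after translation; hence the coordinates $t_1,\dots,t_r$ on $S_v=\AA^r$ are weight vectors of $\TT$-weight $v$. Combined with Lemma~\ref{lem:T1Y}, this places $Dj(\CC^r)$ and $D\Pi_\star(\T^1_{Y,E}(v))$ inside $\T^1_{X,D}(v)$.

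Next I would localize at $x_v$. Lemma~\ref{lem:T1(X,D)}(2) already gives an isomorphism $\T^1_{X,D}(v)\cong\T^1_{X_{\sigma_v},D_{\sigma_v}}(v)$. The analogous statement for $(Y,E)$, namely $\T^1_{Y,E}(v)\cong\T^1_{Y_{\sigma_v},E_{\sigma_v}}(v)$, follows from the explicit description of $\cT^1_{Y,E}$ in Lemma~\ref{lem:qODPdeformations} together with the fact, visible from the basis of Lemma~\ref{lem:T1Y}, that the weight-$v$ sections $s^v_i$ are supported on $(\pi\nu)^{-1}(x_v)\subset Y_{\sigma_v}$. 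Under these identifications, $Dj$ corresponds to the Kodaira--Spencer map of the local Altmann deformation (the compatibility being built into the proof of Proposition~\ref{prop:altmann_global_Fano}), and the restriction of $D\Pi_\star$ corresponds to the local blow-down on $v$-weight pieces.

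The core computation is then to show that the Kodaira--Spencer map $\kappa^{\mathrm{loc}}\colon\CC^r\to\T^1_{X_{\sigma_v},D_{\sigma_v}}(v)$ of the local Altmann deformation is injective. I would extract this directly from the explicit defining equations in Proposition~\ref{pro:Altmann_deformation}: turning on each $t_i$ perturbs the equation $\chi^{e_i^\star}-\chi^{e_0^\star}$ by an independent parameter and the corresponding first-order classes in $\T^1$ are distinguished by their position in the cone $\widetilde\sigma$. This establishes claim~(a). For (b) and (c) I would invoke the local simultaneous resolution of Proposition~\ref{pro:admissible_decomposition_Matsushita}, which produces a Kodaira--Spencer map $\mu\colon\CC^r\to\T^1_{Y_{\sigma_v},E_{\sigma_v}}(v)$ satisfying $D\Pi_\star^{\mathrm{loc}}\circ\mu=\kappa^{\mathrm{loc}}$; hence $\mu$ is injective, and since $\dim \T^1_{Y_{\sigma_v},E_{\sigma_v}}(v)=r(v)=r$ by Lemma~\ref{lem:T1Y}, $\mu$ is an isomorphism. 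Claims (b) and (c) then follow at once: $D\Pi_\star$ is injective on $\T^1_{Y,E}(v)=\operatorname{Im}\mu$, and $D\Pi_\star(\T^1_{Y,E}(v))=\kappa^{\mathrm{loc}}(\CC^r)=Dj(\CC^r)$.

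The main obstacle is making the two local-to-global identifications rigorous, and in particular verifying that the miniversal deformation of $(Y,E)$ restricted to the analytic germ at $\pi^{-1}(x_v)$ recovers the local simultaneous resolution of Proposition~\ref{pro:admissible_decomposition_Matsushita} at the level of Kodaira--Spencer classes in weight $v$. Both should follow from the versal property of $\cM_{Y,E}^{\mathrm{an}}$ combined with the $\TT$-equivariance provided by Rim, but the bookkeeping is where most of the real work lies.
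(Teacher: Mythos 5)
Your overall plan matches the paper's: localize at $x^v$ via the injectivity in Lemma~\ref{lem:T1(X,D)} and the identity $\T^1_{Y,E}(v)=\T^1_{Y_{\sigma_v},E_{\sigma_v}}(v)$ from Lemma~\ref{lem:T1Y}, then bring in the simultaneous resolution of Proposition~\ref{pro:admissible_decomposition_Matsushita} and the factorization $\kappa_v^\dagger=\kappa_v\circ\lambda_v^\dagger$ of Kodaira--Spencer maps. The paper does not invoke Rim for this lemma; it gets by with $\TT$-equivariance of the Altmann family and of the KS maps, which suffices to place both images inside $\T^1_{X,D}(v)$.

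Where you diverge is in the order of the key step, and this is where the gap in your proposal sits. You propose to prove injectivity of the local Altmann KS map $\kappa_v^\dagger\colon\CC^r\to\T^1_{X_{\sigma_v},D_{\sigma_v}}(v)$ ``directly from the defining equations,'' and then deduce that $\lambda_v^\dagger$ is injective, hence an isomorphism by dimension count. But identifying first-order deformation classes of the pair in $\T^1_{X_{\sigma_v},D_{\sigma_v}}(v)$ and checking linear independence requires Altmann's intricate description of $\T^1_{X_\sigma}$ (cf.\ Remark~\ref{rem:T1(X,D)}(2)); the phrase ``distinguished by their position in the cone $\widetilde\sigma$'' is not a computation and does not obviously survive the passage from $\cX$ (dimension $n+r$) back to $X_{\sigma_v}$. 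The paper goes the other way, treating as the basic fact that $\lambda_v^\dagger\colon T_0S_v\to\T^1_{Y_{\sigma_v},E_{\sigma_v}}(v)$ is an isomorphism. This is the easier direction precisely because $\T^1_{Y_{\sigma_v},E_{\sigma_v}}(v)$ is the small, explicit space with basis $\{s^v_1,\dots,s^v_r\}$ from Lemma~\ref{lem:T1Y}, and the simultaneous resolution deforms the slices $V_j$ independently, so one can read $t_j\mapsto s^v_j$ off the construction (compare the identical style of argument in Step~3 of the proof of Lemma~\ref{lem:moves}). Once you have $\lambda_v^\dagger$ an isomorphism and the factorization, the image equality is immediate, and the injectivity of $\kappa_v^\dagger$ should then be what you deduce, not what you assume.

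Your dimension count $\dim T_0 S_v=r=r(v)=\dim\T^1_{Y_{\sigma_v},E_{\sigma_v}}(v)$ is correct and is a clean way to upgrade injectivity to an isomorphism; the paper leaves this implicit. But without a genuine argument for the local injectivity of $\kappa_v^\dagger$ (independent of the factorization you then want to use), your chain of implications is circular in spirit. Rework the ``core computation'' so that it computes $\lambda_v^\dagger$ against the basis of Lemma~\ref{lem:T1Y}, and the rest of your proposal goes through.
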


     \begin{proof}
By construction, the global Altmann
deformation is $\TT$-equivariant and the tangent space $T_{0}S_v=\CC^r$
is $v$-isotypic. Both $\kappa$ and $\kappa^\dagger$
are $\TT$-equivariant and it follows that both images are contained in
the direct summand $\T^1_{X,D}(v)\subset \T^1_{X,D}$.

Denote by $\Sigma$ the spanning fan of $P$, and by $\Sigma^\prime$ the
set of its maximal cones. Among these cones, there is the cone
$\sigma_v$ corresponding to $v$. For all $\sigma \in \Sigma^\prime$,
denote by $(X_{\sigma},D_{\sigma})$ the corresponding affine toric
pair. Also write $Y_\sigma=\pi^{-1}(X_\sigma), E_\sigma=\pi^{-1}(D_\sigma)$.

Lemma~\ref{lem:T1(X,D)} states that the localization map
  \[
    \T^1_{X,D}(v) \to \T^1_{X_{\sigma_v}, D_{\sigma_v}}(v)
  \]
is an isomorphism (we only need that it is injective). We prove
the result by composing with the localization map and studying
images in $\T^1_{X_{\sigma_v}, D_{\sigma_v}}$.

It follows from the explicit
description of Lemma~\ref{lem:T1Y} that
$\T^1_{Y,E}(v) =\T^1_{Y_{\sigma_v},E_{\sigma_v}}(v)$. Also, we have a
commutative diagram where the notation is self-explanatory:
\begin{equation}
  \label{eq:CD_A}
  \xymatrix{
    \T^1_{Y,E} (v) \ar@{=}[r]    \ar_{\kappa}[d] & \T^1_{Y_{\sigma_v},E_{\sigma_v}}(v)
 \ar[d]^{\kappa_v} &\\
\T^1_{X,D}(v) \ar[r] & \T^1_{X_{\sigma_v},D_{\sigma_v}}(v)
}  
\end{equation}

Denoting by $f_{\sigma_v} \colon (\cX_{\sigma_v},
\cD_{\sigma_v} )\to S_v$ the local Altmann deformation, and by
${\kappa_v^\dagger} \colon T_{0}S_v\to \T^1_{X_{\sigma_v}, D_{\sigma_v}}$ its
Kodaira--Spencer map, we also have a commutative diagram:
\begin{equation}
  \label{eq:CD_B}
    \xymatrix{T_{0}
      S_v\ar[d]_{\kappa^\dagger}\ar[dr]^{\kappa_v^\dagger}& \\
      \T^1_{X,D}  (v)\ar[r] & \T^1_{X_{\sigma_v}, D_{\sigma_v}}(v)
      }  
\end{equation}
The two commutative diagrams~\ref{eq:CD_A} and~\ref{eq:CD_B}, together
with Lemma~\ref{lem:T1(X,D)} show that the statement that we have to
prove follows from its local version: namely, the statement that the
two maps:
    \[
\kappa_v \colon \T^1_{Y_{\sigma_v},E_{\sigma_v}}(v) \to
\T^1_{X_{\sigma_v}, D_{\sigma_v}}(v) \quad \text{and} \quad
 \kappa_v^\dagger \colon T_{0} S_v \to \T^1_{X_{\sigma_v}, D_{\sigma_v}}(v)
\]
have the same image. To prove this local statement, consider the
simultaneous partial resolution
\[
  g_{\sigma_v} \colon (\cY_{\sigma_v}, \cE_{\sigma_v})\to S_v
\]
of the local Altmann deformation provided by
Proposition~\ref{pro:admissible_decomposition_Matsushita}, and denote
by $\lambda_v^\dagger \colon T_{0}S_v \to
\T^1_{Y_{\sigma_v},E_{\sigma_v}}(v)$ its Kodaira--Spencer
map. Everything follows from the following two simple facts:
\begin{enumerate}[(a)]
\item $\lambda_v^\dagger$ is an isomorphism;
\item The partial resolution of the local Altmann deformation induces
  a factorization of Kodaira--Spencer maps
  \[
    \xymatrix{ T_{0} S_v \ar[r]^(.4){\lambda_v^\dagger} \ar[dr]_{\kappa_v^\dagger} &
      \T^1_{Y_{\sigma_v},E_{\sigma_v}}(v)\ar[d]^{\kappa_v}\\
       & \T^1_{X_{\sigma_v},D_{\sigma_v}}(v)
      }
    \]
  \end{enumerate}
     \end{proof}

  \begin{lem}
    \label{lem:Ginvariant_formal}
  \begin{enumerate}[(1)]
  \item Theorem~\ref{thm:Tequivariant_versal} with the setup
    $\frakD=\catDeff{Y}{E}$, $\T^1=\T^1_{Y,E}$, $\TT=\Spec \CC[M]$ and
    $\TT$-invariant subspace $W=\T^1(v)\subset \T^1$ gives a formal $\TT$-equivariant deformation:
     \[
    \xymatrix{ (Y,E) \ar[d]\ar@{^(->}[r]& (\widehat{\cY}_v, \widehat{\cE}_v) \ar[d]^{\widehat{g}_v}\\
    \{0\} \ar@{^(->}[r]& \widehat{\T}^1(v)}
\]
(where we denoted by $\widehat{\T}^1(v)$ the formal completion of
$\T^1(v)$ at the origin).
  \item Consider the $\TT$-equivariant blow-down diagram of formal
    $\TT$-equivariant deformations
    \[
    \xymatrix{ (Y,E)\ar[dd]\ar[dr]^\pi\ar@{^{(}->}[rrr] &   & &
      \bigl(\widehat{\cY}_v, \widehat{\cE}_v\bigr)
      \ar[dd]_(.3){\widehat{g}_v}\ar[dr]^{\widehat{\Pi}_v} & \\
      &(X,D)\ar[dl] \ar@{^(-}[rr]&  & \ar@{->}[r]&
      \bigl(\widehat{\cX}_v,\widehat{\cD}_v\bigr)
      \ar[dl]^{\widehat{f}_v} \\
      \{0\} \ar@{^{(}->}[rrr]& & &  \widehat{\T}^1(v) & }
  \]
   The formal $\TT$-equivariant deformation
 $\widehat{f}_v\colon \bigl(\widehat{\cX}_v,\widehat{\cD}_v\bigr)\to \widehat{\T}^1(v)$ is 
 formally isomorphic to the formal completion of the global Altmann family
 $f_v\colon (\cX_v,\cD_v) \to S_v=\AA^r$ of Proposition~\ref{prop:altmann_global_Fano}.      
  \end{enumerate}
  \end{lem}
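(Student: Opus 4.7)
\medskip

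\noindent\textbf{Proof plan.}

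For Part~(1), the plan is to apply Theorem~\ref{thm:Tequivariant_versal} directly to the deformation functor $\frakD = \catDeff{Y}{E}$ equipped with the natural $\TT$-action inherited from the toric structure on $(Y,E)$. The hypotheses are verified by: finite dimensionality of $\T^1_{Y,E}$ (from $Y$ proper and Lemma~\ref{lem:toricqODPdeformations}); the vanishing of the obstruction space $\T^2_{Y,E} = 0$, which is Theorem~\ref{thm:3}(3); and $\TT$-invariance of $W = \T^1_{Y,E}(v)$, immediate from the weight decomposition. This produces the $\TT$-equivariant formal deformation $\widehat{g}_v$ over $\widehat{W}$. Blowing down via Lemma~\ref{lem:blow_down_defo} yields $\widehat{f}_v$, and the $\TT$-equivariance is preserved by functoriality of the blow-down construction.

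For Part~(2), I plan to proceed in two main steps. \emph{Step~1} is to compare Kodaira--Spencer data and build a $\TT$-equivariant identification of the two bases. By Lemma~\ref{lem:injectivities}, both $\kappa^\dagger\colon \CC^r \to \T^1_{X,D}$ and $\kappa|_{\T^1_{Y,E}(v)}\colon \T^1_{Y,E}(v) \to \T^1_{X,D}$ are injective with the same image, so
\[
\sigma_0 \;=\; \bigl(\kappa|_{\T^1_{Y,E}(v)}\bigr)^{-1}\circ \kappa^\dagger \colon \CC^r \xrightarrow{\;\sim\;} \T^1_{Y,E}(v)
\]
is a $\TT$-equivariant linear isomorphism. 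Since both sides are $v$-isotypic of the same dimension, $\sigma_0$ canonically extends to a $\TT$-equivariant isomorphism of formal smooth $\TT$-schemes $\sigma\colon \widehat{S}_v \xrightarrow{\;\sim\;} \widehat{W}$. After pulling back by $\sigma$, the formal completion $\widehat{f}_{v,\mathrm{Alt}}$ of the global Altmann family and $\sigma^\star\widehat{f}_v$ are two $\TT$-equivariant formal deformations of $(X,D)$ over the common base $\widehat{S}_v$ with identical Kodaira--Spencer map $\kappa^\dagger$.

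\emph{Step~2} is to upgrade this equality of tangent data to a formal isomorphism of families by induction on the order of the deformation. Suppose inductively that the two families are $\TT$-equivariantly isomorphic modulo $\frakm^{n+1}$. The obstruction to extending the isomorphism modulo $\frakm^{n+2}$ lies in the $\TT$-invariant part of $\T^1_{X,D} \otimes \frakm^{n+1}/\frakm^{n+2}$; because $\TT$ acts on $\frakm^{n+1}/\frakm^{n+2}$ by the character $-(n+1)v$, this invariant part equals $\T^1_{X,D}\bigl((n+1)v\bigr)$. The crucial vanishing is that for every nonzero vertex $v \in Q$ and every integer $k \ge 2$, the point $kv$ lies outside the convex polytope $Q$ (since $v$ is an extreme point of $Q$ and $0 \in \interior Q$), so $kv \notin Q^1 \cap M$; the weight analysis from Altmann's work, packaged in Lemma~\ref{lem:T1(XD)and(X)}, then gives $\T^1_{X,D}(kv) = 0$ for all $k \ge 2$. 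At the base case $n=0$ the obstruction is exactly the difference of the two Kodaira--Spencer maps, which is zero by construction. The induction therefore carries through, and the formal isomorphism $\widehat{f}_v \cong \widehat{f}_{v,\mathrm{Alt}}$ is assembled as the inverse limit.

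The main obstacle will be setting up the $\TT$-equivariant obstruction calculus precisely, drawing on Appendix~\ref{sec:appendix} for the deformation theory of pairs $(X,D)$ and on Appendix~\ref{sec:equiv-g-struct} for Rim's theory of equivariant formal deformations, and in particular keeping track of the automorphism ambiguity at each order so that the chosen isomorphisms assemble coherently. Once the weight-theoretic bookkeeping is in place, the vanishing $\T^1_{X,D}(kv)=0$ for $k \ge 2$ is the substantive geometric input that makes the inductive extension unobstructed.
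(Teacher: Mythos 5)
Part~(1) matches the paper's argument. For Part~(2), however, you take a genuinely different route from the paper, and the key step of your route has a gap. The paper applies Theorem~\ref{thm:Tequivariant_versal} a second time, with the setup $\frakD=\catDeff{X}{D}$, $\T^1=\T^1_{X,D}$ and $\TT$-invariant subspace $W=\kappa(\T^1_{Y,E}(v))=\kappa^\dagger(\CC^r)$, obtaining a $\TT$-equivariant miniversal family $\widehat{f}_W$ over a formal subscheme $\widehat{M}\subset\widehat{W}$. Since both $\widehat{f}_v$ and the completion of the global Altmann family are $\TT$-equivariant deformations of $(X,D)$ with Kodaira--Spencer map landing isomorphically in $W$ (this is Lemma~\ref{lem:injectivities}), both are induced from $\widehat{f}_W$, whence $\widehat{M}=\widehat{W}$ and the families are formally isomorphic. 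No knowledge of $\T^1_{X,D}(m)$ for $m\notin W$ is needed.

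Your argument replaces this with an order-by-order extension of a $\TT$-equivariant isomorphism, with the obstruction at order $n+1$ lying in the $\TT$-invariant part of $\T^1_{X,D}\otimes\frakm^{n+1}/\frakm^{n+2}$, which you identify as $\T^1_{X,D}((n+1)v)$ and claim vanishes for $n\geq 1$ because $kv\notin Q$ for $k\geq 2$. The weight bookkeeping is correct, but the vanishing claim does not follow from Lemma~\ref{lem:T1(XD)and(X)}: that lemma shows that the cokernel $\C$ of $\rH^0(X,T_X)\to\rH^0(D,N_DX)$ is supported on $Q^1\cap M$ and computes $\T^1_{X,D}(v)=\C(v)$ for $v\in Q^1\cap M$, but it says nothing about $\T^1_X(m)$, and hence nothing about $\T^1_{X,D}(m)\cong\T^1_X(m)$, for $m\notin Q^1\cap M$. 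So the needed vanishing $\T^1_{X,D}(kv)=0$ for $k\geq 2$ would require the full Altmann description of the weight support of $\T^1_X$, which the paper cites but neither states nor needs. As written your induction is therefore not closed; either supply the support statement for $\T^1_X$ from Altmann's Theorem in~\cite[\S~4.3]{MR1798979}, or adopt the paper's route through the $\TT$-equivariant versal family $\widehat{f}_W$, which sidesteps the question entirely.
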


  \begin{proof}
Part~(1) is a consequence of the fact that the pair $(Y,E)$
is unobstructed: Theorem~\ref{thm:Tequivariant_versal} gives a family
over a formal subscheme of $\widehat{\T}^1_{Y,E}(v)$ and we are just
saying that this subscheme is all of $\widehat{\T}^1_{Y,E}(v)$.

Let us prove Part~(2). For sake of clarity, let us denote by
\[
  \xymatrix{(X,D)\ar@{^(->}[r] \ar[d]& (\cX^\dagger_v,\cD^\dagger_v) \ar[d]^{f_v^\dagger}\\
  \{0\} \ar@{^(->}[r] & S_v}
\]
the global Altmann family of Proposition~\ref{prop:altmann_global_Fano}. We want to compare
$\widehat{f}_v$ with $\widehat{f}^\dagger_v$.


Theorem~\ref{thm:Tequivariant_versal} with the setup
$\frakD=\catDeff{X}{D}$, $\T^1=\T^1_{X,D}$, $\TT=\Spec \CC[M]$ and
$\TT$-invariant subspace $W=\kappa \left(\T^1(v)\right)=\kappa^\dagger (\CC^r)$ gives a formal $\TT$-equivariant deformation:
\[
    \xymatrix{(X,D) \ar[d]\ar@{^(->}[r]& (\widehat{\cX}_W, \widehat{\cD}_W) \ar[d]^{\widehat{f}_W}\\
    \{0\} \ar@{^(->}[r]& \widehat{M}}
\]
where, denoting by $\widehat{W}$ the formal completion at the origin,
$\widehat{M}\subset \widehat{W}$ is a formal subscheme. The families
$\widehat{f}_v$ and $\widehat{f}_v^\dagger$ are both induced by the
miniversal family $\widehat{f}_W$: it follows from this and Lemma~\ref{lem:injectivities}
that $\widehat{M}=\widehat{W}$ and that the three families are formally
isomorphic.
\end{proof}


 \begin{proof}[Proof of Proposition~\ref{pro:TdefoII}]

   \begin{claim}
   There exists a complex analytic lift $\widetilde{j}$ of $j$ as in
   the diagram:
   \[
     \xymatrix{
       & \cM_{Y,E}^{\text{an}}\ar[d]^{\Pi_\star} \\
       S_v^{\text{an}}\ar[r]_j \ar@{-->}[ru]^{\widetilde{j}}&  \cM_{X,D}^{\text{an}}}
   \]
   \end{claim}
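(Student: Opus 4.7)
The plan is to promote the formal lift provided by Lemma~\ref{lem:Ginvariant_formal} to an analytic one, exploiting the smoothness of $\cM_{Y,E}^{\text{an}}$ together with Rim's equivariant theory.

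Since $(Y,E)$ is unobstructed by Theorem~\ref{thm:3}(3), the analytic germ $\cM_{Y,E}^{\text{an}}$ is smooth of dimension $\dim_\CC \T^1_{Y,E}$. By Rim's theory (Appendix~\ref{sec:equiv-g-struct}), the $\TT$-representation on $\T^1_{Y,E}$ extends to a formal $\TT$-equivariant structure on the miniversal deformation; on the smooth germ $\cM_{Y,E}^{\text{an}}$ this formal action integrates to an analytic $\TT$-action with the given linearization on the tangent space. Arranging the same for $\cM_{X,D}^{\text{an}}$ and using versality for compatibility, we may take $\Pi_\star$ to be $\TT$-equivariant; the morphism $j$ is already $\TT$-equivariant since the global Altmann family is.

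Let $S_v' \subset \cM_{Y,E}^{\text{an}}$ be the smooth analytic sub-germ whose tangent space is the $v$-isotypic summand $\T^1_{Y,E}(v) \subset \T^1_{Y,E}$; by Lemma~\ref{lem:T1Y} we have $\dim S_v' = r(v) = r$. Pulling back $f^{\text{an}}$ along the closed embedding $S_v' \hookrightarrow \cM_{Y,E}^{\text{an}}$ gives an analytic $\TT$-equivariant deformation of $(Y,E)$ whose blow-down (Lemma~\ref{lem:blow_down_defo}) is the analytic $\TT$-equivariant deformation of $(X,D)$ classified by $\Pi_\star|_{S_v'} \colon S_v' \to \cM_{X,D}^{\text{an}}$. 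By Lemma~\ref{lem:Ginvariant_formal}(2), the formal completion of this blow-down is formally isomorphic to the formal completion of the global Altmann family over $S_v^{\text{an}}$, yielding a $\TT$-equivariant formal isomorphism of formal bases $\widehat{\phi} \colon \widehat{S}_v \xrightarrow{\sim} \widehat{S}_v'$.

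Both $S_v^{\text{an}}$ and $S_v'$ are smooth $r$-dimensional analytic germs whose $\TT$-action is $v$-isotypic on the tangent space; by analytic equivariant linearization of torus actions they admit $\TT$-eigencoordinates $x_1,\dots,x_r$ of common weight $v$. Since $P$ is reflexive, $0 \in \interior Q$, so $v \neq 0$, and a $\TT$-equivariant formal morphism between two such germs is forced by weight matching of monomials $x_1^{a_1}\cdots x_r^{a_r}$ (of weight $(a_1+\cdots+a_r)v$) against weight $v$ to be linear in the $x_i$, hence analytic. Consequently $\widehat{\phi}$ is the formal completion of an analytic $\TT$-equivariant isomorphism $\phi \colon S_v^{\text{an}} \xrightarrow{\sim} S_v'$, and we set
\[
\widetilde{j} \colon S_v^{\text{an}} \xrightarrow{\phi} S_v' \hookrightarrow \cM_{Y,E}^{\text{an}}
\]
The equality $\Pi_\star \circ \widetilde{j} = j$ follows because both sides classify analytic deformations of $(X,D)$ that become analytically isomorphic via the convergent upgrade of the formal isomorphism of families (applied to the total spaces by the same weight argument), and the analytic versal property of $\widetilde{f}^{\text{an}}$ then identifies the two classifying maps. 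The principal obstacle is this convergence step, upgrading Lemma~\ref{lem:Ginvariant_formal}(2) to an analytic statement; it is overcome precisely because $\TT$-equivariance, combined with $v$-isotypy of the tangent spaces ($v \neq 0$), rigidifies all relevant formal data into polynomial (hence convergent) form.
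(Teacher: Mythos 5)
Your proposal takes a genuinely different route from the paper. The paper reduces to the formal level via Artin's theorem on solutions of analytic equations and then constructs the formal lift inductively order by order, using the equality $\Aut^0(X,D)=\Aut^0(Y,E)=\TT$ to absorb the ambiguity at each step. You instead try to stay analytic throughout: you posit a holomorphic $\TT$-action on $\cM_{Y,E}^{\text{an}}$, cut out the $v$-isotypic analytic submanifold $S_v'$, and force the formal comparison $\widehat{S_v}\to\widehat{S_v'}$ to be linear (hence convergent) by a weight-matching argument. That weight-matching argument is a nice observation and is correct: since $P$ is reflexive, $v\neq 0$ in the torsion-free lattice $M$, so a $\TT$-equivariant formal map between $v$-isotypic smooth germs must be linear in eigencoordinates.

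The gap is at the very first step. Rim's theorem (Appendix~\ref{sec:equiv-g-struct}, Theorem~\ref{thm:Tequivariant_versal}) is a purely formal result: it equips a formal versal base with a $\TT$-structure. It does not give, and your argument does not supply, a holomorphic $\TT$-action on the complex analytic miniversal germ $\cM_{Y,E}^{\text{an}}$ produced by Kuranishi--Douady--Grauert. Transporting Rim's structure across a (non-canonical) formal isomorphism with $\widehat{\cM}_{Y,E}^{\text{an}}$ yields only a formal $\TT$-action on the completed local ring; the formal change of coordinates that linearizes it has no reason to converge. Establishing the analytic action would require an equivariant version of the Kuranishi--Douady--Grauert theorem for a reductive (non-compact) group acting on a (possibly singular) compact complex space --- a substantive piece of machinery that the paper deliberately avoids by only ever using Rim's formal statement and invoking Artin at the end. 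Since both $S_v'$ and the eigencoordinates in your rigidity step depend on that analytic $\TT$-action, your last paragraph does not overcome the convergence problem but presupposes it. A secondary point: Lemma~\ref{lem:Ginvariant_formal}(2) asserts a formal isomorphism of families without asserting $\TT$-equivariance of the induced map on bases, so the $\TT$-equivariance of $\widehat{\phi}$ would also need a further argument inside Rim's framework. Replacing your first step by Artin's theorem would repair the proof, but essentially reproduces the paper's argument.
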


   The claim finishes the proof: the pull-back family
   $\widetilde{j}^\star
   (\cY ^{\text{an}},\cE ^{\text{an}}) \to S_v ^{\text{an}}$ is the
   family that we want.

   \smallskip
  
   It remains to prove the claim. By~\cite[Theorem~1.5(ii)]{MR232018}
   it is enough to prove that the lift exists formally. The existence
   of a formal lift is a more-or-less immediate consequence of
   Lemma~\ref{lem:Ginvariant_formal} and the fact that
   $\Aut (Y,E)=\Aut(X,E)=\TT$. For clarity, we spell out the
   detail. Choose ideals
 \[
   \cO_{S_v}\supset \mathfrak{m}_{S_v}=J_1 \supset J_2 \supset \cdots
 \]
 such that
 \begin{enumerate}[(a)]
 \item for all $m\geq 1$ $J_m/J_{m+1}\cong \CC$;
 \item for all $n>0$ there exists $m$ such that
   $\mathfrak{m}_{S_v}^n\supset J_m$. 
 \end{enumerate}

 For all $m\geq 0$, let $S_v^m =\Spec (\cO_{S_v}/J_m)$ and denote by
 $j^m\colon S_v^m \to \cM_{X,D}^{an}$, etcetera, the induced
 morphism.

 For a fixed $n\geq 1$, assume by induction that we have constructed a
 lift to order $n$
  \[
     \xymatrix{
       & \cM_{Y,E}^{\text{an}}\ar[d]^{\Pi_\star} \\
       S_v^n \ar[r]_{j^n} \ar@{-->}[ru]^{\widetilde{j}^n}&  \cM_{X,D}^{\text{an}}}
   \]
   We construct a lift $\widetilde{j}^{n+1} \colon S_v^{n+1} \to
   \cM_{Y,E}^{\text{an}}$. This finishes the proof, since the base case $n=1$
   holds by Lemma~\ref{lem:injectivities}.

   For all $m\geq 0$, the morphism $j^m\colon S_v^m \to \cM_{X,D}^{\text{an}}$
   induces a deformation of the pair
   $(X,D)$ that we denote by
    \[
\xi^m =(j^m)^\star (\cX,\cD) \in \ob \catDeff{X}{D} \left( S_v^m \right) 
\]

    The lift $\widetilde{j}^n \colon S_v^n\to 
    \cM_{Y,E}^{\text{an}}$ induces a deformation 
    \[
\eta^n=(\widetilde{j}^n)^\star (\cY,\cE) \in \ob \catDeff{Y}{E} \left(
  S^n_v \right)
   \]
of the pair $(Y,E)$ over $S_v^n$. By construction, this deformation
blows down to the given deformation $\xi^n$ of the pair $(X,D)$, and we write this fact as $\pi_\star \eta^n=\xi^n$.

By Lemma~\ref{lem:Ginvariant_formal} there
exists a deformation
    \[
\eta^{n+1}_\sharp \in \ob\catDeff{Y}{E} (S_v^{n+1})
\]
that extends $\eta^n$ and blows down to an object
isomorphic to $\xi^{n+1}$.

By the versal property of the miniversal family there exists a
morphism $\widetilde{j}^{n+1}_\sharp \colon S_v^{n+1} \to \cM_{Y,E}^{\text{an}}$
such that $\eta^{n+1}_\sharp=(\widetilde{j}^{n+1}_{\sharp})^\star
(\cY,\cE)$. By construction,
$\Pi_\star \circ \widetilde{j}^{n+1}_\sharp \equiv j^{n+1}$~(mod~$n$)
as morphisms from $S_v^{n+1}$ to $\cM_{X,D}^{\text{an}}$.

In addition, as we said, $(\Pi_\star \circ
\widetilde{j}^{n+1}_\sharp)^\star(\cX,\cD)$ is isomorphic to
$\xi^{n+1}=(j^{n+1})^\star (\cX,\cD)$.

It follows that there exists $a\in
\cO_{S_v^1}\otimes \aut (X,D)$ such that
\begin{equation}
  \label{eq:tricky_deformations}
  j^{n+1}=a+(\Pi_\star \circ \widetilde{j}^{n+1}_\sharp) 
\end{equation}
as morphisms to $\cM_{X,D}^{\text{an}}$. But
$\Aut^0 (X,D)=\Aut^0 (Y, E)$, thus regarding 
$a$ as an element of $\cO_{S_v^1}\otimes \aut (Y,E)$ we have that
$\widetilde{j}^{n+1}=\widetilde{j}^{n+1}_\sharp+a$ is a lift of $j^{n+1}$.
\end{proof}

\section{Proof of \texorpdfstring{Theorem~\ref{thm:1}}{Theorem 1}}
\label{sec:proof-theorem}

In this section, we prove Theorem~\ref{thm:1}.

\begin{rem}
  \label{rem:only_cDV} Theorem~\ref{thm:1} states that $\cX_t$ has at
  worst ordinary double point singularities. In this section we
  only show that $\cX_t$ has at worst isolated cDV singularities; the
  full statement will be proved in \S~\ref{sec:proof-theorem-2}, see Remark~\ref{rem:ODP}.
\end{rem}

\begin{lem}
  \label{lem:fibre_dimensions}
  Let $N$ be a $3$-dimensional lattice, $P\subset N$ a reflexive
  polytope endowed with amd, $(X,D)$ be the corresponding Fano toric
  pair and $\pi\colon (Y,E)\to (X,D)$ the induced partial resolution.

Consider a complex analytic miniversal deformation space of the pair
$(Y,E)$ and the blow-down deformation
  \[
    \xymatrix{ (Y,E)\ar[dd]\ar[dr]^\pi\ar@{^{(}->}[rrr] &   & &
    \bigl(\cY^{\text{an}}, \cE^{\text{an}}\bigr)
      \ar[dd]_(.3){g^{\text{an}}}\ar[dr]^{\Pi^{\text{an}}} & \\
      &(X,D)\ar[dl] \ar@{^(-}[rr]&  & \ar@{->}[r]&
      \bigl(\cX^{\text{an}},\cD^{\text{an}}\bigr)
      \ar[dl]^{f^{\text{an}}} \\
      \{0\} \ar@{^{(}->}[rrr]& & &   \cM_{Y, E}^{\text{an}}& }
  \]
  
For $i=1,2$ consider the subsets:
\[
  Z^i=\{x\in \cX \mid \dim \pi^{-1} (x) \geq i \}
  \quad \text{and, for all $t\in \cM_{Y,E}^{\text{an}}$,} \quad
  Z^i_t= Z^i \cap \cX_t
\]

Then $Z^i\subset \cX$ is a complex analytic subspace and, for
$t\in \cM_{Y,E}^{\text{an}}$ general
\begin{enumerate}[(1)]
\item $Z^2_t=\emptyset$;
\item $Z^1_t$ is finite.
\end{enumerate}
  \end{lem}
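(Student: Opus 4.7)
My plan has three steps. \emph{First}, I would show that both $Z^1$ and $Z^2$ are closed complex analytic subsets of $\cX^{\text{an}}$ by applying upper semi-continuity of fibre dimension to the proper birational morphism $\Pi^{\text{an}}$ (Remmert's proper mapping theorem). Consequently the ``bad'' loci
\[
T^2 = \{t \in \cM_{Y,E}^{\text{an}} \mid Z^2_t \neq \emptyset\}, \qquad T^1 = \{t \in \cM_{Y,E}^{\text{an}} \mid \dim Z^1_t \geq 1\}
\]
are closed analytic subsets of the miniversal base. By Theorem~\ref{thm:3}(3) the pair $(Y,E)$ is unobstructed, so $\cM_{Y,E}^{\text{an}}$ is smooth and hence irreducible; it therefore suffices to exhibit one point of $\cM_{Y,E}^{\text{an}}$ lying outside each $T^i$.

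\emph{Second}, to handle part (1), I would fix any facet $F^v \leq P$ and invoke Proposition~\ref{pro:TdefoII} to obtain a complex analytic deformation $g_v^{\text{an}} \colon (\cY_v^{\text{an}}, \cE_v^{\text{an}}) \to S_v^{\text{an}}$ that is the simultaneous resolution of the global Altmann family; by versality this family is induced from the miniversal one by a morphism $\widetilde{j}_v \colon S_v^{\text{an}} \to \cM_{Y,E}^{\text{an}}$. Proposition~\ref{pro:admissible_decomposition_Matsushita}(iv) then asserts that for generic $a \in S_v^{\text{an}}$ the morphism $\pi_a \colon Y_a \to X_a$ is the minimal resolution of a \mbox{$3$-fold} with transverse $\mathrm{A}$-singularities along disjoint curves. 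In particular every fibre of $\pi_a$ has dimension at most $1$, so $Z^2 \cap X_a = \emptyset$ and $\widetilde{j}_v(a) \notin T^2$, proving (1).

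\emph{Third}, part (2) is the main obstacle: on a generic Altmann fibre the set $Z^1_a$ contains the entire singular curve locus of $X_a$, so the Altmann families alone are insufficient. To push beyond, I would combine the Altmann direction with further homogeneous deformations in directions not reached by any single $\widetilde{j}_v$: the sections $s^e_{j,m}$ from Lemma~\ref{lem:T1Y} for edges with $k_e \geq 2$, and for dull edges the matching-compatible combinations of $s^v_i$ provided by Corollary~\ref{cor:TT1nice_sections}. The expected geometric effect of these perturbations is to further deform each transverse $\mathrm{A}$-curve of $X_a$ into a configuration of isolated ODPs whose preimages in $\cY_t$ are disjoint $(-1,-1)$-curves, matching the prediction of Theorem~\ref{thm:1}. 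Concretely, it is enough to produce a single tangent vector of $\T^1_{Y,E}$ lying outside the tangent cone $T_0 T^1$; since we do not have an explicit description of the full miniversal family, this is where the work really lies, and I would carry it out by writing down enough explicit $\TT$-equivariant deformations (along the lines developed in Appendix~\ref{sec:exampl-miniv-deform}) and verifying directly that on the generic fibre the positive-dimensional piece of $Z^1$ over each $\Gamma^e$ collapses to a finite set of points. Once one such $t$ is in hand, the closedness of $T^1$ combined with the irreducibility of $\cM_{Y,E}^{\text{an}}$ completes the proof.
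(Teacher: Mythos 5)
Your overall strategy is sound: establish that $Z^1,Z^2$ are analytic and that the "bad" loci $T^1,T^2\subset\cM_{Y,E}^{\mathrm{an}}$ are closed analytic (which is correct, using Cartan--Remmert/Remmert plus upper semicontinuity of fibre dimension), invoke unobstructedness to get irreducibility of the miniversal base, and reduce to finding one point outside each $T^i$. That reduction matches the paper. However, both of your steps 2 and 3 have real gaps.

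\textbf{Gap in Step 2 (part (1)).} You claim that for generic $a\in S_v^{\mathrm{an}}$ one has $Z^2\cap X_a=\emptyset$, citing Proposition~\ref{pro:admissible_decomposition_Matsushita}(iv). But that proposition is a statement about the \emph{local} Altmann deformation of the affine chart $(X_F,D_F)=(X_{\sigma_v},D_{\sigma_v})$; it controls the singularities of the fibre only on that chart, i.e.\ near the single $0$-stratum $x^v$. The global Altmann family at $v$ has $v$-isotypic Kodaira--Spencer class, which is supported at $\sigma_v$ and vanishes on the other affine charts $X_{\sigma_w}$; nothing in Propositions~\ref{pro:admissible_decomposition_Matsushita} or~\ref{prop:altmann_global_Fano} says that the deformation improves the singularities of $\cX_a$ near a $0$-stratum $x^w\neq x^v$. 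If $Z^2_0$ contains $x^w$, the point $\widetilde{j}_v(a)$ may well still lie in $T^2$. The paper's proof avoids this: using that $Z^2$ is closed in $\cX$ and $Z^2_0$ is finite and contained in the $0$-skeleton, it concludes $Z^2$ lies in a small neighbourhood of the $0$-skeleton, and then, \emph{for each} vertex $v$, shows the locus of $t$ with $Z^2_t$ nonempty near $x^v$ is a proper closed analytic subset; intersecting over the finitely many vertices gives a proper closed analytic subset of $\cM_{Y,E}^{\mathrm{an}}$, and a general $t$ avoids it.

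\textbf{Gap in Step 3 (part (2)).} You correctly identify that the Altmann directions alone are insufficient and that one must use generic sections $s^e_{j,m}$ and $s^v_i$, but you explicitly say that the verification ``is where the work really lies'' and leave it unargued. The paper's argument here is actually quite short and does not proceed by constructing a single explicit $t$: for each edge $e$, it works near the generic point of $\Gm=\Gamma^e\setminus\{x^v,x^w\}$, where $X$ is locally $(xy-z^{\ell_e}=0)\subset\AA^3\times\Gm$, writes a general $1$-parameter family with Kodaira--Spencer class a general $s\in\T^1_{Y,E}$ in the form
\[
xy=\prod_{i=1}^{\ell_e}\bigl(z-tA_i(u)\bigr)\quad(\mathrm{mod}\ t^2)
\]
with the $A_i(u)=s\vert_{\Delta^e_i\cap\pi^{-1}(\Gm)}$ Laurent polynomials whose coincidences are simple and occur at distinct $u$ (this is where the matching condition is used, via Corollary~\ref{cor:TT1nice_sections}), and then checks smoothness of the total space at the generic point of $\Gm$ by the Jacobian criterion. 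Smoothness there, combined with the fact that $\Pi$ is crepant and small with $\cY$ terminal, forces $\Pi$ to be an isomorphism over that locus, hence $Z^1$ avoids a neighbourhood of the generic point of each $\Gamma^e$ for $t$ general. This ``mod $t^2$'' local computation is the key idea your proposal is missing, and it removes the need to exhibit explicit equivariant families as in Appendix~\ref{sec:exampl-miniv-deform} (those are needed later, for Theorem~\ref{thm:local_invariance}, not here).
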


  \begin{proof} First of all, by a theorem of Cartan--Remmert~\cite[Chap.~V, Sec.~3.3, Theorem~5]{MR1131081}
    \[
      W^i=\{y\in \cY \mid \dim \pi^{-1} \pi (y) \geq i \}
    \]
is a complex analytic subspace. Because $\pi$ is proper, then by the
Remmert proper mapping theorem $Z^i=\pi
(W^i)\subset \cX$ is also a complex analytic subspace.

Let us show that for $t\in \cM_{Y,E}^{\text{an}}$ general
$Z^2_t=\emptyset$. First of all, $Z^2_0\subset X$ is a finite set
contained in the $0$-skeleton of the toric variety $X$. It follows
from this that all of $Z^2$ is contained in a small analytic
neighbourhood of the $0$-skeleton of $X$ in $\cX$. Let us fix a vertex
$v\in Q$ and let us focus on the corresponding closed point
$x^v\in X$. The complex analytic deformation of
Proposition~\ref{pro:TdefoII} is induced by an inclusion
$i\colon S_v^{\text{an}} \to \cM_{Y,E}^{\text{an}}$. By
Proposition~\ref{pro:singularities_general_fibre_Altmann}, there is a
small analytic neighbourhood $x^v\in U \subset \cX$ such that for
$t \in S_v^{\text{an}}$ general, the singular locus of
$U_t=\cX_t\cap U$ consists of a disjoint union $\coprod C_i$ of curves
and for all $i$ $U_t$ has transverse $\text{A}_{n_i}$-singularities along $C_i$ and
therefore $Z^2\cap U_t=\emptyset$ and this must then be true for all
$t\in \cM_{Y,E}^{\text{an}}$ general. Since this is true for all
$v$, we conclude that there is a small analytic neighbourhood
$V\subset \cX$ of the $0$-skeleton of $X$ such that for
$t\in \cM_{Y,E}^{\text{an}}$ general $Z^2\cap V_t
=\emptyset$. Because, as we said, in fact $Z^2\subset V$, we have that
for $t\in \cM_{Y,E}^{\text{an}}$ general $Z^2_t=\emptyset$.

Finally we show that for $t\in \cM_{Y,E}^{\text{an}}$ general
$Z^1_t \subset \cX_t$ is finite. First of all $Z^1_0\subset X$ is contained in
the $1$-skeleton. Let us fix an edge $e\subset Q$ and let us focus on
the corresponding component $\Gamma^e\subset X$ of the
$1$-skeleton. It is enough to show that for $x\in \Gamma^e$ general,
there exists an open neighbourhood $x\in U \subset \cX$ such that for
$t\in \cM_{Y,E}^{\text{an}}$ general $U_t=\cX_t\cap U$ is smooth. For this purpose
consider the ``interior'' $\Gm\subset \Gamma^e$. The space $X$ has
transverse $\text{A}_{\ell_e-1}$-singularities along this $\Gm$ and the germ of $X$
along this $\Gm$ is isomorphic to the germ:
\[
(xy-z^{\ell_e}=0) \subset \AA^3\times \Gm
\]
and any $1$-parameter deformation of $X$ over a small disk $\DD$ induced by a general
deformation of the pair $(Y,E)$ with Kodaira--Spencer map a general section
(Lemma~\ref{lem:T1Y})
\[
  s \in \T^1_{Y,E} = \rH^0\bigl(Y, \cT^1_{Y,E}\bigr) = \rH^0
  \bigl(\Delta, \nu_\star \nu^\star (-K_Y)\bigr)
\]
can be written as:
  \[
\bigl[ xy=\prod_{i=1}^{\ell_e}(z-tA_i(u)) \quad \text{mod} \; t^2 \bigr] \subset \AA^4_{x,y,z,u}
\times \DD_t
  \]
  where $A_j(u)=s\vert_{\Delta^e_j \cap \pi\inv \left(\Gm\right)}$
  are Laurent polynomials supported on $\Gamma^e$ such that the
  multiple roots of the polynomial
\[
\prod (z-tA_j(u))
\]
are all double roots and they occur for distinct values of $u$. By the Jacobian criterion the
total space is nonsingular at the generic point of $\Gm$ and the
statement follows.
\end{proof}

\begin{proof}[Proof of Theorem~\ref{thm:1}] Part~(1) is Part~(3) of
  Theorem~\ref{thm:3}, stating that the pair $(Y,E)$, regarded as a
  qODP stack, is unobstructed and smoothable.

  Lemma~\ref{lem:blow_down_defo} states that indeed $\cX_t=\Proj
  R(\cY_t, -K_{\cY_t})$. Lemma~\ref{lem:fibre_dimensions} shows that
  the exceptional set of $\Pi_t$ consists of a finite number of curves
  that are contracted. Because $\cY_t$ has qODP, and hence terminal,
  singularities, and the morphism $\Pi_t$ is crepant and small, it
  follows that $\cX_t$ has Gorenstein terminal and hence,
  by~\cite[Theorem~(2.2)]{MR605348}, isolated $\text{cDV}$
  singularities. As anticipated this is all that we show here. The
  rest of the statement follows from Theorem~\ref{thm:topologyII}, see
  Remark~\ref{rem:ODP}. 
\end{proof}

\section{Local invariance under rearrangement}
\label{sec:flops}


The content of this section is rather technical. The goal is to prove
Theorem~\ref{thm:local_invariance}. Before getting down to business,
we discuss informally the statement and its proof.

\smallskip

\paragraph{\textbf{Statement of purpose}} The statement of
Theorem~\ref{thm:local_invariance} is very natural. The starting point
is a plane polygon with a given admissible Minkowski decomposition, a
dual tropical arrangement subordinated to the Minkowski decomposition,
and the induced partial resolution $\pi \colon (Y, E)\to (X,D)$. The
statement describes what happens to a deformation $Y\subset \cY\to (0\in \cM)$
when we change the dual tropical arrangement. The answer is
that, given a second tropical arrangement subordinated to the same
Minkowski decomposition, and corresponding induced partial resolution
$\pi^\prime \colon (Y^\prime, E^\prime) \to (X,D)$, there is a
birational transformation
\[
  \xymatrix{
    \cY \ar@{-->}[rr]^\Phi\ar[dr]& &  \cY^\prime\ar[dl]\\
           &(0\in\cM) & 
    }
\]--- a flop,
in fact, although we don't use the terminology --- and $\cY^\prime\to
(0\in \cM)$ is a deformation of $Y^\prime$.

Theorem~\ref{thm:local_invariance} is needed in the key step in the
proof of our main result Theorem~\ref{thm:topology}, where it is used
to modify the total space of a deformation into one where the
conclusions are easier to see --- all the action is happening in the proof of
Lemma~\ref{lem:topology}.

To prove Theorem~\ref{thm:local_invariance}, we manoeuvre the first
tropical arrangement into the second by a sequence of elementary moves
that we classify into types~I--IV (and inverses of moves of
type~I). In all cases, the existence of the birational transformation
$\Phi$ follows easily from the general results of~\cite{BCHM10}, but
it is not immediately clear that $\cY^\prime\to (0\in \cM)$ is a
deformation of $Y^\prime$. The natural way to prove the result is to
prove it for the miniversal deformation family of $Y$ and then prove
it for all families by pulling back from the miniversal family. For
moves of type~II and~IV, $Y$ has nonisolated singularities and the
miniversal family is an infinite-dimensional ind-scheme and in that
case the miniversal property only holds formally, see
\S~\ref{sec:some-gener-princ}. This is sufficient to show that
$\cY^\prime$ is a deformation of $Y^\prime$ once we know that
$\cY^\prime$ exists.

\medskip

\begin{thm}
  \label{thm:local_invariance}
  Let $\overline{N}$ be a rank-$2$ lattice, $F\subset \overline{N}$ a lattice polygon, and $m=(F=\sum F_j)$
  an admissible Minkowski decomposition.

  Let $N=\overline{N}\oplus \ZZ$, $\sigma=\langle F\times \{1\}
  \rangle_+$, and $(X,D)$ the corresponding affine toric pair. 
  
  Let $\sum \Gamma_j$ be a generic dual tropical arrangement and
  $\pi \colon (Y, E)\to (X,D)$ the induced partial resolution.

    Let $g\colon (\cY,\cE) \to 0\in \cM$ be a (complex analytic or
     algebraic) deformation of $(Y,E)$ over a smooth germ $0\in \cM$,
     $f\colon (\cX,\cD)\to 0 \in \cM$ the blow-down deformation of
     $(X,D)$, and $\Pi\colon \cY \to \cX$ the obvious morphism.
 
  \smallskip
  
  Let $\sum \Gamma_j^\prime$ be another generic dual tropical
  arrangement and $\pi^\prime \colon (Y^\prime, E^\prime) \to (X,D)$
  the induced partial resolution.

  \smallskip If the morphism $\Pi \colon \cY \to \cX$ is small, then
  there exists a birational transformation over $\mathcal{X}$:
  \[
    \xymatrix{
      \left( \mathcal{Y}, \mathcal{E}\right) \ar[dr]_\Pi \ar@{-->}[rr]^\Phi & & \left( \mathcal{Y}^\prime, \mathcal{E}^\prime\right) \ar[dl]^{\Pi^\prime}\\
       & \left( \mathcal{X}, \mathcal{D}\right)}
     \]
     such that:
     \begin{enumerate}[(i)]
     \item The morphism $\Pi^\prime \colon \cY^\prime \to \cX^\prime$
       is small;
     \item The morphism $f\circ \Pi^\prime \colon \mathcal{Y}^\prime \to \cM$ is flat;
     \item The rational map $(Y^\prime,E^\prime) \dasharrow
       \left(\mathcal{Y}^\prime,\cE^\prime \right)$ is a morphism and
       it sends $(Y,E)$ isomorphically to the fibre over $0\in \cM$;
     \item With the inclusion of Part~(ii), the morphism $f\circ
       \Pi^\prime \colon \left(\cY^\prime,\cE^\prime\right)\to (0\in
       \cM)$ is a 
       deformation of $(Y^\prime,E^\prime)$;
     \item The birational map $\Phi \colon (\cY, \cE) \dasharrow
       (\cY^\prime, \cE^\prime)$ is a composition of moves of type~I,
       their inverses, and type II--IV of Definition~\ref{dfn:types}
       and Lemma~\ref{lem:moves}.
     \end{enumerate}
   \end{thm}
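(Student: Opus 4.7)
The plan is to reduce to the case where $\Phi$ is a single elementary move and then iterate. Using the combinatorial dictionary of Appendix~\ref{sec:Cayley} between dual tropical arrangements and coherent fine mixed subdivisions of $F$, any two generic arrangements subordinated to the same admissible Minkowski decomposition $m$ are connected by a finite sequence of moves of type~I (and their inverses) and types~II--IV of Definition~\ref{dfn:types} and Lemma~\ref{lem:moves}. Each intermediate arrangement induces its own partial resolution of $(X,D)$, so it suffices to prove the theorem when $\Phi$ is a single such move; the full result then follows by concatenation, which is also precisely statement~(v).

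For a single move I would first construct the birational model $\cY'$ and the map $\Phi$. Because $\Pi\colon \cY \to \cX$ is small, $\cY$ is $\QQ$-factorial terminal (qODP) in a neighbourhood of the flopping locus and $K_\cY$ is numerically $\Pi$-trivial; the target subdivision prescribed by the move singles out a $\Pi'$-ample divisor on the desired flopped model. Applying the existence of flops from \cite{BCHM10} over $\cX$ yields a small birational modification $\Phi \colon \cY \dashrightarrow \cY'$ with $\cY'$ $\QQ$-factorial and $\Pi'\colon \cY'\to \cX$ small, giving~(i). Restricted to the central fibre, $\Phi$ recovers the toric flop $Y \dashrightarrow Y'$ induced by the local change of tropical arrangement, since at $t=0$ the whole picture is $\TT$-equivariant; this supplies the isomorphism of central fibres required by~(iii).

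The main obstacle is~(ii) and~(iv): showing that $f \circ \Pi'\colon \cY'\to \cM$ is flat and that the result is a deformation of the pair $(Y',E')$. For moves of type~I the local model has only isolated singularities, the miniversal deformation is a finite-dimensional smooth germ, and flatness reduces to the classical statement that the Atiyah flop deforms flatly in a family. The delicate cases are moves of type~II and~IV, where $Y$ has non-isolated singularities along the flopping locus; as discussed in Appendix~\ref{sec:exampl-miniv-deform}, the miniversal deformation of the local model is an infinite-dimensional ind-scheme and the versal property holds only at the level of formal pullback. The strategy is to construct the flopped family $\cY'_{\mathrm{min}}$ directly over the miniversal ind-scheme by writing explicit toric equations, check flatness there by a regular-sequence argument modelled on the proof of Proposition~\ref{pro:admissible_decomposition_Matsushita}, and then pull back along the classifying formal morphism from $\cM$ to the miniversal ind-scheme. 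Rim's theory of equivariant $G$-structures on versal deformations, recalled in Appendix~\ref{sec:equiv-g-struct}, ensures that this pullback is compatible with the torus action, and a parallel argument for $\cE'$ as an effective Cartier divisor completes~(ii) and~(iv).
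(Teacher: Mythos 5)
Your strategy is essentially the paper's: reduce to a single elementary move, use \cite{BCHM10} to produce the flop $\Phi$, and handle the infinite-dimensionality for moves of type~II and~IV by constructing explicit miniversal families over ind-schemes and pulling back formally. This matches the structure of the proof, which reduces to Lemma~\ref{lem:moves} and proceeds exactly along these lines. There are, however, a few points where your justifications are not quite right.

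First, the identification of the central fibre in~(iii) is not obtained from ``$\TT$-equivariance at $t=0$.'' The ambient family $\cY$ is an arbitrary deformation of $(Y,E)$ over $\cM$ and carries no torus action; the torus acts only on the fibre $Y$. The flop $\Phi$ is defined by a relative $\Proj$ construction over $\cX$, and there is no a priori reason its central fibre should coincide with the toric flop $Y'$ rather than some other small modification, possibly with embedded structure. The way this is established in the paper is by computing the formal completion of $\cY'$ over $0$ via the theorem on formal functions (reducing to the formal neighbourhood of the exceptional locus), constructing an explicit (mini)versal family of $(Y,E)$ together with its flop, and pulling back; the central fibre of the flopped miniversal family is $Y'$ by inspection of explicit equations.

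Second, the invocation of Rim's theory is misplaced. Rim's equivariant $G$-structures are used elsewhere in the paper to give the miniversal deformation of a toric pair a $\TT$-structure, but the deformation $g\colon(\cY,\cE)\to\cM$ in the hypothesis is not assumed $\TT$-equivariant, and there is no ``compatibility with the torus action'' to check for the pullback. The correct mechanism is simply functoriality of the flop under formal pullback combined with flatness and central-fibre identification for the explicit miniversal family.

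Third, you omit the local-to-global reduction. Even for a single elementary move, $Y$ may contain many qODPs, only one of which is affected by the move. The paper passes from the affine toric pair associated to $F$ to the toric affine chart at the relevant $0$-stratum of an intermediate model $Z$, using the fact that deformations of $(Y,E)$ are determined by local data via $\cT^1_{Y,E}$ and the vanishing of $R^i\pi_*T_Y(-\log E)$ for $i>0$. Without this reduction, the explicit local models of the appendix do not apply directly.

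Finally, for the type~II/IV moves, verifying flatness of your explicitly constructed family is not enough; you also need to check that the family is miniversal (Kodaira--Spencer map an isomorphism onto $\T^1_{Y,E}$), since the formal classifying morphism from $\cM$ to the ind-scheme base only exists once miniversality is established.
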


   \begin{lem}
     \label{lem:moves}
     Let $(Y,E)$ be one of the toric pairs corresponding to the tropical
     arrangements pictured on the left of
     Figures~\ref{fig:Type_1}--\ref{fig:Type_4} and
     $\pi\colon (Y,E)\to (X,D)$ the associated contraction of the ``internal''
     proper curves and surfaces on $Y$.

     Let $g\colon (\cY,\cE) \to 0\in \cM$ be a (complex analytic or
     algebraic) deformation of $(Y,E)$ over a smooth germ $0\in \cM$,
     $f\colon (\cX,\cD)\to 0 \in \cM$ the blow-down deformation of
     $(X,D)$, and $\Pi\colon \cY \to \cX$ the obvious morphism.

     Let $(Y^\prime, E^\prime)$ be the pair given by the tropical
     arrangements pictured on the right of the same figure and
     $\pi^\prime\colon (Y^\prime, E^\prime) \to (X,D)$ the induced partial resolution.

     If the morphism $\Pi \colon \cY \to \cX$ is small, then there exists a birational transformation over $\mathcal{X}$:
  \[
    \xymatrix{
      \left( \mathcal{Y}, \mathcal{E}\right) \ar[dr]_\Pi \ar@{-->}[rr]^\Phi & & \left( \mathcal{Y}^\prime, \mathcal{E}^\prime\right) \ar[dl]^{\Pi^\prime}\\
       & \left( \mathcal{X}, \mathcal{D}\right)}
     \]
     such that:
     \begin{enumerate}[(i)]
     \item The morphism $\Pi^\prime \colon \cY^\prime \to \cX^\prime$
       is small; 
     \item The morphism $f\circ \Pi^\prime \colon \mathcal{Y}^\prime \to \cM$ is flat;
     \item The rational map $(Y^\prime,E^\prime) \dasharrow
       \left(\mathcal{Y}^\prime,\cE^\prime \right)$ is a morphism and
       it sends $(Y,E)$ isomorphically to the fibre over $0\in \cM$;
     \item With the inclusion of Part~(ii), the morphism $f\circ
       \Pi^\prime \colon \left(\cY^\prime,\cE^\prime\right)\to (0\in
       \cM)$ is a deformation of $(Y^\prime,E^\prime)$;
     \item The effect on the Kodaira--Spencer morphisms with values in
       $\T^1_{Y,E}$, $\T^1_{Y^\prime, E^\prime}$ is pictured in the four cases in Figures~\ref{fig:Type_1}--\ref{fig:Type_4} using the identification $\T^1_{Y,E}=\rH^0\bigl(\Delta, \nu_\star \nu^\star (-K_Y)\bigr)$ of Lemma~\ref{lem:toricqODPdeformations}. Here $A(x)=\sum a_ix^i$, $B(y)=\sum b_iy^i$, etc.
     \end{enumerate}
     Figure~\ref{fig:Type_1} can also be read from right to left and the appropriate statements hold.
     
     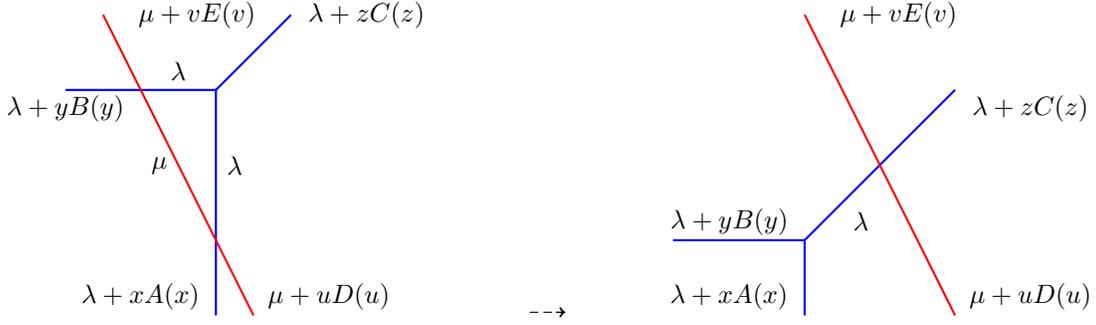
\begin{figure}[ht]
  \centering
\begin{tikzpicture}
  \draw[thick, blue] (0,0) -- (0,-3) (0,0) -- (-2,0) (0,0) -- (1,1);
  \draw[thick, red] (-1.5,1) -- (0.5,-3);
  \node at (-1,-2.75) {$\lambda + xA(x)$};
  \node at (-2,-0.25) {$\lambda + yB(y)$};
  \node at (2,1) {$\lambda + zC(z)$};
  \node at (1.5,-2.75) {$\mu + uD(u)$};
  \node at (-0.25,1) {$\mu + vE(v)$};
  \node at (-0.75,-1) {$\mu$};
  \node at (0.25,-1) {$\lambda$};
  \node at (-0.5,0.25) {$\lambda$};
\end{tikzpicture}
\hspace{1cm}
$\dasharrow$
\hspace{1cm}
\begin{tikzpicture}
  \draw[thick, blue] (-1,-1) -- (-2.75,-1) (-1,-1) -- (-1,-2) (-1,-1) -- (1,1);
  \draw[thick, red] (-1,2) -- (1,-2);
  \node at (-2,-1.75) {$\lambda + xA(x)$};
  \node at (-2,-0.75) {$\lambda + yB(y)$};
  \node at (-0.25,-0.75) {$\lambda$};
  \node at (2,0.75) {$\lambda + zC(z)$};
  \node at (2,-1.75) {$\mu + uD(u)$};
  \node at (0.25,2) {$\mu + vE(v)$};
\end{tikzpicture}
\caption{Move of Type~I.}
  \label{fig:Type_1}
\end{figure}

     \begin{figure}[ht]
  \centering
\begin{tikzpicture}
  \draw[ thick,red] (0,-2) -- (2.25,-2) (0,-2) -- (-2.5,-2);
  \draw[thick,blue] (0,-3) -- (-2.5,-0.5)
  (0,-3) -- (2.25, -3) (0, -3) -- (0,-5);
  \node at (1.25,-2.25) {$\mu+uD_1(u)$};
  \node at (1.25,-3.25) {$\lambda+uD_0(u)$};
  \node at (-0.75,-2.75) {$\lambda$};
  \node at (-1,-4.75) {$\lambda + xA(x)$};
  \node at (-2,-2.25) {$\mu + zC(z)$};
  \node at (-1,-0.75) {$\lambda +yB(y)$};
\end{tikzpicture}
\hspace{1cm}
$\dasharrow$
\hspace{1cm}
\begin{tikzpicture}
  \draw[ thick,red] (0,-3) -- (1.5,-3) (0,-3) -- (-2.5,-3);
  \draw[thick,blue] (-1,-2) -- (-2.5,-0.5)
  (-1,-2) -- (1.5, -2) (-1, -2) -- (-1,-5);
  \node at (0.5,-2.25) {$\lambda+uD_0(u)$};
  \node at (0.5,-3.25) {$\mu+uD_1(u)$};
  \node at (-1.25,-2.5) {$\lambda$};
  \node at (-2,-4.75) {$\lambda + xA(x)$};
  \node at (-2,-3.25) {$\mu + zC(z)$};
  \node at (-1,-0.75) {$\lambda +yB(y)$};
\end{tikzpicture}
\caption{Move of Type~II.}
  \label{fig:Type_2}
\end{figure}

\begin{figure}[ht]
  \centering
\begin{tikzpicture}
  \draw[thick, blue] (-0.5,-1.5) -- (-0.5,3.5) ;
  \draw[thick, red] (-2.5,0.5) -- (2,0.5) ;
  \draw[thick, green] (-2.5,3) -- (1.5,-1) ;
  \node at (-1.75,-1.25) {$\lambda_1 + xA(x)$};
  \node at (-1.75,0.25) {$\lambda_2 + zC(z)$};
  \node at (-0.25,0.25) {$\lambda_2$};
  \node at (0,1) {$\lambda_3$};
  \node at (1.5,0.75) {$\lambda_2 + uD(u)$};
  \node at (-0.75,0.75) {$\lambda_1$};
  \node at (-1.5,3.25) {$\lambda_3 + vE(v)$};
  \node at (0.75,3.25) {$\lambda_1 + yB(y)$};
  \node at (0.75,-1.25) {$\lambda_3 + wF(w)$};
 \end{tikzpicture}
 \hspace{1cm}
 $\dasharrow$
 \hspace{1cm}
 \begin{tikzpicture}
  \draw[thick, blue] (3.5,0) -- (3.5,4.75) ;
  \draw[thick, red] (1,2.5) -- (6,2.5) ;
  \draw[thick, green] (1,4.5) -- (5,0.5) ;
  \node at (2.25,0.25) {$\lambda_1 + xA(x)$};
  \node at (1.75,2.25) {$\lambda_2 + zC(z)$};
  \node at (4.75,2.75) {$\lambda_2+uD(u)$};
  \node at (4.75,4.5) {$\lambda_1+yB(y)$};
  \node at (2.25,4.5) {$\lambda_3+vE(v)$};
  \node at (3,2) {$\lambda_3$};
  \node at (3.25,2.75) {$\lambda_2$};
  \node at (3.75,2.25) {$\lambda_1$};
   \node at (4.75,0.25) {$\lambda_3 + wF(w)$};
 \end{tikzpicture}
  \caption{Move of type~III}
    \label{fig:Type_3}
  \end{figure}

\begin{figure}[ht]
  \centering
\begin{tikzpicture}
  \draw[thick, blue] (-2,0) -- (2,0) ;
  \draw[thick, red] (-2,-1) -- (2,-1) ;
  \draw[thick, green] (-0.5,1.5) -- (0.5,-2.5) ;
  \node at (-1,-1.25) {$\lambda_1 + xA(x)$};
  \node at (-1.25,0.25) {$\lambda_2 + zC(z)$};
  \node at (-0.25,-0.5) {$\lambda_3$};
  \node at (1.25,0.25) {$\lambda_2 + uD(u)$};
  \node at (-1.5,1.25) {$\lambda_3 + vE(v)$};
  \node at (1.5,-1.25) {$\lambda_1 + yB(y)$};
  \node at (1.75,-2.25) {$\lambda_3 + wF(w)$};
 \end{tikzpicture}
 \hspace{1cm}
 $\dasharrow$
 \hspace{1cm}
 \begin{tikzpicture}
  \draw[thick, red] (-2,0) -- (2,0) ;
  \draw[thick, blue] (-2,-1) -- (2,-1) ;
  \draw[thick, green] (-0.5,1.5) -- (0.5,-2.5) ;
  \node at (-1.25,0.25) {$\lambda_1 + xA(x)$};
  \node at (-1,-1.25) {$\lambda_2 + zC(z)$};
  \node at (1.25,-1.25) {$\lambda_2+uD(u)$};
  \node at (1,0.25) {$\lambda_1+yB(y)$};
  \node at (-1.5,1.25) {$\lambda_3+vE(v)$};
   \node at (-0.25,-0.5) {$\lambda_3$};
   \node at (1.75,-2.25) {$\lambda_3 + wF(w)$};
 \end{tikzpicture}
  \caption{Move of type~IV}
    \label{fig:Type_4}
  \end{figure}
  
   \end{lem}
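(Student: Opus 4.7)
The plan is to treat each of the four moves separately in a small Zariski (or analytic) open neighbourhood, and in each case produce the birational map $\Phi$ by constructing a relative flop of $\cY$ over $\cX$, then verify (i)--(v). The strategy reduces to two main steps: constructing $\Phi$ at the level of the miniversal deformation of $(Y,E)$, then pulling back to the given family.

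First I would reduce to a miniversal family. For moves of Type~I and~III, the pair $(Y,E)$ has only isolated qODP singularities in the relevant neighbourhood, so $\T^1_{Y,E}$ is finite-dimensional and a scheme-theoretic miniversal deformation $(\cY_\text{miv},\cE_\text{miv})\to (0\in \cM_\text{miv})$ exists; the given family $\cY\to \cM$ is the pullback via some $\cM \to \cM_\text{miv}$, and it suffices to construct $\Phi$ for the miniversal family. For moves of Type~II and~IV, $(Y,E)$ has one-dimensional singular locus, so $\T^1_{Y,E}$ is infinite-dimensional; here I would work with an ind-scheme miniversal object and rely on the formal miniversal property discussed in the intro to this section. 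Once $\Phi$ is constructed there, pulling back to $\cM$ automatically yields (ii)--(iv).

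Second, I would construct $\Phi$ as a relative flop. Because $\Pi\colon\cY\to\cX$ is small and $\cY$ inherits $\QQ$-factoriality and terminal singularities from the toric model (deforming the qODP stack structure of $(Y,E)$), the existence of flops from \cite{BCHM10} produces a second small projective contraction $\Pi^\prime\colon\cY^\prime\to\cX$. The \emph{choice} of flop is determined by designating which exceptional proper curves of $\pi\colon Y\to X$ are to be contracted by $\Pi^\prime$ and which are to be extracted; this is dictated by comparing the two tropical arrangements and can be described explicitly by picking out the boundary components $\cE$ that move. A toric calculation on the central fibre identifies the resulting $(\cY^\prime_0,\cE^\prime_0)$ with $(Y^\prime,E^\prime)$, which together with flatness of $f\circ \Pi^\prime$ (obtained by a dimension count plus Cohen--Macaulayness) gives (iii) and (iv). Smallness of $\Pi^\prime$ is built into the flop construction, giving (i).

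Finally, for the Kodaira--Spencer effect (v), I would use the identification $\T^1_{Y,E}=\rH^0\bigl(\Delta,\nu_\star \nu^\star (-K_Y)\bigr)$ of Lemma~\ref{lem:toricqODPdeformations} and track how the components of $\Delta^\prime$ match up under $\Phi$. Outside the locus where the tropical move happens, the components are canonically identified, so their contributions (the power series $A,B,\ldots$) transfer unchanged; inside the move's support, one reads off the matching by inspecting the local toric equations of the flop. The constants $\lambda,\mu,\lambda_1,\lambda_2,\lambda_3$ in the figures are precisely the values that the Kodaira--Spencer section takes at the distinguished vertices of $\Delta$ bordering the move, and the main nontrivial point is that they propagate correctly across the flop. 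The main obstacle I anticipate is in Types~II and~IV, where the infinite-dimensional ind-scheme framework requires care in writing down the local analytic equations of the flop and checking that its central fibre is exactly the expected toric pair; this calculation (to be carried out in the appendix on examples of miniversal deformation families) in Cox coordinates is what pins down the matching rule for the series $D_0(u),D_1(u)$ appearing in Figure~\ref{fig:Type_2}.
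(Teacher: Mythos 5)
Your overall strategy — reduce to a miniversal family and construct $\Phi$ via a relative flop using~\cite{BCHM10} — is the same as the paper's, but two of your intermediate claims are wrong or incomplete in a way that matters.

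\smallskip
\textbf{The finite-dimensionality claim for Types~I and~III is false.} You assert that for these moves $\T^1_{Y,E}$ is finite-dimensional because the qODP singularities are isolated. But by Lemma~\ref{lem:toricqODPdeformations}, $\T^1_{Y,E}=\rH^0\bigl(\Delta^\prime,\nu^\star(-K_Y)\bigr)$, and $\Delta^\prime$ contains non-compact $\AA^1$-components corresponding to the unbounded legs of the tropical arrangement; each such component contributes an entire polynomial ring. Indeed the labels $\lambda+xA(x)$, $\lambda+yB(y)$, etc.\ in Figure~\ref{fig:Type_1} are precisely formal power series, because $\T^1_{Y,E}$ is infinite-dimensional in Type~I as well. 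The paper's actual trick, which you would miss with this error, is to pass to $\T^1_Y$ for Types~I, its inverse, and~III: the exact sequence $\rH^0(N_EY)\to\T^1_{Y,E}\to\T^1_Y\to 0$ identifies $\T^1_Y$ with the cokernel of $\rH^0(\cO_\Delta)\hookrightarrow\rH^0(\cO_{\Delta^\prime})$, which is supported at the finitely many qODPs and hence \emph{is} finite-dimensional. The simultaneous resolution of the local Altmann deformation over $\AA^r$ is then shown to be miniversal as a deformation of $Y$ (not of the pair), and that suffices to transport the explicit toric picture. If you try to use the miniversal deformation of the pair $(Y,E)$ throughout, you are forced into the ind-scheme framework for all four types and lose the finite-dimensional shortcut; the paper reserves the ind-scheme argument (with the explicit calculations of Appendix~\ref{sec:exampl-miniv-deform}) for Types~II and~IV only, where $Y$ itself has a one-dimensional singular locus.

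\smallskip
\textbf{The formal-to-actual transfer is a genuine step you skip.} You say that once $\Phi$ is constructed over the miniversal base, ``pulling back to $\cM$ automatically yields (ii)--(iv).'' But the miniversal property is only asserted over Artin rings, so the comparison between the given family $(\cY,\cE)\to\cM$ and the reference family is only formal. One needs the theorem on formal functions together with the formula
\[
\cY^\prime=\Proj_{\cX}\Bigl(\bigoplus_{n\ge 0}\Pi_\star\cO_{\cY}\bigl(n(K_\cY+\Delta)\bigr)\Bigr)
\]
to conclude that the flop commutes with formal completion over $0\in\cM$, and hence that the flop of the actual family is formally pulled back from the flop of the reference family. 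Without this, ``pulling back'' is not literal (there is no honest morphism $\cM\to\AA^\infty$) and you cannot deduce that the central fibre of $\cY^\prime$ is $Y^\prime$ or that $f\circ\Pi^\prime$ is flat. This is not a bookkeeping point: it is the mechanism by which the explicit toric/ind-scheme calculations on the miniversal model control an arbitrary deformation $(\cY,\cE)\to\cM$.
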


   \begin{dfn}
     \label{dfn:types}
     The birational maps $(\cY,\cE)\dasharrow (\cY^\prime, \cE^\prime)$ of Lemma~\ref{lem:moves}, pictured in Figures~\ref{fig:Type_1}--\ref{fig:Type_4}, are called \emph{moves of type I--IV} and --- reading Fig.~\ref{fig:Type_1} from right to left --- the \emph{inverse of a move of type~I}.
   \end{dfn}
     
   \begin{proof}[Proof of Theorem~\ref{thm:local_invariance}]
     The morphism $\pi \colon Y \rightarrow X$ is projective by
     Theorem~\ref{thm:cayley_trick}. (Recall that a subdivision of a
     lattice polyhedron $P \subset \overline{N} \otimes \RR$ is
     regular iff the associated proper birational toric morphism to
     the affine toric variety
     $X=\Spec k[\Cone(P \times\{1\})^{\vee} \cap (\overline{N} \oplus
     \ZZ)^{\vee}]$ is projective.)

     We can rearrange the $\sum \Gamma_j$ into the
     $\sum \Gamma^\prime_j$ by a sequence of moves of type~I, its
     inverse, and II--IV as described in Lemma~\ref{lem:moves}.  So,
     by induction, we may assume that $\pi \colon Y \rightarrow X$ and
     $\pi' \colon Y' \rightarrow X$ factor through $Z \rightarrow X$
     such that in the toric affine chart at a $0$-stratum of $Z$, the
     morphisms $Y \rightarrow Z$ and $Y' \rightarrow Z$ are as
     described in Lemma~\ref{lem:moves}. Using
     Lemma~\ref{lem:toricqODPdeformations} and the description of the
     global infinitesimal deformations of $(Y,E)$ in terms of local
     data in \S~\ref{sec:some-gener-princ}, we may replace $X$ by the
     toric affine chart at a $0$-stratum of $Z$, so that we are in the
     situation of Lemma~\ref{lem:moves}. The statement then follows
     from Lemma~\ref{lem:moves}.

\end{proof}

\begin{proof}[Proof of Lemma~\ref{lem:moves}]

  \textsc{Step 1: existence} We construct the birational map
  $\Phi\colon \cY \dasharrow \cY^\prime$. We will see that it is a
  log-flip in the sense of the minimal model program and then its
  existence will follow from standard results in the minimal model
  program.

  Note that $Y \rightarrow X$ and $Y' \rightarrow X$ have relative
  Picard rank $1$ and we are assuming that $\Pi$ is small.  Let $\cA$ be
  a $\Pi$-ample divisor on $\cY$. Let $\cH$ be a hyperplane section of
  $\cX$ containing $\pi_*\cA$. Let $\Delta$ be the effective Cartier
  divisor $\epsilon(\pi^*\cH-\cA)$ for $0 < \epsilon \ll 1$,
  $\epsilon \in \QQ$.  Then $(\cY,\Delta)$ is klt and, since $\Pi$ is
  crepant, $-(K_{\cY}+\Delta)$ is $\Pi$-ample. We take
  $\Pi' \colon \cY' \rightarrow \cX$ to be the log terminal model over
  $\cX$ of $\Pi\colon \cY \to \cX$. In the algebraic case it exists
  by~\cite[Theorem~1.2]{BCHM10}; in the complex analytic case it
  exists by~\cite[Theorem~1.2]{2022arXiv220111315F}. Part~(i) also
  follows: since $\Pi$ is small, also $\Pi^\prime$ is small.

  \smallskip

  \textsc{Step 2} By the theorem on formal functions and the formula
$$\cY' = \Proj_{\cX} \left(\bigoplus_{n \ge 0} {\Pi}_*\cO_{\cY}\bigl(n(K_\cY+\Delta)\bigr)\right)$$
the formal completion of $\Pi'\colon \cY^\prime \to \cM$ over $0 \in
\cM$ may be computed from the formal completion of $\Pi$.

\smallskip

\textsc{Step 3: proof for type~I, its inverse, and type~III} The proof
will follow from the:

\begin{claim}
  In the set up of moves of type~I, its inverse, and type~III, let
  $\mathfrak{g}\colon (\mathfrak{Y}, \mathfrak{E}) \to \AA^r$ be the simultaneous resolution of the
  local Altmann deformation from
  Proposition~\ref{pro:admissible_decomposition_Matsushita}.

  The family $\mathfrak{g}\colon \mathfrak{Y} \to \AA^r$ is miniversal.
\end{claim}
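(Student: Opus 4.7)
The plan is to verify miniversality of $\mathfrak{g}$ by checking that $(Y,E)$ is unobstructed and that the Kodaira--Spencer map
\[
\kappa\colon T_0 \AA^r \to \T^1_{Y,E}
\]
is an isomorphism. Unobstructedness is adapted from the proof of Theorem~\ref{thm:3}(3): the local-to-global spectral sequence for $\Ext$, together with $\cT^2_{Y,E}=0$ (Lemma~\ref{lem:qODPdeformations}), reduces $\T^2_{Y,E}=0$ to the pair of vanishings $\rH^2\bigl(Y, T_Y(-\log E)\bigr) = \rH^1\bigl(Y,\cT^1_{Y,E}\bigr)=0$. The first holds because $T_Y(-\log E)\cong \cO_Y^3$ and $Y$ is projective over the affine base $X$ (higher cohomology of trivial bundles vanishes by Kawamata--Viehweg and Leray applied to $\pi$). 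The second, via $\cT^1_{Y,E}=\nu_\star\nu^\star(\cO_\Delta(E))$, becomes $\rH^1(\Delta', \nu^\star(-K_Y))=0$, which is checked by direct inspection: in each configuration $\Delta'$ is a disjoint union of chains of $\PP^1$'s on which $\nu^\star(-K_Y)$ has non-negative degree.

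For the Kodaira--Spencer map I argue as in Lemma~\ref{lem:injectivities}. The simultaneously resolved Altmann family is $\TT$-equivariant and $T_0\AA^r$ is isotypic of character $v$, the character dual to the unique $0$-stratum in the local affine chart, so $\kappa$ factors through the isotypic summand $\T^1_{Y,E}(v)$. The injectivity of $\kappa\colon T_0\AA^r \hookrightarrow \T^1_{Y,E}(v)$ is exactly fact~(a) in the proof of Lemma~\ref{lem:injectivities}: it is verified by an explicit matching of the Altmann parameters $t_1,\dots,t_r$ with the basis of $\T^1_{Y,E}(v)$ read off from the Cayley polytope construction of $\mathfrak{Y}$.

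For surjectivity I compute $\dim \T^1_{Y,E}=r$. The local analog of the basis in Lemma~\ref{lem:T1Y} shows that, for a cone over a polygon $F$ with $r$ Minkowski summands and a unique $0$-stratum, $\T^1_{Y,E}(v)$ is spanned by the $r$ vertex sections $s^v_i$; other isotypic summands $\T^1_{Y,E}(m)$ are empty unless an edge of $F$ contributes characters in its interior, and this does not happen in any of types~I, I-inverse, or III, as direct inspection of the polygons dual to the tropical arrangements in Figures~\ref{fig:Type_1} and~\ref{fig:Type_3} shows. Combined with injectivity, $\kappa$ is then an isomorphism, and together with unobstructedness this yields miniversality. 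The principal obstacle is the combinatorial input --- that in each of the three configurations no non-trivial edge characters contribute and that $\rH^1(\Delta', \nu^\star(-K_Y))=0$ --- which is routine but must be performed separately for each type by drawing $(Y,E)$ and $\Delta'$ explicitly.
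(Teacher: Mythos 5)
Your proposal misidentifies the deformation problem. The claim asserts that ``$\mathfrak{g}\colon\mathfrak{Y}\to\AA^r$ is miniversal'' --- $\mathfrak{E}$ has been deliberately dropped from the notation, and the paper's proof of the claim (and the way it is used afterwards: ``Parts~(ii--iv)\ldots follow easily from versality of the family $Y\subset\mathfrak{Y}\to(0\in\AA^r)$'') is about deformations of the \emph{variety} $Y$, not of the pair $(Y,E)$. The paper's argument runs through the exact sequence $\rH^0(N_EY)\to\T^1_{Y,E}\to\T^1_Y\to 0$, trivialises $-K_Y$ so that $\T^1_{Y,E}=\rH^0(\cO_{\Delta'})$ and the image of $\rH^0(N_EY)$ is $\rH^0(\cO_{\Delta})$, and deduces that the Kodaira--Spencer map $T_0\AA^r\to\T^1_Y=\rH^0(\cO_{\Delta'})/\rH^0(\cO_{\Delta})$ is an isomorphism; the quotient contributes one dimension per qODP of $Y$, and in the configurations of types~I, I-inverse and~III all singularities of $Y$ are isolated qODPs and their number equals $r$.

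Your dimension count $\dim\T^1_{Y,E}=r$ is false, and it is precisely what breaks the surjectivity half of your argument. In the setting of Lemma~\ref{lem:moves} the variety $Y$ is \emph{affine}, and $\Delta=\Sing E$ contains non-compact $\AA^1$ components (the strata dual to the unbounded legs of the tropical arrangement). Each such component contributes an infinite-dimensional copy of $\CC[u]$ to $\T^1_{Y,E}\cong\rH^0(\cO_{\Delta'})$ --- this is exactly what the labels $\lambda+xA(x)$, $\mu+uD(u)$, etc.\ in Figures~\ref{fig:Type_1} and~\ref{fig:Type_3} encode, with $A(x)=\sum a_ix^i$ a power series. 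Your appeal to Lemma~\ref{lem:T1Y} is misapplied: that result (like Lemma~\ref{lem:T1(XD)and(X)}) is formulated for the \emph{proper} Fano toric case, where the colength $k_e$ of each edge is finite and bounds the contributing characters; in the affine case the dual ``edges'' of $Q$ are unbounded rays, and $\T^1_{Y,E}(m)$ is nonzero for infinitely many $m$. The injectivity you import from Lemma~\ref{lem:injectivities} is fine as far as it goes, but it lands in the single isotypic piece $\T^1_{Y,E}(v)$, which is far from all of $\T^1_{Y,E}$; and the unobstructedness you check is that of the pair, whereas the relevant one is $\T^2_Y=(0)$. No argument carried out entirely inside $\Deff{Y}{E}$ can produce a miniversal family over a finite-dimensional base, and the essential missing idea in your proposal is the passage to $\Def{Y}$ via the quotient $\T^1_Y$ --- exactly the step the paper's proof is built around.
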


Let us first prove the claim. It is enough to prove that the
Kodaira--Spencer map for $\mathfrak{g}\colon \mathfrak{Y} \to \AA^r$
is an isomorphism. We have an exact sequence
$$\rH^0(N_E Y) \rightarrow \T^1_{Y,E} \rightarrow \T^1_Y \rightarrow 0,$$
cf. Lemma~\ref{lem:T1(XD)and(X)}.  We have
$N_E Y=\cO_E(E)=\cO_E(-K_Y)$ and
$\T^1_{Y,E}=\rH^0(\nu^*\nu_*\cO_{\Delta}(-K_Y))$ by
Lemma~\ref{lem:toricqODPdeformations}, and the map
$\rH^0(N_E Y) \rightarrow \T^1_{Y,E}$ is the restriction map.
Moreover $E=-K_Y=\pi^*(-K_X)$ is principal; fix a trivialization
$\cO_Y(-K_Y)=\cO_Y$.  We observe that the map
$\rH^0(N_EY) \rightarrow \T^1_{Y,E}=\rH^0(\cO_{\Delta'})$ has image
$\rH^0(\cO_{\Delta}) \subset \rH^0(\cO_{\Delta'})$, and deduce that
the Kodaira--Spencer map is an isomorphism as required.

Parts~(ii--iv) of the statement follow easily from versality of the
family $Y\subset \mathfrak{Y} \to (0\in \AA^r)$. Indeed for this family the
morphism $\mathfrak{Y}^\prime \to \mathfrak{X}$ is the
simultaneous resolution of the same local Altmann family corresponding
to the second arrangement. In particular it is clear from the statement
of Proposition~\ref{pro:admissible_decomposition_Matsushita} that
$\mathfrak{Y}^\prime \to (0\in \AA^r)$ is a deformation of
$Y^\prime$. The statement follows from Step~2 and the fact that the
formal completions of $\cY\to (0\in \cM)$ and $\cY^\prime \to (0\in \cM)$
are pulled back from $\mathfrak{Y} \to (0\in \AA^r)$ and
$\mathfrak{Y}^\prime \to (0\in \AA^r)$.

Part~(v) is not difficult to prove: the induced birational map
$Y\dasharrow Y^\prime$ is an isomorphism outside the unique
exceptional divisor(s), which are mapped to the $0$-stratum $x\in
X$. The Kodaira--Spencer map is constant in the internal curves and
the claimed effect is the only solution that is consistent with
Lemma~\ref{lem:toricqODPdeformations}.

\textsc{Step 4: proof for type~II and type~IV} The proof follows the outline of Step 3.
For example in the case of moves of type~II a miniversal deformation family of
$(Y,E)$ is constructed explicitly in Example~\ref{exa:uz2}. 
The base of the miniversal family is the infinite dimensional ind-scheme $\AA^\infty = \varinjlim \AA^n$.
The miniversal property works for deformations over finite dimensional
$\CC$-algebras $A$; in practice this means that every deformation over
$\Spec A$ is the pull-back of a deformation $(\mathfrak{Y}^n, \mathfrak{E}^n)\to \AA^n$
via a morphism $\Spec A \to \AA^n$, for some finite $n$.

In Claim~\ref{claim:defo_uz2} miniversal deformation families
$(\mathfrak{Y}, \mathfrak{E})\to \AA^\infty$ of $(Y,E)$ and 
$(\mathfrak{Y}^\prime, \mathfrak{E}^\prime)\to \AA^\infty$ of
$(Y^\prime, E^\prime)$ are constructed, and the birational map
$\mathfrak{Y}\dasharrow
\mathfrak{Y}^\prime$ is also constructed. Parts~(ii--iv) follow; for Part~(v) see
Part~(iv) of Claim~\ref{claim:defo_uz2}.

The case of moves of type~IV is similar and easier. 
\end{proof}

\section{Proof of \texorpdfstring{Theorem~\ref{thm:topology}}{Theorem 1}}
\label{sec:proof-theorem-2}

We work in the analytic category throughout this section.

\begin{lem}
  \label{lem:2}
Let $P$ be a $3$-dimensional reflexive polytope endowed with amd,
$(X,D)$ the corresponding Fano toric pair and $\pi \colon (Y,E)\to
(X,D)$ the induced partial resolution.

 Let $0\in \DD$ be a small analytic disk and $g\colon \cY \to \DD$ a smoothing of $Y$. For all
  $0<|t|<\!\!< 1$ $\rH^2(\cY_t; \QQ)=\rH^2(Y; \QQ)$.
\end{lem}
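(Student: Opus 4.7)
The plan is to compare $\rH^2(Y;\QQ)$ and $\rH^2(\cY_t;\QQ)$ via Mayer--Vietoris, separating the qODP singularities of $Y$ from its smooth locus. Write $\Sing Y = \{p_1,\dots,p_n\}$. For each $p_i$ choose a small open analytic ball $B_i\subset \cY$ centred at $p_i$ so that $U_i:=B_i\cap Y$ is the cone over the link $L_i$ of the qODP at $p_i$, and $F_i:=B_i\cap \cY_t$ is the local Milnor fibre. Set $Y^\circ:=Y\setminus\bigcup_i U_i$ and $\cY^\circ:=\cY\setminus\bigcup_i B_i$. Since $\Sing Y$ contains all critical points of $g$, Ehresmann's fibration theorem gives, after shrinking $\DD$, a locally trivial smooth fibration $g|_{\cY^\circ}\colon \cY^\circ\to\DD$; in particular $\cY^\circ_t\cong Y^\circ$ and $\partial\cY^\circ_t\cong\partial Y^\circ\cong\bigsqcup_i L_i$.

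The key local input is the rational cohomology of the Milnor fibre $F_i$. By Definition~\ref{dfn:qODP}, the underlying variety of the qODP at $p_i$ is $\{x_1x_2-x_3x_4=0\}/\mu_a$ with $\mu_a$ acting with weights $(1,-1,b,-b)$, $\gcd(a,b)=1$. The smoothing equation $x_1x_2-x_3x_4=t$ is $\mu_a$-invariant, and for $t\neq 0$ the action on $\{x_1x_2-x_3x_4=t\}$ is free: a nontrivial fixed point would force $x_1=x_2=x_3=x_4=0$, contradicting $t\ne 0$. Hence $F_i$ is the free quotient by $\mu_a$ of the Milnor fibre of the ordinary ODP, which is well-known to be homotopy equivalent to $S^3$. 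The $\mu_a$-action restricts to a free orientation-preserving action on $S^3$, so $F_i$ is rationally the closed orientable $3$-manifold $S^3/\mu_a$, giving
\[
\rH^0(F_i;\QQ)=\rH^3(F_i;\QQ)=\QQ,\qquad \rH^1(F_i;\QQ)=\rH^2(F_i;\QQ)=0
\]
(Passing between the coarse space and the qODP stack does not change rational cohomology.)

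I then apply Mayer--Vietoris to the two decompositions $Y=Y^\circ\cup\bigsqcup_i U_i$ and $\cY_t=Y^\circ\cup\bigsqcup_i F_i$, both with intersection $\bigsqcup_i L_i$. Since each $U_i$ is contractible and $\rH^1(F_i;\QQ)=\rH^2(F_i;\QQ)=0$, both long exact sequences collapse to the identical five-term sequence
\[
\rH^1(Y^\circ;\QQ)\to \bigoplus_i \rH^1(L_i;\QQ)\to \rH^2(Z;\QQ)\to \rH^2(Y^\circ;\QQ)\to \bigoplus_i \rH^2(L_i;\QQ)
\]
with $Z\in\{Y,\cY_t\}$, whose outer maps coincide under the identifications coming from Ehresmann's theorem. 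The five-lemma then yields $\rH^2(\cY_t;\QQ)\cong \rH^2(Y;\QQ)$.

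The main technical point is the vanishing $\rH^1(F_i;\QQ)=\rH^2(F_i;\QQ)=0$, which reduces to the freeness of the $\mu_a$-action on the rationally $3$-spherical Milnor fibre of the ordinary ODP, together with the standard fact that rational cohomology is insensitive to free actions of finite groups.
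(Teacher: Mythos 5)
Your proof is correct and takes essentially the same approach as the paper: the paper's proof is a one-line statement of the key local input (the Milnor fibre of the qODP smoothing retracts onto a lens space $S^3/\mu_a$), which you establish carefully and then combine with a Mayer--Vietoris comparison of $Y$ and $\cY_t$ that the paper leaves implicit.
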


\begin{proof}
  The Milnor fibre for the smoothing of a qODP deformation retracts onto a lens space $S^3/ _{\ZZ/a\ZZ}$.
\end{proof}

We prove the following more precise version of Theorem~\ref{thm:topology}.

\begin{thm}
  \label{thm:topologyII}
Let $P$ be a $3$-dimensional reflexive polytope endowed with amd,
$(X,D)$ the corresponding Fano toric pair and $\pi \colon (Y,E)\to
(X,D)$ the induced partial resolution.

Choose a $1$-parameter deformation of $(Y,E)$ over a small
analytic disk $0\in \DD$ with generic Kodaira--Spencer class, and
consider the blow-down deformation
  \[
    \xymatrix{ (Y,E)\ar[dd]\ar[dr]^\pi\ar@{^{(}->}[rrr] &   & & (\cY,\cE)\ar[dd]_(.3){g}\ar[dr]^\Pi & \\
                        &(X,D)\ar[dl] \ar@{^{(}-}[rr]&  & \ar@{->}[r]& (\cX,\cD)\ar[dl]^f\\
                        \{0\}  \ar@{^{(}->}[rrr]& & &  \DD &}
  \]

  \smallskip
  
  Note that it follows from the matching condition along dull edges
  that the Kodaira--Spencer class
  $s\in \T^1_{Y,E}=\rH^0\bigl(\Delta^\prime, \nu^\star (-K_Y)\bigr)$ of
  the deformation enjoys the following property:

   For all edges $e=[v,w]\leq P$ and all $1\leq i< j \leq \ell_e$,
    writing
    \[s_i=s\vert_{\Delta^e_i}\in \rH^0\left(\Delta^e_i, -K_Y\vert_{\Delta^e_i}\right)=
      \rH^0\left(\Gamma^e, -K_X \vert_{\Gamma^e} \right)\]
    $s_i-s_j$ has simple zeros in $\Gm =\Gamma^e\setminus
    \{x^v,x^w\}$. 

  Then for all $0<|t|<\!\!< 1$, $\cX_t$ has precisely one ODP for
  all $e=[v, w]$, $x\in
  \Gamma^e\setminus \{x^v,x^w\}$, and $1\leq i<j\leq \ell_e$ such that
  $s_i(x)=s_j(x)$, located near $x\in \Gamma^e$, and it is smooth
  everywhere else.
  
  Above all ODP in $\cX_t$, $\cY_t$ has precisely one rational curve with
  normal bundle $\cO(-1)\oplus\cO(-1)$ in the homology class of
$\sum_{i<k\leq j} C_k^e$ in $\rH^2(\cY_t;\QQ)=\rH^2(Y;\QQ)$.
\end{thm}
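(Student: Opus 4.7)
The plan is to localise the analysis to a neighbourhood of each 1-stratum $\Gamma^e\subset X$. By Lemma~\ref{lem:fibre_dimensions} the singular locus of $\cX_t$ for generic small $t$ is finite and concentrated near the 1-skeleton of $X$, and by Proposition~\ref{pro:singularities_general_fibre_Altmann} applied via the local Altmann deformation in Proposition~\ref{pro:TdefoII}, the 0-strata contribute no singularities in the generic fibre. Fix an edge $e=[v,w]\leq P$. Following the computation at the end of the proof of Lemma~\ref{lem:fibre_dimensions}, the local model of $\cX\to\DD$ near a generic point of $\Gm\subset\Gamma^e$ is, modulo $t^2$,
\[
 xy \;=\; \prod_{i=1}^{\ell_e}\bigl(z - tA_i(u)\bigr)
\]
with $A_i(u) = s_i|_{\Delta^e_i\cap \pi\inv(\Gm)}$. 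For $t\neq 0$ small, the singular locus of $\cX_t$ inside this model is where two factors vanish simultaneously; by genericity all coincidences are simple and pairwise, and at each $u_0\in\Gm$ with $A_i(u_0)=A_j(u_0)$ (with $i<j$ unique) the equation becomes $xy=\alpha\beta$ in local coordinates $\alpha = z-tA_i(u)$, $\beta = z-tA_j(u)$, i.e.\ an ordinary double point. The $(-1,-1)$ normal bundle of the exceptional curve in the small resolution $\cY_t\to\cX_t$ then follows from standard facts on small resolutions of threefold nodes.

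To count these ODPs we use Lemma~\ref{lem:T1Y}: for generic $s$, the difference $s_i-s_j$ is an element of $\rH^0\bigl(\Gamma^e,\cO(k_e)\bigr)$ that vanishes at $x^v$ (resp.\ $x^w$) iff $i,j$ lie in a common part $L^v_\ast$ (resp.\ $L^w_\ast$) of the induced partition. The matching condition forbids both vanishings simultaneously, so the number of simple zeros in $\Gm$ equals $k_e-[i\sim_v j]-[i\sim_w j]$; summing over pairs and recognising $\sum_h\binom{|L^v_h|}{2}$ as the number of pairs $i<j$ with $i\sim_v j$ yields the count $n_e$ of Theorem~\ref{thm:topology}. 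This establishes that $\cX_t$ has exactly the asserted ODPs and is smooth away from them.

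The main difficulty is to identify the homology class in $\rH_2(\cY_t;\QQ)=\rH_2(Y;\QQ)$ (Lemma~\ref{lem:2}) of the $(-1,-1)$-curve above each ODP as $\sum_{i<k\leq j}C^e_k$. Here we invoke Theorem~\ref{thm:local_invariance}: by a sequence of elementary moves of types~I--IV applied to the dual tropical arrangements of the two facets incident at $e$, we rearrange the configuration near a fixed coincidence at $u_0$ into a canonical toric form in which the local crepant resolution of $X$ is the standard toric resolution of the transverse $\text{A}_{\ell_e-1}$-singularity along $\Gm$, with chain of exceptional fibres $C^e_1,\dots,C^e_{\ell_e-1}$, and the ODP over $u_0$ arises from the coincidence of precisely the factors indexed by $i$ and $j$. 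In this canonical form a direct toric computation --- patterned on Proposition~\ref{pro:admissible_decomposition_Matsushita} --- identifies the exceptional curve of the small resolution of the ODP as the limit of the sub-chain $C^e_{i+1}+\dots+C^e_j$, which becomes a single $(-1,-1)$-curve after the smoothing flops. Theorem~\ref{thm:local_invariance} then transports this identification back to the original configuration, since each elementary move is realised by a birational transformation over $\cM$ preserving the total space, and tracking classes through these flops preserves $\sum_{i<k\leq j}C^e_k$. The subtle point --- where the bulk of the work lies --- is the handling of moves of types~II and~IV, for which the miniversal family of $(Y,E)$ is parametrised by an infinite-dimensional ind-scheme; here one reduces to finite-dimensional subfamilies via the explicit miniversal models used in the proof of Lemma~\ref{lem:moves}, where the class identification becomes a transparent computation in a small resolution of an $\text{A}$-chain smoothing.
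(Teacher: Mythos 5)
Your local analysis along the one-dimensional strata $\Gm\subset\Gamma^e$ — the equation modulo $t^2$, the identification of the ODPs with the simple zeros of $s_i-s_j$, the use of the matching condition to rule out simultaneous vanishing at $x^v$ and $x^w$, and the resulting count $n_e$ — is correct and essentially coincides with the paper's argument in \S~\ref{sec:end-proof-theorem}, which reads the ODPs and the homology class $\sum_{i<k\leq j}C^e_k$ off the explicit simultaneous resolution of Example~\ref{exa:cA}.

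There is, however, a genuine gap at the very start. You justify ``the $0$-strata contribute no singularities in the generic fibre'' by invoking Proposition~\ref{pro:singularities_general_fibre_Altmann} through the family of Proposition~\ref{pro:TdefoII}. That argument only shows $\cX_m$ is smooth near $x^v$ for $m$ generic in the $r$-dimensional Altmann subfamily $S_v\hookrightarrow\cM_{Y,E}^{\text{an}}$, whose tangent direction at $0$ is the $v$-isotypic piece $\T^1_{Y,E}(v)$, a proper subspace of $\T^1_{Y,E}$. The theorem's disk $\DD$ has generic Kodaira--Spencer class in all of $\T^1_{Y,E}$, so its generic points need not lie anywhere near the open locus your argument controls; openness of smoothness in the family does not bridge this, because the open set need not contain a full punctured neighbourhood of $0$. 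This is precisely what Lemma~\ref{lem:topology} is for, and its proof requires the full strength of Theorem~\ref{thm:local_invariance}: one argues by contradiction, producing (via the analytic sets $Z^i$ of Lemma~\ref{lem:fibre_dimensions}) a family of contracted curves dominating $\cM$, identifies the restriction of this family over $S_v$ using Proposition~\ref{pro:singularities_general_fibre_Altmann}, then rearranges the tropical arrangement by moves of types I--IV so that a \emph{generic} Kodaira--Spencer direction in the rearranged model is covered by Lemma~\ref{lem:smoothness}, contradicting the persistence of the contracted curve. Your proposal instead deploys Theorem~\ref{thm:local_invariance} only for the homology-class identification near $\Gm$ — where it is not needed, since near a generic point of a $1$-stratum the crepant toric resolution is canonical (the tropical arrangements in the incident facets are invisible) and the class can be read directly from Example~\ref{exa:cA}. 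In short, you have invoked Theorem~\ref{thm:local_invariance} where it is dispensable and omitted it where it is essential; as written, the smoothness of $\cX_t$ near the $0$-strata for your chosen $\DD$ is unproven.
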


\begin{rem}
  \label{rem:ODP}
  \begin{enumerate}
  \item In particular, the theorem states that $\cX_t$ has ordinary double
  points: this finishes the proof of Theorem~\ref{thm:1}.
\item The statement indeed is a more precise version of
  Theorem~\ref{thm:1}. Indeed it is a simple exercise to compute the
  number of singular points on a general fibre.
  \end{enumerate}
\end{rem}

The proof, given in \S~\ref{sec:end-proof-theorem}, will be a (simple)
consequence of the following:

\begin{lem}
\label{lem:topology} 
Let $P$ be a $3$-dimensional reflexive polytope $P$ endowed with amd,
$(X,D)$ the corresponding Fano toric pair and
$\pi \colon (Y,E)\to (X,D)$ the induced partial resolution.

Consider a miniversal complex analytic deformation of $(Y,E)$ over an
analytic germ $0\in \cM$ and its blow-down:
  \[
    \xymatrix{ (Y,E)\ar[dd]\ar[dr]^\pi\ar@{^{(}->}[rrr] &   & & (\cY,\cE)\ar[dd]_(.3){g}\ar[dr]^\Pi & \\
                        &(X,D)\ar[dl] \ar@{^{(}-}[rr]&  & \ar@{->}[r]& (\cX,\cD)\ar[dl]^f\\
                        \{0\}  \ar@{^{(}->}[rrr]& & &  \cM &}
  \]

For $x\in X$ and $0<\varepsilon$, denote by $B_x(\varepsilon) \subset \cX$ the open ball with centre at $x$ and radius
$\varepsilon$.

Then for all $x^v\in X$ in the $0$-skeleton there exists
$0<\varepsilon$ such that for all sufficiently general\footnote{that
  is, outside a complex analytic strict subset $W\subsetneq \cM$} $m\in \cM$, $\cX_m\cap
B_{x^v}(\varepsilon)$ is smooth.
\end{lem}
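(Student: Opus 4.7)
The proof is local at $x^v$, so it suffices to exhibit, for a generic direction $m$ in the miniversal base, smoothness of $\cX_m$ in a fixed small analytic ball centred at $x^v$. The plan is to work in two layers: the $v$-isotypic (Altmann) directions handle the vertex itself, while the edge directions from Lemma~\ref{lem:T1Y} push the singular points of the $1$-strata $\Gamma^e$ away from $x^v$. The matching condition of Definition~\ref{dfn:decomposition_data}, as exploited in Corollary~\ref{cor:TTnice_sections}, is exactly what provides the needed control.

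First I would make the vertex smoothing precise. By Proposition~\ref{pro:TdefoII} and Lemma~\ref{lem:injectivities}, the image of the Kodaira--Spencer map induced by $\Pi_\star\colon \cM^{\text{an}}_{Y,E}\to\cM^{\text{an}}_{X,D}$ contains the entire $v$-isotypic summand $\T^1_{X,D}(v)$, and is realised by the analytification of the global Altmann family. Applying Proposition~\ref{pro:singularities_general_fibre_Altmann} to that family, the generic Altmann fibre $X_a$ is smooth on the open piece $U_0\cap X_a$ that fills out the neighbourhood of $x^v$ off the $1$-strata; its only singularities lie along curves of transverse $\text{A}_{n_j}$ type that collapse to the $1$-strata of $X$ as $a\to 0$.

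To dispose of those remaining transverse-$\text{A}$ loci near $x^v$, I would enlarge from the Altmann subfamily to the full miniversal family using the edge sections $s^e_{j,m}$ of Lemma~\ref{lem:T1Y} for edges $e\subset P$ incident at $x^v$. Writing a generic Kodaira--Spencer class in the form $s=\sum_h \lambda_h s^v_h + \sum_k \mu_k s^w_k + \text{edge terms}$ and restricting to each $\Delta^e_j\cong \PP^1$ gives sections $s|_{\Delta^e_j}\in\rH^0(\PP^1,\cO(k_e))$. The proof of Corollary~\ref{cor:TTnice_sections} shows that the matching condition forces the restricted sections to be pairwise distinct for generic $(\lambda,\mu,\dots)$, and a direct root count in $\PP^1$ (the difference $s|_{\Delta^e_i}-s|_{\Delta^e_j}$ is homogeneous of degree $k_e$ with non-vanishing leading coefficients in both $z_v^{k_e}$ and $z_w^{k_e}$) places its $k_e$ zeros inside $\Gm=\Gamma^e\setminus\{x^v,x^w\}$. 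By the transverse-$\text{A}$ smoothing analysis from the end of the proof of Lemma~\ref{lem:fibre_dimensions}, these zeros are exactly the singular points of $\cX_m$ on $\Gamma^e$; they are uniformly bounded away from $x^v$, so choosing $\varepsilon>0$ smaller than this bound for every $e$ through $x^v$ produces the required radius.

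The main obstacle I anticipate is patching the two layers uniformly: one must verify that no extra singularity arises in $B_{x^v}(\varepsilon)$ when the Altmann and edge deformations are combined, and that the generic choices in the two layers are simultaneously generic. I would handle this by invoking Theorem~\ref{thm:local_invariance} to rearrange the tropical arrangement near $x^v$ into a standard form, so that the germ of $\pi\colon Y\to X$ at $\pi\inv(x^v)$ decomposes (up to isomorphism) as a product of a vertex piece (handled by Altmann) and edge-normal pieces (handled by the $s^e_{j,m}$), making the two analyses independent and the uniform choice of $\varepsilon$ transparent.
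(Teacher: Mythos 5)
Your ``root count'' step contains a genuine gap. From the explicit basis of Lemma~\ref{lem:T1Y}, the coefficient of $z_v^{k_e}$ in $s\vert_{\Delta^e_i}-s\vert_{\Delta^e_j}$ is $\lambda_{h(i)}-\lambda_{h(j)}$, where $h(j)$ is the index with $j\in L^v_{h(j)}$. This vanishes whenever $i,j$ lie in the same part of the induced partition at $v$, and the matching condition~(c) of Definition~\ref{dfn:decomposition_data} only forbids $i,j$ from lying in the same part at \emph{both} endpoints of $e^\star$; $h(i)=h(j)$ is perfectly compatible with the amd hypotheses. In that case $s\vert_{\Delta^e_i}-s\vert_{\Delta^e_j}$ is divisible by $z_w$ and vanishes at $x^v=[1:0]$; the edge sections $s^e_{j,m}$ cannot repair this, because they contribute only monomials $z_v^mz_w^{k_e-m}$ with $1\le m\le k_e-1$, all of which vanish at $[1:0]$. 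For dull edges ($k_e=1$) with $h(i)=h(j)$ the vanishing is forced exactly at $x^v$. So the claimed uniform bound of the ``edge'' singular points away from $x^v$ is false, and your choice of $\varepsilon$ does not exist by this argument.

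The deeper difficulty is that a zero of $s_i-s_j$ at $x^v$ need not produce a singularity of $\cX_m$ there --- but establishing this requires analysing the local model at the $0$-stratum, which is exactly where the vertex (Altmann) deformation and the edge deformations interact and do not factor into independent pieces. Your invocation of Theorem~\ref{thm:local_invariance} to ``decompose the germ of $\pi$ near $\pi\inv(x^v)$ as a product of a vertex piece and edge-normal pieces'' is not what that theorem provides: it changes the tropical arrangement (hence $Y$) but leaves $X$ and its deformations unchanged, and offers no product decomposition. The paper instead argues by contradiction: supposing a contracted curve persists over general $m$, it uses Theorem~\ref{thm:local_invariance}, realised as a composition of the elementary moves of Lemma~\ref{lem:moves}, to transform the offending curve into the internal fibre of a single surface $E_j$, tracks the Kodaira--Spencer class through the moves via Lemma~\ref{lem:moves}(v), and applies Lemma~\ref{lem:smoothness}, whose hypothesis $A_{j-1}(0)\neq A_j(0)$ is what genericity buys \emph{after} the rearrangement. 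To salvage your direct approach you would need exactly this bookkeeping of how the moves act on the Kodaira--Spencer class, which your proposal does not supply.
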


\begin{proof}
Assume for a contradiction that the conclusion fails. Then there
exists a flat projective family of curves
$\psi \colon \cC \to z_0\in Z$ over an analytic germ $z_0\in Z$ and a
commutative diagram
 \[
   \xymatrix{
  \cC \ar[d]_\psi \ar[r]^\iota  & \cY \ar[d]^\Pi\\
   Z\ar[r]^x \ar[dr]_m & \cX \ar[d]^f  \\
      & \cM }
 \]
 such that
 \begin{enumerate}[(i)]
 \item For all $z\in Z$, $\iota \colon \cC_z\hookrightarrow
   \cY_{m(z)}$ is the inclusion of a subscheme. Put in other words,
   $\cC/Z$ is a family of curves in $\cY/\cM$.
 \item $m\colon Z \to \cM$ is proper and surjective.
 \item $x(z_0)=x^v$.
 \end{enumerate}

 Note that, because the diagram is commutative, for all $z\in Z$ the
 curve $\cC_z\in \cY_{m(z)}$ is contracted by the map $\Pi$ to the
 point $x(z)\in \cX_{m(z)}$.

Let $g_v\colon (\cY_v,\cE_v)\to S_v$ be the simultaneous resolution of
the global Altmann deformation; in other words, the complex analytic deformation whose
existence was established in Proposition~\ref{pro:TdefoII}.

 Because $g\colon (\cY,\cE) \to 0\in \cM$ is a miniversal deformation,
 there is a diagram where both squares are Cartesian
 \[
   \xymatrix{
     (\cY_v,\cE_v) \ar[d]_{\Pi_v} \ar[r] & (\cY, \cE) \ar[d]^\Pi \\
     (\cX_v,\cD_v) \ar[d]_{f_v} \ar[r] & (\cX, \cD) \ar[d]^f \\
   S_v \ar[r]_\mu & \cM }
\]

By Proposition~\ref{pro:singularities_general_fibre_Altmann}, for
general $t\in S_v$, for all $z\in Z$ such that $m=m(z)=\mu(t)$, the
singular locus of $\cX_m$ near $x^v$ consists of a disjoint union of
curves with transverse type $\text{A}$ singularities and the curve
$\cC_z\subset \cY_m \to \cX_m$ is a connected union of irreducible components of a fibre of the minimal resolution
along one of these $\text{A}$-curves.

For the rest of the proof we restrict all our families and diagrams to an affine
neighbourhood of $x^v\in X$. By Theorem~\ref{thm:local_invariance},
there exists a birational transformation over $\cX$:
\[
  \xymatrix{
    (\cY, \cE) \ar@{-->}[rr]^\Phi\ar[dr]_\Pi& & (\cY^\prime,
    \cE^\prime) \ar[dl]^{\Pi^\prime}\\
     & (\cX,\cD)&     }
 \]
 Inducing new families
    \[
    \xymatrix{ (Y^\prime,E^\prime)\ar[dd]\ar[dr]^{\pi^\prime}\ar@{^{(}->}[rrr] &   & & (\cY^\prime,\cE^\prime)\ar[dd]_(.3){g^\prime}\ar[dr]^{\Pi^\prime} & \\
                        &(X,D)\ar[dl] \ar@{^{(}-}[rr]&  & \ar@{->}[r]& (\cX,\cD)\ar[dl]^f\\
                        \{0\}  \ar@{^{(}->}[rrr]& & &  \cM &}
                      \quad \text{and} \quad
  \xymatrix{
  \cC^\prime \ar[d]_{\psi^\prime} \ar[r]^{\iota^\prime}  & \cY^\prime \ar[d]^{\Pi^\prime}\\
   Z\ar[r]^x \ar[dr]_m & \cX \ar[d]^f  \\
      & \cM }                      
   \]
   where now the (class of the) exceptional curve $\cC^\prime_z$ is
   the class of the fibre of the surface $(E_j\subset Y^\prime)\to
   (\Gamma\subset X)$ shaded in Fig.~\ref{fig:exc_curve}.
        \begin{figure}[h!]
  \centering
\begin{tikzpicture}
  \draw[thick] (0,0) -- (2,-1) -- (3,-2) 
  (3,-6) -- (2,-7) -- (0,-8)
  (2, -1) -- (6, -1) (3, -2) -- (6,-2) 
  (3,-6) -- (6,-6) (2,-7) -- (6, -7)
  (3,-3.5) -- (3, -4.5) (3, -3.5)--(6,-3.5)
  (3, -4.5) -- (6,-4.5);
  \draw[thick, dashed] (3,-2) -- (3,-3) (3,-5) -- (3, -6);
  \fill[green!20!white] (3,-3.5) -- (6,-3.5) -- (6,-4.5) -- (3, -4.5)
  -- cycle ;
  \node at (4.8,-0.7) {$\lambda + uA_{n+1}(u)$};
  \node at (4.8,-1.7) {$\lambda + uA_{n}(u)$};
  \node at (4.8,-3.2) {$\lambda + uA_{j}(u)$};
  \node at (4.8,-4.2) {$\lambda + uA_{j-1}(u)$};
    \node at (4.8,-6.7) {$\lambda + uA_{0}(u)$};
    \node at (-1,-0.5) {$\lambda + yC(y)$};
  \node at (2.75,-4) {$\lambda$};
  \node at (2,-1.5) {$\lambda$};
  \node at (2,-6.5) {$\lambda$};
  \node at (-1,-7.5) {$\lambda + xB(x)$};
\end{tikzpicture}
\caption{Exceptional curve and tangent vector \texorpdfstring{$\xi \in T_0 \cM=\rH^0\bigl(\Delta^\prime, \nu_\star
\nu^\star (-K_{Y^\prime}) \bigr)$}{in T0(M)}.} 
  \label{fig:exc_curve}
\end{figure}
It follows from this that the curve $\cC^\prime_{z_0}\subset Y^\prime$
is the internal fibre in the surface $E_j$. The figure
also depicts --- following our usual conventions, see e.g. Lemma~\ref{lem:moves} --- a tangent vector $\xi \in T_0 \cM=\rH^0\bigl(\Delta^\prime, \nu_\star
\nu^\star (-K_{Y^\prime}) \bigr)$.  

The birational map $\Phi\colon (\cY,\cE) \dasharrow (\cY^\prime,\cE^\prime)$ is a
composition of moves of type I, their inverses, and type II--IV, see
Definition~\ref{dfn:types}. By Lemma~\ref{lem:moves}, a generic
tangent vector $\xi \in T_0 \cM$ has the property --- see
Fig.~\ref{fig:exc_curve} --- that $A_{j-1}(0)\neq A_j(0)$. The
statement now follows from Lemma~\ref{lem:smoothness}, implying that
the curve $\cC^\prime_{z_0}$ does not deform to a general fibre. 
\end{proof}

\begin{lem}
  \label{lem:smoothness}
  Consider the toric pair $(X,D)$ where
  \begin{align*}
    X & =\text{A}_{n +1}\times \AA^1_u =\left(xy+z^{n+1}=0\right)
        \subset \AA^4_{x,y,z,u} \\
    D & = (uz=0) \subset X
  \end{align*}
  Note that $D$ has three components: $D=D_u+ D_x+D_y\subset X$, where
  $D_u=(u=0)$, $D_x=(z=y=0)$ and
  $D_y=(z=x=0)$.

  Let $\pi \colon Y\to X$ be the minimal resolution with chain of
  exceptional divisors $E_1,\dots, E_n$ and strict transforms
  $F=D_u^\prime$, $E_0=D_x^\prime$, $E_{n+1}=D_y^\prime$ labelled as pictured in
  Fig.~\ref{fig:KS_cA}. 

  Consider a $1$-parameter analytic deformation of $(Y,E)$ over a
  small disk and its blow-down:
     \[
    \xymatrix{ (Y,E)\ar[dd]\ar[dr]^\pi\ar@{^{(}->}[rrr] &   & & (\cY,\cE)\ar[dd]_(.3){g}\ar[dr]^\Pi & \\
                        &(X,D)\ar[dl] \ar@{^{(}-}[rr]&  & \ar@{->}[r]& (\cX,\cD)\ar[dl]^f\\
                        \{0\}  \ar@{^{(}->}[rrr]& & &  \DD &}
                    \]

Assume that the Kodaira--Spencer class $\xi \in \T^1_{Y,E}$ of the deformation, in the
notation pictured in Fig.~\ref{fig:KS_cA} with $A_j(u)\in
\CC[u]$, $B(x)\in \CC[x]$, $C(y)\in \CC[y]$, is such that the
$A_j(0)\in \CC$ are pairwise distinct. 
 
Then for all small enough $0\neq t \in \DD$ the fibre $\cX_t$ is
smooth.
\end{lem}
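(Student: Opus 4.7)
The plan is to write down the blow-down family $f\colon (\cX, \cD) \to \DD$ explicitly using the Altmann-type toric machinery of \S\ref{sec:homog-deform-local}, and then verify smoothness of $\cX_t$ via the Jacobian criterion.

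\emph{Step 1 (explicit model).} The local toric pair $(X,D)$ has moment polygon with a single edge of length $n+1$ dual to the $u$-axis, and the chain $E_0, E_1, \dots, E_{n+1}$ on $Y$ corresponds to the admissible Minkowski decomposition of this edge into $n+1$ unit segments. An affine version of Proposition~\ref{pro:admissible_decomposition_Matsushita} gives the universal Altmann family
\[
\cX_{\text{univ}} = \bigl(xy = \prod_{j=0}^{n}(z - a_j(u))\bigr) \subset \AA^4 \times \AA^{n+1}
\]
and matching Kodaira--Spencer classes via Lemma~\ref{lem:toricqODPdeformations} identifies the given family $\cX \to \DD$ as the pullback along the tangent direction $(tA_0(u), \dots, tA_n(u))$, so that
\[
\cX_t = \bigl(xy = \prod_{j=0}^{n}(z - tA_j(u))\bigr) \subset \AA^4
\]
modulo terms of order $t^2$ that are immaterial for the Jacobian analysis to follow. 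The components $B(x)$ and $C(y)$ of the Kodaira--Spencer class live on the boundary strata $D_x^\prime, D_y^\prime$; they move the divisor $\cD$ inside $\cX$ but do not alter the total space $\cX$ itself.

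\emph{Step 2 (Jacobian criterion).} Set $F_t(x,y,z,u) = xy - \prod_{j=0}^n(z - tA_j(u))$. A singular point of $\cX_t = \{F_t = 0\}$ must have $x = y = 0$ (from $\partial_y F_t = x$ and $\partial_x F_t = y$), hence $z = tA_k(u)$ for some $k$ (from $F_t = 0$). At such a point
\[
\partial_z F_t = -t^n \prod_{j \neq k}\bigl(A_k(u) - A_j(u)\bigr)
\]
For $t \neq 0$ its vanishing forces $A_k(u) = A_j(u)$ for some $j \neq k$. By continuity and the hypothesis $A_k(0) \neq A_j(0)$, there is $c > 0$ independent of $t$ such that every such $u$ satisfies $|u| \geq c$. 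Choosing an analytic ball of radius $\varepsilon < c$ centered at the origin, $\cX_t$ is smooth in this ball for every sufficiently small $t \neq 0$.

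\emph{Main obstacle.} The delicate point is Step~1: precisely identifying the blow-down family as the pullback of an Altmann family, including the correspondence between the figure's labels $A_j$ and the Altmann parameters $a_j$. This is essentially a bookkeeping exercise using the explicit basis of $\T^1_{Y,E}$ from Lemma~\ref{lem:T1Y} and the construction of the simultaneous resolution in the proof of Proposition~\ref{pro:admissible_decomposition_Matsushita}; the Jacobian step, by contrast, is a routine computation once the model is in hand.
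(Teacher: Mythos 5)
Your proposal is correct and follows essentially the same route as the paper. You reduce to the explicit model $\cX_t = \{xy = \prod_j(z - tA_j(u))\}$ modulo $t^2$ and check smoothness via the Jacobian criterion; the paper does exactly this, writing the equation $xy=\prod_{i=1}^{n+1}(z-tA_i(u))$ mod $t^2$ and saying "smoothness is easily checked with the Jacobian criterion." The one point where the paper is a bit more careful than you is the claim that the $\lambda$, $B(x)$, $C(y)$ components of $\xi$ do not affect the total space $\cX$: the paper derives this from the exact sequence $\rH^0(E, N_E Y)\to\T^1_{Y,E}\to\T^1_Y\to 0$ of Lemma~\ref{lem:T1(XD)and(X)}, identifying $\T^1_Y\cong\CC[u]^{n+1}/\CC[u]$ and checking that the forgetful map sends $\xi$ to the class of $(A_1(u),\dots,A_{n+1}(u))$, whereas you assert it as an observation about where these components "live" in the tropical picture. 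This is a matter of bookkeeping and not a gap, but it is worth spelling out via the exact sequence if you want a proof that does not rely on the informal picture.
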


\begin{proof}

  Lemma~\ref{lem:T1(XD)and(X)} gives an exact sequence:
  \[
    \rH^0\left(E, N_E Y\right) \to \T^1_{Y,E} \to
    \T^1_Y \to (0) 
  \]
  From this, using Fig.~\ref{fig:KS_cA} to picture a vector $\xi \in
  \T^1_{Y,E}$, we see that
  \begin{enumerate}[(a)]
  \item   $\T^1_Y=\CC[u]^{n+1}/\CC[u]$, where $\CC[u]$
  acts by translation:
  \[
A(u) \colon \left(A_1(u),\dots, A_{n+1}(u)\right) \mapsto
\left(A(u)+A_1(u),\dots, A(u)+A_{n+1}(u)\right) 
\]
 and 
\item The map $\T^1_{Y,E}\to \T^1_Y$ takes a vector $\xi\in \T^1_{Y,E}$ as pictured in
  Fig.~\ref{fig:KS_cA} to the equivalence class of the tuple
  $\left(A_1(u),\dots, A_{n+1}(u)\right)$.
  \end{enumerate}

  The result follows from the following:
  \begin{claim}
    Let $g\colon \cY \to \DD$ be a deformation of $Y$ over a small
    disk and $f\colon \cX\to \DD$ its blow-down. If the
    Kodaira--Spencer class $\xi \in \T^1_Y$ is (the class of) a tuple
    $\left(A_1(u),\dots, A_{n+1}(u)\right)$ where the $A_j(0)\in \CC$
    are pairwise distinct, then for all $0<t\in \DD$ small enough
    $\cX_t$ is smooth.
  \end{claim}

  To show the claim, note that any $1$-parameter family $g\colon
  \cY\to \DD$ with Kodaira--Spencer class $\xi$ blows down to a
  deformation $f\colon \cX \to \DD$ given by an equation:
  \[
\bigl[ xy=\prod_{i=1}^{n+1}(z-tA_i(u)) \quad \text{mod} \; t^2 \bigr] \subset \AA^4_{x,y,z,u}
\times \DD_t
  \]
and smoothness is easily checked with the Jacobian criterion.
\end{proof}

\subsection{Proof of Theorem~\ref{thm:topologyII}}
\label{sec:end-proof-theorem}

The singular fibres occur next to the points in the $1$-strata
$\Gm=\Gamma^e\setminus \{x^v,x^w\}$ where for some $i,j$ $s_i-s_j=0$.

The formula for the homology class is an exercise starting from the
construction of the simultaneous resolution $\Pi \colon \cY \to \cX$ in Example~\ref{exa:cA}.

\appendix

\section{The Cayley polytope, mixed subdivisions, and tropical arrangements}
\label{sec:Cayley}

The notions below were (to our knowledge) first introduced and studied
in \cite{sturmfels_newton_polytope_resultant} and in
\cite{huber_rambau_santos}. For a self-contained treatment we
recommend~\cite[\S9.2]{book_triangulations} and~\cite[\S4.1]{Joswig}.

\subsection{Conventions concerning polyhedra and polytopes}
\label{sec:conv-conc-polyh}

Fix a real vector space $V$ of finite dimension.  A \emph{cone} is a
non-empty subset of $V$ which is closed under sum and multiplication
by non-negative real numbers.  If $S$ is a subset of $V$, then we
denote by $\langle S \rangle_+$ the conical hull of $S$, i.e.\ the smallest cone
containing $S$.  A subset of $V$ is a \emph{polyhedral cone} if it
coincides with the conical hull of a finite set, or equivalently if it
is the intersection of finitely many closed half-spaces whose boundary
hyperplanes contain the origin.

A \emph{polyhedron} is the intersection of finitely many closed
half-spaces, so it is always convex and closed.  There is the obvious
notion of \emph{face} of a polyhedron. The $0$-dimensional faces are
called \emph{vertices} and the top-dimensional ones \emph{facets}.  A
compact polyhedron is called \emph{polytope}; equivalently, a polytope
is the convex hull of a finite set.  The convex hull of a subset
$S\subset V$ is denoted by $\conv{S}$.

Now fix a lattice $L$ and consider the real vector space
$L_\RR := L \otimes_\ZZ \RR$.  A cone in $L_\RR$ is called
\emph{rational} if it is the conical hull of finitely many points in
$L$.  A \emph{lattice polytope} in $L$ is a subset of $L_\RR$ which is
the convex hull of finitely many points in $L$.

\begin{dfn}
  \label{dfn:polyhedral_subdivision}
  A \emph{polyhedral subdivision} of a polytope $F$ is a finite
  collection $\cS$ of polytopes with the following properties: every
  face of an element of $\cS$ is an element of $\cS$, the intersection
  of two elements of $\cS$ is either empty or a common face, and the
  union of the elements in $\cS$ is $F$. The elements of $\cS$ are
  called \emph{cells} of the subdivision.  If the cells are simplices,
  then the subdivision is called a \emph{triangulation}.  If $F$ is a
  lattice polytope in a lattice $L$, then a \emph{lattice polyhedral
    subdivision} of $F$ is a polyhedral subdivision whose cells are
  lattice polytopes.  A polyhedral subdivision of a polytope $F$ is
  called \emph{regular} it its cells are the domains of linearity of a
  convex piecewise affine-linear continuous function
  $\phiv \colon F \to \RR$.
\end{dfn}

\subsection{The Cayley construction}

\begin{dfn}
    \label{dfn:cayley_polytope}
    Let $F$ be a lattice polytope in a lattice $L$ and let
    $m=(F = F_1 + \cdots + F_k)$ 
    a Minkowski decomposition. The
    \emph{Cayley polytope} of $m$ is the
    lattice polytope 
    \begin{equation*}
    \cayley{m} = \conv{F_1 + e_1, \dots, F_k + e_k}
    \end{equation*}
        in the lattice
    $L \oplus \ZZ^k$, where $e_1, \dots, e_k$ are the vectors of the standard basis of $\ZZ^k$. 
\end{dfn}

By construction, $\cayley{m}$ is contained in the affine hyperplane
$x_1 + \cdots + x_k = 1$ of $L \oplus \ZZ^k$, where $x_1, \dots, x_k$
are the standard coordinates of $\ZZ^k$.

\begin{rem}
 The Cayley polytope does not have interior lattice points; more precisely
        \begin{equation*}
        \cayley{m} \cap (L \oplus \ZZ^k ) = \bigcup_{i=1}^k (F_i \cap L) \times \{ e_i \}.
        \end{equation*}
The polytope $F$ is an affine linear slice of the Cayley
        polytope: indeed, the map
        \begin{equation} \label{eq:cayley_map}
\iota \colon  F \to \cayley{m} \cap \left( L_\RR \times \left\{ \left( \frac{1}{k},
       \dots, \frac{1}{k} \right)\right\} \right)
        \end{equation}
        defined by $\iota (v) =\frac{1}{k} (v + e_1 + \cdots + e_k)$ is
        bijective.
        Notice that this map does not respect the lattice structure. 
\end{rem}

\begin{dfn}
   Let $F$ be a lattice polytope in a lattice $L$ and $m=(F = F_1 + \cdots + F_k)$ 
  a Minkowski decomposition.

  A cell $\widetilde{C}\subset \cayley{m}$ is necessarily the
  convex hull
  \[
\widetilde{C} = [C_j\times\{e_j\}\mid j=1,\dots, k]
\]
 of cells $C_j \times \{e_j\} \subset F_j\times \{e_j\}$, and then $C=\iota^{-1}(\widetilde{C}) =\sum C_j\subset F$ is a
  cell.
  \begin{enumerate}[(a)]
  \item   A \emph{mixed cell} is a
  cell $C\subset F$ of this form, together with a choice, for all
  $j$, of a cell $C_j\subset F_j$ such that $C=\sum
  C_j$.\footnote{This choice is not unique, and this is why we need to
  build it into the definition.}  
\item A \emph{mixed subdivision} of $F$ is a subdivision of $F$
  consisting of the mixed cells induced by the cells of a lattice
  polyhedral subdivision of $\cayley{m}$. In this case we say that the
  subdivision is \emph{subordinated} to $m$. 
\item A lattice triangulation $\cT$ of $\cayley{m}$ is \emph{fine} if
  every lattice point of $\cayley{m}$ is a $0$-stratum of some simplex
  of $\cT$. A mixed subdivision of $F$ subordinated to $m$ is
  \emph{fine} if it is induced by a fine lattice triangulation $\cT$
  of $\cayley{m}$. In other words, a mixed subdivision subordinated to
  $m$ is fine if and only if it has no non-trivial refinement among
  mixed subdivisions subordinated to $m$.
\item A mixed subdivision of $F$ subordinated to $m$ is \emph{coherent} if
it is induced by a regular lattice polyhedral subdivision
triangulation of $\cayley{m}$.
\end{enumerate}
\end{dfn}

\begin{rem}
  It obvious that a coherent mixed subdivision is itself regular.
\end{rem}

The following statement is a synthesis of (for example)~\cite[Theorem~1.4]{MR2134766},
\cite[Theorem~1.11]{Joswig},~\cite[Corollary~4.10]{Joswig},
\cite[\S~3, \S~4]{MR3751882}.

\begin{thm}
	\label{thm:cayley_trick}
	Let $F$ be a lattice polygon in a rank-$2$ lattice $N$, and $m
        = (F= F_1 + \cdots + F_k)$ a Minkowski decomposition of $F$.

        The
	following three sets are in (compatible) $1$-to-$1$
        correspondence:
	\begin{enumerate}[(i)]
		\item The set of coherent mixed subdivisions of $F$ subordinated to $m$;
		\item The set of regular subdivisions of the polytope $\cayley{m}$;
		\item The set of arrangements of tropical curves
		$\Trop (f_1)$, \dots, $\Trop (f_k)$, where $f_i$ is a tropical
		polynomial with Newton polytope $F_i$.
                    \end{enumerate}

        In this correspondence, fine mixed subdivision of $F$
        correspond to fine triangulations and to generic tropical arrangements.
      \end{thm}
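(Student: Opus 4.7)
The plan is to prove the two correspondences (i)$\leftrightarrow$(ii) and (ii)$\leftrightarrow$(iii) separately, and then observe that fineness/genericity is preserved by both.

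For (i)$\leftrightarrow$(ii), I would use the slice map $\iota\colon F\to \cayley{m}\cap \{x_1=\cdots=x_k=1/k\}$ from~\eqref{eq:cayley_map}. Given a regular lattice polyhedral subdivision $\widetilde{\cS}$ of $\cayley{m}$ with convex piecewise-linear height function $\widetilde{\phiv}\colon \cayley{m}\to \RR$, the key observation is that each cell $\widetilde{C}\in \widetilde{\cS}$ that meets the interior of the slicing hyperplane is the convex hull of cells $C_j\times \{e_j\}\subset F_j\times\{e_j\}$ (one $C_j$ may be empty for some $j$, but the non-empty ones are uniquely determined by $\widetilde{C}$), and then $\iota^{-1}(\widetilde{C}\cap\text{slice})=\sum C_j$ is a mixed cell of $F$. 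Conversely, a mixed subdivision of $F$ subordinate to $m$ produces a subdivision of $\cayley{m}$ by taking convex hulls; this is inverse to the previous construction. To match coherence on both sides, I would show that the affine function $\widetilde{\phiv}_j:=\widetilde{\phiv}|_{F_j+e_j}$ gives a piecewise-linear height $\phiv=\sum \iota_j^{-1}\widetilde{\phiv}_j$ on $F$ whose domains of linearity are exactly the mixed cells; convexity transfers across by a short direct verification.

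For (ii)$\leftrightarrow$(iii), I would use tropical duality. A tropical polynomial $f_i=\bigoplus_{v\in F_i\cap N} c_{i,v}\odot x^v$ has a Newton polytope $F_i$ and a regular subdivision of $F_i$ whose cells are the projections of the upper faces of the extended Newton polytope $\widehat{F}_i=\conv{(v,-c_{i,v})\mid v\in F_i\cap N}$; the tropical hypersurface $\Trop(f_i)$ is the $(n-1)$-skeleton of the dual polyhedral complex. The package $(c_{i,v})_{i,v}$ is exactly the data of a height function $\widetilde{\phiv}$ on the lattice points of $\cayley{m}$, where $\widetilde{\phiv}(v+e_i)=-c_{i,v}$. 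The induced regular subdivision of $\cayley{m}$ is dual, in the obvious polyhedral sense, to the common refinement of the arrangement $\bigcup_j\Trop(f_j)$ in $N_\RR$ extended in the direction of $(e_1,\ldots,e_k)$; I would make this precise via the projection $L_\RR\oplus\RR^k\to L_\RR$ applied to the Cayley subdivision, which recovers the arrangement of tropical curves.

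The last point is that fine triangulations of $\cayley{m}$ use every lattice point, and the corresponding mixed subdivision consists of cells $\sum C_j$ where each $C_j$ is a vertex, edge, or triangle whose dimensions add to $\dim F$; this matches the generic tropical arrangement condition that the tropical curves $\Gamma_j=\Trop(f_j)$ meet transversely (only trivalent and $4$-valent vertices appear), as noted in the remark after Definition~\ref{dfn:dual_tropical_arrangement}. I expect the main obstacle to be organising the dictionary between the height function on $\cayley{m}$, the coefficients of the $f_i$, and the combinatorics of mixed cells crisply enough to verify simultaneously that regularity, fineness, and the partition of each edge of $F$ correspond correctly; however, since the literature cited (\cite{MR2134766, Joswig, MR3751882}) already contains all the ingredients, the work is one of synthesis rather than genuine novelty.
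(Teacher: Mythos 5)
Your proposal follows essentially the same route as the paper: both cite \cite{book_triangulations, MR2134766} for the (almost tautological) Cayley trick identifying mixed subdivisions of $F$ with subdivisions of $\cayley{m}$, and both invoke the duality between regular subdivisions of Newton polytopes and tropical hypersurfaces (\cite{Joswig, MR3751882}) to bring in the arrangements. The paper chains (i)$\leftrightarrow$(ii) and (i)$\leftrightarrow$(iii), while you chain (i)$\leftrightarrow$(ii) and (ii)$\leftrightarrow$(iii); the difference is immaterial since the correspondences are compatible by design.

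One small imprecision you should tidy up: in your step (ii)$\leftrightarrow$(iii) you write that the Cayley subdivision is ``dual, in the obvious polyhedral sense, to the common refinement of the arrangement $\bigcup_j\Trop(f_j)$ in $N_\RR$ extended in the direction of $(e_1,\ldots,e_k)$.'' This phrasing mixes up the ambient spaces: the tropical curves live in the dual space $M_\RR=\Hom(N,\RR)$, not in $N_\RR$, and the clean statement of duality is between the \emph{mixed} subdivision of $F\subset N_\RR$ and the arrangement in $M_\RR$, with the subdivision of $\cayley{m}$ being the lift of that mixed subdivision under the slice map $\iota$. Routing the duality through the mixed subdivision (as the paper does, citing \cite{MR3751882}) is cleaner than trying to state a direct duality between $\cayley{m}$ and the arrangement. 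Apart from this cosmetic issue, the proposal is sound, correctly transfers coherence via the infimal-convolution height $\phiv=\sum\iota_j^{-1}\widetilde{\phiv}_j$, and handles fineness and genericity as the paper does.
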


      \begin{proof} The correspondence between mixed subdivisions of
        $F$ and subdivisions of $\cayley{m}$ is an elementary (almost
        tautologous), well-known and 
        well-explained fact, see for example
        in~\cite[\S~9.2]{book_triangulations}
        and~\cite{MR2134766}.

        We briefly sketch the correspondence with tropical
        arrangements (it also is an elementary fact).  
        Consider the tropical semiring $\TT$, that is, $\RR\cup
        \{\infty\}$ with operations $\min, +$:
        \[
x \oplus y = \min \{x,y\} \quad \text{and} \quad x \odot y = x+y
\]
A tropical polynomial
\[
  f(x) = \bigoplus_{\nu\in N} c_\nu \odot x^{\odot \nu} \in \TT[N]
\]
(where almost all $c_\nu =\infty$) evaluates at $a\in M_\RR=\Hom (N,\RR)$ as
\[
f(a) = \min \{c_\nu +\langle a,\nu\rangle \mid \nu \in N\}
\]
The tropical hypersurface defined by $f$ is the subset
$\Trop (f)\subset M_\RR$ consisting of those $a\in M_\RR$ where the
minimum is attained at least twice.

It is almost obvious that
\[
  \Trop (f\odot g)=\Trop (f) \cup \Trop (g) \quad \text{and}
  \Newt (f\odot g)=\Newt (f) + \Newt (g)
\]

A tropical polynomial $f$ induces a regular subdivision of its Newton
polyhedron $F=\Newt (f)$ defined as follows.
The \emph{extended Newton polyhedron} $\widetilde{F} \subset N_\RR \oplus
\RR$ --- defined to be the convex hull of the $\{(\nu,c_\nu+p) \mid
p\geq 0, \nu \in N\}$ --- is the supergraph of a PL function $\varphi\colon F \to
\RR$: the regular subdivision is the one whose maximal cells are the domains of
linearity of $\varphi$. We
denote this subdivision by $\mathcal{S} (f)$. It is an elementary
fact~\cite[Theorem~1.1]{Joswig} that $\mathcal{S} (f)$ is dual to the
tropical hypersurface $\Trop (f)$.

The key fact is this: the union of two tropical hypersurfaces is dual
to the regular mixed subdivision derived from the regular subdivisions
dual to the two hypersurfaces, see~\cite[\S~3, \S~4]{MR3751882}.
\end{proof}

\section{Deformations of pairs}
\label{sec:appendix}

In this section we develop the general theory of deformations of pairs
$(X,D)$ of a scheme $X$ and effective Cartier divisor $D$ on
$X$. This is basically a long exercise in deformation theory. The
results must be well-known to many algebraic geometers but,
surprisingly given the vast amounts of existing literature on
deformation theory, we could not find them written up anywhere. The
first \S~\ref{dfn:Q_L} treats deformations of pairs $(X,L)$ of a
scheme and line bundle; in \S~\ref{sec:defo(X,B)} we discuss
deformations of pairs $(X,D)$ of a scheme and Cartier divisor.

\smallskip

We work over a fixed base field $k$.  If $S$
is an algebraic stack of finite type over $k$, $\LL_S$ denotes the
cotangent complex of $S$ over $k$, $\Omega_S$ denotes the cotangent
sheaf of $S$ over $k$, i.e., $\Omega_S = \cH^0(\LL_S)$, and $T_S$
denotes the tangent sheaf of $S$ over $k$, i.e.,
$T_S = \cHom(\Omega_S, \cO_S)$.  Similarly, we use the notation
$\LL_f$, $\Omega_f$, $T_f$ for a morphism $f$ between two algebraic
stacks of finite type over $k$.

\subsection{Deformations of a scheme together with a line bundle}

Here we fix a separated scheme $X$ of finite type over the field $k$ and a line bundle $L$ on $X$.

\begin{nota} \label{dfn:Q_L} Write
  $Q_L = P^1(L) \otimes_{\cO_X} L^\vee$, where $P^1(L)$ is the first
  sheaf of principal parts of $L$.
\end{nota}

We refer the reader to \cite[Definition~16.3.1]{ega4-4} for the definition of the sheaves of principal parts.
There is a natural short exact sequence of coherent sheaves on $X$
\begin{equation} \label{eq:short_exact_sequence_atiyah_class}
(0) \to \Omega_X \to Q_L \to \cO_X \to (0)
\end{equation}
that is associated to the class in
$\Ext^1(\cO_X,\Omega_X) = \rH^1(X, \Omega_X)$ that is the image of
the class of $L$ in $\rH^1(X, \cO_X^*)$ via the homomorphism of
abelian groups $\rH^1(X, \cO_X^*) \to \rH^1(X, \Omega_X)$
induced on first cohomology by the homomorphism of sheaves of abelian
groups $\mathrm{dlog} \colon \cO_X^* \to \Omega_X$ given by
$g \mapsto g^{-1} \rmd g$.  We refer the reader to
\cite[\S3]{altmann_christophersen} and to
\cite{wang_deformations_XL}. The short exact
sequence~\eqref{eq:short_exact_sequence_atiyah_class} is called the
\emph{Atiyah class} of $L$, see \cite[IV.2.3]{illusie_1}.

\begin{rem} \label{rem:explicit_description_sheaf_Q_L}
It is possible to give an explicit description of the sheaf $Q_L$.
Fix an affine open cover $\cU = \{ U_i \}_i$ that trivialises the line bundle $L$.
Set $U_{ij} = U_i \cap U_j$.
Let $g_{ij} \in \cO_X(U_{ij})^*$ be transition functions of $L$ over $\cU$, i.e.\ the diagram
\begin{equation*}
	\xymatrix{
	& L \vert_{U_{ij}} \\
	\cO_{U_{ij}} \ar[ru]^{ s_i \vert_{U_{ij}}   } & & \cO_{U_{ij}} \ar[lu]_{  s_j \vert_{U_{ij}}  } \ar[ll]^{ \cdot g_{ij}   }
	}
\end{equation*}
commutes, where $s_i$ (resp.\ $s_j$) is a nowhere vanishing section of
$L$ over $U_i$ (resp. $U_j$). In other words,
$s_j \vert_{U_{ij}} = g_{ij} s_i \vert_{U_{ij}}$.  Then the line
bundle is associated to the cohomology class
$[\{ g_{ij} \} ] \in \check{\rH}^1(\cU, \cO_X^*) \subseteq
\rH^1(X,\cO_X^*)$.

The sheaf $Q_L$ is obtained by choosing isomorphisms
$\phiv_i \colon Q_L \vert_{U_i} \to \Omega_{U_i} \oplus \cO_{U_i}$ and
on the pairwise intersections we have the following commutative
diagram of isomorphisms.
\[
	\xymatrix{
	& Q_L \vert_{U_{ij}} \ar[ld]_{\phiv_i \vert_{U_{ij}} } \ar[rd]^{\phiv_j  \vert_{U_{ij}} } \\
	\Omega_{U_{ij}} \oplus \cO_{U_{ij}}  \ar[rr]_{ \begin{pmatrix}
			\mathrm{id}_{\Omega_{U_{ij}}} &  \frac{\rmd g_{ij}}{g_{ij}} \\
			0 & \mathrm{id}_{\cO_{U_{ij}}}
	\end{pmatrix}  } & & \Omega_{U_{ij}} \oplus \cO_{U_{ij}}
}
\]

\end{rem}

\begin{rem}
  The sheaf $Q_{L^\vee}$ associated to the dual line bundle $L^\vee$
  is isomorphic to $Q_L$, but the short exact sequence
  \eqref{eq:short_exact_sequence_atiyah_class} for $L^\vee$ differs
  from the one for $L$ by a minus sign in one of the two maps.
\end{rem}

\begin{rem}
	If $X$ is smooth, then the dual sequence of \eqref{eq:short_exact_sequence_atiyah_class} is
	\[
	(0) \to \cO_X \to \cD^1(L) \to T_X \to (0)
	\]
	where $\cD^1(L)$ is the sheaf of first order differential
        operators of $L$ and the homomorphism $\cD^1(L) \to T_X$ is
        the symbol map.
\end{rem}

We next see that the sheaf $Q_L$ controls the deformations of the pair $(X,L)$.
We also consider the forgetful map $\Deff{X}{L} \to \Def{X}$.

\begin{pro} \label{pro:deformations_X_L} Let $X$ be a scheme of finite
  type over the field $k$ and let $L$ be an invertible sheaf on
  $X$. Then:
	\begin{enumerate}[(i)]
		\item There exists an exact sequence
		\begin{equation} \label{eq:exact_sequence_tangent_deformation_X_L}
				\rH^1(X, \cO_X) \to \rT \Deff{X}{L} \to \rT \Def{X} \to \rH^2(X, \cO_X)
		\end{equation}
		\item If $\rH^2(X,\cO_X) = (0)$, then the forgetful
                  map $\Deff{X}{L} \to \Def{X}$ is smooth.
		\item If $X$ is reduced and $k$ is perfect, then $\Ext^1(Q_L, \cO_X)$ is the tangent space of $\Deff{X}{L}$ and the exact sequence~\eqref{eq:exact_sequence_tangent_deformation_X_L} is obtained by taking the cohomology long exact sequence of \eqref{eq:short_exact_sequence_atiyah_class}.
		\item If $X$ is normal and $k$ is perfect, then $\Ext^2(Q_L, \cO_X)$ is an obstruction space of $\Deff{X}{L}$.
	\end{enumerate}	
\end{pro}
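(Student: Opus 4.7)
The plan is to combine a direct analysis of the forgetful morphism $\catDeff{X}{L} \to \catDef{X}$ for parts (i)--(ii), with a cohomological identification via the Atiyah sequence~\eqref{eq:short_exact_sequence_atiyah_class} for parts (iii)--(iv).

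For parts (i) and (ii): Given a small extension $A^\prime \twoheadrightarrow A$ of Artinian local $k$-algebras with square-zero kernel $I$, extending a deformation of $(X, L)$ from $A$ to $A^\prime$ factors into two steps: first extend the underlying scheme $X_A$ to $X_{A^\prime}$, then lift the line bundle $L_A$. The second step is governed by the exponential short exact sequence of sheaves
\[
(0) \to I \otimes \cO_X \to \cO_{X_{A^\prime}}^{\ast} \to \cO_{X_A}^{\ast} \to (0)
\]
(with $a \mapsto 1 + a$ on the left), whose associated cohomology long exact sequence exhibits $\rH^1(X, \cO_X) \otimes I$ as the torsor acting on lifts of a fixed $L_A$ to $X_{A^\prime}$, and produces a lifting obstruction valued in $\rH^2(X, \cO_X) \otimes I$. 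Specializing to $A = k$, $A^\prime = k[\epsilon]$ yields the exact sequence~\eqref{eq:exact_sequence_tangent_deformation_X_L}; running the same argument for arbitrary small extensions and using that every line-bundle lifting obstruction vanishes when $\rH^2(X, \cO_X) = 0$ gives smoothness of $\catDeff{X}{L} \to \catDef{X}$, proving (ii).

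For parts (iii) and (iv): Under the hypothesis that $X$ is reduced with $k$ perfect, the standard comparison $\Omega_X \to \LL_X$ induces an isomorphism $\Ext^1(\Omega_X, \cO_X) \cong \rT\Def{X}$ (and, under normality, $\Ext^2(\Omega_X, \cO_X)$ is an obstruction space for $\Def{X}$). To identify $\Ext^i(Q_L, \cO_X)$ with deformation data for the pair, we use the local description of $Q_L$ in Remark~\ref{rem:explicit_description_sheaf_Q_L}: on a trivializing affine cover $\cU = \{U_i\}$, a first-order deformation of $(X, L)$ is encoded by a \v{C}ech $1$-cocycle consisting of derivations $\theta_{ij} \in \Gamma(U_{ij}, T_X)$ deforming $X$, together with compatible sections $\delta_{ij} \in \Gamma(U_{ij}, \cO_X)$ deforming the transition functions $g_{ij}$ of $L$. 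The compatibility condition matches exactly the relations defining $\cHom(Q_L, \cO_X)$ as the extension of $T_X = \cHom(\Omega_X, \cO_X)$ by $\cO_X = \cHom(\cO_X, \cO_X)$ dual to~\eqref{eq:short_exact_sequence_atiyah_class}, so such data modulo coboundaries computes $\Ext^1(Q_L, \cO_X)$ via the local-to-global spectral sequence; a parallel analysis identifies $\Ext^2(Q_L, \cO_X)$ as an obstruction space.

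The main obstacle is the compatibility check: once both sides of (i) are identified with Ext groups, one must verify that~\eqref{eq:exact_sequence_tangent_deformation_X_L} is the sequence obtained by applying $\Ext^{\bullet}(-, \cO_X)$ to~\eqref{eq:short_exact_sequence_atiyah_class}. Concretely, the connecting homomorphism $\Ext^1(\Omega_X, \cO_X) \to \Ext^2(\cO_X, \cO_X) = \rH^2(X, \cO_X)$ produced by the Ext long exact sequence must agree, up to sign, with the obstruction-to-lifting-$L$ map extracted from the exponential sequence above. Both maps are given by cup product with the Atiyah class $\mathrm{at}(L) \in \rH^1(X, \Omega_X)$, so the comparison is a classical formal statement about the Atiyah class and falls out of unwinding the constructions carefully; this is the sort of naturality verification that is conceptually straightforward but notationally delicate.
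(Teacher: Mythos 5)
Your approach to (i) and (ii) takes a genuinely different and more elementary route than the paper's. You analyze the forgetful map $\catDeff{X}{L} \to \catDef{X}$ directly via the exponential sequence $(0) \to I \otimes \cO_X \to \cO_{X_{A'}}^* \to \cO_{X_A}^* \to (0)$ on lifts of line bundles, obtaining the exact sequence and the smoothness in (ii) from a torsor/obstruction argument. This is correct and classical. The paper instead works with the classifying morphism $\epsi_L \colon X \to \rB\Gm$ and the distinguished triangle $\LL_X \to \LL_{\epsi_L} \to \cO_X \to \LL_X[1]$ in the derived category, reading off $\rT\Deff{X}{L}=\Ext^1(\LL_{\epsi_L},\cO_X)$ and the long exact sequence in one go. Your route is more hands-on; the paper's is more uniform and sets up (iii)--(iv) automatically.

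Parts (iii) and (iv) have a genuine gap. Your \v{C}ech description of first-order deformations of $(X,L)$ --- a $1$-cocycle of pairs $(\theta_{ij},\delta_{ij})$ with $\theta_{ij}\in\Gamma(U_{ij},T_X)$ deforming the gluing of $X$ and $\delta_{ij}$ deforming the transition functions $g_{ij}$ --- only parametrizes the \emph{locally trivial} deformations, i.e.\ the term $E_2^{1,0}=\rH^1(X,\cHom(Q_L,\cO_X))$ of the local-to-global spectral sequence. Since the hypotheses are merely that $X$ is reduced (resp.\ normal) and $k$ is perfect, $X$ is allowed to be singular, and first-order deformations also involve the contribution $E_2^{0,1}=\rH^0(X,\cExt^1(Q_L,\cO_X))$ coming from genuine deformations of the local singularity type. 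Your argument does not account for this piece, so it does not identify all of $\Ext^1(Q_L,\cO_X)$ with $\rT\Deff{X}{L}$; for the same reason, the ``parallel analysis'' claimed for $\Ext^2(Q_L,\cO_X)$ as an obstruction space is not established. The paper avoids this by first identifying $\rT\Deff{X}{L}$ and the obstruction space with $\Ext^1(\LL_{\epsi_L},\cO_X)$ and $\Ext^2(\LL_{\epsi_L},\cO_X)$, which are the correct objects controlling \emph{all} deformations, and then using that the negative-degree cohomology of $\LL_{\epsi_L}$ is supported in the non-smooth locus together with the depth estimate of Lemma~\ref{lem:homological_algebra} to replace $\LL_{\epsi_L}$ by $Q_L=\cH^0(\LL_{\epsi_L})$ in degrees $\leq 1$ (under $R_0,S_1$) resp.\ $\leq 2$ (under $R_1,S_2$). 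If you want to keep the \v{C}ech framework, you would need to supplement it by a separate analysis of the $\cExt^1$ term and a Mayer--Vietoris/spectral sequence argument to glue the local and locally-trivial contributions, which is essentially reproving the lemma the paper relies on.
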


Before giving the proof, we prove a homological algebra lemma and we make a remark on the cotangent complex of a smooth quotient stack.

\begin{lem} \label{lem:homological_algebra}
	Let $n \geq 0$ be an integer.
	Let $X$ be a noetherian scheme and let $Z \subseteq X$ be a closed subset.
	Let $A$ be a complex of $\cO_X$-modules with coherent cohomology such that:
	\begin{enumerate}
		\item for all $i>0$ $\cH^i(A) = (0)$,
		\item for all $i<0$ $\cH^i(A)$ is  set-theoretically supported on a subset of $Z$.
	\end{enumerate}
	If for every scheme-theoretic point $x \in Z$ we have
        $\depth \cO_{X,x} \geq n$, then for all $0 \leq j \leq n$ the map
	\begin{gather*}
	\Ext^j(\cH^0(A), \cO_X) \to \Ext^j(A, \cO_X)
	\end{gather*}
	induced by the truncation map $A \to \tau_{\geq 0} A$ and the
        quasi-isomorphism $\cH^0(A) \to \tau_{\geq 0} A$, is an isomorphism.
\end{lem}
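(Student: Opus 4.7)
The approach is to reduce the statement to a depth-theoretic $\Ext$-vanishing for coherent sheaves set-theoretically supported on $Z$, via successive truncations of the complex $A$. First I would form the canonical distinguished triangle $\tau_{<0} A \to A \to \tau_{\geq 0} A \xrightarrow{+1}$ in $D^b_{\mathrm{coh}}(\cO_X)$. Since $\cH^i(A) = 0$ for $i > 0$, the morphism $\cH^0(A) \to \tau_{\geq 0} A$ is a quasi-isomorphism. Applying $R\Hom(-,\cO_X)$ to the triangle and taking cohomology yields the long exact sequence
\[
\cdots \to \Ext^{j-1}(\tau_{<0} A, \cO_X) \to \Ext^j(\cH^0(A), \cO_X) \to \Ext^j(A, \cO_X) \to \Ext^j(\tau_{<0} A, \cO_X) \to \cdots
\]
and it is enough to show $\Ext^j(\tau_{<0} A, \cO_X) = 0$ for all $j \leq n$.

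Next I would filter $\tau_{<0} A$ by the truncation triangles $\tau_{<-k} A \to \tau_{<-k+1} A \to \cH^{-k}(A)[k] \xrightarrow{+1}$ and use their associated long exact $\Ext$ sequences to reduce, by induction on $k \geq 1$, to showing $\Ext^{j-k}(M, \cO_X) = \Ext^{j}(M[k], \cO_X) = 0$ for $j \leq n$, where $M = \cH^{-k}(A)$ is a coherent sheaf set-theoretically supported on $Z$. Since $k \geq 1$, the relevant index $j-k$ ranges only over $(-\infty, n-1]$, so the whole problem reduces to the following key claim: for every coherent sheaf $M$ set-theoretically supported on $Z$, $\Ext^i(M, \cO_X) = 0$ for all $i < n$. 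The local-to-global spectral sequence
\[
E_2^{p,q} = \rH^p(X, \cExt^q(M, \cO_X)) \Rightarrow \Ext^{p+q}(M, \cO_X)
\]
then reduces this to the sheaf-theoretic vanishing $\cExt^q(M, \cO_X) = 0$ for $q < n$; this is trivial off $Z$, and at $x \in Z$ it follows from Rees's depth-sensitivity formula ($\Ext^q_R(N, R) = 0$ for $q < \depth_{\mathrm{ann}(N)}(R)$) applied to $M_x$, whose annihilator contains a power of $\cI_{Z,x}$.

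The main obstacle I anticipate is the last step, which requires control over $\depth_{\cI_{Z,x}}(\cO_{X,x})$ whereas the hypothesis is phrased in terms of $\depth \cO_{X,x}$ (depth at the maximal ideal $\frakm_x$). At a generic point $\eta$ of an irreducible component of $Z$ the two notions coincide, since $\sqrt{\cI_{Z,\eta}} = \frakm_\eta$, so the hypothesis directly yields $\depth_{\cI_{Z,\eta}}(\cO_{X,\eta}) \geq n$; by Grothendieck's theorem this is equivalent to $\cH^i_Z(\cO_X) = 0$ for $i < n$, which in turn forces $\depth_{\cI_{Z,x}}(\cO_{X,x}) \geq n$ at every $x \in Z$. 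Running the hypothesis over all scheme-theoretic points of $Z$, as the lemma does, is precisely what ensures the required depth bound at the specialized points where $M$ can be supported.
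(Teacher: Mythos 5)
Your reduction --- truncation triangle to get $\Ext^j(\tau_{<0}A, \cO_X) = 0$ for $j \leq n$, then iterated truncations reducing to $\Ext^i(M, \cO_X) = 0$ for $i < n$ with $M = \cH^{-k}(A)$ coherent and supported on $Z$, then local-to-global --- is in substance the same d\'evissage the paper performs via the double-complex spectral sequence of $K^{i,j}=\Hom(A^{-i},I^j)$. The sheaf-level vanishing $\cExt^q(M,\cO_X)=0$ for $q<n$ that you arrive at is precisely the statement the paper imports wholesale as \cite[Proposition~3.7]{hartshorne_local_cohomology}; you instead try to reprove it via Rees's depth-sensitivity formula, which is where the error sits.

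The last paragraph contains a genuine mistake. You claim that $\depth_{\cI_{Z,\eta}}(\cO_{X,\eta}) \geq n$ at a generic point $\eta$ of a component of $Z$ is, ``by Grothendieck's theorem, equivalent to $\cH^i_Z(\cO_X)=0$ for $i<n$.'' It is not. Grothendieck's depth/local-cohomology theorem says $\cH^i_Z(\cF)=0$ for $i<n$ iff $\depth \cF_x \geq n$ for \emph{all} $x\in Z$; the condition at generic points alone is strictly weaker. The sheaves $\cH^i_Z(\cO_X)$ are not coherent, so vanishing of the stalk at $\eta$ gives no control at specializations of $\eta$, and the intermediate inference ``$\cH^i_Z(\cO_X)_\eta = 0$, hence $\depth_{\cI_{Z,x}}(\cO_{X,x}) \geq n$ for every $x \in Z$'' simply does not hold. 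The reasoning also goes in the wrong direction for a naive pointwise comparison: since $\cI_{Z,x} \subseteq \frakm_x$, one has $\depth_{\cI_{Z,x}}(\cO_{X,x}) \leq \depth_{\frakm_x}(\cO_{X,x})$, so the lemma's hypothesis does not directly bound the $\cI_Z$-depth at a non-generic $x$. The correct chain is: the hypothesis $\depth\cO_{X,x}\geq n$ for all $x\in Z$ is (by Grothendieck's theorem, applied to all of $Z$) equivalent to $\cH^i_Z(\cO_X)=0$ for $i<n$; localizing this sheaf vanishing at an arbitrary $x\in Z$ yields $\depth_{\cI_{Z,x}}(\cO_{X,x})\geq n$; then Rees's formula gives $\cExt^q(M,\cO_X)_x=0$ for $q<n$. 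Equivalently, cite \cite[Proposition~3.7]{hartshorne_local_cohomology} directly, as the paper does. Your closing sentence has the right instinct, but the two sentences preceding it must be replaced.
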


\begin{rem}
	In Lemma~\ref{lem:homological_algebra} the assumption on the depth of the stalks of the sheaf $\cO_X$ at the points of $Z$ implies that every irreducible component of $Z$ has codimension $\geq n$ in $X$.
	Conversely, if $X$ is Cohen--Macaulay and every irreducible component of $Z$ has codimension $\geq n$ in $X$ then the assumption on the depth of $\cO_X$ at points of $Z$ is automatically satisfied.
\end{rem}

\begin{proof}[Proof of Lemma~\ref{lem:homological_algebra}]
  By \cite[Proposition~3.7]{hartshorne_local_cohomology} we get that
  for all $i \neq 0$ and $q<n$ $\cExt^q(\cH^i(A), \cO_X) = (0)$.  By
  the spectral sequence
	\[
	E_2^{p,q} = \rH^p (\cExt^q(\cH^i(A), \cO_X)) \Rightarrow \Ext^{p+q}(\cH^i(A), \cO_X)
      \]
      we get the vanishing
	\begin{equation} \label{eq:vanishing_exts} \text{ for all } i
          \neq 0 \text{ and } j<n \quad \Ext^j(\cH^i(A), \cO_X) = (0)
	\end{equation}
	Let $I$ be an injective resolution of $\cO_X$.  Consider the
        double complex $K$ given by $K^{i,j} = \Hom(A^{-i}, I^j)$ that
        computes $\Ext^\bullet(A,\cO_X)$.  The double complex $K$
        gives two spectral sequences; one of these has first page
        equal to $' \! E_1^{i,j} = \Hom(\cH^{-i}(A), I^j)$, and second
        page equal to $' \! E_2^{i,j} = \Ext^j(\cH^{-i}(A), \cO_X)$.
        The vanishing in \eqref{eq:vanishing_exts} implies that, for
        all $0 \leq j \leq n$,
        $' \! E_2^{0,j} = \Ext^j(\cH^0(A), \cO_X)$ is the unique
        non-zero term in $' \! E_2$ with total degree $j$. Moreover,
        for all $0 \leq j \leq n$, the differentials in $' \! E_2$
        starting from and arriving in $' \! E_2^{0,j}$ are zero. This
        implies that, for all $0 \leq j \leq n$, $' \! E_2^{0,j} $ is
        isomorphic to the limit $\Ext^j(A,\cO_X)$.
\end{proof}

\begin{rem} \label{rem:cotangent_quotient_stack} Fix a field $k$ and
  an algebraic group $G$ over $k$ acting on a smooth scheme $Y$ over
  $k$.  Then the cotangent complex $\LL_{[Y/G]}$ of the quotient stack
  $[Y/G]$ is quasi-isomorphic to the following complex of
  $G$-equivariant sheaves on $Y$ concentrated in degrees $0,1$:
	\[
	\Omega_Y \to \frakg^\vee \otimes_k \cO_Y 
	\]
	where $\frakg^\vee$ is the dual of the Lie algebra $\frakg$ of
        $G$ and the differential is dual to the adjoint action on
        tangent vectors.
\end{rem}

\begin{proof}[Proof of Proposition~\ref{pro:deformations_X_L}]
We denote by $\rB \Gm$ the quotient stack $[\Spec k \, / \, \Gm]$ that classifies line bundles, i.e.\ $\Gm$-torsors.
Let $\epsi_L \colon X \to \rB \Gm$ be the classifying morphism of $L$.
The morphisms
\[
X \overset{\epsi_L}\longrightarrow \rB \Gm \to \Spec k
\]
induce a distinguished triangle in the derived category of $X$:
\begin{equation*}
\rL \epsi_L^* \LL_{\rB \Gm} \to \LL_X \to \LL_{\epsi_L} \to \rL \epsi_L^* \LL_{\rB \Gm}[1]
\end{equation*}
By Remark~\ref{rem:cotangent_quotient_stack} we have $\LL_{\rB \Gm} = \cO_{\rB \Gm}[-1]$. Therefore $\rL \epsi_L^* \LL_{\rB \Gm} = \cO_X[-1]$. By shifting we obtain a distinguished triangle
\begin{equation}
  \label{eq:triangle_epsi_L}
\LL_X \to \LL_{\epsi_L} \to \cO_X \to \LL_X[1]
\end{equation}
This is exactly the Atiyah class of the line bundle $L$, see~\cite[IV.2.3]{illusie_1}, so
\begin{equation*}
Q_L = \cH^0(\LL_{\epsi_L})
\end{equation*}
and
\begin{equation}
  \label{eq:cohomology_sheaves_of_L_epsi_L}
\text{for all } i \neq 0\qquad \cH^i(\LL_X) \simeq \cH^i(\LL_{\epsi_L}) 
\end{equation}

Giving an infinitesimal deformation of the scheme $X$ together with the line bundle $L$ is equivalent to giving an infinitesimal deformation of the morphism $\epsi_L$ which is trivial on the target.
Therefore infinitesimal automorphisms, tangent space and obstructions of $\Deff{X}{L}$ are controlled by the cotangent complex $\LL_{\epsi_L}$ of the morphism $\epsi_L$. The distinguished triangle~\eqref{eq:triangle_epsi_L} induces the exact sequence
\begin{align*}
  (0) &\to\rH^0(\cO_X)  \to \Ext^0(\LL_{\epsi_L}, \cO_X) \to \Ext^0(\LL_X, \cO_X) \to \\
    &\to \rH^1(\cO_X)  \to  \Ext^1(\LL_{\epsi_L}, \cO_X) \to \Ext^1(\LL_X, \cO_X) \to \\
    &\to \rH^2(\cO_X)  \to  \Ext^2(\LL_{\epsi_L}, \cO_X) \to \Ext^2(\LL_X, \cO_X)
\end{align*}
where $\Ext^1(\LL_{\epsi_L}, \cO_X)$ is the tangent space of $\Deff{X}{L}$ and $\Ext^2(\LL_{\epsi_L}, \cO_X)$ is an obstruction space for $\Deff{X}{L}$. This immediately implies the well known fact that $\rH^2(\cO_X)$ is an obstruction space for the forgetful map
$
\Deff{X}{L} \to \Def{X}
$; this proves (i) and (ii).

Now we need to prove (iii) and (iv). Assume that $k$ is perfect.
Let $Z$ be the complement of the regular locus of $X$. Since $k$ is perfect, $X \setminus Z$ is smooth over $k$ and hence the negative degree cohomology sheaves of the cotangent complex $\LL_X$ are supported on subsets of $Z$. By \eqref{eq:cohomology_sheaves_of_L_epsi_L} also the negative degree cohomology sheaves of $\LL_{\epsi_L}$ are supported on subsets of $Z$.

If $X$ is reduced, then $X$ is $R_0$ (i.e.\ $\codim(Z,X) \geq 1$) and
$S_1$, hence for all $x\in X$
$\depth \cO_{X,x} \geq \min \{ 1, \dim \cO_{X,x} \}$. We apply
Lemma~\ref{lem:homological_algebra} to $\LL_{\epsi_L}$ and we obtain
an isomorphism
$\Ext^1(\LL_{\epsi_L}, \cO_X) \cong \Ext^1(Q_L, \cO_X)$. This proves
(iii).  Part~(iv) is proved in the same way, because if $X$
is normal then $X$ is $R_1$ and $S_2$.
\end{proof}

\subsection{Deformations of a scheme together with an effective Cartier divisor}
\label{sec:defo(X,B)}

Here we fix a separated scheme $X$ of finite type over the field $k$ and an effective Cartier divisor $i \colon D \into X$.

\begin{lem} \label{lem:homomorphism_from_OX-D_to_Q}	There is a natural injective homomorphism of $\cO_X$-modules
	\[
	\cO_X(-D) \into Q_{\cO_X(D)}
	\]
where $Q_{\cO_X(D)}$ is the coherent sheaf on $X$ associated to the line bundle $\cO_X(D)$ in Definition~\ref{dfn:Q_L}.
\end{lem}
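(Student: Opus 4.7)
The plan is to realise the desired homomorphism as the one adjoint to a distinguished global section of $P^1(\cO_X(D))$, obtained by applying the universal first-order differential operator to the tautological section of $\cO_X(D)$.

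By definition $Q_{\cO_X(D)} = P^1(\cO_X(D)) \otimes_{\cO_X} \cO_X(-D)$, so giving an $\cO_X$-linear map $\cO_X(-D) \to Q_{\cO_X(D)}$ is equivalent (upon tensoring with $\cO_X(D)$) to giving a global section of $P^1(\cO_X(D))$. Let $s_D \in \Gamma(X, \cO_X(D))$ be the tautological section cutting out $D$, and let $d^1 \colon \cO_X(D) \to P^1(\cO_X(D))$ denote the universal first-order differential operator, which is $k$-linear but not $\cO_X$-linear. First I would define the map $\cO_X(-D) \to Q_{\cO_X(D)}$ to be the one corresponding to the global section $d^1(s_D) \in \Gamma(X, P^1(\cO_X(D)))$. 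Naturality in the pair $(X, D)$ is automatic, since both $d^1$ and $s_D$ are canonical constructions.

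To prove injectivity, I would compose with the surjection $Q_{\cO_X(D)} \twoheadrightarrow \cO_X$ from the Atiyah sequence~\eqref{eq:short_exact_sequence_atiyah_class} --- which is nothing but the $\cO_X(-D)$-twist of the projection $P^1(\cO_X(D)) \twoheadrightarrow \cO_X(D)$. Since $d^1(s_D)$ maps to $s_D$ under this last quotient, a short diagram chase identifies the composite $\cO_X(-D) \to Q_{\cO_X(D)} \to \cO_X$ with the canonical inclusion of the ideal sheaf of $D$. Because $D$ is an effective Cartier divisor, $s_D$ is a non-zero-divisor and this inclusion is injective; hence so is the original map.

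There is no substantial obstacle here: the only points to watch are the bookkeeping of twists by $\cO_X(\pm D)$, and the fact that although $d^1$ fails to be $\cO_X$-linear as a morphism of sheaves, evaluating it on the single distinguished global section $s_D$ still produces a perfectly $\cO_X$-linear map $\cO_X \to P^1(\cO_X(D))$.
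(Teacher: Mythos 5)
Your proof is correct, and it takes a genuinely different route from the paper. The paper works entirely in local coordinates: it chooses a trivializing cover, writes the map as $1 \mapsto (\rmd f_i, -f_i)$ in each chart (where $f_i$ is the local equation of $D$), and then verifies by hand that these local formulas glue along the explicit transition matrix from Remark~\ref{rem:explicit_description_sheaf_Q_L}; injectivity is deduced directly from the fact that the $f_i$ are non-zero-divisors. You instead work globally and intrinsically: the $\cO_X$-linear identification $\cHom(\cO_X(-D), Q_{\cO_X(D)}) \cong P^1(\cO_X(D))$ converts the problem into exhibiting a global section of $P^1(\cO_X(D))$, and the universal first-order differential operator applied to the tautological section $s_D$ does the job; gluing and naturality are then automatic rather than verified. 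Your injectivity argument is also slightly different in flavour but amounts to the same non-zero-divisor observation: because $d^1$ splits the projection $P^1(\cO_X(D)) \onto \cO_X(D)$ at the level of $k$-modules, the composite $\cO_X(-D) \to Q_{\cO_X(D)} \onto \cO_X$ is exactly the ideal-sheaf inclusion, which is injective since $D$ is Cartier. Note that if you unwind your global section $d^1(s_D)$ in a local chart using the Leibniz rule $d^1(f_i s_i) = f_i d^1(s_i) + \rmd f_i \otimes s_i$, you land on $(\rmd f_i, +f_i)$ rather than the paper's $(\rmd f_i, -f_i)$; this is only a sign discrepancy in the $\cO_X$-component (the paper's trivializations $\phiv_i$ are fixed only up to such a sign, cf.\ the remark on $Q_{L^\vee}$), and it has no bearing on injectivity or on which map is ultimately being defined up to the sign ambiguity the authors acknowledge.
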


\begin{proof}
  We use the notation as in
  Remark~\ref{rem:explicit_description_sheaf_Q_L}: $\cU = \{U_i\}_i$
  is an affine open cover of $X$ which trivialises $\cO_X(D)$, $s_i$
  is a nowhere vanishing section of $L$ on $U_i$, $\{g_{ij}\}$ are the
  transition functions of $\cO_X(D)$.  Let $s \in \rH^0(X, \cO_X(D))$
  be the global section which defines $D$.  Let $f_i \in \cO_X(U_i)$
  be such that $s \vert_{U_i} = f_i s_i$. Then
  $f_i \vert_{U_{ij}} = g_{ij} f_j \vert_{U_{ij}}$.
 
  The homomorphism $\cO_X(-D) \to Q_{\cO_X(D)}$ in the statement is
  given by the collection of homomorphisms
  $\cO_{U_i} \to \Omega_{U_i} \oplus \cO_{U_i}$ which maps
  $1 \in \cO_{U_i}$ to
\[
\begin{pmatrix}
\rmd f_i \\ -f_i
\end{pmatrix} \in \Omega_{U_i} \oplus \cO_{U_i}
\]
These homomorphisms glue together because the diagram
\[
\xymatrix{
\cO_{U_{ij}}
\ar[rr]^{\begin{pmatrix}	\rmd f_i \\ -f_i	\end{pmatrix}}  
   & & \Omega_{U_{ij}} \oplus \cO_{U_{ij}}  
   \ar[d]^{\begin{pmatrix}	\mathrm{id}_{\Omega_{U_{ij}}} &  \frac{\rmd g_{ij}}{g_{ij}} \\ 0 & \mathrm{id}_{\cO_{U_{ij}}}	\end{pmatrix}} 
    \\
\cO_{U_{ij}}
\ar[rr]_{\begin{pmatrix}	\rmd f_j \\ -f_j	\end{pmatrix}}
\ar[u]^{g_{ij}^{-1}} 
&   & \Omega_{U_{ij}} \oplus \cO_{U_{ij}}
 \\
}
\]
commutes. The homomorphism is injective because the $f_i$'s are not
zero divisors, since $D$ is a Cartier divisor.
\end{proof}

\begin{dfn} \label{dfn:logarithmic_differential}
We define $\Omega_X(\log D)$ to be the cokernel of the homomorphism $\cO_X(-D) \to Q_{\cO_X(D)}$ constructed in Lemma~\ref{lem:homomorphism_from_OX-D_to_Q}.
\end{dfn}

It is immediate that $\Omega_X(\log D)$ is a coherent sheaf on $X$. We
have a short exact sequence
\begin{equation} \label{eq:definition_Omega_log}
	(0) \longrightarrow \cO_X(-D) \longrightarrow Q_{\cO_X(D)}
        \longrightarrow \Omega_X(\log D) \longrightarrow (0) 
\end{equation}

\begin{rem}
	One can see that $\Omega_X(\log D)$ is the sheaf of logarithmic differentials of $X$ equipped with logarithmic structure (in the Zariski topology) induced by $D$ over the standard log point. This logarithmic structure on $X$ is obtained by the charts $\NN \to \cO_{U_i}$ given by $1 \mapsto f_i$.
\end{rem}

\begin{rem} \label{rem:usual_logarithmic_differentials}
	If $X$ and $D$ are smooth over $k$, then $\Omega_X(\log D)$ is the usual sheaf of logarithmic differentials on $X$ with respect to the divisor $D$.
	If $X$ is smooth and $D$ is a normal crossing divisor, our $\Omega_X(\log D)$ is \emph{not} the usual sheaf of logarithmic differentials on $X$ with respect to $D$.
	
	For example, if $X = \AA^2 = \Spec k[x,y]$ and $D= \Spec k[x,y]/(xy)$ is the union of the two axes then $\Omega_X(\log D)$ is generated by $\rmd x$, $\rmd y$ and $\frac{\rmd(xy)}{xy}$ and is not locally free, whereas the usual sheaf of logarithmic differentials on $X$ with respect to $D$ is a free $\cO_X$-module of rank $2$, generated by $\frac{\rmd x}{x}$, $\frac{\rmd y}{y}$.
	
	See Proposition~\ref{prop:Omegalog_for_X_smooth} for a general treatment of the case in which $X$ is smooth.
\end{rem}

\begin{lem} \label{lem:two_short_exact_sequence_with_log_differentials}
  There is a short exact sequence
\begin{equation} \label{eq:residue_sequence_Omega_log}
	(0) \longrightarrow \Omega_X \longrightarrow \Omega_X( \log D)
        \longrightarrow \cO_D \longrightarrow (0) 
\end{equation}
\end{lem}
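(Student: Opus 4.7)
The plan is to extract the sequence from the two short exact sequences already in play, namely the Atiyah sequence
\[
(0)\to \Omega_X \to Q_{\cO_X(D)} \to \cO_X \to (0)
\]
and the defining sequence
\[
(0)\to \cO_X(-D) \to Q_{\cO_X(D)} \to \Omega_X(\log D) \to (0)
\]
First I would define the desired map $\Omega_X \to \Omega_X(\log D)$ to be the composition $\Omega_X \hookrightarrow Q_{\cO_X(D)} \twoheadrightarrow \Omega_X(\log D)$ coming from these two sequences.

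Injectivity comes down to showing that the subsheaves $\Omega_X$ and $\cO_X(-D)$ of $Q_{\cO_X(D)}$ have trivial intersection. This is a local check in the setup of Remark~\ref{rem:explicit_description_sheaf_Q_L} and Lemma~\ref{lem:homomorphism_from_OX-D_to_Q}: after trivialising $\cO_X(D)$ on $U_i$ and choosing $f_i\in \cO_X(U_i)$ with $s\vert_{U_i}=f_is_i$, the subsheaf $\Omega_X$ corresponds under $\phiv_i$ to pairs $(\omega,0)$, while $\cO_X(-D)$ is generated by $\bigl(\rmd f_i,-f_i\bigr)$. An element of the intersection would have second component $-f_i g=0$; since $D$ is Cartier, $f_i$ is a non-zero-divisor, so $g=0$.

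For the cokernel, the cleanest route is to identify
\[
\coker\bigl(\Omega_X \to \Omega_X(\log D)\bigr) = Q_{\cO_X(D)}\big/\bigl(\Omega_X+\cO_X(-D)\bigr)
\]
and then quotient first by $\Omega_X$: by the Atiyah sequence the quotient $Q_{\cO_X(D)}/\Omega_X$ is canonically $\cO_X$, and the image of $\cO_X(-D)$ in this $\cO_X$ is, by the local formula $1\mapsto(\rmd f_i,-f_i)\mapsto -f_i$, exactly the ideal sheaf $\cO_X(-D)\subset\cO_X$. Hence the cokernel is $\cO_X/\cO_X(-D)=\cO_D$, as required. Equivalently, one can feed the two sequences into the snake lemma applied to the obvious commutative diagram with top row $0\to 0\to \Omega_X\to \Omega_X\to 0$ and bottom row the defining sequence, with the middle vertical arrow the Atiyah inclusion; the connecting map then produces the short exact sequence. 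I do not expect a main obstacle: once the two subsheaves of $Q_{\cO_X(D)}$ are written out explicitly in local coordinates, both injectivity and the identification of the cokernel are forced.
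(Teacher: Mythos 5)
Your proof is correct and takes essentially the same approach as the paper: the paper also constructs the sequence by combining the Atiyah sequence with the defining sequence of $\Omega_X(\log D)$, and finishes with the snake lemma applied to a $3\times 3$ diagram (the paper's diagram is the transpose of the one you describe, with top row built from $\cO_X(-D)$ rather than $\Omega_X$, but the two readings of the $3\times 3$ lemma are equivalent and yield the same sequence).
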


When $X$ and $D$ are smooth over $k$, the  exact sequence \eqref{eq:residue_sequence_Omega_log}  is the residue sequence.

\begin{proof}[Proof of Lemma~\ref{lem:two_short_exact_sequence_with_log_differentials}]
The short exact sequence \eqref{eq:residue_sequence_Omega_log} can be constructed locally on affine open cover of $X$ which trivialises $\cO_X(D)$. Alternatively, one can consider the commutative diagram
\[
\xymatrix{
(0) \ar[r] & 0 \ar[r] \ar[d] & \cO_X(-D) \ar[r] \ar[d] & \cO_X(-D) \ar[r] \ar[d] & (0) \\
(0) \ar[r] & \Omega_X \ar[r] & Q_{\cO_X(D)} \ar[r] & \cO_X \ar[r] & (0)
}
\]
with exact rows and apply the snake lemma.
\end{proof}

We next see that, in many cases, the sheaf $\Omega_X(\log D)$ controls
the deformations of the pair $(X,D)$, i.e.\ of the closed embedding
$D \into X$.

\begin{pro}
  \label{pro:deformations_pairs_X_D}
  Let $X$ be a scheme of finite type over the field $k$ and let
  $D \into X$ be an effective Cartier divisor. Denote by $N_D X = \cO_D(D)$
  the normal bundle of $D$ in $X$. Then:
	\begin{enumerate}[(i)]
		\item There exists an exact sequence
		\begin{equation*}
			\rH^0(D,N_D X) \to \rT \Deff{X}{D} \to \rT \Def{X} \to \rH^1(D, N_D X)
		\end{equation*}
		\item If $\rH^1(D, N_D X) = 0$, then the forgetful map $\Deff{X}{D} \to \Def{X}$ is smooth.
		\item If $X$ is reduced and $k$ is perfect, then $\Ext^1(\Omega_X(\log D), \cO_X)$ is the tangent space of $\Deff{X}{D}$ and the exact sequence in (i) is obtained by taking the cohomology exact sequence of \eqref{eq:residue_sequence_Omega_log}.
		\item If $X$ is normal and $k$ is perfect, then $\Ext^2(\Omega_X(\log D), \cO_X)$ is an obstruction space of $\Deff{X}{D}$.
	\end{enumerate}
\end{pro}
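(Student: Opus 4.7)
The strategy mirrors the proof of Proposition~\ref{pro:deformations_X_L}, replacing the classifying stack $\rB \Gm$ for line bundles with the classifying stack $[\AA^1 / \Gm]$ for pairs (line bundle, section). Let $\epsi_D \colon X \to [\AA^1/\Gm]$ be the morphism classifying the pair $\bigl(\cO_X(D), s_D\bigr)$, where $s_D \in \rH^0(X, \cO_X(D))$ is the canonical section with zero locus $D$. Deformations of the closed embedding $i \colon D \into X$ correspond bijectively to deformations of $\epsi_D$ with fixed target, and so are controlled by the cotangent complex $\LL_{\epsi_D}$: the tangent space is $\Ext^1(\LL_{\epsi_D}, \cO_X)$ and an obstruction space is $\Ext^2(\LL_{\epsi_D}, \cO_X)$.

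By Remark~\ref{rem:cotangent_quotient_stack}, $\LL_{[\AA^1/\Gm]}$ is quasi-isomorphic to the two-term complex $[\Omega_{\AA^1} \to \cO_{\AA^1}]$ in degrees $0, 1$, with differential $\rmd t \mapsto t$ arising from the derivative of the $\Gm$-action $(\lambda, t) \mapsto \lambda t$. Pulling back via $\epsi_D$ yields the perfect two-term complex $[\cO_X(-D) \xrightarrow{\cdot s_D} \cO_X]$, whose cohomology is $0$ in degree $0$ (since $s_D$ is a nonzero divisor, $D$ being Cartier) and $\cO_D$ in degree $1$. The distinguished triangle
\[
\rL \epsi_D^* \LL_{[\AA^1/\Gm]} \to \LL_X \to \LL_{\epsi_D} \to \rL \epsi_D^* \LL_{[\AA^1/\Gm]}[1]
\]
then yields by its long exact cohomology sequence a natural isomorphism $\cH^i(\LL_{\epsi_D}) \cong \cH^i(\LL_X)$ for $i < 0$, together with the short exact sequence
\[
(0) \to \Omega_X \to \cH^0(\LL_{\epsi_D}) \to \cO_D \to (0)
\]
which, by Lemma~\ref{lem:two_short_exact_sequence_with_log_differentials}, identifies $\cH^0(\LL_{\epsi_D}) \cong \Omega_X(\log D)$.

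Applying $\mathrm{R}\Hom(-, \cO_X)$ to the same distinguished triangle, and using that $\rL \epsi_D^* \LL_{[\AA^1/\Gm]}$ is quasi-isomorphic to $\cO_D[-1]$ so that $\Ext^i\bigl(\rL \epsi_D^* \LL_{[\AA^1/\Gm]}, \cO_X\bigr) \cong \Ext^{i+1}(\cO_D, \cO_X) \cong \rH^i(D, N_D X)$ for $i \geq 0$, I obtain the long exact sequence
\[
\rH^0(X, T_X) \to \rH^0(D, N_D X) \to \Ext^1(\LL_{\epsi_D}, \cO_X) \to \Ext^1(\LL_X, \cO_X) \to \rH^1(D, N_D X)
\]
giving part~(i). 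Part~(ii) then follows from standard obstruction theory: the relative obstruction for the forgetful map $\Deff{X}{D} \to \Def{X}$ lies in $\rH^1(D, N_D X)$, so its vanishing makes the map smooth.

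For parts~(iii) and~(iv) I would apply Lemma~\ref{lem:homological_algebra} to the complex $\LL_{\epsi_D}$. Since $k$ is perfect, the negative-degree cohomology sheaves of $\LL_X$, and hence of $\LL_{\epsi_D}$ by the identification above, are set-theoretically supported on the non-smooth locus $Z \subset X$. When $X$ is reduced then $X$ is $S_1$, so $\depth \cO_{X,x} \geq 1$ at every $x \in Z$; when $X$ is normal then $X$ is $S_2$, so $\depth \cO_{X,x} \geq 2$ at every $x \in Z$. The lemma with $n = 1$ (respectively $n = 2$) then gives the isomorphism $\Ext^j(\Omega_X(\log D), \cO_X) \cong \Ext^j(\LL_{\epsi_D}, \cO_X)$ for $j \leq 1$ (respectively $j \leq 2$), yielding parts~(iii) and~(iv). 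The main technical point is to verify that the connecting homomorphism in the long exact cohomology sequence of the distinguished triangle recovers canonically the short exact sequence \eqref{eq:residue_sequence_Omega_log} built into the definition of $\Omega_X(\log D)$; once this identification is in place, everything else is a formal consequence of the deformation-theoretic setup and Lemma~\ref{lem:homological_algebra}, exactly parallel to the proof of Proposition~\ref{pro:deformations_X_L}.
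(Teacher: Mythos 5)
Your proposal is correct and follows essentially the same route as the paper's proof: both use the classifying morphism $X \to [\AA^1/\Gm]$ to the stack of line bundles with a section, compute $\rL\epsi_D^*\LL_{[\AA^1/\Gm]}\simeq[\cO_X(-D)\to\cO_X]\simeq\cO_D[-1]$, identify $\cH^0(\LL_{\epsi_D})\cong\Omega_X(\log D)$ via the residue exact sequence, obtain parts (i)--(ii) from the long exact sequence of $\Ext$, and deduce (iii)--(iv) from Lemma~\ref{lem:homological_algebra} exactly as in Proposition~\ref{pro:deformations_X_L}. The only differences are cosmetic notation, and your proposal is slightly more careful in flagging the canonical identification of the truncated triangle with the residue sequence~\eqref{eq:residue_sequence_Omega_log}, which the paper simply asserts.
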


\begin{proof}
  We consider the quotient stack $[\AA^1 / \Gm]$, where the action of
  $\Gm$ on $\AA^1$ has weight $1$. This stack classifies line bundles
  with a section, see \cite[\S10.3]{olsson_book}, so we can consider
  the classifying morphism $\eta_D \colon X \to [\AA^1 / \Gm]$
  associated to the effective Cartier divisor $D$ on $X$.  We have a
  sequence of maps of algebraic stacks of finite type over $k$
	\[
	X \overset{\eta_D}\longrightarrow [\AA^1 / \Gm] \overset{p}\longrightarrow \rB \Gm \longrightarrow \Spec k
	\]
	where $p$ is given by $\AA^1 \to \Spec k$ and
        $p \circ \eta_D = \epsi_{\cO_X(D)}$ is the classifying
        morphism of the line bundle $\cO_X(D)$.  There is a
        distinguished triangle in the derived category of $X$:
	\begin{equation*}
		\rL \eta_D^* \LL_{[\AA^1 / \Gm]} \to \LL_X \to \LL_{\eta_D} \to \rL \eta_D^* \LL_{[\AA^1 / \Gm]} [1]
	\end{equation*}
	
	Denote by $t$ the standard coordinate on $\AA^1$, and by
        $\cO_{\AA^1}(r)$ the $\Gm$-equivariant sheaf on $\AA^1$ with
        weight $r \in \ZZ$. By
        Remark~\ref{rem:cotangent_quotient_stack} we have that
        $\LL_{[\AA^1 / \Gm]}$ is quasi-isomorphic to the complex of
        $\Gm$-equivariant sheaves on $\AA^1$
	\[
	\cO_{\AA^1}(-1) \overset{t}{\longrightarrow} \cO_{\AA^1}
	\]
	in degrees $0,1$. It follows that
	$\rL \eta_D^* \LL_{[\AA^1 / \Gm]}$ is quasi-isomorphic to the $2$-term complex
	\begin{equation*}
		\cO_X(-D) \to \cO_X
	\end{equation*}
	in degrees $0,1$, where the differential is given by the
        section of $\cO_X(D)$ that defines $D$. This implies that
        $\rL \eta_D^* \LL_{[\AA^1 / \Gm]}$ is quasi-isomorphic to
        $\cO_D[-1]$. Therefore, by shifting we obtain the
        distinguished triangle
	\begin{equation} \label{eq:residue_triangle}
		\LL_X \to \LL_{\eta_D} \to \cO_D \to \LL_X[1]
	\end{equation}
	The truncation of this distinguished triangle in degree 0 is the short exact sequence \eqref{eq:residue_sequence_Omega_log}, because
	we have
	\begin{equation*}
		\Omega_X(\log D) = \cH^0 (\LL_{\eta_D})
	\end{equation*}
	and
	\begin{equation*}
		\text{for all } i \neq 0 \qquad \cH^i(\LL_X) \simeq \cH^i(\LL_{\eta_D}) 
	\end{equation*}

	Giving an infinitesimal deformation of the scheme $X$ together
        with the Cartier divisor $D$ is equivalent to giving an
        infinitesimal deformation of the morphism $\eta_D$ that is
        trivial on the target.  Therefore infinitesimal automorphisms,
        tangent space and obstructions of $\Deff{X}{D}$ are controlled
        by the cotangent complex $\LL_{\eta_D}$ of the morphism
        $\eta_D$.  Since for all $i$
        $\Ext^i(\cO_D, \cO_X) = \rH^{i-1}(N_D X)$, the distinguished
        triangle~\eqref{eq:residue_triangle} induces the exact
        sequence
	\begin{align*}
		(0)  &\to \Ext^0(\LL_{\eta_D}, \cO_X) \to \Ext^0(\LL_X, \cO_X) \to \\
		\to \rH^0(N_D X)  &\to  \Ext^1(\LL_{\eta_D}, \cO_X) \to \Ext^1(\LL_X, \cO_X) \to \\
		\to \rH^1(N_D X)  &\to  \Ext^2(\LL_{\eta_D}, \cO_X) \to \Ext^2(\LL_X, \cO_X)
	\end{align*}
	where $\Ext^1(\LL_{\eta_D}, \cO_X)$ is the tangent space of $\Deff{X}{D}$ and $\Ext^2(\LL_{\eta_D}, \cO_X)$ is an obstruction space for $\Deff{X}{D}$. This immediately implies the well known fact that $\rH^1(N_D X)$ is an obstruction space for the forgetful map
	$
	\Deff{X}{D} \to \Def{X}
	$; this proves (i) and (ii).
	
	The proof of (iii) and (iv) is very similar to the one of Proposition~\ref{pro:deformations_X_L} and is omitted.
\end{proof}

Now we make some comments on the sheaves
$\cExt^i(\Omega_X(\log D), \cO_X)$, which help to compute the
$k$-vector space $\Ext^i(\Omega_X(\log D), \cO_X)$.

\begin{nota}
  Write $T_X(- \log D) = \cH om (\Omega_X(\log D), \cO_X)$.
\end{nota}

\begin{rem}
  \label{rem:log_tangents}
  If $(X,D)$ is a toric pair over $k$, then $T_X(- \log D)$ coincides
  with the usual logarithmic tangent sheaf, i.e.\ the one associated
  to the standard log structure that exists on a toric
  variety. Moreover, under these assumptions,
  $T_X(-\log D)\cong \cO_X^n$ where $n=\dim X$. Compare these two
  results for the logarithmic tangent sheaf with similar statements
  (but different assumptions) about the logarithmic cotangent sheaf in
  Remark~\ref{rem:usual_logarithmic_differentials} and in
  Proposition~\ref{prop:Omegalog_for_X_smooth}.
	
  The two results above hold because of the following argument.  Let
  $T_{\log}$ be the usual logarithmic tangent sheaf; it is well known
  that $T_{\log}\cong \cO_X^n$. Our $T_X(- \log D)$ is reflexive
  because it is the dual of a coherent sheaf. By
  Remark~\ref{rem:usual_logarithmic_differentials}, $T_{\log}$ and
  $T_X(- \log D)$ coincide on the open subset of $X$ where the divisor
  $D$ is smooth (or empty). The complement of this open subset has
  codimension $\geq 2$ in $X$, so the two reflexive sheaves $T_{\log}$
  and $T_X(- \log D)$ coincide everywhere on $X$.
\end{rem}

\begin{rem} \label{rem:ext_sheaves_Omegalog}
Let $\cO_X(-D) \subseteq \cO_X$ be the ideal sheaf of the closed embedding $D \into X$.
Let $N_D X = \cO_D(D) = \cHom (\cO_X(-D), \cO_D)$ be the normal bundle of $D$ in $X$.	
By dualising the exact sequence \eqref{eq:residue_sequence_Omega_log} we get the exact sequence
\begin{equation*}
(0) \to T_X(- \log D) \to T_X \to N_D X \to \cExt^1(\Omega_X(\log D), \cO_X) \to \cExt^1(\Omega_X, \cO_X) \to (0)
\end{equation*}
where the homomorphism
\begin{equation} \label{eq:homomorphism_TX_N_DX}
T_X \to N_D X
\end{equation}
maps a derivation $\partial \colon \cO_X \to \cO_X$ into the
composition of the restriction
$\partial \vert_{\cO_X(-D)} \colon \cO_X(-D) \to \cO_X$ with the
surjection $\cO_X \onto \cO_D$.  This implies that $T_X(- \log D)$ is
the subsheaf of $T_X$ consisting of the derivations $\partial$ such
that $\partial h \in \cO_X(-D)$ for every section $h$ of the ideal
sheaf $\cO_X(-D)$ of $D$ in $X$.  Moreover we have
\begin{equation*}
\text{for all} \; i \geq 2 \qquad \cExt^i(\Omega_X(\log D), \cO_X) \simeq \cExt^i(\Omega_X, \cO_X) 
\end{equation*}
\end{rem}

\begin{pro} \label{prop:Omegalog_for_X_smooth}
	If $X$  is smooth over $k$, then the following statements hold.
	\begin{enumerate}
		\item $\Omega_X(\log D)$ is a locally free $\cO_X$-module if and only if $D$ is smooth over $k$.
		
		\item If $X$ has pure dimension $n$ and $Z$ is the
                  closed subscheme of $D$ defined by the
                  $(n-1)$\textsuperscript{st} Fitting ideal of the
                  $\cO_D$-module $\Omega_D$, then there is an
                  isomorphism
		\begin{equation*}
			\cExt^1(\Omega_X(\log D), \cO_X) \simeq \cO_{Z}(D)
		\end{equation*}
		
		\item For all $i$ there is an isomorphism
		\begin{equation*}
			\Ext^i(\Omega_X(\log D), \cO_X) \simeq \HH^i(T_X \to N_D X)
		\end{equation*}
	where the 2-term complex on the right is in degrees $0$ and $1$ and the differential is the homomorphism in \eqref{eq:homomorphism_TX_N_DX}.
	\end{enumerate}
\end{pro}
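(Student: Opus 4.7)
The plan is to use the derived category together with the residue sequence~\eqref{eq:residue_sequence_Omega_log}, exploiting the fact that smoothness of $X$ makes $\Omega_X$ locally free. I will prove part~(3) first and deduce parts~(1) and~(2) from it.

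First I would apply $R\cHom(-, \cO_X)$ to the residue sequence
\[
(0) \to \Omega_X \to \Omega_X(\log D) \to \cO_D \to (0)
\]
Because $X$ is smooth, $R\cHom(\Omega_X, \cO_X)=T_X$ sits in degree $0$; because $D$ is Cartier the Koszul-type resolution $0\to\cO_X(-D)\to\cO_X\to\cO_D\to 0$ yields $R\cHom(\cO_D,\cO_X)\simeq N_D X[-1]$. The resulting distinguished triangle
\[
N_D X[-1] \longrightarrow R\cHom\bigl(\Omega_X(\log D),\cO_X\bigr) \longrightarrow T_X \xrightarrow{\;\delta\;} N_D X
\]
exhibits $R\cHom(\Omega_X(\log D),\cO_X)$ as quasi-isomorphic to the two-term complex $[T_X\to N_D X]$ placed in degrees $0$ and $1$, with differential the boundary morphism $\delta$. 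Unwinding $\delta$ as a connecting map identifies it with the natural homomorphism of Remark~\ref{rem:ext_sheaves_Omegalog}. Taking $R\Gamma$ (i.e., hypercohomology) then gives part~(3).

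For part~(2), I would compute the first cokernel sheaf $\cExt^1=\coker(T_X\to N_D X)$ locally. Choose local coordinates $x_1,\dots,x_n$ on $X$ and a local equation $f\in\cO_X$ for $D$; then $N_D X$ is trivialized by $f^{-1}$ and the differential sends $\partial_i \mapsto \partial_i(f) \bmod f$. Hence the image of $T_X\to N_D X$ is, locally on $D$, the ideal generated by the partial derivatives of $f$. Via the conormal presentation $\cO_D \xrightarrow{df} \cO_D^n \to \Omega_D \to 0$, this ideal is, by direct inspection, the $(n-1)$-st Fitting ideal of $\Omega_D$ defining $Z$. Untwisting recovers the desired isomorphism $\cExt^1(\Omega_X(\log D),\cO_X)\simeq \cO_Z(D)$.

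Part~(1) is then a corollary. The sequence~\eqref{eq:definition_Omega_log} presents $\Omega_X(\log D)$ as the cokernel of a map of locally free sheaves --- using once again that $Q_{\cO_X(D)}$ is locally free because $\Omega_X$ is --- so $\Omega_X(\log D)$ has projective dimension $\leq 1$ and is locally free iff $\cExt^1(\Omega_X(\log D),\cO_X)=0$. By part~(2) this happens iff $Z=\emptyset$, which by the Jacobian criterion is equivalent to $D$ being smooth over $k$. The main obstacle is the precise identification of the derived boundary map $\delta$ with the natural morphism of Remark~\ref{rem:ext_sheaves_Omegalog}: rather than chasing injective resolutions, the cleanest route is to run the same computation using the alternative two-term locally free resolution $0\to\cO_X(-D)\to Q_{\cO_X(D)}\to \Omega_X(\log D)\to 0$ and to check compatibility via the commutative diagram relating the two resolutions, which reduces the identification to a transparent local computation.
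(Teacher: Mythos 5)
Your proposal is correct; the mathematics all goes through. The main divergence from the paper is in how you get part~(1). The paper proves~(1) directly: reducing by base change and \'etale descent to $X = \AA^n$, writing $\Omega_X(\log D)$ as the cokernel of a map $\phiv\colon \cO_{\AA^n}\to\cO_{\AA^n}^{\oplus(n+1)}$ coming from~\eqref{eq:definition_Omega_log}, and checking pointwise via $\mathrm{Tor}_1$ / fibre-injectivity that the cokernel is free exactly where one of $f,\partial_1 f,\dots,\partial_n f$ is nonzero. You instead observe that~\eqref{eq:definition_Omega_log} exhibits $\Omega_X(\log D)$ as a cokernel of locally free sheaves, hence of projective dimension $\leq 1$, so it is locally free if and only if $\cExt^1(\Omega_X(\log D),\cO_X)=0$, and then invoke part~(2). (Watch the notation: this is the sheaf $\cExt^1$, not the global $\Ext^1$.) This is a clean, legitimate shortcut; what it buys is one fewer local computation, at the cost of a mild forward dependence of~(1) on~(2). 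Your part~(2) is the same local Fitting-ideal computation the paper runs, and for part~(3) you correctly identify the sticking point --- pinning down the connecting map in the triangle obtained by applying $R\cHom(-,\cO_X)$ to~\eqref{eq:residue_sequence_Omega_log} --- and your proposed remedy (switch to the locally free resolution~\eqref{eq:definition_Omega_log} and compare) is precisely what the paper does, so on part~(3) you converge onto the same argument after an initial detour.
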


\begin{rem} \label{rem:Omegalog_Fitting_singular_locus_D}
	Assume that we are under the assumptions of (2).
	Since $\Omega_X \vert_D$ is a locally free $\cO_D$-module of rank $n$ and the conormal bundle $\cO_D(-D)$ is a line bundle on $D$, the conormal exact sequence
	\[
	\cO_D(-D) \longrightarrow \Omega_X \vert_D \longrightarrow \Omega_D \longrightarrow (0)
	\]
	gives that the $(n-1)$\textsuperscript{st} Fitting ideal of the $\cO_D$-module $\Omega_D$ is locally generated by the entries of the $n \times 1$ matrix which represents the homomorphism $\cO_D(-D) \to \Omega_X \vert_D$.
	More explicitly, on an open subset of $X$ where $\cO_X(D)$ and $\Omega_X$ are free this homomorphism is given by the differential of the local equation which defines $D$. Therefore, $Z$ coincides set-theoretically with the non-smooth locus of $D$.
	
	If, in addition, $D$ is a reduced normal crossing divisor on $X$, then $Z$ is the singular locus of $D$ equipped with the reduced structure.
\end{rem}

\begin{rem}
	Combining Proposition~\ref{prop:Omegalog_for_X_smooth}(3) and Proposition~\ref{pro:deformations_pairs_X_D}(iii) implies that if $X$ is smooth over $k$ then $\HH^1(T_X \to N_D X)$ is the tangent space of the deformation functor of the pair $(X,D)$. This result was proved in \cite[Proposition~8]{smith_varley} via explicit computations.
\end{rem}

\begin{proof}[Proof of Proposition~\ref{prop:Omegalog_for_X_smooth}]
	
  (1) Since the question is local, we may assume that $X$ is affine
  and $D$ is the zero-locus of a global function $f \in \cO_X(X)$.  By
  base change to the algebraic closure of $k$, we may assume that $k$
  is algebraically closed.  By \'etale descent, we may assume that $X$
  is the affine space $\AA^n = \Spec k[x_1, \dots, x_n]$.
	
  The exact sequence in \eqref{eq:definition_Omega_log} implies that
  $\Omega_X(\log D)$ is the cokernel of the injective homomorphism
  $\phiv \colon \cO_{\AA^n} \to \cO_{\AA^n}^{\oplus (n+1)}$ which maps
  $1$ to
	\[
	\begin{pmatrix}
		\frac{\partial f}{\partial x_1} \\
		\vdots \\
		\frac{\partial f}{\partial x_n} \\
		-f
	\end{pmatrix}
	\]
	For all closed points $p \in \AA^n(k)$,
	the following statements are equivalent:
	\begin{itemize}
		\item the sheaf $\Omega_X(\log D)$ is free in a neighbourhood of $p$ in $\AA^n$,
		\item the stalk $\Omega_X(\log D)_p$ is a flat $\cO_{\AA^n, p}$-module,
		\item $\mathrm{Tor}_1^{\cO_{\AA^n, p}} ( \Omega_X(\log D)_p  , k_p  ) = 0$,
		\item $\phiv \otimes \mathrm{id}_{k_p} \colon k_p \to k_p^{\oplus (n+1)}$ is injective,
		\item at least one among $\frac{\partial f}{\partial x_1}(p), \dots, 
		\frac{\partial f}{\partial x_n}(p)$, $f(p)$ is non-zero,
		\item $p$ does not lie in the non-smooth locus of $D$.
	\end{itemize}
	Since $p$ is arbitrary, this concludes the proof of (1).
	
	\medskip
	
        (2) Since $X$ is smooth, $\Omega_X$ is locally free, so
        $\cExt^1(\Omega_X, \cO_X) = 0$. By the exact sequence in
        Remark~\ref{rem:ext_sheaves_Omegalog},
        $\cExt^1(\Omega_X(\log D), \cO_X)$ is the cokernel of the
        homomorphism $T_X \to N_D X$ in
        \eqref{eq:homomorphism_TX_N_DX}.

Consider the restriction $N_D X = \cO_D(D) \onto \cO_Z(D)$ from $D$
to $Z$. If we show the equality
\begin{equation} \label{eq:equality_im_ker_in_Omegalog_for_Xsmooth}
	\mathrm{im}\left(  T_X \to N_D X  \right) = \ker \left( N_D X \to \cO_Z(D)  \right)
\end{equation}
then we are done because this will imply that $\cO_Z(D)$ is the
cokernel of $T_X \to N_D X$.  The
equality~\eqref{eq:equality_im_ker_in_Omegalog_for_Xsmooth} is a local
problem in the fppf topology, so we can assume, as we did in the proof
of (1), that $k$ is algebraically closed,
$X = \AA^n = \Spec k[x_1, \dots, x_n]$ and $D$ is the zero-locus of a
non-zero polynomial $f \in k[x_1, \dots, x_n]$. In this case, the
homomorphism $T_X \to N_D X$ is $\cO_X^{\oplus n} \to \cO_D$ given by
the row of the $n$ partial derivatives of $f$ and the homomorphism
$N_D X \to \cO_Z(D)$ is just the restriction $\cO_D \to \cO_Z$.  By
Remark~\ref{rem:Omegalog_Fitting_singular_locus_D}, $Z$ is the closed
subscheme of $\AA^n$ whose ideal is generated by $f$ and its $n$
partial derivatives.  This concludes the proof of (2).

\medskip

(3) We have the following sequence of quasi-isomorphisms of complexes
\begin{equation*}
	\rR \cHom(\Omega_X(\log D), \cO_X)
	\simeq \left[ Q_{\cO_X(D)}^\vee \to \cO_X(D) \right]
	\simeq \left[ T_X \to N_D X \right]
\end{equation*}
where the 2-term complexes in the middle and on the right are both
concentrated in degrees $0$ and $1$.  The first quasi-isomorphism
holds because, since $X$ is smooth, the short exact
sequence~\eqref{eq:definition_Omega_log} gives a locally free
resolution of $\Omega_X(\log D)$.  The second quasi-isomorphism
follows via the snake lemma applied to the following morphism of short
exact sequences:
\begin{equation*}
	\xymatrix{
		(0) \ar[r]  & \cO_X \ar@{=}[d] \ar[r] & Q_{\cO_X(D)}^\vee \ar[r] \ar[d] & T_X \ar[r] \ar[d] & (0) \\	
		(0) \ar[r]  & \cO_X \ar[r] & \cO_X(D) \ar[r] & N_D X \ar[r] & (0) \\	
	}
\end{equation*}
Via the sequence of quasi-isomorphisms above we get
\begin{align*}
	\Ext^i(\Omega_X(\log D), \cO_X) &= \cH^i \, \rR \Hom(\Omega_X(\log D), \cO_X) \\
	&= \cH^i \, \rR\Gamma \, \rR \cHom(\Omega_X(\log D), \cO_X) \\
	&= \HH^i (T_X \to N_D X)
\end{align*}
This concludes the proof of Proposition~\ref{prop:Omegalog_for_X_smooth}.
\end{proof}

\begin{exa} \label{ex:Omegalog_complete_intersection_affine_space}
Let $f,g \in k[x_1, \dots, x_{n+1}]$ be polynomials such that $g,f$ is a regular sequence.
Consider the following closed subschemes of the affine space $\AA^{n+1}$: $X = \Spec k[x_1, \dots, x_{n+1}] / (g)$ and $D = \Spec k[x_1,\dots,x_{n+1}] / (f,g)$. Clearly, $D$ is an effective Cartier divisor on $X$.
The conormal sequence of $X \into \AA^{n+1} \to \Spec k$ is the exact sequence
\begin{equation*}
	\cO_X  \xrightarrow{
	\begin{pmatrix}
\frac{\partial g}{\partial x_1} \\
\vdots \\
\frac{\partial g}{\partial x_{n+1}}
	\end{pmatrix}
	}  \cO_X^{\oplus {(n+1)}} \longrightarrow \Omega_X \longrightarrow (0)
\end{equation*}
The short exact sequence \eqref{eq:definition_Omega_log} is
\begin{equation*}
	(0) \longrightarrow \cO_X \xrightarrow{ 
	\begin{pmatrix}
		\rmd f \\
		-f
	\end{pmatrix}
	 } \Omega_X \oplus \cO_X \longrightarrow \Omega_X (\log D) \longrightarrow (0)
\end{equation*}
Combining these two exact sequences gives the exact sequence
\begin{equation*}
	\cO_X^{\oplus 2}   \xrightarrow{
	\begin{pmatrix}
		\frac{\partial g}{\partial x_1} & \frac{\partial f}{\partial x_1}\\
		\vdots & \vdots  \\
		\frac{\partial g}{\partial x_{n+1}} & \frac{\partial f}{\partial x_{n+1}} \\
		0 & -f
	\end{pmatrix}
	}
	\cO_X^{\oplus (n+2)}
	\longrightarrow
	\Omega_X(\log D)
	\longrightarrow (0)
\end{equation*}
\end{exa}

\begin{exa}
  \label{exa:ODP_deformations}
  Consider the \mbox{$3$-fold} ordinary double point
  $X = \Spec k[x,y,z,w] / (xy-zw)$ and its closed subscheme
  $D = \Spec k[x,y,z,w] / (xy, zw)$.  This is
  Example~\ref{ex:Omegalog_complete_intersection_affine_space} with
  $g = xy-zw$ and $f=xy$.  $D$ is a reduced effective Cartier divisor
  on $X$ with $4$ irreducible components, each of which is isomorphic
  to $\AA^2$.  The last exact sequence in
  Example~\ref{ex:Omegalog_complete_intersection_affine_space} is
\begin{equation*}
	(0) \longrightarrow \cO_X^{\oplus 2}   \xrightarrow{
		\begin{pmatrix}
			y & y \\
			x & x  \\
			-w & 0 \\
			-z & 0 \\
			0 & -xy
		\end{pmatrix}
	}
	\cO_X^{\oplus 5}
	\longrightarrow
	\Omega_X(\log D)
	\longrightarrow (0)
\end{equation*}
By dualising and doing some column operations and row operations, we
deduce that the group $\Ext^1(\Omega_X(\log D), \cO_X)$ is isomorphic
to the cokernel of the homomorphism
$\cO_X^{\oplus 4} \to \cO_X^{\oplus 2}$ given by the matrix
\begin{equation*}
	\begin{pmatrix}
		x & y & 0 & 0  \\
		0 & 0 & z & w 
	\end{pmatrix}
\end{equation*}
This shows that $\Ext^1(\Omega_X(\log D), \cO_X)$ is isomorphic to 
\begin{equation*}
	\frac{\cO_X}{(x,y)} \oplus \frac{\cO_X}{(z,w)} = \frac{k[x,y,z,w]}{(x,y,zw)} \oplus \frac{k[x,y,z,w]}{(xy,z,w)} =  \cO_{\Gamma_1} \oplus \cO_{\Gamma_2}
\end{equation*}
where $\Gamma_1$(resp.\ $\Gamma_2$) is the closed subscheme of $D$
(and of $X$) defined by the ideal generated by $x,y,zw$ (resp.\
$xy,z,w$). Hence the fibre of $\Ext^1(\Omega_X(\log D), \cO_X)$ at the
closed point corresponding to the maximal ideal $(x,y,z,w)$ has
dimension $2$.

Now we give a more conceptual way to describe
$\Ext^1(\Omega_X(\log D), \cO_X)$.  Consider the following closed
subscheme of $D$:
$\Gamma = \Spec k[x,y,z,w] / (xy, xz, xw, yz, yw, zw)$.  It is reduced
and has $4$ irreducible components, each of which is isomorphic to
$\AA^1$.  $\Gamma$ is the closed subscheme of $D$ defined by the $2$\textsuperscript{nd}
Fitting ideal of the $\cO_D$-module $\Omega_D$, so $\Gamma$ is the
non-smooth locus of $D$ equipped with the reduced structure.
$\Gamma_1$ and $\Gamma_2$ are the union of $2$ of the $4$ irreducible
components of $\Gamma$ and $\Gamma$ is the union of $\Gamma_1$ and
$\Gamma_2$.  Consider the disjoint union
$\Gamma' = \Gamma_1 \coprod \Gamma_2$. The closed embeddings
$\Gamma_1 \into \Gamma$ and $\Gamma_2 \into \Gamma$ give a finite
surjective morphism $\nu \colon \Gamma' \to \Gamma$ which is a partial
resolution.  Then $\Ext^1(\Omega_X(\log D), \cO_X)$ is isomorphic to
$\nu_\star \cO_{\Gamma'} = \nu_\star \nu^\star \cO_\Gamma$.

\section{Miniversal deformations of pairs}
\label{sec:exampl-miniv-deform}

\subsection{Statement of purpose}
\label{sec:statement-purpose-2}

\begin{nota}
  \label{notation:Axyz}
  In this section, $\AA^4_{x,y,z,u}$ (for example) denotes $\Spec \CC[x,y,z,u]$,
  that is, affine $4$-dimensional space with a choice of coordinate
  functions labelled $x,y,z,u$.

  We adopt a similar convention in a host of similar situations.
\end{nota}

  The purpose of this section is to write down explicit examples of miniversal deformations of
  toric pairs $(Y,E)$ --- where $Y$ is locally a hypersurface and $E$ is a
  Cartier divisor --- that are needed in the proof of
  Theorem~\ref{thm:local_invariance}. 

  \smallskip
  
  We allow $Y$ to be nonproper and to have nonisolated singularities.
  Even in the simplest case when $Y=\AA^3_{x,y,z}$
  and $E=(xyz=0)\subset Y$, the miniversal deformation is
  infinite-dimensional. Indeed in this case:
  \[
    \T^1_{Y,E}=\Bigl\{\lambda+xA(x)+yB(y) +zC(z)\mid \lambda \in \CC,\; A(x) \in
    \CC[x],\; B(y) \in \CC[y],\; C(z) \in \CC[z] \Bigr\}
  \]

  To address the infinite-dimensionality we work with ind-schemes. The
  terminology of ind-schemes is introduced in~\cite{KV04, MR3701353}
  but (fortunately) we don't actually need any of the theory. 

  In \S~\ref{sec:expl-exampl-miniv} we construct ``by hand''
  several explicit examples of deformation families of toric pairs
  $(Y,E)$ over a smooth ind-scheme --- in fact, see below, over $\AA^\infty
  =\varinjlim \AA^n$. We want to check that these
  families are miniversal. The key is that, although we construct these
  families over $\AA^\infty$, we only claim miniversality over Artin
  rings. In \S~\ref{sec:some-gener-princ} we prove some general
  principles: the key result is Corollary~\ref{cor:miniversality},
  stating that a family is miniversal if it is universal for
  first-order deformations. To prove the Corollary, first, using
  Lemma~\ref{local-to-global}, we reduce to the
  local case. Second, in our setting the local case may be treated by
  the explicit and elementary Lemma~\ref{miniversal}, since we
  may assume that $Y$ is a hypersurface.

\subsection{General principles}
\label{sec:some-gener-princ}

\begin{nota}
  \label{sec:miniv-deform-pairs}
  \begin{enumerate}[(i)]
  \item   We denote by $\AA^\infty $ the ind-scheme 
  $$\AA^\infty_{a_1,a_2,\dots} = \varinjlim_m \AA^m_{a_1,\dots, a_m}$$
  where the limit is taken over the inclusions
  $$\AA^m_{a_1,\dots,a_m}=(a_{m+1}=0)\hookrightarrow
  \AA^{m+1}_{a_1,\dots,a_{m+1}}$$
\item   Writing $\AA^m_l= \Spec
  k[a_1,\ldots,a_m]/(a_1,\ldots,a_m)^{l+1}$, we denote by
  $$\widehat{\AA^\infty} = \varinjlim_l \varinjlim_m \AA^m_l$$
  the formal completion of $\AA^\infty$, also an ind-scheme.
\item In \S~\ref{sec:expl-exampl-miniv} we often have a situation where we
  regard the ``power series'' $A(x)=\sum_{i=0}^\infty a_ix^i$ as a
  function on the ind-scheme
  $\AA^1_x\times \AA^\infty_{a_0,a_1,\dots}$.
  \end{enumerate}
\end{nota}

\begin{lem}
  \label{local-to-global}
Let $Y$ be a normal variety and $E \subset Y$ a Cartier divisor.
Assume that $\rH^i(Y, T_Y(-\log E))=0$ for $i=1,2$. 
Let $\{U_i\}_{i \in I}$ be an affine open covering of $Y$.
Let $A$ be an Artinian local $k$-algebra, finite dimensional over
$k$. For all $i \in I$, let $(\cU_i,\cE_i)/\Spec A$ be an
infinitesimal deformation of the pair $(U_i,E|_{U_i})$ over $A$.

If the induced deformations of
$(U_i \cap U_j, E|_{U_i \cap U_j})$ over $A$ are isomorphic, then
there exists a deformation $(\cY,\cE)/\Spec A$ of $(Y,E)$ over $A$
such that its restriction to $U_i$ is isomorphic to
$(\cU_i,\cE_i)/\Spec A$ for each $i \in I$, and it is uniquely
determined up to isomorphism of deformations. \qed
\end{lem}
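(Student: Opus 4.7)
The plan is to prove the lemma by induction on $\dim_k A$, at each step reducing to a standard \v{C}ech-cohomological gluing problem whose obstructions live in $\rH^1$ and $\rH^2$ of $T_Y(-\log E)$ and therefore vanish by hypothesis. The base case $A=k$ is trivial: the $(\cU_i,\cE_i)$ coincide with $(U_i,E|_{U_i})$ and glue to $(Y,E)$ itself. For the inductive step I would pick a small extension $0\to J\to A\to A'\to 0$ with $J\cdot \frakm_A=0$, so that $J$ is a $k$-vector space, and apply the inductive hypothesis to the $A'$-reductions $(\cU_i',\cE_i')/\Spec A'$ of the given local deformations to obtain a unique global $A'$-deformation $(\cY',\cE')/\Spec A'$ extending them.

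For existence over $A$ I would first choose isomorphisms $\phi_{ij}\colon (\cU_j,\cE_j)|_{U_{ij}}\xrightarrow{\sim}(\cU_i,\cE_i)|_{U_{ij}}$, which exist by hypothesis, and arrange that they lift the gluing isomorphisms of $(\cY',\cE')$ on the $A'$-level. On triple overlaps the failure of the cocycle condition is measured by $\psi_{ijk}=\phi_{ij}\circ\phi_{jk}\circ\phi_{ki}$, an automorphism of $(\cU_i,\cE_i)|_{U_{ijk}}$ inducing the identity on the $A'$-reduction. The key input is the identification of such automorphisms: an automorphism $\mathrm{id}+\partial$ of a pair-deformation over $A$ that is trivial modulo $J$ is given by a $k$-derivation $\partial$ of $\cO_Y$ with values in $J\otimes_k\cO_Y$ which preserves the ideal of $E$, i.e.\ by a section of $T_Y(-\log E)\otimes_k J$ (compare Remark~\ref{rem:ext_sheaves_Omegalog}); and since $J\cdot J=0$ the composition of two such automorphisms is simply their sum, so this sheaf of groups is abelian. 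A routine check shows $\{\psi_{ijk}\}$ is a \v{C}ech $2$-cocycle; by the assumption $\rH^2(Y,T_Y(-\log E))=0$ it is a coboundary $\delta\{\tau_{ij}\}$, and replacing $\phi_{ij}$ by $\phi_{ij}\circ(\mathrm{id}-\tau_{ij})$ yields a genuine cocycle along which the $(\cU_i,\cE_i)$ glue to the desired $(\cY,\cE)/\Spec A$.

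For uniqueness I would compare two gluings $(\cY_1,\cE_1)$ and $(\cY_2,\cE_2)$ of the same local data. Their $A'$-reductions are isomorphic by induction, so after replacing $(\cY_2,\cE_2)$ by a suitable isomorphic deformation I may assume they coincide on the $A'$-level. The difference of any two sets of gluing data lifting the common $A'$-gluing is then a \v{C}ech $1$-cocycle in $T_Y(-\log E)\otimes_k J$; by $\rH^1(Y,T_Y(-\log E))=0$ it is a coboundary, which produces the sought isomorphism of $A$-deformations.

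The only step that requires genuine care is the identification of the sheaf of identity-mod-$J$ automorphisms with $T_Y(-\log E)\otimes_k J$ and the verification that its natural group structure is abelian; the rest is standard \v{C}ech descent. The hypothesis $J\cdot\frakm_A=0$ is exactly what forces abelianness, so no nonabelian cohomology enters, and the vanishing of $\rH^1$ and $\rH^2$ delivers existence and uniqueness as stated.
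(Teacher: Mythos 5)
The paper itself gives no proof of this lemma: it is stated and immediately marked \verb|\qed|, presumably regarded as a standard fact of Čech-style deformation theory. Your proposal supplies the expected argument and it is essentially correct. The structure — induction on small extensions $0\to J\to A\to A'\to 0$ with $J\cdot\frakm_A=0$, identifying identity-mod-$J$ automorphisms of a pair-deformation with sections of $T_Y(-\log E)\otimes_k J$, killing the gluing obstruction with $\rH^2(Y,T_Y(-\log E))=0$ and the ambiguity in the gluing with $\rH^1(Y,T_Y(-\log E))=0$ — is the natural way to do this, and you correctly isolate the only non-formal point, namely the identification of the automorphism sheaf with the log tangent sheaf and its abelianness (for which $J^2=0$ is what you need; $J\cdot\frakm_A=0$ is a slightly stronger convenience that makes $J$ a $k$-vector space and lets you write everything on the central fibre $Y$).

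A few small points you pass over but should at least mention if this were to be written out in full. First, when you ``arrange that the $\phi_{ij}$ lift the gluing isomorphisms of $(\cY',\cE')$'', you are invoking that an isomorphism over $A'$ of deformations on $U_{ij}$ lifts to $A$; this uses that $U_{ij}=U_i\cap U_j$ is affine (which holds because $Y$, being a variety, is separated), so the relevant $\rH^1$ on $U_{ij}$ vanishes. Second, in checking that $\{\psi_{ijk}\}$ is a $2$-cocycle you are implicitly using that conjugation by $\phi_{ij}$ acts as the identity on sections of $T_Y(-\log E)\otimes_k J$; this holds because $\phi_{ij}$ reduces to the identity on the central fibre and $J\cdot\frakm_A=0$, so the conjugation only sees that reduction. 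Third, in the uniqueness step you should start from local lifts $\theta_i$ of the inductive isomorphism $\theta'\colon\cY_1'\to\cY_2'$ (again using affineness of $U_i$), after which the comparison cochain $\theta_i^{-1}\theta_j$ lands in $T_Y(-\log E)\otimes_k J$ and the vanishing of $\rH^1$ applies. None of these affects the correctness of the overall argument.
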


\begin{dfn}
  \label{dfn:miniversal}
  Let $(Y,E)$ be a pair consisting of a variety $Y$ and Cartier
  divisor $E$.

  Consider deformations of the pair $(Y,E)$ over Artinian local
  $\CC$-algebras $A$, assumed finite-dimensional over $\CC$.

  A deformation $(\cY^u,\cE^u)$ of $(Y,E)$ over either $0 \in \AA^d$
  for some $d \in \NN$, or  the ind-scheme $0 \in \AA^{\infty} =
  \varinjlim \AA^m$ is \emph{miniversal} if:
  \begin{enumerate}
\item For all $A$ and deformation $(\cY,\cE)$ of $(Y,E)$ over $A$
  there is a morphism $\Spec A \rightarrow \AA^\infty$ such that
  $(\cY,\cE)/A$ is isomorphic to the pull back of
  $(\cY^u,\cE^u)/\AA^\infty$;
  \item If $A=k[t]/(t^2)$ the morphism $\Spec A \rightarrow \AA^{\infty}$ in (1) is uniquely determined.
\end{enumerate}
\end{dfn}

\begin{lem}
  \label{miniversal}
  Let $(Y,E)$ be a pair consisting of an affine hypersurface
  $Y=(f=0) \subset \AA^n$ and a Cartier divisor $E =(g=0) \subset Y$.
  Consider deformations of the pair $(Y,E)$ over Artinian local
  $\CC$-algebras $A$, assumed finite-dimensional over $\CC$.  There is
  a miniversal deformation $(\cY^u,\cE^u)$ of $(Y,E)$ over either
  $0 \in \AA^d$ for some $d \in \NN$ or the ind-scheme
  $0 \in \AA^{\infty} := \varinjlim \AA^m$.

\smallskip

Moreover, this miniversal deformation may be explicitly constructed as
follows:

\smallskip

Let $h_i  \in \CC[x_1,\ldots,x_n]$, $i \in I$, be a lift of a basis of the $\CC$-vector space
$$\T^1_Y=\coker \left( \rH^0(\AA^n, T_{\AA^n}) \rightarrow
  \rH^0(\AA^n, N_Y \AA^n)\right)
=\textstyle{\CC[x_1,\ldots,x_n]/(f,\frac{\partial f}{\partial x_1},\ldots, \frac{\partial f}{\partial x_n})}$$

Let $k_j \in \CC [x_1,\ldots,x_n]$, $j \in J$ be a lift of a basis of the $\CC$-vector space
$$\T^1_{Y,E} = \coker \left( \rH^0(Y,T_Y) \rightarrow
  \rH^0(Y, N_E Y) \right)$$
under the surjection
$$\CC [x_1,\ldots,x_n] \rightarrow \CC[x_1,\ldots,x_n]/(f,g) =
\rH^0(Y, N_E Y)$$
\noindent Then a miniversal deformation of the pair $(Y,E)$ is given by the
family
\begin{align*}
\cY^u & = (f+\sum t_i h_i = 0) \subset \AA^n_{x_1,\dots,x_n} \times \AA^{d}_{\{t_i,
        u_j\mid i\in I, \;j\in J\}} \\
\cE^u & = (g+\sum u_j k_j =0) \subset \cY  
\end{align*}
where $d$ is the cardinality of the disjoint union $I \sqcup J$ (either finite or countably infinite).
\end{lem}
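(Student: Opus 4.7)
The plan is to verify the two clauses of Definition~\ref{dfn:miniversal}. For clause~(2), I would check directly that the Kodaira--Spencer map of $(\cY^u,\cE^u)$ at the origin $0\in\AA^d$ is an isomorphism onto $\rT\Deff{Y}{E}$: the vector $\partial/\partial t_i$ corresponds to the first-order deformation $(f+\varepsilon h_i=0,\,g=0)$, whose image under the surjection $\rT\Deff{Y}{E}\onto \rT\Def{Y}=\T^1_Y$ is the class of $h_i$; the vector $\partial/\partial u_j$ corresponds to $(f=0,\,g+\varepsilon k_j=0)$, which lies in the kernel of that surjection and projects to the class of $k_j$ in $\coker(\rH^0(Y,T_Y)\to \rH^0(Y,N_E Y))=\T^1_{Y,E}$. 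Since $Y$ is affine, $\rH^1(Y,N_E Y)=0$, so the exact sequence of Proposition~\ref{pro:deformations_pairs_X_D}(i) gives $\dim\rT\Deff{Y}{E}=\dim\T^1_Y+\dim\T^1_{Y,E}=d$ and the map is an isomorphism.

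For clause~(1), I would induct on the smallest $k$ with $\frakm_A^{k+1}=0$. Given $(\cY,\cE)/A$, lifting each coordinate $x_i\in\cO_Y$ to $\cO_\cY$ produces a closed embedding $\cY\hookrightarrow \AA^n_A$ extending $Y\hookrightarrow \AA^n$; by Nakayama and flatness of $\cY/A$, the ideal of $\cY$ is principal, generated by some $F\equiv f\pmod{\frakm_A}$, and $\cE\subset\cY$ is cut out by some $G\equiv g\pmod{\frakm_A}$. The freedom in these choices consists of (a) alternative lifts of the $x_i$, giving $A$-automorphisms of $\AA^n_A$ reducing to the identity mod $\frakm_A$, and (b) multiplication of $F$ by units of $A[x_1,\dots,x_n]$. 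At order $l+1$, an infinitesimal change $x_i\mapsto x_i+\delta_i$ with $\delta_i\in\frakm_A^{l+1}$ alters $F$ by $\sum\delta_i\,\partial f/\partial x_i \pmod{\frakm_A^{l+2}}$, and $F\mapsto(1+\epsilon)F$ with $\epsilon\in\frakm_A^{l+1}$ alters $F$ by $\epsilon f$. Thus the residue of $F-f-\sum t_i^{(l)}h_i$ in $(\frakm_A^{l+1}/\frakm_A^{l+2})\otimes\CC[x_1,\dots,x_n]$ reduces modulo $(f,\partial f/\partial x_1,\dots,\partial f/\partial x_n)$ to an element of $(\frakm_A^{l+1}/\frakm_A^{l+2})\otimes\T^1_Y$, and expanding in the basis $\{h_i\}$ determines the increments $t_i^{(l+1)}-t_i^{(l)}$. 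Crucially, these corrections do not affect $F$ modulo $\frakm_A^{l+1}$, so the induction is coherent.

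Once $F$ is in normal form, I run the analogous induction on $G$, now using automorphisms of $\cY/A$ (arising from $\rH^0(Y,T_Y)$, which act on $G$ by $D\mapsto D(g)$ at order $l+1$) and multiplication of $G$ by units in $\cO_\cY$ (altering $G$ by $\epsilon g$): the residue of $G-g-\sum u_j^{(l)}k_j$ normalizes to $(\frakm_A^{l+1}/\frakm_A^{l+2})\otimes\T^1_{Y,E}$, and the basis $\{k_j\}$ furnishes $u_j^{(l+1)}-u_j^{(l)}$. The resulting pair $(t_i,u_j)\in\frakm_A^d$ defines the classifying morphism $\Spec A\to\AA^d$; when $d=\infty$, any morphism from the finite-length scheme $\Spec A$ involves only finitely many parameters and so factors through some $\AA^m\subset\AA^\infty$. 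The main obstacle is organizing the inductive bookkeeping and verifying that the spaces of order-$(l+1)$ modifications coincide precisely with the subspaces cutting out the definitions of $\T^1_Y$ and $\T^1_{Y,E}$; both reduce to the explicit first-order computations outlined above, and the preservation of earlier normalizations follows automatically from the order count.
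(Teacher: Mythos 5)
Your proof is correct and takes essentially the same approach as the paper: the paper simply cites Artin's \emph{Lectures on deformations of singularities} (Theorem~6.1 and Remark~6.1, p.~23) for exactly this order-by-order normalization argument, adapted to pairs and to infinite-dimensional bases. You have filled in the details the paper delegates to that reference — the Kodaira--Spencer computation giving the first-order isomorphism, and the two successive inductions normalizing $F$ and then $G$ using coordinate automorphisms, unit multiples, and (for $G$) automorphisms of $\cY/A$ arising from $\rH^0(Y,T_Y)$ — and the bookkeeping, while informal, is sound.
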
 

\begin{proof}
  By definition of ind-scheme, a morphism $\Spec A \rightarrow \AA^d$
  is given by an assignment $t_i \mapsto a_i$, $u_j \mapsto b_j$ for
  some $a_i, b_j \in A$ with finitely many nonzero. The assertion
  follows from e.g.~\cite{Artin76}, p.~23, Theorem 6.1 and Remark 6.1.
\end{proof}

\begin{cor}
  \label{cor:miniversality}
  Let $(Y,E)$ be a pair of a normal variety and Cartier divisor. Assume that
  $(Y,E)$ has an atlas of charts as in Lemma~\ref{miniversal} and that $\rH^i(Y, T_Y(-\log E))=0$ for $i=1,2$.

  Suppose given a deformation of $(Y,E)$ over a base $0 \in \AA^e$ for
  $e$ finite or countably infinite, the deformation is miniversal iff
  it is universal for first order deformations, equivalently, the
  Kodaira--Spencer map $T_0\AA^e \rightarrow \T^1_{Y,E}$ is an
  isomorphism. 
\end{cor}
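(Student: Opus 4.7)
The ``only if'' direction is tautological: part~(2) of Definition~\ref{dfn:miniversal}, combined with part~(1) applied to arbitrary first-order deformations, says exactly that the Kodaira--Spencer map is bijective.

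For the ``if'' direction, the strategy is to first produce a miniversal deformation by gluing local ones, then to compare the given family with it by an inverse function theorem for formal ind-schemes. For the construction step, on each chart $U_i$ of the atlas, Lemma~\ref{miniversal} provides an explicit miniversal family $(\tilde{\cU}_i, \tilde{\cE}_i) \to \AA^{d_i}$. Fix a basis of $\T^1_{Y,E}$ that, upon restriction, spans the local spaces $\T^1_{U_i, E \cap U_i}$. Then Lemma~\ref{local-to-global}, together with the vanishing $\rH^i(Y, T_Y(-\log E))=0$ for $i=1,2$, allows the chart-wise classifying maps to be glued inductively over Artinian thickenings: the $\rH^1$-vanishing makes the gluing data unique and the $\rH^2$-vanishing kills the cocycle obstruction at each stage. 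This yields a global formal deformation $(\tilde{\cY}, \tilde{\cE}) \to \widehat{\AA^f}$, with $f = \dim_{\CC}\T^1_{Y,E}$, which is miniversal for test algebras finite-dimensional over $\CC$ and whose Kodaira--Spencer map is an isomorphism by construction.

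Second, suppose $(\cY^u, \cE^u) \to \AA^e$ has Kodaira--Spencer map an isomorphism. Miniversality of $(\tilde{\cY}, \tilde{\cE})$ supplies (built up inductively over Artinian thickenings of $0 \in \AA^e$) a morphism of formal completions $\psi: \widehat{\AA^e} \to \widehat{\AA^f}$ whose pullback of $(\tilde{\cY}, \tilde{\cE})$ is isomorphic to the restriction of $(\cY^u, \cE^u)$. By functoriality, the Kodaira--Spencer map of $(\cY^u, \cE^u)$ factors as the Kodaira--Spencer map of $(\tilde{\cY}, \tilde{\cE})$ composed with $d\psi_0$; since both Kodaira--Spencer maps are isomorphisms, so is $d\psi_0$. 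Formal smoothness of both germs then forces $\psi$ to be an isomorphism of formal ind-schemes, which transfers the miniversality property from $(\tilde{\cY}, \tilde{\cE})$ to $(\cY^u, \cE^u)$.

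\textbf{Main obstacle.} The chief subtlety is the ind-scheme aspect when $e$ or $f$ is infinite. This is handled by the standard device: miniversality is tested only against finite-dimensional Artinian $\CC$-algebras $A$, so every morphism $\Spec A \to \varinjlim_m \AA^m$ factors through some finite $\AA^m$, reducing every inductive step to the classical finite-dimensional situation. Once this reduction is in place the argument is entirely standard deformation-theoretic formalism.
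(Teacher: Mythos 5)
Your argument is correct and follows the same route as the paper's: both proofs rest on Lemma~\ref{local-to-global} and the explicit affine miniversal families of Lemma~\ref{miniversal}, and both conclude by an order-by-order comparison between the given family and a miniversal one. The only organizational difference is that you first assemble an explicit global glued miniversal family over $\widehat{\AA^f}$ and then invoke a formal inverse function theorem, whereas the paper phrases the reduction as ``we may assume $Y$ is affine'' and runs the order-by-order comparison directly; these are the same argument said in two orders, and your version makes the globalization step (gluing via the $\rH^1$- and $\rH^2$-vanishing of $T_Y(-\log E)$) more explicit.

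One small wording caveat: the phrase ``a basis of $\T^1_{Y,E}$ that, upon restriction, spans the local spaces $\T^1_{U_i,E\cap U_i}$'' presupposes surjectivity of the restriction maps $\T^1_{Y,E}\to\T^1_{U_i,E|_{U_i}}$, which is not among the corollary's stated hypotheses. This is not actually needed for the gluing: it suffices to pull each local Lemma~\ref{miniversal} family back along the map $\widehat{\AA^f}\to\widehat{\AA^{d_i}}$ dual to the restriction (whatever its image), since a global deformation over an Artin ring can only realize local first-order deformations lying in that image. Rephrasing this removes the apparent extra assumption and the rest of your construction and comparison go through.
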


\begin{proof} By Lemma~\ref{local-to-global} we may assume that $Y$ is
  affine. Let $(\cY^u, \cE^u)\to \AA^d$ be the miniversal family
  provided by Lemma~\ref{miniversal}. By versality of this family, for all orders $l$ we have compatible maps $\AA^e_l
  \rightarrow \AA^d_l$, where:
  \begin{itemize}
  \item if $e$ is finite, $\AA^e_l = \Spec k[s_1,\ldots
    s_e]/(s_1,..,s_e)^{l+1}$;
  \item if $e$ is countably infinite and $\AA^e=\varinjlim \AA^m$, $\AA^e_l : = \varinjlim \AA^m_l$
  \end{itemize}
  Now one checks that if the map for $l=1$ is an isomorphism then for
  all $l$ the maps $\AA^e_l \rightarrow \AA^d_l$ are isomorphisms.
\end{proof}


\subsection{Explicit examples of miniversal deformations of pairs}
\label{sec:expl-exampl-miniv}

Below we describe several deformation families of toric pairs $(Y,E)$ over
$\AA^\infty$ that are flat over a Zariski open neighborhood of the
origin, but we only claim (mini)versality for infinitesimal
deformations. In all cases the claimed miniversality follows from the description of $\T^1_{Y,E}$
given in Lemma~\ref{lem:toricqODPdeformations} and
Corollary~\ref{cor:miniversality}. In our setting, the assumption $\rH^i(Y, T_Y(-\log
E))=0$ for $i=1,2$ of the Corollary is satisfied by Theorem~\ref{thm:3}(2).
We will also need to work locally on $X$, where
$\pi \colon Y \rightarrow X$ is a crepant partial resolution of a
Gorenstein toric Fano \mbox{$3$-fold}, for which we need the assertion
$R^i\pi_*T_Y(-\log E))=0$, which is proved in the same way.


\begin{exa}
  \label{exa:A3}
  A miniversal deformation of the pair
  \[
    \Bigl(Y, E\Bigr)=\Bigl(\AA^3_{x,y,z},(xyz=0)\Bigr)
  \]
  is given by the family
  \begin{align*}
    \cY & = \AA^3_{x,y,z} \times \cM\\
    \cE & = \Bigl(xyz+ \lambda+xA(x)+yB(y) +zC(z)=0\Bigr)\subset \cY
  \end{align*}
  over $\cM= \AA^1_\lambda \times
    \AA^\infty_{a_0,a_1,\dots} \times \AA^\infty_{b_0,b_1,\dots}
    \times \AA^\infty_{c_0,c_1,\dots}$, where  $A(x) = \sum_{i=0}^\infty a_ix^i$, $B(y) = \sum_{i=0}^\infty
  b_iy^i$, $C(z) = \sum_{i=0}^\infty c_iz^i$.
\end{exa}

\begin{exa}
  \label{exa:ODP}
  We describe a miniversal deformation of the pair $(Y, E)$ where
  \begin{align*}
    Y& = (xy+zw=0) \subset \AA^4_{x,y,z,w} \\
    E& = (zw=0) \subset Y
  \end{align*}
   
  The following claim is a simple exercise based on the previous sections,
especially Proposition~\ref{pro:deformations_pairs_X_D}~(iii) and
Lemma~\ref{lem:toricqODPdeformations}.
  \begin{claim}
    \label{cla:miniversal_ODP} The family
    \[g\colon (\cY, \cE)\to \cM = \AA^1_\lambda \times \AA^1_\mu
      \times \AA^\infty_{a_0,a_1,\dots}\times
    \AA^\infty_{b_0,b_1,\dots} \times \AA^\infty_{c_0,c_1,\dots}
    \times \AA^\infty_{d_0,d_1,\dots}
    \]
  given by equations as follows is a miniversal deformation of the
  pair $(Y,E)$:
    \begin{align}
   \label{Eq_ODP}   
           \cY &  =
                              \Bigl(xy+zw=\lambda+\mu +
                              xA(x)+yB(y)+zC(z) + wD(w)\Bigr) 
                              \subset \AA^4_{x,y,z,w} \times \cM \\
    \label{Eq_ODP_E}  \cE & = \Bigl(zw=\lambda+xA(x)+yB(y)\Bigr) \subset \cY
    \end{align}
    where $A(x)=\sum_{i=0}^\infty a_ix^i$, $B(y)=\sum_{i=0}^\infty b_iy^i$, etc.
   
    The Kodaira--Spencer map of the family is shown in Fig.~\ref{fig:KS_ODP} , where
    the picture represents a vector in $\T^1_{Y,E}=\rH^0\left(\Delta,\nu_\star \nu^\star (-K_Y)\right)$,
      see Lemma~\ref{lem:toricqODPdeformations}.
      \begin{figure}[ht]
  \centering
\begin{tikzpicture}
  \draw[thick,red] (0,0) -- (0,-4); \draw[thick,blue] (-2,-2) -- (2,-2);
  \node at (1.25,-0.25) {$\lambda + xA(x)$};
  \node at (-1.25,-3.75) {$\lambda + yB(y)$};
  \node at (1.5,-2.25) {$\mu + zC(z)$};
  \node at (-1.5,-1.75) {$\mu + wD(w)$};
\end{tikzpicture}
\caption{Kodaira--Spencer map of the family of Equations~\ref{Eq_ODP}
  and~\ref{Eq_ODP_E}.}
  \label{fig:KS_ODP}
\end{figure}
  \end{claim}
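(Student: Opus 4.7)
The plan is to apply Corollary~\ref{cor:miniversality}, which reduces miniversality to two things: flatness of the family, and the condition that the Kodaira--Spencer map $\kappa \colon T_0 \cM \to \T^1_{Y,E}$ is an isomorphism. The hypotheses of the Corollary are satisfied: $Y$ is affine, so $\rH^i(Y, T_Y(-\log E))=0$ for $i=1,2$, and $Y$ is itself a hypersurface in $\AA^4$ (equipped with the Cartier divisor $E$), so Lemma~\ref{miniversal} applies locally. Flatness is straightforward: $\cY\subset \AA^4_{x,y,z,w}\times \cM$ is cut out by one equation deforming $xy+zw$, and since $xy+zw$ is a non-zero-divisor in $\CC[x,y,z,w]$, the family $\cY\to \cM$ is flat in a neighbourhood of the origin; similarly $\cE\subset \cY$ is defined by one further equation deforming $zw$, and the same argument applies.

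Next, I would identify $\T^1_{Y,E}$ with $T_0\cM$ using Lemma~\ref{lem:toricqODPdeformations}, according to which $\T^1_{Y,E}=\rH^0(\Delta^\prime, \nu^\star(-K_Y))$. In our setting $E=(zw=0)\cap Y$ decomposes into four planes $(x=z=0)$, $(y=z=0)$, $(x=w=0)$, $(y=w=0)$, and its singular locus $\Delta$ is the union of the four coordinate axes through the ODP origin. The partial normalization $\nu\colon \Delta^\prime\to \Delta$ separates the origin into two points: a ``red'' point $p$ where the $x$- and $y$-axes meet, and a ``blue'' point $q$ where the $z$- and $w$-axes meet, exactly as in Fig.~\ref{fig:axes_normalization}. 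Since $\cO_Y(-K_Y)=\cO_Y(E)$ restricts trivially (with a canonical trivialization) to each coordinate axis, a global section of $\nu^\star(-K_Y)$ on $\Delta^\prime$ is equivalent to a quadruple of polynomials $(f_x(x), f_y(y), f_z(z), f_w(w))$ subject to the matching conditions $f_x(0)=f_y(0)=:\lambda$ and $f_z(0)=f_w(0)=:\mu$. Writing $f_x=\lambda+xA(x)$, $f_y=\lambda+yB(y)$, $f_z=\mu+zC(z)$, $f_w=\mu+wD(w)$, we recover the parameter space $\cM$ bijectively.

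To finish, I would compute the Kodaira--Spencer map $\kappa$ and verify that it matches the identification above. Concretely, a tangent vector in $T_0 \cM$ corresponds to a first-order deformation of $(Y,E)$, and by Proposition~\ref{pro:deformations_pairs_X_D}(iii) this gives an element of $\Ext^1(\Omega_Y(\log E), \cO_Y) = \rH^0(\cExt^1(\Omega_Y(\log E), \cO_Y)) = \rH^0(\Delta^\prime, \nu^\star(-K_Y))$ via the local-to-global spectral sequence (the $\rH^1(T_Y(-\log E))$ term vanishes since $Y$ is affine). Using the explicit presentation of $\cExt^1(\Omega_Y(\log E), \cO_Y)$ computed in Example~\ref{exa:ODP_deformations}, one reads off that the deformation with parameters $(\delta\lambda, \delta\mu, \delta A, \delta B, \delta C, \delta D)$ maps to precisely the section $(\delta\lambda+x\delta A(x), \delta\lambda+y\delta B(y), \delta\mu+z\delta C(z), \delta\mu+w\delta D(w))$ on $\Delta^\prime$. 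This gives the isomorphism $\kappa$, whence miniversality follows. Part~(iv) of the claim (that the Kodaira--Spencer map agrees with Fig.~\ref{fig:KS_ODP}) is then just a restatement of this identification.

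The calculations are all elementary; the only mild subtlety is the ind-scheme structure on $\cM$, which is handled by restricting to finite-dimensional truncations $\AA^m\subset \AA^\infty$ at each step, as in the definition of miniversality (Definition~\ref{dfn:miniversal}). Torus equivariance of the family further permits checking the isomorphism weight-by-weight, reducing to finite-dimensional linear algebra in each weight.
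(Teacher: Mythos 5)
Your proposal is correct and uses precisely the tools the paper signals immediately before the claim (Corollary~\ref{cor:miniversality}, Lemma~\ref{lem:toricqODPdeformations}, Proposition~\ref{pro:deformations_pairs_X_D}(iii), and Example~\ref{exa:ODP_deformations}), thereby fleshing out the ``simple exercise'' the paper declines to write up. The one step you compress -- reading off that the Kodaira--Spencer class of this particular family equals the displayed quadruple $(\delta\lambda+x\delta A, \delta\lambda+y\delta B, \delta\mu+z\delta C, \delta\mu+w\delta D)$ rather than some other isomorphism -- is indeed a direct computation using the matrix presentation of $\cExt^1(\Omega_Y(\log E),\cO_Y)$ from Example~\ref{exa:ODP_deformations}, and is entirely in the spirit of the elided argument.
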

\end{exa}

\begin{exa}
  \label{exa:cA}
  In this example we describe a miniversal deformation of the toric pair
  $(Y,E)$ where $Y$ is the product of the minimal resolution of the
  surface $\text{A}_n$-singularity with $\AA^1$.

  \smallskip
  
  In more detail, consider the pair $(X,D)$ where
  \begin{align*}
    X&=\text{A}_{n}\times \AA^1_u =\left(xy+z^{n+1}=0\right)
       \subset \AA^4_{x,y,z,u} \\
    D&=(zu=0) \subset X
  \end{align*}
  Note that $D$ has three irreducible components: $D=D_u+D_x+D_y$
  where $D_u=(u=0)$, $D_x=(z=y=0)$ and $D_y=(z=x=0)$.
  Let $\pi \colon Y\to X$ be the minimal resolution
  with chain of exceptional divisors $E_1,\dots, E_n$ and strict
  transforms $D_u^\prime$, $E_0=D_x^\prime$, $E_{n+1}=D_y^\prime$ as
  pictured in Fig~\ref{fig:KS_cA}.

\smallskip
  
  Our goal is to describe
  the miniversal deformation family of the pair $(Y,E)$, where
  $E=D_u^\prime + \sum_{i=0}^{n+1} E_i$.

\smallskip
  
Let
\[\cM= \left(\AA^\infty \right)^{n+1} \times \AA^1_\lambda
\times \AA^\infty_{b_0,b_1,\dots} \times \AA^\infty_{c_0,c_1,\dots} \]
and consider the family of pairs $f\colon (\cX, \cD)\to \cM$
  given by equations:
    \begin{align} \label{Eq_cA} 
      \cX & =
\Bigl[ xy +\prod_{i=1}^{n+1} \left(z-A_i (u) \right) =0 \Bigr]\subset
\AA^4_{x,y,z,u} \times \cM 
      \\ \label{Eq_cA_D}
\cD & = \Bigl[uz+ \lambda + xB(x) +yC(y) =0 \Bigr] \subset \cX
    \end{align}
  where:
  \begin{itemize}
  \item For $j=1, \dots, n+1$, denoting by $a_{j0}, a_{j1},\dots$ the
    coordinate functions on the $j$\textsuperscript{th} factor of the
    product $\left(\AA^\infty\right)^{n+1}$, $A_j(u)=\sum_{i=0}^\infty
    a_{ji}u^i$;
  \item $B(x)=\sum_{i=0}^\infty b_ix^i$, $C(y)=\sum_{i=0}^\infty c_i y^i$.
  \end{itemize}
  \begin{claim} \label{claim_mini_cA}
    There exists a simultaneous resolution $\Pi \colon (\cY ,\cE)\to
    (\cX, \cD)$ inducing a flat morphism $g=f\circ \Pi \colon (\cY,
    \cE)\to \cM$ where:
    \begin{enumerate}[(i)]
    \item The rational map $(Y,E)\dasharrow (\cY, \cE)$ is a morphism
      that identifies $(Y,E)$ with the fibre $g^\star (0)$;
    \item With the identification in Part~(i), $g\colon (\cY, \cE) \to
      0\in \cM$ is a miniversal family for the pair $(Y,E)$;
    \item The Kodaira--Spencer map of the family is shown in
      Fig.~\ref{fig:KS_cA}, where the picture represents a vector in
      $\T^1_{Y,E}=\rH^0\left(\Delta,\nu_\star \nu^\star (-K_Y)\right)$,
      see Lemma~\ref{lem:toricqODPdeformations}.
      \begin{figure}[ht]
  \centering
\begin{tikzpicture}
  \draw[thick] (0,0) -- (2,-1) -- (3,-2) -- (3,-3)
  (3,-5) -- (3,-6) -- (2,-7) -- (0,-8)
  (2, -1) -- (7, -1) (3, -2) -- (7,-2) (3,-3) -- (7,-3)
  (3,-6) -- (7,-6) (2,-7) -- (7, -7);
  \draw[thick, dashed] (3,-3.5) -- (3, -4.5);
  \node at (7.4, -8) {$E_0=D_x^\prime$};
  \node at (7, -6.5) {$E_1$};
  \node at (7.2, -2.5) {$E_{n-1}$};
  \node at (7, -1.5) {$E_n$};
  \node at (7.6, -0.5) {$E_{n+1}=D_y^\prime$};
  \node at (0,-4) {$D^\prime_u$};
  \node at (4.8,-0.7) {$\lambda + uA_{n+1}(u)$};
  \node at (4.8,-1.7) {$\lambda + uA_{n}(u)$};
  \node at (4.8,-2.7) {$\lambda + uA_{n-1}(u)$};
  \node at (4.8,-5.7) {$\lambda + uA_{2}(u)$};
  \node at (4.8,-6.7) {$\lambda + uA_{1}(u)$};
  \node at (-1,-0.5) {$\lambda + yC(y)$};
  \node at (2,-1.5) {$\lambda$};
  \node at (2.5,-2.5) {$\lambda$};
  \node at (2.5,-5.5) {$\lambda$};
  \node at (2,-6.5) {$\lambda$};
  \node at (-1,-7.5) {$\lambda + xB(x)$};
\end{tikzpicture}
\caption{Kodaira--Spencer map of the family of Equations~\ref{Eq_cA}
  and~\ref{Eq_cA_D}.}
  \label{fig:KS_cA}
\end{figure}
    \item The action of the symmetric group $\mathfrak{S}_{n+1}$ that permutes
      the factors of $(\AA^\infty)^{n+1}$ lifts uniquely to:
      \begin{itemize}
      \item a biregular action on $(\cX, \cD)$ such that
        $f\colon (\cX,\cD) \to \cM$ is equivariant, and
      \item a birational action on the pair $(\cY,\cE)$ such that
        $\Pi \colon \cY \to \cX$ is equivariant.
      \end{itemize}
    \end{enumerate}
  \end{claim}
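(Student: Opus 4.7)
The strategy is to construct $\Pi\colon \cY \to \cX$ explicitly as a small resolution of the conifold-type family $\cX$, then to invoke Corollary~\ref{cor:miniversality} to deduce miniversality from a computation of the Kodaira--Spencer map. Rewriting~\eqref{Eq_cA} as $xy = -\prod_{i=1}^{n+1}(z - A_i(u))$, I would take $\cY$ to be the closure of the graph of the rational map $\cX \dashrightarrow (\PP^1_\cM)^n$ whose $j$-th component is
\[
\bigl(-\textstyle\prod_{i \le j}(z - A_i(u)) : x\bigr) = \bigl(y : \textstyle\prod_{i > j}(z - A_i(u))\bigr)
\]
Covering $\cY$ by $n+1$ standard affine charts indexed by which factor $(z - A_k(u))$ is being resolved, each chart reduces after a Laurent-monomial change of coordinates to an equation of the form $x'y' = z' - A_k(u)$, hence is smooth over $\cM$; the exceptional locus of $\cY \to \cX$ is a codimension-two chain of $\PP^1$-families, so $\Pi$ is small. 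Setting all parameters to zero recovers the minimal resolution $\pi\colon Y \to X$ with its toric boundary $E$, yielding~(i), and proves along the way that $g = f \circ \Pi$ and its restriction to $\cE$ are flat.

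For~(ii), the hypotheses of Corollary~\ref{cor:miniversality} hold since $(Y,E)$ is toric --- hence locally a hypersurface of the type considered in Lemma~\ref{miniversal} --- and $R^i\pi_\star T_Y(-\log E) = 0$ for $i \ge 1$ because $T_Y(-\log E) \cong \cO_Y^{\oplus 3}$ by Remark~\ref{rem:log_tangents} and $Y$ has rational singularities. It therefore suffices to show the Kodaira--Spencer map $\kappa\colon T_0\cM \to \T^1_{Y,E}$ is an isomorphism. By Lemma~\ref{lem:toricqODPdeformations}, $\T^1_{Y,E} = \rH^0(\Delta',\nu^\star(-K_Y))$, and enumerating the components of $\Delta'$ --- the toric strata $\Delta_j = E_{j-1} \cap E_j$ for $j = 1, \ldots, n+1$ together with the strata $D'_u \cap E_i$ for $i = 0, \ldots, n+1$ --- a section is determined by a single common constant $\lambda$ together with a regular function on each of the $\Delta_j$ and on the two non-trivial strata $D'_u \cap E_0$, $D'_u \cap E_{n+1}$. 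This matches the summand structure of $T_0\cM$. Differentiating the defining equations~\eqref{Eq_cA},~\eqref{Eq_cA_D} and restricting to each stratum yields exactly the assignment of Fig.~\ref{fig:KS_cA}, proving~(iii) and showing $\kappa$ is bijective.

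For~(iv) the $\mathfrak{S}_{n+1}$-action on $\cM$ by permutation of the power series $A_j(u)$ preserves the defining equations of $\cX$ and $\cD$ (neither distinguishes between the $A_j$), so it lifts biregularly and uniquely to $(\cX,\cD)$ with $f$ equivariant; on $\cY$ the lift is only birational because the incidence conditions in the construction fix an ordering of the factors, and uniqueness of the birational lift follows from smallness of $\Pi$. The main conceptual obstacle in this outline is the ind-scheme base: the construction of $\cY$ and the verification of its flatness must be done level by level along the exhaustion $\cM = \varinjlim \cM_m$ by finite-dimensional pieces. However, each chart of $\cY$ is algebraic in the coefficients of the $A_j, B, C$, so flatness and the Kodaira--Spencer computation are essentially formal extensions of the finite-dimensional case treated in Example~\ref{exa:ODP} and present no genuine difficulty.
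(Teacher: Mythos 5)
Your proposal is correct and takes essentially the same approach as the paper: both construct $\Pi$ explicitly as the simultaneous small resolution of $\cX$, verify in coordinate charts that $\cY$ is smooth over $\cM$ and compute the Kodaira--Spencer map there, and then conclude miniversality via Corollary~\ref{cor:miniversality} (using the vanishing of $R^i\pi_\star T_Y(-\log E)$ to reduce to the local hypersurface situation of Lemma~\ref{miniversal}). The only cosmetic difference is that you assemble $\cY$ in one step as the graph closure in $\cX\times(\PP^1_\cM)^n$, whereas the paper proceeds by induction on $n$ via iterated blow-ups of the divisors $(x = z - A_k(u) = 0) \subset \cX$; these yield the same resolution.
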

\end{exa}
We prove the claim by induction on $n$; see
also~\cite[2.2.2.~Theorem]{MR1144527}. Consider the blow up
$\cX^\prime \to \cX$ of the divisor
\[
Z=(x=z-A_1(u)=0) \subset \cX
\]
The variety $\cX^\prime$ is given by the equation $(z^\prime x =
x^\prime A_1) \subset \cX \times \PP^1_{x^\prime:z^\prime}$.

In the chart where
$z^\prime \neq 0$, setting $\widetilde{x}=x^\prime/z^\prime$ and
substituting $x=\widetilde{x} A_1$, we get
\[
\cX^\prime \cap \{z^\prime \neq 0\}=\Bigl[ \widetilde{x}y +\prod_{i=2}^n \left(z-A_i (u) \right) =0 \Bigr]\subset
\AA^4_{\widetilde{x},y,z,u} \times \cM
\]
and we proceed to resolve singularities by induction on $n$.

In the chart
where $x^\prime \neq 0$, setting $\widetilde{z}=z^\prime/x^\prime$ and
substituting $z=x\widetilde{z}+A_1$, we get
\[
  \cX^\prime \cap \{x^\prime \neq 0\}=
  \Bigl[ y +\widetilde{z} \prod_{i=2}^{n+1} \left(z+(A_1(u)-A_i (u) \right) =0 \Bigr]\subset
\AA^4_{x,y,\widetilde{z},u} \times \cM
\]
and, solving for $y$, we see that $\cX^\prime$ is nonsingular in
this chart. Working in this chart, we can prove the statement concerning the
Kodaira--Spencer map of the family. Indeed, in this chart
\[
\cD^\prime \cap \{x^\prime \neq 0\}=\Bigl[ x\widetilde{z}u + \lambda
+uA_1(u)+xB(x) + yB(y) +\cdots =0\Bigr] \subset \cX
\]

It follows from this last expression, by induction on $n$, that the
Kodaira--Spencer map identifies $T_{0}{\cM}$ with $\T^1_{Y,E}$, which
shows that the family $g\colon (\cY,\cE)\to 0\in \cM$ is indeed a
versal family.  
\end{exa}

\begin{exa}
  \label{exa:uz2}
  In this example we describe a miniversal deformation of the pair
  $(Y,E)$ given by the tropical arrangement in
  Fig.~\ref{fig:uz2_trop}.

  \smallskip

  In other words, consider the pair $(X,D)$ where
  \begin{align*}
    X&=(xy - uz^2=0)\subset \AA^4_{x,y,z,u} \\
    D&=(uz=0) \subset X
  \end{align*}
  Note that $X$ has an $\text{A}_1$-singularity along the
  $u$-axis. Our goal is to describe a miniversal deformation of the
  pair $(Y,E)$ where $\pi \colon Y \to X$ is the blow-up of the
  divisor $Z=(x=z=0)\subset X$, and $E=\pi^{-1} (D)$.

  \smallskip

  The variety $Y$ is given by the equation
$(z^\prime x=x^\prime  z) \subset X \times \PP^1_{x^\prime : z^\prime}$.

  In the chart where $z^\prime \neq 0$, setting
  $\widetilde{x}=x^\prime/z^\prime$, the equations of the pair $(Y,E)$ are given by
  substituting $x=\widetilde{x}z$ in the equations of $(X,D)$:
  \begin{align*}
    Y & = (\widetilde{x}y-uz=0) \subset \AA^4_{\widetilde{x},y,z,u} \\
    E & = (uz = 0) \subset Y
  \end{align*}
  Note that this is the pair of a \mbox{$3$-fold} ordinary double
  point together with its toric boundary divisor, already considered
  in Example~\ref{exa:ODP}.
  
  In the chart where $x^\prime \neq 0$, setting $\widetilde{z} =
  z^\prime/x^\prime$, the equations of the pair $(Y,E)$ are given by
  substituting $z=x\widetilde{z}$ in the equations of $(X,D)$:
  \begin{align*}
    Y & = (y-xu\widetilde{z}^2=0) \subset \AA^4_{x,y,\widetilde{z},u} \\
    E & = (xu\widetilde{z}=0) \subset Y
  \end{align*}
  We represent the pair $(Y,E)$ with the generic tropical
  arrangement given in Fig.~\ref{fig:uz2_trop}, where we also label
  the ``axes'' by their respective coordinate functions. 
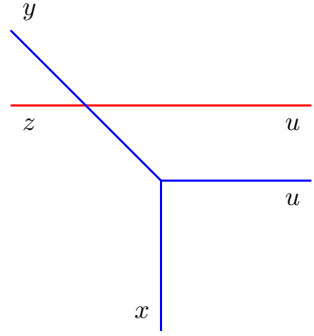
\begin{figure}[ht]
  \centering
\begin{tikzpicture}
  \draw[thick,red] (0,-2) -- (2,-2);
  \draw[thick,red] (0,-2) -- (-2,-2);
  \draw[thick,blue] (0,-3) -- (-2,-1);
  \draw[thick,blue] (0,-3) -- (2, -3);
  \draw[thick,blue] (0, -3) -- (0,-5);
  \node at (1.75,-2.25) {$u$};
  \node at (1.75,-3.25) {$u$};
  \node at (-0.25,-4.75) {$x$};
  \node at (-1.75,-2.25) {$z$};
  \node at (-1.75,-0.75) {$y$};
\end{tikzpicture}
\caption{Tropical arrangement for the pair \texorpdfstring{$(Y,E)$}{(Y,E)} of Example~\ref{exa:uz2}.}
  \label{fig:uz2_trop}
\end{figure}
 Consider the family of pairs
\[
f\colon (\cX, \cD) \to \cM = \AA^1_\lambda \times \AA^1_\mu \times
\AA^\infty_{a_0,a_1,\dots}  \times \AA^\infty_{b_0,b_1,\dots}\times
\AA^\infty_{c_0,c_1,\dots}\times
(\AA^\infty)^2
\]
given by equations as follows:
\begin{align}
  & \label{eq:mini_uz2}
    \begin{aligned}
    \cX = \Bigl[xy= \bigl(uz+\lambda+\mu + xA(x)+yB(y)+zC(z) +
  uD_1(u)\bigr)\bigl(z-D_0(u)\bigr) \Bigr] \subset
  \AA^4_{x,y,z,u}\times \cM
    \end{aligned}\\
  & \label{eq:mini_uz2D} \cD = \Bigl[uz+ \lambda + xA(x) + yB(y)  =0
    \Bigr] \subset \cX
\end{align}
where $A(x)=\sum_{i=0}^\infty a_ix^i$, etc.

\begin{claim}
  \label{claim:defo_uz2}
  Let $\Pi \colon \cY \to \cX$ be the blow-up of the
  divisor
  \[
Z=(x=z-D_0(u)=0)\subset \cX
\]
  and $\cE=\Pi^{-1}(\cD)$. Then $g=f\circ \Pi \colon (\cY, \cE)\to \cM$ is a flat morphism where
  \begin{enumerate}[(i)]
  \item The rational map $(Y,E)\dasharrow (\cY,\cE)$ is a morphism
    that identifies $(Y,E)$ with the fibre $g^\star (0)$;
  \item With the identification in Part~(i), $g\colon (\cY, \cE) \to
    0\in \cM$ is a miniversal family for the pair $(Y,E)$;
  \item   The Kodaira--Spencer map of the family is shown in
  Fig.~\ref{fig:KS_uz2}, where the picture represents a vector in
  $\T^1_{Y,E}=\rH^0\left(\Delta, \nu_\star \nu^\star (-K_Y) \right)$,
  see Lemma~\ref{lem:toricqODPdeformations}.
  \begin{figure}[ht]
  \centering
\begin{tikzpicture}
  \draw[ thick,red] (0,-2) -- (3,-2) (0,-2) -- (-3,-2);
  \draw[thick,blue] (0,-3) -- (-3,0)
  (0,-3) -- (3, -3) (0, -3) -- (0,-5);
  \node at (1.75,-2.25) {$\mu+uD_1(u)$};
  \node at (1.75,-3.25) {$\lambda+uD_0(u)$};
  \node at (-0.75,-2.75) {$\lambda$};
  \node at (-1.25,-4.75) {$\lambda + xA(x)$};
  \node at (-2.75,-2.25) {$\mu + zC(z)$};
  \node at (-1.5,-0.25) {$\lambda +yB(y)$};
\end{tikzpicture}
\caption{Kodaira--Spencer map of the family of
  Equations~\ref{eq:mini_uz2} and~\ref{eq:mini_uz2D}.}
  \label{fig:KS_uz2}
\end{figure}
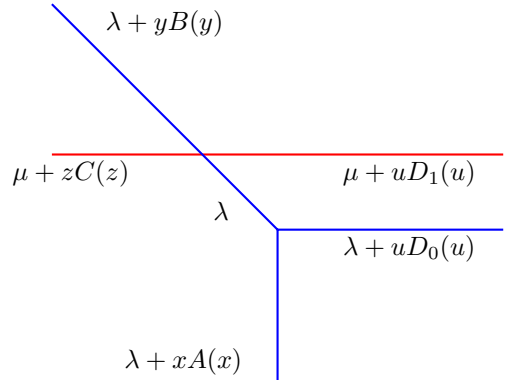
\item Let $\Pi^\prime \colon \cY^\prime \to \cX$ be the blow-up of the
  divisor
\[
Z^\prime = \Bigl(y=z-D_0(u)=0\Bigr) \subset \cX
\]
and $\cE^\prime=\Pi^{\prime\, -1}(\cD)$. Then $g^\prime=f\circ
\Pi^\prime\colon (\cY^\prime, \cE^\prime) \rightarrow \cM$ is a miniversal family for
the pair $(Y^\prime, E^\prime)$ given by the tropical arrangement in
Fig.~\ref{fig:KS_uz2prime}, also showing the Kodaira--Spencer map of
the family.
 \begin{figure}[ht]
  \centering
\begin{tikzpicture}
  \draw[ thick,red] (0,-3) -- (3,-3) (0,-3) -- (-3,-3);
  \draw[thick,blue] (-1,-2) -- (-3,0)
  (-1,-2) -- (3, -2) (-1, -2) -- (-1,-5);
  \node at (1.75,-2.25) {$\lambda+uD_0(u)$};
  \node at (1.75,-3.25) {$\mu+uD_1(u)$};
  \node at (-1.25,-2.5) {$\lambda$};
  \node at (-2.25,-4.75) {$\lambda + xA(x)$};
  \node at (-2.75,-3.25) {$\mu + zC(z)$};
  \node at (-1.5,-0.25) {$\lambda +yB(y)$};
\end{tikzpicture}
\caption{Kodaira--Spencer map of the family of Claim~\ref{claim:defo_uz2}.}
  \label{fig:KS_uz2prime}
\end{figure}
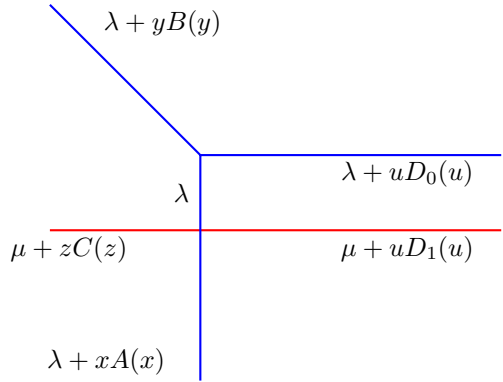
  \end{enumerate}
\end{claim}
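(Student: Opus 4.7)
The proof proceeds in charts of the blow-up $\Pi: \cY \to \cX$. In the chart $z' \neq 0$, setting $\tilde{x} = x'/z'$ so that $x = \tilde{x}(z - D_0(u))$, substituting into~\eqref{eq:mini_uz2} and dividing out the factor $(z - D_0(u))$ gives an equation of the form $\tilde{x}y = uz + \lambda + \mu + yB(y) + zC(z) + uD_1(u) + O\bigl((z-D_0(u))\tilde{x}\bigr)$; after the evident change of variables, this recovers precisely the miniversal deformation of the ODP pair from Claim~\ref{cla:miniversal_ODP} (with $\lambda$ and $\mu$ playing their earlier roles, and the power series $A, B, C, D_1$ accounting for the non-isolated boundary). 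In the chart $x' \neq 0$, setting $\tilde{z} = z'/x'$ so that $z = x\tilde{z} + D_0(u)$, the equation of $\cX$ factors as $x(y - \tilde{z}H) = 0$ for an explicit polynomial $H$, so the strict transform $\cY$ is the smooth hypersurface $y = \tilde{z}H$. In both charts $\cY$ is a hypersurface in $\AA^4 \times \cM$, hence $g = f \circ \Pi$ is flat of relative dimension $3$. Setting all parameters to zero in the equations recovers $X: xy = uz^2$ and $D: uz = 0$, while the restricted center $Z_0 = (x = z = 0)$ of the blow-up reproduces the pair $(Y,E)$ described before the claim; this proves Part~(i).

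For Part~(iii), we compute the Kodaira--Spencer map $T_0 \cM \to \T^1_{Y,E} = \rH^0\bigl(\Delta^\prime, \nu^\star(-K_Y)\bigr)$ (Lemma~\ref{lem:toricqODPdeformations}) by restricting first-order perturbations to each component of the $1$-skeleton $\Delta^\prime$. Each leg of the tropical arrangement in Fig.~\ref{fig:uz2_trop} is canonically identified with an affine line coordinated by one of $x, y, z, u$, and the perturbation induced by the corresponding power series $A(x), B(y), C(z), D_i(u)$ restricts to that leg; the two internal edges (which sit above $\Gm$-strata of $X$) pick up the constants $\lambda$ and $\mu$ respectively. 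This matches exactly the labeling of Fig.~\ref{fig:KS_uz2}. Comparing dimensions of isotypic pieces, the resulting map $T_0 \cM \to \T^1_{Y,E}$ is a bijection, so by Corollary~\ref{cor:miniversality} (whose hypotheses are satisfied by Theorem~\ref{thm:3}(2) and the hypersurface local structure of $Y$) the family $g$ is miniversal, proving Part~(ii).

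For Part~(iv), observe that the two blow-ups $\Pi$ and $\Pi'$ with respective centers $Z = (x = z - D_0(u) = 0)$ and $Z' = (y = z - D_0(u) = 0)$ are the two small resolutions of the relative conifold singularity parametrized by $\cM$, and the induced birational map $\cY \dasharrow \cY'$ is a flop over $\cX$. Repeating the chart analysis with $Z$ replaced by $Z'$ (interchanging the roles of $x$ and $y$) shows that $g' = f \circ \Pi'$ is flat, its central fibre is the pair $(Y', E')$ corresponding to the tropical arrangement of Fig.~\ref{fig:KS_uz2prime}, and the Kodaira--Spencer map is as shown with the roles of $\lambda + uD_0(u)$ and $\mu + uD_1(u)$ swapped (the combinatorial effect of the flop on the two triangular fibres adjacent to the exceptional $\PP^1$). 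Miniversality of $g'$ again follows from Corollary~\ref{cor:miniversality}.

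The principal obstacle throughout is the book-keeping in the second paragraph: one must verify that the chart-wise perturbations assemble into the prescribed section of $\nu^\star(-K_Y)$, and that the infinite-dimensional power-series parameters of $\cM$ match the infinite-dimensional summands of $\T^1_{Y,E}$ coming from the legs of the tropical arrangement. The ind-scheme framework of \S~\ref{sec:some-gener-princ} ensures that only this infinitesimal identification is needed; the global structure of the family $g: \cY \to \cM$ is merely flat, not universal in any scheme-theoretic sense.
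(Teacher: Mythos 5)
Your proof follows the paper's approach closely: the same two blow-up charts (substituting $x = \tilde{x}(z-D_0(u))$ when $z'\neq 0$ and $z = x\tilde{z} + D_0(u)$ when $x'\neq 0$), the same comparisons with Example~\ref{exa:ODP} and Example~\ref{exa:A3} respectively, the same appeal to Corollary~\ref{cor:miniversality} for miniversality, and Part~(iv) by symmetry (the paper records it as ``similar''; your flop description is consistent with the paper's own language about the birational map $\mathfrak{Y}\dasharrow\mathfrak{Y}'$). One small inaccuracy in your discussion of Part~(iii): you speak of ``the two internal edges'' picking up $\lambda$ and $\mu$, but Fig.~\ref{fig:KS_uz2} has exactly one bounded internal edge, labeled $\lambda$ (the blue segment between the trivalent vertex and the crossing with the red line); $\mu$ appears only as a constant term on the two unbounded red legs, since the red tropical curve is a straight line with no vertex and hence no internal edge. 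This slip does not affect the validity of your argument or its identification of the Kodaira--Spencer map.
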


The variety $\cY$ is given by the equation $z^\prime x = x^\prime
\bigl(z-D_0(u)\bigr)$ in $\cX \times \PP^1_{x^\prime : z^\prime}$. We
examine in turn the two charts of the blow-up.

In the chart $z^\prime \neq 0$, setting
$\widetilde{x}=x^\prime/z^\prime$ and substituting
$x=\widetilde{x}(z-D_0(u))$ in the equation for $\cX$ we obtain:
\[
\widetilde{x}y=uz+\lambda + \mu + xA(x) + yB(y) +zC(z)+uD_1 (u)
\]
with $\cE$ given by $\lambda+uz+xA(x)+yB(y)=0$. Now compare with
Example~\ref{exa:ODP}.

In the chart $x^\prime \neq 0$, setting
$\widetilde{z}=z^\prime/x^\prime$ and substituting
\[
z=\widetilde{z}x+D_0(u)
\]
in the equation for $\cX$ we obtain
\[
y=\widetilde{z}\Bigl[ u\bigl(\widetilde{z}x+D_0(u)\bigr) + \lambda +
  \mu + xA(x)+\cdots \Bigr] 
\]
where the important thing is that we can solve for $y$, that is, $\cY$
is nonsingular in this chart. The divisor $\cE$ is given by:
\[
u\bigl(\widetilde{z}x+D_0(u)\bigr)+\lambda + xA(x) + yB(y) = 0
\]
and then compare with Example~\ref{exa:A3}.

(iv) is similar to (i)--(iii).
\end{exa}

\section{Equivariant \texorpdfstring{$G$}{G}-structures on versal deformations after Rim}
\label{sec:equiv-g-struct}

\begin{setup}
  \label{sec:Rim}

  Fix an algebraically closed field $k$, and denote by $\frakB$ the
category of fat points over $k$. Consider a deformation
category 
$\frakD$ fibered in groupoids over $\frakB$ with finite dimensional
tangent space $\T^1=|\frakD (\varepsilon)|$. For example, we can take
$\frakD$ to be the category of infinitesimal deformations of a proper algebraic
variety $X$:
\[
\frakD = \catDef{X} 
\]
or a variant thereof.

Denote by $\gamma \colon \frakD \to \frakB$ the structure functor. 

From now on we assume that for some $d>0$ the $d$-dimensional torus
$\TT=\GG_m^d$ acts on $\frakD$. Then $\TT$ acts on $\T^1$ and we fix a
$\TT$-invariant subspace $W\subset \T^1$. Let $\frakB^\prime$ be the
category whose objects are pairs $(S, j)$ of a fat point $S$ over $k$
and an embedding $j\colon S \hookrightarrow W$, and let
$\frakD^\prime = \frakD | \frakB^\prime$. The group $\TT$ acts on
$\frakD^\prime$ and $\frakB^\prime$ and the functor
$\gamma \colon \frakD^\prime \to \frakB^\prime$ is $G$-equivariant.  
\end{setup}

\begin{thm}
  \label{thm:Tequivariant_versal}
  Assume Setup~\ref{sec:Rim}. There exists a formal object $\widehat{\cX} \in
  \ob \widehat{\frakD^\prime}$, equipped with a $\TT$-action, and
  characterised by the following versal property.

  For all objects $\xi \in \ob \frakD^\prime$ equipped with a
  $\TT$-action such that the morphism $\xi \to \gamma (\xi)$ is
  $\TT$-equivariant, there exists a (non-unique) $\TT$-equivariant morphism
  $i\colon \gamma (\xi) \to \gamma (\widehat{\cX})$ in
  $\widehat{\frakB^\prime}$ and a fibre square in $\widehat{\frakD^\prime}$
  \[
    \xymatrix{
    \xi \ar[r] \ar[d]& \widehat{\cX} \ar[d]\\
    \gamma(\xi) \ar[r]_i& \gamma (\widehat{\cX})}
\]
 where all morphisms are $\TT$-equivariant. \qed
\end{thm}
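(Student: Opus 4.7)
The plan is to invoke the Schlessinger--Rim existence theorem to produce an ordinary formal versal object for the deformation category $\frakD^\prime$ over $\frakB^\prime$, and then use the reductivity of the torus $\TT$ to upgrade both the versal family and the morphisms into it to $\TT$-equivariant ones. This is essentially the content of Rim's paper~\cite{MR549162}, which we would follow closely.

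First I would verify that the fibered category $\gamma \colon \frakD^\prime \to \frakB^\prime$ satisfies the Rim--Schlessinger axioms. The tangent space of $\frakD^\prime$ is the finite-dimensional space $W$, and the deformation--obstruction formalism of $\frakD$ descends to $\frakD^\prime$ essentially unchanged, so the standard (H1)--(H4) hypotheses propagate. Schlessinger's existence theorem in its groupoid formulation then produces a formal object $\widehat{\cX}_0 \in \ob \widehat{\frakD^\prime}$ over a formal subscheme $\widehat{S}_0 \subset \widehat{W}$ whose tangent space is $W$, enjoying the usual (non-equivariant) versal property.

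Second I would equip $\widehat{\cX}_0$ with a $\TT$-equivariant structure. The torus $\TT$ acts on $\widehat{W}$, and by the minimal versal property $\widehat{S}_0 \subset \widehat{W}$ must be $\TT$-invariant; for every $t \in \TT$ the pullback $t^\star \widehat{\cX}_0$ is again a versal object over $\widehat{S}_0$, so ordinary versality produces (non-canonical) isomorphisms $\phi_t \colon \widehat{\cX}_0 \to t^\star \widehat{\cX}_0$. To obtain a genuine $\TT$-action one needs the cocycle condition $\phi_{ts} = t^\star \phi_s \circ \phi_t$, and this is imposed inductively order by order in the maximal ideal of $\widehat{S}_0$. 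At each order, the defect of the cocycle lies in a finite-dimensional $\TT$-representation sitting inside the automorphism group of the $n$-th order truncation of $\widehat{\cX}_0$; since $\TT$ is reductive, taking weight-$0$ components solves the coboundary equation, and in the limit one obtains the desired $\widehat{\cX}$. This is Rim's key observation.

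Finally, the stated versal property is verified by the same principle. Given a $\TT$-equivariant $\xi \in \ob \frakD^\prime$, ordinary versality supplies a not-necessarily-equivariant morphism $i_0 \colon \gamma(\xi) \to \gamma(\widehat{\cX})$ together with a fibre square; the discrepancy between $i_0$ and a $\TT$-equivariant lift is, at each order, controlled by a finite-dimensional $\TT$-representation, and reductivity of $\TT$ allows us to modify $i_0$ order by order to reach an equivariant $i$. The main technical obstacle throughout is precisely this order-by-order cohomological bookkeeping of $\TT$-weight decompositions of automorphism groups of formal objects; the geometric input is contained entirely in the Schlessinger--Rim axioms for $\frakD$, and the torus reductivity is what converts the abstract versal family into one that remembers its $\TT$-structure.
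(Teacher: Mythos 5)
The paper gives no proof of this theorem: it is stated with a terminal \qed and the appendix is explicitly billed as a summary of Rim's paper~\cite{MR549162}, so the statement is a citation. Your sketch — Schlessinger–Rim existence for the restricted groupoid $\frakD'$, then an order-by-order cocycle argument that uses linear reductivity of $\TT$ (equivalently, the weight decomposition and exactness of taking $\TT$-invariants) to rigidify both the versal family and maps into it — is precisely the content of Rim's argument, so it takes essentially the same route the paper relies on.
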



\end{document}